\newcommand{\bp}{\begin{prop}}
\newcommand{\ep}{\end{prop}}
\newcommand{\bd}{\begin{definicion}}
\newcommand{\ed}{\end{definicion}}
\newcommand{\bl}{\begin{lema}}
\newcommand{\el}{\end{lema}}
\newcommand{\bh}{\begin{hecho}}
\newcommand{\eh}{\end{hecho}}
\newcommand{\bpreg}{\begin{preg}}
\newcommand{\epreg}{\end{preg}}
\newcommand{\bo}{\begin{obs}}
\newcommand{\eo}{\end{obs}}
\newcommand{\bcon}{\begin{conj}}
\newcommand{\econ}{\end{conj}}
\newcommand{\brmk}{\begin{rmk}}
\newcommand{\ermk}{\end{rmk}}
\newcommand{\bc}{\begin{corol}}
\newcommand{\ec}{\end{corol}}
\newcommand{\bconst}{\begin{const}}
\newcommand{\econst}{\end{const}}
\newcommand{\bitem}{\begin{itemize}}
\newcommand{\eitem}{\end{itemize}}
\newcommand{\bt}{\begin{teor}}
\newcommand{\et}{\end{teor}}
\newcommand{\be}{\begin{ejem}}
\newcommand{\ee}{\end{ejem}}
\newcommand{\bnot}{\begin{nota}}
\newcommand{\enot}{\end{nota}}
\newtheorem*{thm}{Theorem}
\newtheorem*{conj}{Conjecture}
\newtheorem{theorem}{Theorem}[section]
\newtheorem{claim}{Claim}[theorem]
\newtheorem{corollary}[theorem]{Corollary}
\newtheorem{definition}[theorem]{Definition}
\newtheorem{lemma}[theorem]{Lemma}
\newtheorem{notation}[theorem]{Notation}
\newtheorem{proposition}[theorem]{Proposition}
\newtheorem{fact}[theorem]{Fact}
\newtheorem{remark}[theorem]{Remark}
\newcommand{\forkindep}[1][]{%
  \mathrel{
    \mathop{
      \vcenter{
        \hbox{\oalign{\noalign{\kern-.3ex}\hfil$\vert$\hfil\cr
              \noalign{\kern-.7ex}
              $\smile$\cr\noalign{\kern-.3ex}}}
      }
    }\displaylimits_{#1}
  }
}
\numberwithin{equation}{section}
\newcommand{\tp}{\operatorname{tp}}
\newcommand{\dcl}{\operatorname{dcl}}
\newcommand{\acl}{\operatorname{acl}}
\newcommand{\VS}{\operatorname{VS}}
\newcommand{\val}{\operatorname{val}}
\newcommand{\res}{\operatorname{res}}
\newcommand{\primes}{\operatorname{Primes}}
\newcommand{\Pres}{\operatorname{Pres}}
\newcommand{\cof}{\operatorname{cof}}
\newcommand{\red}{\operatorname{red}}
\newcommand{\germ}{\operatorname{germ}}
\newcommand{\Stab}{\operatorname{Stab}}
\title{Elimination of imaginaries in $\mathbb{C}((\Gamma))$ }
\author{Mariana Vicar\'ia }
\begin{document}
    \maketitle
\begin{abstract}
In this paper we study elimination of imaginaries in henselian valued fields of equicharacteristic zero and residue field algebraically closed. The results are sensitive to the complexity of the value group. We focus first on the case where the ordered abelian group has finite spines, and then prove a better result for the dp-minimal case. In \cite{Vicaria} it was shown that an ordered abelian with finite spines weakly eliminates  imaginaries once one adds sorts for the quotient groups $\Gamma/ \Delta$  for each definable convex subgroup $\Delta$, and sorts for the quotient groups $\Gamma/(\Delta+ l\Gamma)$ where $\Delta$ is a definable convex subgroup and $l \in \mathbb{N}_{\geq 2}$. We refer to these sorts as the \emph{quotient sorts}. In \cite{dpminimal} F. Janke, P. Simon and E. Walsberg characterized $dp$-minimal ordered abelian groups as those without singular primes, i.e. for every prime number $p$ $[\Gamma:p\Gamma]< \infty$. \\

 We prove the following two theorems: 
 \begin{thm}Let $K$ be a henselian valued field of equicharacteristic zero with residue field algebraically closed and value group of finite spines. Then $K$ admits weak elimination of imaginaries once one adds codes for all the definable $\mathcal{O}$-submodules of $K^{n}$ for each $n \in \mathbb{N}$, and the quotient sorts for the value group. 
 \end{thm}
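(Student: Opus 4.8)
The plan is to follow the architecture of Haskell--Hrushovski--Macpherson's proof of elimination of imaginaries in algebraically closed valued fields, isolating the extra input forced by a value group more complicated than $\mathbb Z$ or a divisible group. Two substitutions are made throughout: the lattices and their reductions of the $\mathrm{ACVF}$ geometric sorts (the sorts $S_n,T_n$) are replaced by the full family of definable $\mathcal O$-submodules of $\mathbb K^n$, which over a value group with non-trivial convex subgroups or subgroups $\Delta+l\Gamma$ of finite index is strictly richer; and the value group sort $\Gamma$, which alone suffices for $\mathrm{ACVF}$, is supplemented by its quotient sorts, the value-group imaginaries being disposed of by the weak elimination of \cite{Vicaria}. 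Work in a monster model $\mathbb K\models\mathrm{Th}(K)$ expanded by the quotient sorts and by a code for each $\emptyset$-definable family of $\mathcal O$-submodules of $\mathbb K^n$; let $\mathcal G$ denote the resulting collection of sorts together with the valued field sort and the value group sort. I will reduce weak elimination of imaginaries into $\mathcal G$, by the usual argument, to the statement that for every definable $X\subseteq \mathbb K^n\times P$, with $P$ a finite product of sorts from $\mathcal G$, there is a tuple $c$ from $\mathcal G$ with $c\in\acl^{\mathrm{eq}}(e_X)$ and $e_X\in\dcl^{\mathrm{eq}}(c)$, where $e_X$ denotes the canonical parameter of $X$. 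Since every imaginary is the canonical parameter of some definable subset of a power of the valued field sort, and since for weak elimination one may replace $e_X$ by any interalgebraic tuple and break finite unions apart, this suffices. I would prove it by a simultaneous induction on $n$ and on the sorts occurring in $P$: value group and quotient factors of $P$ are peeled off fibrewise using that $\Gamma$ is a pure, stably embedded ordered abelian group of finite spines together with \cite{Vicaria}, while the $\mathcal O$-submodule factors of $P$ are handled alongside the main case by the coding device below.

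The substantive case is $X\subseteq\mathbb K^n$, treated by induction on $n$. Projecting $X$ to $\mathbb K^{n-1}$ and slicing into fibres $X_b\subseteq\mathbb K$, the canonical parameter $e_X$ is controlled by the canonical parameter of the projection $X'\subseteq\mathbb K^{n-1}$, handled inductively, together with the graph of the definable function $b\mapsto e_{X_b}$; coding that graph amounts to coding a definable subset of $\mathbb K^{n-1}$ times $\mathcal G$-sorts, which is again an instance of the statement of the first paragraph — this is precisely why the auxiliary factor $P$ was built in. So everything comes down to a definable $X\subseteq\mathbb K$, appearing in general in a definable family.

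Here I would invoke the structure theory of one-variable definable sets in an equicharacteristic zero henselian field with algebraically closed residue field (a form of cell decomposition, equivalently quantifier elimination relative to the leading-term structure): uniformly in parameters, $X$ is a finite Boolean combination of cells attached to leading-term data, so $e_X$ is interalgebraic with a finite collection of data consisting of (i) centres $a_i\in\mathbb K$, each determined only modulo a definable $\mathcal O$-submodule of $\mathbb K$, (ii) residue-field parameters, and (iii) value-group parameters. Part (iii) is coded by the quotient sorts of $\Gamma$ by \cite{Vicaria}. For parts (i) and (ii), the decisive observation is that a coset $a+M$ of a definable $\mathcal O$-submodule $M\subseteq\mathbb K$ is interdefinable with the definable $\mathcal O$-submodule $N:=\mathcal O\cdot(a,1)+(M\times\{0\})$ of $\mathbb K^2$: from $N$ one recovers $M=\{x:(x,0)\in N\}$ and $a+M=\{x:(x,1)\in N\}$, while $N$ depends only on the coset $a+M$, and since $N$ ranges in a $\emptyset$-definable family as $a$ and $M$ vary it has a code in the $\mathcal O$-submodule sorts. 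The same device, one dimension higher, shows that the closed and open ``balls'' and the residue-field-valued data occurring inside a cell — exactly what $S_n$ and $T_n$ encode in the $\mathrm{ACVF}$ case — are encoded by definable $\mathcal O$-submodules of a suitable $\mathbb K^m$, hence coded by tuples from the $\mathcal O$-submodule sorts. Assembling (i)--(iii) gives $e_X$ interalgebraic with a tuple from $\mathcal G$ for every $X\subseteq\mathbb K$, and then the fibration step and the reduction of the first paragraph propagate this to all $X\subseteq\mathbb K^n\times P$, yielding weak elimination of imaginaries into $\mathcal G$.

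I expect the main obstacle to lie entirely in the one-variable analysis, and to be exactly where the finite-spines hypothesis is used. First, one needs the correct \emph{uniform} structure theorem for definable subsets of $\mathbb K$ — and, more demandingly, for definable families of them, which the fibration step consumes — over a value group that may carry convex subgroups and subgroups $\Delta+l\Gamma$; finite spines is what bounds the lattice of definable convex subgroups, hence the combinatorial complexity of the leading-term data and of the definable $\mathcal O$-submodules that can appear in a family, keeping the decomposition first-order uniform. Second, one must show that definable families of $\mathcal O$-submodules of $\mathbb K^n$ are themselves uniformly coded by tuples from the $\mathcal O$-submodule sorts, i.e.\ that these sorts are closed under the constructions above; this too rests on finite spines, and is the reason the statement degrades — and needs the stronger dp-minimal input — for the sharper second theorem of the paper. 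By contrast, the set-theoretic reductions, the coding of a single coset, and the transfer of value-group imaginaries to the quotient sorts via \cite{Vicaria} should be routine.
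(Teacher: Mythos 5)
Your proposal follows the Haskell--Hrushovski--Macpherson architecture — a fibration-and-coding induction on the number of valued-field variables, bottoming out at a canonical cell decomposition of one-variable definable sets — whereas the paper proves this theorem through Hrushovski's abstract criterion \cite[Lemma 1.17]{criteria} (Theorem \ref{weakEI}), which replaces the direct coding of definable sets by two independent tasks: density of $\acl^{eq}(\ulcorner X\urcorner)$-definable types in every non-empty one-variable definable set $X$, and coding of definable $n$-types in the stabilizer sorts together with $\Gamma^{eq}$. The paper handles the second task by approximating a definable type $p(\mathbf{x})$ through the finite-dimensional valued vector spaces $D_\ell/I_\ell$ of polynomials of bounded degree modulo those vanishing on $p$, extracting a separated basis, and coding the resulting filtration by $\mathcal{O}$-submodules (Theorem \ref{codingdeftypes}); and the first by generic types of $1$-torsors and a tree argument on equivalence classes of closed balls (Theorem \ref{densitydef}). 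No canonical cell decomposition is invoked anywhere.

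The gap in your argument sits exactly where you flag the ``main obstacle,'' but it is deeper than a uniformity issue. The fibration step requires, for each one-variable definable set $X$, a decomposition into ``nice sets'' that is \emph{canonical}: each piece must be definable over $\acl^{eq}(\ulcorner X\urcorner)$, not merely over the parameters used to write $X$ down. The paper proves (Theorem \ref{1def}) that every one-variable definable set is a finite union of nice sets — generalized swiss cheeses cut by congruence conditions in the coarsened valuations $v_\Delta$ — but that decomposition carries extraneous parameters, and at the opening of Section \ref{density} the paper explicitly names the canonicalization of such a decomposition as ``detailed technical work'' and deliberately sidesteps it. That is the whole point of pivoting to Hrushovski's criterion: density of definable types does not demand a canonical partition, only the existence of a single $\acl^{eq}(\ulcorner X\urcorner)$-definable type concentrated on $X$. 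A Holly-style canonical swiss-cheese normal form is established for ACVF and RCVF but not, as far as is known, for the richer one-variable geometry produced by a value group with non-trivial definable convex subgroups and finite-index subgroups $\Delta+l\Gamma$; your proposal assumes such a normal form and does not supply it. There is a secondary soft spot in the fibration induction itself: you code the graph of $b\mapsto e_{X_b}$ by folding it into ``another instance of the statement'' over $\mathbb{K}^{n-1}\times P$, but the sorts $B_m(K)/\Stab_{(I_1,\dots,I_m)}$ that may appear in $P$ are themselves quotients of large powers of $\mathbb{K}$, so the induction needs a more careful complexity measure than you give — HHM handle the analogous step for ACVF via germs of definable functions, machinery which this paper only develops in Section \ref{dpminimal} and only under the stronger dp-minimal hypothesis. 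Your remaining observations — the coset-to-module trick interdefining a torsor with an $\mathcal{O}$-submodule one dimension up, the disposal of value-group imaginaries via \cite{Vicaria}, and the subsumption of the ACVF geometric sorts $S_n$, $T_n$ by the $\mathcal{O}$-submodule sorts — all match the paper's constructions (Proposition \ref{torsormodule}, Theorem \ref{WEIbounded}, Remark \ref{ACVF}) and are sound.
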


 \begin{thm}Let $K$ be a henselian valued field of equicharacteristic zero, residue field algebraically closed and dp-minimal value group. Then $K$ eliminates imaginaries once one adds codes for all the definable $\mathcal{O}$-submodules of $K^{n}$ for each $n \in \mathbb{N}$, the quotient sorts for the value group and constants to distinguish the elements of each of the finite groups $\Gamma/\ell\Gamma$, where $\ell \in \mathbb{N}$.
\end{thm}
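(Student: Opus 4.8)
The plan is to upgrade the weak elimination of imaginaries supplied by Theorem 1 to full elimination. Every dp-minimal ordered abelian group has finite spines, so Theorem 1 applies: in the language obtained from $K$ by adjoining codes for the definable $\mathcal{O}$-submodules of each $K^{n}$ together with the quotient sorts for $\Gamma$, the theory of $K$ has weak elimination of imaginaries, and this persists after adjoining the constants for the groups $\Gamma/\ell\Gamma$. I would then invoke the standard criterion that a theory with weak elimination of imaginaries eliminates imaginaries if and only if every finite set of tuples from the named sorts admits a code. So the task reduces to coding finite sets, and the point of the new constants is precisely to make this possible in the value-group sorts.

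I would carry out the coding sort by sort, organizing the named sorts into three layers of increasing field-theoretic complexity: the value-group sorts ($\Gamma$, the quotients $\Gamma/\Delta$, and the finite quotients $\Gamma/(\Delta+\ell\Gamma)$); the residue layer (the residue field and the $\mathcal{O}$-submodule code sorts); and the valued field $K$ on top. Finite sets of tuples from $K^{n}$ are coded by the classical device available over any infinite field: for finite $X\subseteq K^{n}$, the tuple of coefficients of $\prod_{v\in X}(T-\sum_{i}U_{i}v_{i})$, viewed as a polynomial in $T$ and auxiliary variables $U_{1},\dots,U_{n}$, is a code for $X$, since an automorphism fixes it exactly when it permutes the monic, pairwise distinct, linear-in-$T$ factors, i.e.\ when it fixes $X$ setwise. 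Finite sets in the residue field are coded because it is a pure algebraically closed field and algebraically closed fields eliminate imaginaries; finite sets of $\mathcal{O}$-submodules, and of elements of the submodule code sorts, are then reduced to finite sets in the residue field and the value group through the explicit description of these sorts in terms of the residue field and $\Gamma$.

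The heart of the matter is the value-group layer. Here I would show that the dp-minimal ordered abelian group $\Gamma$, equipped with the quotient sorts and with constants naming the elements of each $\Gamma/\ell\Gamma$, codes all of its finite sets — indeed eliminates imaginaries. This builds on the weak elimination of imaginaries for finite-spines ordered abelian groups with the quotient sorts from \cite{Vicaria}, on the characterization of dp-minimal ordered abelian groups recalled above (\cite{dpminimal}), and on quantifier elimination for ordered abelian groups: dp-minimality forces each $\Gamma/\ell\Gamma$ to be finite and the spine to be finite, so the only finite imaginaries on which automorphisms can act nontrivially — the obstruction to upgrading weak elimination to full elimination in a finite-spines value group — all factor through the (for each $\ell$, finitely many) elements of $\Gamma/\ell\Gamma$; moreover every finite quotient sort $\Gamma/(\Delta+\ell\Gamma)$ is itself a quotient of $\Gamma/\ell\Gamma$, so naming the elements of the latter rigidifies all of them. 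After this rigidification an induction exploiting the structure of $\Gamma$ gives elimination of imaginaries for the value-group sorts. (In a general finite-spines value group the quotients $\Gamma/\ell\Gamma$ may be infinite and the spine unbounded, which is why Theorem 1 only claims weak elimination there.)

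Finally, a finite set $X$ of tuples whose coordinates spread over several layers must be assembled from the cases above. This is the step I expect to be the main obstacle: one reduces $X$ to finite sets within each layer using the orthogonality of the value group and the residue field together with the stable embeddedness of the auxiliary sorts inside $K$, so that $X$ is coded by combining the code of its value-group shadow with the codes of the fibers over it, uniformly in parameters. Making that uniform gluing precise, and verifying that the constants for the groups $\Gamma/\ell\Gamma$ genuinely rigidify every finite imaginary of the value-group sorts in the dp-minimal case, are the two points that demand the most care.
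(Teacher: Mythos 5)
Your high-level skeleton is correct and matches the paper: start from Theorem~\ref{weakEI1} (weak elimination of imaginaries in the stabilizer sorts), invoke the standard fact that weak EI plus coding of finite sets gives EI (Fact~\ref{all}), then code finite sets. You also correctly use that $K$ and $k$ code finite sets in themselves (fields uniformly code finite sets) and that the $dp$-minimal value group with the quotient sorts and the $\Gamma/\ell\Gamma$ constants eliminates imaginaries (this is exactly Corollary~\ref{EIdpmin}, imported from \cite{Vicaria}). But you have misidentified where the difficulty lies, and the step you label as a routine reduction is in fact the entire content of Section~\ref{dpminimal}.

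Specifically, you write that finite sets of $\mathcal{O}$-submodule codes ``are then reduced to finite sets in the residue field and the value group through the explicit description of these sorts in terms of the residue field and $\Gamma$.'' No such reduction exists. The sorts $B_{n}(K)/\operatorname{Stab}_{(I_{1},\dots,I_{n})}$ are cosets of matrix groups over $K$; they are not products of residue-field coordinates and value-group coordinates, and a finite subset of such a sort cannot be split coordinatewise into a finite subset of $k$ and one of $\Gamma$. The paper has to build an entire toolkit to handle this: (i) elimination of imaginaries in the flagged $k$-linear structure $VS_{k,C}$ (Theorem~\ref{EIinternalresidue}, via \cite{Grupoids}); (ii) coding germs of definable functions over definable types in the stabilizer sorts (Theorem~\ref{codinggerm}), which in turn needs the $\ulcorner M\urcorner$-definable ``type of a representation matrix'' (Proposition~\ref{nicebasis}); (iii) the reduction of a module or torsor code to the code of its last-coordinate projection together with the germ of the fiber function (Lemma~\ref{codemodule}, Corollary~\ref{torgen}); (iv) an ad hoc polynomial-ideal device to code a primitive finite set of $1$-torsors (Lemma~\ref{codetorsors}, where $J_{F}$ is introduced), and the ``symmetrized generic type'' construction of Lemma~\ref{tipomaster}; and (v) a simultaneous induction on the size $m$ of the finite set and the complexity $n$ of the torsors, interleaving statements $I_{m}$ (coding finite sets) and $II_{m}$ (coding definable functions on finite sets), with a case split on whether the last-coordinate projections coincide or are pairwise distinct. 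None of this appears in your proposal. You also frame the value-group layer as ``the heart of the matter,'' but that layer is dispatched in one line by Corollary~\ref{EIdpmin}; the genuinely hard part is the stabilizer-sort layer you dismissed. So while the overall strategy is right, the argument as written has a gap at precisely the point that required the paper's main technical effort.
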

\end{abstract}
\newpage
  \tableofcontents 
\newpage
\section{Introduction}
The model theory of henselian valued fields has been a major topic of study during the last century, it was initiated by Robinson's model completeness results for algebraically closed valued fields  in \cite{robinson}.  Remarkable work has been achieved by Haskell, Hrushovski and Macpherson to understand the model theory of algebraically closed valued fields. In a sequence of papers \cite{HHM} and \cite{HHM2} they developed the notion of stable domination, that rather than being a new form of stability should be understood as a way to apply techniques of stability in the setting of valued fields. Further work of Ealy, Haskell and Ma\v{r}\'{i}cov\'{a} in  \cite{Clif} for the setting of real closed convexly valued fields, suggested that the notion of having a stable part of the structure was not fundamental to achieve domination results and indicated that the right notion should be residue field domination or domination by the sorts internal to the residue field. Our main motivation for the present document arises from the natural question of how much further a notion of residue field domination could be extended to broader classes of valued fields to gain a deeper model theoretic insight of henselian valued fields, and the first step is finding a reasonable language where the valued field will eliminate imaginaries. \\

The starting point in this project relies on the Ax-Kochen theorem, which states that the first order theory of a henselian valued field of equicharacteristic zero or unramified mixed characteristic with perfect residue field is completely determined by the first order theory of its valued  group and its residue field. A natural principle follows from this theorem: model theoretic questions about the valued field itself can be understood by reducing them to its residue field, its value group and their interaction in the field.\\

A fruitful application of this principle has been achieved to describe the class of definable sets. For example, in \cite{Pas} Pas proved field quantifier elimination relative to the residue field and the value group once angular component maps are added in the equicharacteristic case. Further studies of Basarab and F.V. Kuhlmann show a quantifier elimination relative to the $RV$ sorts  [see \cite{Basarab}, \cite{Kuhlmann} respectively]. \\

The question of whether a henselian valued field eliminates imaginaries in a given language is of course subject to the complexity of its value group and its residue field as both are interpretable structures in the valued field itself.  The case for algebraically closed valued fields was finalized by Haskell, Hrushovski and Macpherson in their important work \cite{HHM2} , where elimination of imaginaries for $ACVF$ is achieved once the  geometric sorts $S_{n}$ (codes for the $\mathcal{O}$-lattices of rank $n$ ) and $T_{n}$ (codes for the residue classes of the elements in $S_{n}$)  are added.  This proof was later significantly simplified by Will Johnson in  \cite{will} by using a criterion isolated by Hurshovski [ see \cite{criteria}]. \\

Recent work has been done to achieve elimination of imaginaries in some other examples of henselian valued fields, as the case of separably closed valued fields  in \cite{separablyclosed}, the $p$-adic case in \cite{padics} or  enrichments of ACVF in \cite{silvancito}.\\
 
 However, the above results are all obtained for particular instances of henselian valued fields while the more general approach of obtaining a relative statement for broader classes of henselian valued fields  is still a very interesting open question.\\
 Following the Ax-Kochen style principle, it seems natural to first attempt to solve this question by looking at the problem in two orthogonal directions: one by making the residue field as docile as possible and studying which troubles would the value group bring into the picture, or by making the value group  tame and understanding the difficulties that the residue field would contribute to the problem. \\
 Hils and Rideau \cite{hablando} had proved that under the assumption of having a definably complete value group and requiring that the residue field eliminates the $\exists^\infty$ quantifier, then any definable set admits a code once the geometric sorts and the linear sorts are added to the language. Any definably complete ordered abelian group is either divisible or a $\mathbb{Z}$-group (i.e. a model of Presburger Arithmetic).\\
 
 This paper is addressing the first approach in the setting of henselian valued fields of equicharacteristic zero. We suppose the residue field to be algebraically closed and we obtain results which are sensitive to the complexity of the value group. We first analyze the case where the value group has finite spines. An ordered abelian with finite spines weakly eliminates imaginaries once we add sorts for the quotient groups $\Gamma/ \Delta$  for each definable convex subgroup $\Delta$, and sorts for the quotient groups $\Gamma/ \Delta+ l\Gamma$ where $\Delta$ is a definable convex subgroup and $l \in \mathbb{N}_{\geq 2}$. We refer to these sorts as the quotient sorts. The first result that we obtain is: 
 \begin{theorem} Let $K$ be a valued field of equicharacteristic zero, residue field algebraically closed and value group with finite spines. Then $K$ admits weak elimination of imaginaries once we add codes for all the definable $\mathcal{O}$-submodules of $K^{n}$ for each $n \in \mathbb{N}$, and the quotient sorts for the value group. 
 \end{theorem}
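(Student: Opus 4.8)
The plan is to deduce weak elimination of imaginaries for $K$ from the known weak elimination of imaginaries for the value group $\Gamma$ (with the quotient sorts) and for the residue field $k$ (algebraically closed, hence trivially), by applying Hrushovski's criterion for elimination of imaginaries in the style of Johnson's treatment of $ACVF$. Concretely, one works in a sufficiently saturated model $\M$, fixes a small set of parameters $A$ (a substructure closed under the relevant functions, containing enough of the geometric and quotient sorts), and a definable set $X$ over $A$; the goal is to find a code for $X$ in the algebraic closure of $A$ together with the $\mathcal{O}$-submodule sorts and the quotient sorts. Following the usual reduction, it suffices to code definable functions, and ultimately to code a single element of the valued-field sort over a generic base, i.e. to show that $\tp(a/\M)$ is coded by $\acl$ of the named sorts for $a$ a field element.

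The key steps, in order, are: (1) establish a \emph{relative quantifier elimination} down to $\Gamma$ and $k$ (or $RV$), so that every definable set is built from value-group data and residue-field data; here I would invoke the Pas/Basarab--Kuhlmann-style results quoted in the introduction, together with the equicharacteristic-zero henselianity. (2) Prove a \emph{resolution of one-types}: given a field element $a$ and a base containing the geometric/module sorts, show that $\tp(a)$ is controlled by the valuative data of $a$ over the base — essentially that the relevant invariant is the $\mathcal{O}$-submodule (or more precisely the coset structure) generated by the coordinates of a tuple, which is exactly why one must add codes for \emph{all} definable $\mathcal{O}$-submodules of $K^n$, not just the lattices as in $ACVF$: the value group now has convex subgroups and $l$-divisibility phenomena, so the family of definable submodules is richer. (3) Handle the value-group imaginaries: since $\Gamma$ has finite spines it weakly eliminates imaginaries after adding the quotient sorts (cited from \cite{Vicaria}), so any $\Gamma$-imaginary appearing is already coded; similarly residue-field imaginaries are coded since $k$ is algebraically closed (it eliminates imaginaries outright). (4) Assemble: use the criterion — if every unary type over a model is coded and the pieces ($\Gamma$, $k$) are coded, then $K$ weakly eliminates imaginaries in the combined language. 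One verifies the criterion's hypotheses (germs of definable functions, the density/invariance conditions) by the same bookkeeping as in Johnson's argument, now carrying the extra quotient sorts along.

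I expect the \emph{main obstacle} to be step (2)–(3) interface: showing that the only obstruction to coding a field element is captured by a definable $\mathcal{O}$-submodule together with a quotient-sort coordinate, and that no further imaginaries leak out of the value group. In $ACVF$ the value group is divisible and o-minimal-like, so $\Gamma$-imaginaries reduce to points and intervals; here, with finite spines, one must carefully track how a definable convex subgroup $\Delta$ and the torsion quotients $\Gamma/(\Delta + l\Gamma)$ interact with the valuation of a field tuple, and argue that the induced structure on these quotients is already tame enough (stably embedded, with its imaginaries coded by the quotient sorts) that the "relative" coding goes through. A secondary technical point is proving that the family of definable $\mathcal{O}$-submodules of $K^n$ is itself \emph{uniformly definable} and closed under the operations needed (intersections, sums, images under coordinate projections), so that "adding codes for all of them" yields an honest expansion with quantifier elimination down to it — this is where most of the routine but delicate work lies, and I would isolate it as a preliminary lemma before invoking the criterion.
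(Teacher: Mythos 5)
Your high-level plan is the same as the paper's: invoke Hrushovski's abstract criterion (Lemma 1.17 of \cite{criteria}, stated as Theorem \ref{weakEI}), use stable embeddedness and orthogonality of $\Gamma$ and $k$ (Corollary \ref{stableembeddedness}), import weak elimination of imaginaries for $\Gamma$ with the quotient sorts from \cite{Vicaria}, and add codes for definable $\mathcal{O}$-submodules. But what you have written is a sketch of the scaffolding, not a proof; the two concrete hypotheses that the criterion demands are never identified or verified, and that is where essentially all of the paper's work lies.

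First, the criterion as the paper uses it asks for exactly two things: (i) \emph{density of definable types} — for every nonempty definable $X\subseteq K$ in one variable there is an $\acl^{eq}(\ulcorner X\urcorner)$-definable global type concentrated on $X$; and (ii) \emph{coding of definable types} — every definable type in $K^n$ is coded (possibly by an infinite tuple) in $\mathcal G\cup\Gamma^{eq}$. You fold these into vague phrases such as ``resolution of one-types'' and ``coding a single element over a generic base,'' which do not capture either condition correctly, and you list ``germs of definable functions'' among the hypotheses of the criterion; germs play no role in the weak-EI argument (they enter only in the dp-minimal section, where finite sets must be coded for \emph{full} elimination). Second, and more importantly, you do not explain how either condition would actually be established. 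For (i) the paper introduces generic types of closed and open $1$-torsors and runs a compactness/tree argument showing that one can always locate an $\acl^{eq}(\ulcorner X\urcorner)$-definable torsor $U$ whose generic type can be extended inside $X$ in an $\acl^{eq}(\ulcorner X\urcorner)$-definable way, with separate bookkeeping for congruence conditions. For (ii) the paper reduces a definable type to a tower of interpretable \emph{valued vector-space} structures on spaces of polynomials, proves the existence of separated bases in a maximal extension (Lemma \ref{basismax}), and shows each such structure is coded by finitely many $\mathcal{O}$-submodules of some $K^n$ together with $\Gamma^{eq}$-data. None of this is present or even gestured at in your proposal, and it is not a matter of ``bookkeeping'': the valued-vector-space coding is the mechanism that makes the $\mathcal{O}$-submodule sorts sufficient, and the torsor/tree argument is what makes density work outside the divisible setting. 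Without them the plan does not close.
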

Later, we prove a better result for the dp-minimal case, this is: 
 \begin{theorem}Let $K$ be a henselian valued field of equicharacteristic zero, residue field algebraically closed and dp-minimal value group. Then $K$ eliminates imaginaries  once we add codes for all the definable $\mathcal{O}$-submodules of $K^{n}$ for each $n \in \mathbb{N}$, the quotient sorts for the value group and constants to distinguish the elements of the finite groups $\Gamma/ \ell \Gamma$ where $l \in \mathbb{N}_{\geq 2}$. 
\end{theorem}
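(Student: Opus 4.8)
The plan is to deduce both theorems from Hrushovski's criterion for elimination of imaginaries, following the strategy Johnson used for $ACVF$ and Hils--Rideau used in the definably complete case. Concretely, I would work in a sufficiently saturated monster model, pass to the language with the geometric/module sorts and the quotient sorts adjoined, and verify the standard package: (i) the value group, in the quotient sorts, eliminates imaginaries — this is the content of the result from \cite{Vicaria} cited in the abstract, which gives weak elimination of imaginaries for ordered abelian groups with finite spines, upgraded to full elimination in the dp-minimal case once constants for $\Gamma/\ell\Gamma$ are added (using that there are no singular primes, by \cite{dpminimal}); (ii) the residue field, being algebraically closed, eliminates imaginaries outright; and (iii) a density/coding statement for $1$-types: every unary set is coded, using that a definable $\mathcal{O}$-submodule of $K^n$ now has a code by fiat. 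The glue is an Ax--Kochen style relative quantifier elimination (Pas, or the $RV$-version of Basarab/Kuhlmann quoted in the introduction) to reduce an arbitrary definable set to data living in the residue field and the value group together with the linear/module sorts.

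The key steps, in order, would be: First, reduce to the problem of coding definable subsets of $K$ (equivalently, definable functions into the geometric sorts) via the standard transitivity argument for Hrushovski's criterion — it suffices to code $1$-types over algebraically closed sets of parameters, i.e. to show that for each unary definable set $X \subseteq K$ the canonical parameter of $X$ lies in the definable closure of the new sorts. Second, stratify such an $X$ by its image in $RV$ (or in $\Gamma$ and the residue field separately): the ``value group part'' is coded by the quotient sorts by invoking the ordered-abelian-group elimination result; the ``residue field part'' is coded because $ACF$ eliminates imaginaries; the remaining ``fibering'' data is exactly a definable family of cosets of fractional ideals / $\mathcal{O}$-submodules of $K$, which is coded because we have thrown in codes for all definable $\mathcal{O}$-submodules of $K^n$. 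Third, assemble these codes: here one must check that the code of $X$ is interdefinable with the tuple of codes of its pieces, which requires knowing that the pieces are themselves recovered definably from $X$ — a germ/definability argument. Fourth, for the dp-minimal refinement, observe that the only obstruction to full (rather than weak) elimination is the finite imaginary corresponding to the choice of coset in $\Gamma/\ell\Gamma$; adding constants naming those finitely many elements removes it, exactly as in the passage from weak EI to EI for Presburger-like groups.

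The main obstacle I expect is step three: controlling the interaction between the value group and the residue field inside $K$, i.e. showing that the stratification of a definable unary set into its $\Gamma$-part, its residue-field-part, and its $\mathcal{O}$-module-part is itself canonical, so that a code for $X$ is genuinely recovered from codes of the pieces with no extra imaginary sneaking in. This is precisely the place where the complexity of the value group enters — when $\Gamma$ has finite spines but is not dp-minimal, the quotient sorts $\Gamma/(\Delta+\ell\Gamma)$ proliferate, and one only gets \emph{weak} elimination because a definable set may determine its convex-subgroup data only up to a finite ambiguity; in the dp-minimal case the spine is a single point, this finite ambiguity collapses to the $\Gamma/\ell\Gamma$ issue, and naming those elements suffices. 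A secondary technical point is verifying that ``codes for all definable $\mathcal{O}$-submodules of $K^n$'' is the right and sufficient amount of linear data — one should check these sorts are interpretable from (hence add nothing beyond, but also are implied by) the geometric sorts $S_n, T_n$ together with the quotient sorts, so that the statement is both correct and not wasteful.
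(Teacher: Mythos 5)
Your high-level picture for the weak elimination part is broadly in line with the paper: Hrushovski's criterion, verified by a density-of-definable-types statement for $1$-variable sets and a coding-of-definable-types statement that pushes things into the value group via valued-vector-space structures. But you fundamentally mischaracterize the step from weak EI to full EI, which is where essentially all of the technical difficulty of this theorem lives and where your sketch collapses.

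You write that ``the only obstruction to full (rather than weak) elimination is the finite imaginary corresponding to the choice of coset in $\Gamma/\ell\Gamma$; adding constants naming those finitely many elements removes it.'' This conflates two different places where weak EI must be upgraded. At the level of the pure ordered abelian group, it is true that dp-minimality plus the constants for $\Gamma/\ell\Gamma$ turn weak EI into full EI (this is Corollary~\ref{EIdpmin}). But at the level of the valued field, weak EI in the stabilizer sorts plus full EI for the value group do \emph{not} automatically give full EI for $K$. To pass from weak EI to EI one must show that every finite set of tuples in the stabilizer sorts $\mathcal{G}$ is coded in $\mathcal{G}$ (Fact~\ref{all}), and this is the content of the entire Section~\ref{dpminimal}: it requires the analysis of the stable, stably embedded structure $\VS_{k,C}$ of residue-field vector spaces and its elimination of imaginaries via Hrushovski's flagged-linear-structure result (Theorem~\ref{EIinternalresidue}), the machinery of germs of definable functions on definable types and their codability in $\mathcal{G}$ (Theorem~\ref{codinggerm}), the reduction of a module or torsor to its last-coordinate projection together with the germ of the fiber map (Lemma~\ref{codemodule}, Corollary~\ref{torgen}), and finally a simultaneous induction $I_m$/$II_m$ on the size of finite sets, proving both that finite subsets of $\mathcal{G}^r$ are coded and that definable functions out of them are coded. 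None of this is a ``finite ambiguity collapsing to $\Gamma/\ell\Gamma$,'' and adding constants does not dispense with it. Dp-minimality is used here not to trivialize the spine — your claim that ``in the dp-minimal case the spine is a single point'' is simply false, since a dp-minimal ordered abelian group may have many nontrivial convex subgroups — but rather to guarantee that the generic type of any definable end-segment is $\ulcorner S \urcorner$-definable (Fact~\ref{genericodp}), which in turn makes Corollary~\ref{alldef} hold and feeds directly into the germ-coding and Lemma~\ref{tipomaster}. As a secondary point, the stratification by $RV$ that you propose does not reflect the paper's mechanism; the paper works in the three-sorted language with relative QE (Theorem~\ref{extension}) rather than via angular components or $RV$, and the decomposition you need of a unary set into a $\Gamma$-part, a residue part, and a module part is handled by generic types of torsors and germs, not by an $RV$-fibration.
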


This document is organized as follows:
\begin{itemize}
\item \emph{Section \ref{preliminaries}:} We introduce the required background, including quantifier elimination statements, the state of the model theory of ordered abelian groups and some results about valued vector spaces. 
\item \emph{Section \ref{modules}:} We study definable $\mathcal{O}$-modules of $K^{n}$. 
\item \emph{Section \ref{stabilizer}: }We start by presenting Hrushovski's criterion to eliminate imaginaries. We introduce the \emph{stabilizer sorts}, where the $\mathcal{O}$-submodules of $K^{n}$ can be coded. 
\item \emph{Section \ref{density}:} We prove that each of the conditions of Hrushovski's criterion hold. This is the density of definable types in definable sets in $1$-variable $X \subseteq K$ and finding canonical basis for definable types in the stabilizer sorts and $\Gamma^{eq}$. We conclude this section proving the weak elimination of imaginaries of any henselian valued field of equicharacteristic zero, residue field algebraically closed and value group with finite spines down to the stabilizer sorts. 
\item \emph{Section \ref{dpminimal}: }We show a complete elimination of imaginaries statement when the value group is $dp$-minimal. We prove that any finite set of tuples in the stabilizer sorts can be coded. 
\end{itemize}
\textbf{Aknowledgements:}  The author would like to thank Pierre Simon and Thomas Scanlon for many insightful mathematical conversations, their time, their constant support and finally for the elegance of their mathematical approach. The author would like to thank particularly Scanlon, for having introduce her to the model theory of valued fields with a huge mathematical generosity and curiosity. The author would like to express her gratitude for the financial support given by the NSF grant 1600441, that allowed her to attend the conference on Model Theory of Valued fields in Paris. \\
The author would like to express her gratitude to the anonymous referee, for a very careful reading and for pointing out a gap in the original draft. The provided list of comments helped significantly to improve the presentation of the paper, and we extend a warm message of gratitude to them.
\newpage
\section{Preliminaries}\label{preliminaries}
\subsection{Quantifier Elimination for valued fields of equicharacteristic zero and residue field algebraically closed }
In this section we recall several results relevant for our statement. In particular we state a quantifier elimination relative to the value group in the canonical three sorted language $\mathcal{L}_{val}$ for the class of valued fields of equicharacteristic zero and residue field algebraically closed. 
\subsubsection{The three-sorted language $\mathcal{L}_{\val}$}
We consider valued fields as  three sorted structures $(K, k, \Gamma)$. The first two sorts are equipped with the language of fields  $\mathcal{L}_{fields}= \{0, 1, +, \cdot, (\cdot)^{-1}, - \}$, we refer to the first one as the \emph{main field sort} while we call the second one as the \emph{residue field sort}. The third sort is supplied with the language of ordered abelian groups $\mathcal{L}_{OAG}=\{ 0, <, +, -\}$, and we refer to it as the \emph{value group sort}. We also add constants $\infty$ to the second sort and the third sort. 
 We introduce a function symbol $v: K \rightarrow \Gamma \cup \{ \infty\}$, interpreted as the valuation and, we add a map $res: K \rightarrow k \cup \{ \infty\}$, where 
 $res: \mathcal{O} \rightarrow k$ is interpreted as a surjective homomorphism of rings, while for any element $x \in K \backslash \mathcal{O}$ we have $\res(x)=\infty$ . We denote this language as $\mathcal{L}_{\val}$. 
\subsubsection{The Extension Theorem}
Let $\mathcal{K}=(K,k,\Gamma)$ be a valued field and $\mathcal{O}$ its valuation ring. A triple $\mathcal{E}=(E,k_{E}, \Gamma_{E})$ is a substructure if $E$ is a subfield of $K$, $k_{E}$ is a subfield of $k$, $\Gamma_{E}$ is a subgroup of $\Gamma$, $v(E^{\times}) \subseteq \Gamma_{E}$ and $\res(O_{E}) \subseteq k_{E}$ where $O_{E}= O \cap E$. 

\begin{definition} Let $\mathcal{K}_{1}=(K_{1},k_{1},\Gamma_{1})$ and  $\mathcal{K}_{2}=(K_{2},k_{2},\Gamma_{2})$ be valued fields of equicharacteristic zero with residue field algebraically closed. \\
Let  $\mathcal{E}=(E,k_{E}, \Gamma_{E})$ be a substructure of $\mathcal{K}_{1}$ a triple $(f,f_{r},f_{v})= \mathcal{E} \rightarrow \mathcal{K}_{2}$ is said to be an \emph{admissible embedding} if it is a $\mathcal{L}_{val}$ isomorphism and $f_{v}= \Gamma_{E} \rightarrow \Gamma_{2}$ is a partial elementary map between $\Gamma_{1}$ and $\Gamma_{2}$, i.e for every $\mathcal{L}_{OAG}$ formula $\phi(x_{1},\dots, x_{n})$ and tuple $e_{1}, \dots, e_{n} \in \Gamma_{E}$ we have that
\begin{align*}
    \Gamma_{1} \vDash \phi(e_{1},\dots,e_{n})& \ \text{if and only if} \ \Gamma_{2} \vDash \phi(f_{v}(e_{1}),\dots, f_{v}(e_{n})). 
\end{align*}

Let $\kappa=\max \{ |k_{E}|, |\Gamma_{E}| \}$. If $\mathcal{K}_{2}$ is $\kappa^{+}$-saturated we say that $(f,f_{r},f_{v})$ is an \emph{admissible map with small domain}.
\end{definition}

\begin{theorem} \label{extension}[The Extension Theorem] The theory of henselian valued fields of equicharacteristic zero with residue field algebraically closed admits quantifier elimination relative to the value group in the language $\mathcal{L}_{val}$. That is given $\mathcal{K}_{1}=(K_{1},k_{1},\Gamma_{1})$ and $\mathcal{K}_{2}=(K_{2},k_{2},\Gamma_{2})$ henselian valued fields of equicharacteristic zero with residue field algebraically closed, a substructure $\mathcal{E}$ of $\mathcal{K}_{1}$ and $(f,f_{r},f_{v}): \mathcal{E} \rightarrow \mathcal{K}_{2}$ and admissible map with small domain, for any $b \in K_{1}$ there is an admissible map $\hat{f}$ extending $f$ whose domain contains $b$.
\end{theorem}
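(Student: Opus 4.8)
\emph{Proof proposal.} I would prove this by the classical Abhyankar--Kaplansky extension argument (the relative quantifier elimination is essentially due to Basarab and to Pas, simplified here by the hypothesis that $k$ is algebraically closed, which removes any need for angular components). Given $b\in K_1$, the idea is to enlarge the substructure $\mathcal E$ in a sequence of moves, each adjoining a single element to $E$, to $k_E$, or to $\Gamma_E$, and at each move to extend $(f,f_r,f_v)$ to an admissible map on the larger substructure, using the $\kappa^+$-saturation of $\mathcal K_2$ to produce the required images; after enough moves the domain contains $b$. Since $\operatorname{char}k=0$, the henselization of a valued field is an immediate algebraic extension that embeds uniquely over the base, so I would first replace $E$ by its henselization inside $K_1$ and $f(E)$ by the unique henselization inside $K_2$, and assume from now on that $\mathcal E$ is henselian.

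\emph{Residual reductions.} Next I would arrange that $k_E$ is algebraically closed: for $\bar a\in k_1$ algebraic over $k_E$, Hensel's lemma applied to a monic lift of the minimal polynomial of $\bar a$ yields $a\in E^{h}$ with $\res(a)=\bar a$, giving a residual extension $E(a)/E$ with $\Gamma_{E(a)}=\Gamma_E$ and $k_{E(a)}=k_E(\bar a)$; one extends $f$ by sending $a$ to the analogous Hensel lift in $K_2$ and leaves $f_v$ unchanged. Iterating, assume $k_E$ is algebraically closed. Now, replacing $b$ by $b^{-1}$ if $b\notin\mathcal O$, if $\res(b)$ is transcendental over $k_E$ then $b$ is transcendental over $E$ with $\Gamma_{E(b)}=\Gamma_E$ and $k_{E(b)}=k_E(\res b)$, and we may send $b$ to any $b'\in\mathcal O_{K_2}$ with $\res(b')$ transcendental over $f_r(k_E)$ (which exists by saturation); so assume $\res(b)\in k_E$, and, after subtracting a representative in $\mathcal O_E$ and multiplying by an element of $E^\times$, reduce to the two cases $v(b)\notin\Gamma_E$ and $v(b)\in\Gamma_E$.

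\emph{Value-group reduction.} Suppose $v(b)\notin\Gamma_E$, and let $\Gamma'$ be the subgroup of $\Gamma_1$ generated by $\Gamma_E$ and $v(b)$. Using saturation and the fact that $f_v$ is partial elementary, realize the $\mathcal L_{OAG}$-type of a generating tuple of $\Gamma'$ over $f_v(\Gamma_E)$ inside $\Gamma_2$; this extends $f_v$ to a partial elementary $f'_v$ on $\Gamma'$ with $\gamma:=f'_v(v(b))$, and I would fix $c\in K_2$ with $v(c)=\gamma$. If $v(b)$ has infinite order modulo $\Gamma_E$ then $E(b)/E$ adds nothing to the residue field and $b':=c$ works; if $n\,v(b)=\delta\in\Gamma_E$ with $n$ minimal, put $u:=b^n/e$ for $v(e)=\delta$, note $\res(u)$ is a (possibly new, hence transcendental) residue, choose $\bar u'\in k_2$ transcendental over $f_r(k_E)$ (or the image of $\res u$ if that lies in $k_E$), lift it to $u'\in\mathcal O_{K_2}$, and take $b'\in K_2$ with $(b')^{n}=f(e)u'$ and $v(b')=\gamma$: such a $b'$ exists because $f(e)u'/c^{n}$ has value $0$, hence an $n$-th root in the henselian field $K_2$ with algebraically closed residue field, which one multiplies by $c$. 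A routine check shows the resulting map $E(b)\to f(E)(b')$ is admissible, and after re-running the residual reduction we may assume $v(b)\in\Gamma_E$ with all relevant residues in $k_E$.

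\emph{Immediate extensions, the generic case, and the obstacle.} It remains to treat $b\notin E$ with $v(b)\in\Gamma_E$ and all relevant residues already in $k_E$. If some pseudo-Cauchy sequence $(a_\rho)$ from $E$ with no pseudo-limit in $E$ has $b$ as a pseudo-limit, I would invoke Kaplansky's theory: in equicharacteristic zero there is no defect and $\mathcal E$ satisfies Kaplansky's hypothesis, so if $(a_\rho)$ has transcendental type then $b$ is transcendental over $E$ and any pseudo-limit $b'$ of $(f(a_\rho))$ in $K_2$ (obtained by saturation) gives an admissible extension, while if $(a_\rho)$ has algebraic type with $q$ a minimal-degree polynomial moved by it, then $E(b)/E$ is immediate of degree $\deg q$, $f(q)$ has a root $b'$ in $K_2$ that is a pseudo-limit of $(f(a_\rho))$ (by henselianity, saturation, and Kaplansky's uniqueness lemma), and $E(b)\to f(E)(b')$ is admissible. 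Iterating, assume no such sequence exists; then $b$ must be transcendental over $E$ (an algebraic $b$ would, by the defectlessness decomposition into unramified, totally ramified, and immediate parts, already lie in $E$), the data collected so far determines $v(p(b))$ for every $p\in E[X]$, and a suitable $b'\in K_2$ realizing these conditions is found by $\kappa^+$-saturation, giving an admissible map. I expect the main obstacle to be precisely the immediate-extension step: handling pseudo-Cauchy sequences of algebraic type and the uniqueness of the corresponding immediate extension, where the equicharacteristic-zero hypothesis does its essential work through the absence of defect and Kaplansky's theorem, all while keeping $f_v$ partial elementary and simultaneously realizing the field-side data in $\mathcal K_2$.
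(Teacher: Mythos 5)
Your proposal is correct and follows essentially the same route as the paper: the paper defers to the standard back-and-forth for field-quantifier elimination (citing van den Dries, Theorem 5.21) and spells out only the ramified step, where it replaces the angular component by using henselianity together with the algebraically closed residue field to extract a $p$th root — exactly what you do in your value-group reduction. Your residual and immediate-extension steps (Hensel lifts to close $k_E$, Kaplansky's theory of pseudo-Cauchy sequences with no defect in equicharacteristic zero) are the standard ingredients the paper is tacitly invoking, so the two arguments coincide in substance.
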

\begin{proof}
This is straightforward using the standard techniques to obtain elimination of field quantifiers already present in the area. We refer the reader for example to \cite[Theorem 5.21]{dries}. The unique step that requires the presence of an angular component map is 
when for a subfield $E\subseteq K_{1}$ we want to add an element $\gamma$ to $v(E^{\times})$ and there is some prime number  $p$ such that $p\gamma \in v(E^{\times})$. For this,  take $a \in E$ and $c \in K_{1}$ be such that $v(a)=p\gamma$ and $v(c)=\gamma$. We first aim to find  $b_{1} \in K_{1}$  that is a root of the polynomial $Q(x)\in O_{E}(x)$, where $Q(x)=x^{p}-\frac{a}{c^{p}}$. Let $d=\res(\frac{a}{c^{p}})$, because $k_{1}$ is algebraically closed there is some $z \in k_{1}$ such that $z^{p}-d=0$. Let $\alpha \in K_{1}$ be such that $\res(\alpha)=z$, then $v(Q(\alpha))>0$ while $v(Q'(\alpha))=0$, because $p \neq char(k_{1})$. Indeed, $Q'(x)= px^{p-1}$ and $z \neq 0$ because $d \neq 0$. \\
 By henselianity we can find $b_{1} \in K_{1}$ that is a root of $Q(x)$. Then $x_{1}= (b_{1} \cdot c)$ is a $p$th-root of $a$. \\

\end{proof}
The following is an immediate consequence of relative quantifier elimination. 
\begin{corollary}\label{stableembeddedness} The residue field and the value group are both purely stably embedded and orthogonal to each other. 
\end{corollary}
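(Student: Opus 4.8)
The plan is to read the corollary directly off the relative quantifier elimination of Theorem~\ref{extension}, which is where the hypothesis that the residue field is algebraically closed does its work (it is what makes relative quantifier elimination hold in $\mathcal{L}_{val}$ itself, with no angular component needed). First I would package Theorem~\ref{extension} in the usual normal form: modulo the theory, if the free variables of an $\mathcal{L}_{val}$-formula are split by sort as $\bar x_K$ (main sort), $\bar x_k$ (residue sort), $\bar x_\Gamma$ (value sort), then that formula is equivalent to a finite disjunction of conjunctions
\[
\theta\bigl(\bar x_k,\ \res(g_1(\bar x_K)),\dots,\res(g_r(\bar x_K))\bigr)\ \wedge\ \eta\bigl(\bar x_\Gamma,\ v(h_1(\bar x_K)),\dots,v(h_s(\bar x_K))\bigr),
\]
where the $g_i,h_j$ are polynomials, $\theta$ is an $\mathcal{L}_{fields}$-formula on the residue sort and $\eta$ an $\mathcal{L}_{OAG}$-formula on the value sort with $\infty$ adjoined. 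The key sorting observations are that a residue- or value-sort variable cannot occur inside a field-sort term, so it occurs only in the $\theta$-part or only in the $\eta$-part respectively, and that a field-sort equality $p(\bar x_K)=0$ is absorbed into $\eta$ as $v(p(\bar x_K))=\infty$; thus $\bar x_K$ enters $\theta$ only through $\res$ of polynomials and enters $\eta$ only through $v$ of polynomials.

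For the stable embeddedness of $k$, let $X\subseteq k^n$ be defined by $\phi(\bar y,\bar b)$ with parameters $\bar b=(\bar b_K,\bar b_k,\bar b_\Gamma)$, and apply the normal form with $\bar x_k=\bar y$ and the remaining variables instantiated by $\bar b$. In each disjunct the free variables $\bar y$ occur only inside the factor $\theta$, whose other arguments are $\bar b_k$ together with the residue-field elements $\res(g_i(\bar b_K))$, while the factor $\eta$ has all of its variables instantiated and so is a fixed truth value. Hence $X$ is defined by an $\mathcal{L}_{fields}$-formula with parameters from $k$; this shows at once that $X$ is $k$-definable and that the structure induced on $k$ by $M$ coincides with its $\mathcal{L}_{fields}$-structure, i.e. $k$ is purely stably embedded. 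The argument for $\Gamma$ is symmetric: the free value-group variables occur only inside $\eta$, whose remaining arguments $v(h_j(\bar b_K))$ lie in $\Gamma$, so every definable subset of $\Gamma^m$ is $\mathcal{L}_{OAG}$-definable over $\Gamma$, and $\Gamma$ is purely stably embedded.

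For orthogonality, let $Z\subseteq k^n\times\Gamma^m$ be defined by $\phi(\bar y,\bar z,\bar b)$, put $\phi$ in the normal form above, and observe that each disjunct factors as $\{\bar y:\theta(\bar y,\dots)\}\times\{\bar z:\eta(\bar z,\dots)\}$, a definable rectangle whose two sides are definable in $k$ and in $\Gamma$ respectively. Thus $Z$ is a finite union of such rectangles, which is exactly the assertion that $k$ and $\Gamma$ are orthogonal.

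The argument is genuinely just bookkeeping with the sorts; the only point needing care is to be sure the normal form extracted from Theorem~\ref{extension} really does confine the residue-sort variables and the value-group-sort variables to separate conjuncts, since this separation is precisely what delivers both the word ``purely'' and the orthogonality with no further work. For that reason I do not expect a real obstacle here, only the need to state the relative quantifier elimination in the convenient form above.
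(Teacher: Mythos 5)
Your proof is correct and supplies exactly the sort-bookkeeping the paper has in mind when it calls the corollary an ``immediate consequence of relative quantifier elimination'' (the paper itself gives no proof). The key observations you make --- that the normal form extracted from Theorem~\ref{extension} confines residue-sort and value-group-sort variables to disjoint conjuncts $\theta$ and $\eta$, that field-sort equalities absorb into $\eta$ via $v(p(\bar x_K))=\infty$, and that $\mathcal{L}_{val}$ has no symbol linking $k$ to $\Gamma$ --- are precisely what deliver stable embeddedness, the adjective ``purely,'' and orthogonality in one stroke.
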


\subsection{Some results on the model theory of ordered abelian groups}\label{background}
In this Subsection we summarize many interesting results about the model theory of ordered abelian groups. We start by recalling the following folklore fact.
\begin{fact} Let $(\Gamma,\leq ,+, 0)$ be a non-trivial ordered abelian group. Then the topology induced by the order in $\Gamma$ is discrete if and only if $\Gamma$ has a minimum positive element. In this case we say that $\Gamma$ is \emph{discrete}, otherwise we say that it is \emph{dense}. 
\end{fact}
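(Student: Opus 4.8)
The plan is to argue both implications directly from the definition of the order topology, whose basic open sets are the bounded open intervals $(a,b)=\{x\in\Gamma: a<x<b\}$ together with the rays $(a,\infty)$ and $(-\infty,b)$. First I would record two easy preliminaries: (i) a topological space is discrete iff every singleton is open, and (ii) since $z\mapsto z+g$ is an order isomorphism of $\Gamma$ it is a homeomorphism, so $\{0\}$ being open already forces every $\{g\}$ to be open. Hence the whole statement reduces to the claim that $\{0\}$ is open if and only if $\Gamma$ has a least positive element.

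For the direction ($\Leftarrow$), I would let $\varepsilon$ denote the minimum positive element and show that $(-\varepsilon,\varepsilon)=\{0\}$: if $0\neq y\in(-\varepsilon,\varepsilon)$, then $y>0$ forces $y\geq\varepsilon$ by minimality, while $y<0$ forces $-y\geq\varepsilon$, hence $y\leq-\varepsilon$; either way this contradicts $y\in(-\varepsilon,\varepsilon)$. So $\{0\}$ is a basic open set and the topology is discrete.

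For the direction ($\Rightarrow$), assuming $\{0\}$ is open, I would pick a basic open set $U$ with $0\in U\subseteq\{0\}$, so $U=\{0\}$. Non-triviality gives some $g\neq 0$; then $g$ and $-g$ have opposite signs, so $0$ is neither the maximum nor the minimum of $\Gamma$, and moreover any ray in a non-trivial ordered abelian group is infinite (from $a<a+g<a+2g<\cdots$ for a fixed $g>0$). Hence $U$ cannot be a ray or all of $\Gamma$, so $U=(a,b)$ with $a<0<b$. Then $b$ is positive and admits no positive element strictly below it, since such an element would lie in $U\setminus\{0\}$; therefore $b$ is the least positive element of $\Gamma$.

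The remaining clause of the statement is simply the \emph{definition} of ``dense'' as the negation of ``discrete'', so nothing further is required. I do not expect a genuine obstacle here, as the argument is elementary; the only point deserving a moment's care is the verification in the ($\Rightarrow$) direction that the neighbourhood $U$ must be a bounded interval rather than an unbounded basic open set, which is exactly what the infiniteness-of-rays observation above secures.
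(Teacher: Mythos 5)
Your proof is correct. The paper states this as a folklore fact and offers no proof of its own, so there is no argument to compare against; your two-direction argument via the order topology (translations are homeomorphisms; $(-\varepsilon,\varepsilon)=\{0\}$ for the easy direction; identifying the basic open neighbourhood $U=\{0\}$ as a bounded interval $(a,b)$ and reading off $b$ as the least positive element for the converse) is a complete and standard justification. One minor stylistic remark: the case split on whether $U$ is a ray can be avoided by observing that $\{0\}$ open already gives some basic open $U$ with $0\in U\subseteq\{0\}$, and that by non-triviality there exists $g>0$, whence $(-g,g)\cap U$ is a bounded open interval around $0$ contained in $\{0\}$; but your route via infiniteness of rays is equally valid.
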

The following notions were isolated in the sixties by Robinson and Zakon in \cite{RZ} to understand some model complete extensions of the theory of ordered abelian groups.
\begin{definition} Let $\Gamma$ be an ordered abelian group and $n \in \mathbb{N}_{\geq 2}$.
\begin{enumerate}
\item  Let $\gamma \in \Gamma$. We say that $\gamma$ is \emph{$n$-divisible} if there is some $\beta \in \Gamma$ such that $\gamma=n\beta$. 
\item We say that $\Gamma$ is \emph{$n$-divisible} if every element $\gamma \in \Gamma$ is $n$-divisible. 
\item $\Gamma$ is said to be \emph{$n$-regular} if any interval with at least $n$ points contains an $n$-divisible element. 
\end{enumerate}
\end{definition}
\begin{definition}\label{regular} An ordered abelian group $\Gamma$ is said to be \emph{regular} if it is $n$-regular for all $n \in \mathbb{N}$.
\end{definition}

Robinson and Zakon in their seminal paper \cite{RZ} completely characterized the possible completions of the theory of regular groups, obtained by extending the first order theory of ordered abelian groups with axioms asserting that for each $n \in \mathbb{N}$ 
if an interval contains at least $n$-elements then it contains an $n$-divisible element. 
The following is \cite[Theorem 4.7]{RZ}. 
\begin{theorem}\label{charreg}
The possible completions of the theory of regular groups, are: 
\begin{enumerate}
    \item the theory of discrete regular groups, and
    \item  the completions of the theory of dense regular groups $T_{\chi}$ where
    \begin{align*}
      \chi&=:\primes \rightarrow \mathbb{N} \cup \{\infty\},\\
    \end{align*}
   is a function specifying the index $\chi(p)= [\Gamma : p\Gamma]$.
\end{enumerate}
\end{theorem}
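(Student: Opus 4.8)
The plan is to show that the pair ``discrete vs.\ dense'' together with the function $\chi(p)=[\Gamma:p\Gamma]$ is a complete invariant for the theory $T_{\mathrm{reg}}$ of regular ordered abelian groups, so that its completions are exactly the ones listed. One direction is soft: ``$\Gamma$ is discrete'' is the $\mathcal{L}_{OAG}$-sentence asserting that a least positive element exists, and for each prime $p$ and $k\in\mathbb{N}$ the assertions ``$[\Gamma:p\Gamma]\geq k$'' and ``$[\Gamma:p\Gamma]=k$'' are first order (existence, resp.\ non-existence, of $k$ pairwise $p$-incongruent elements); so every completion fixes this data and distinct data yield distinct completions. The content is the converse, together with uniqueness of the discrete completion.

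In the discrete case, in any discrete ordered abelian group the successor function is $x\mapsto x+1$ for $1$ the least positive element, so every block of $n$ consecutive elements contains exactly one multiple of $n$; regularity then forces $\Gamma/n\Gamma$ to be cyclic of order $n$ generated by $1$, hence $[\Gamma:n\Gamma]=n$ for all $n$, so $\Gamma$ is a model of Presburger arithmetic. By the classical quantifier elimination for Presburger arithmetic (in $\{<,+,0,1\}$ together with the congruence predicates $\equiv_n$), every discrete regular group is elementarily equivalent to $\mathbb{Z}$, so there is a single discrete completion.

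In the dense case, I would fix sufficiently saturated dense $\Gamma_1,\Gamma_2\models T_{\mathrm{reg}}$ with the same $\chi$ and run a back-and-forth between their finitely generated subgroups, each carried along with the data of the coset of $n\Gamma_i$ occupied by each of its elements, for every $n$; equivalently, one works in the language with congruence predicates, in which dense regular groups of a fixed $\chi$ admit quantifier elimination, and establishing this quantifier elimination is the engine of the proof. The type of a new element $a$ over such an enriched substructure $A_1$ is pinned down by the cut it fills in $A_1$ together with, for each $n$, its coset in $\Gamma_1/(A_1+n\Gamma_1)$. Transporting this through the partial isomorphism $f\colon A_1\to A_2$ produces a partial type over $A_2$ whose consistency is the crux: Chinese remainder reduces finitely many congruence conditions to one modulus $N$, and regularity is precisely the statement that an interval of $\Gamma_2$ with enough points meets every coset of $N\Gamma_2$, so the cut and the congruence can be solved simultaneously; the equality of the two $\chi$'s makes $[\Gamma_1:A_1+N\Gamma_1]=[\Gamma_2:A_2+N\Gamma_2]$, so a fresh coset is available on one side exactly when on the other, and saturation realizes the type. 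Symmetry then gives $\Gamma_1\equiv\Gamma_2$.

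Finally, each listed completion must be shown consistent by exhibiting a model: $\mathbb{Z}$ for the discrete one, and for a prescribed $\chi$ --- necessarily with every $\chi(p)$ a power of $p$, since $\Gamma/p\Gamma$ is an $\mathbb{F}_p$-vector space --- a suitable dense subgroup of $\mathbb{R}$, which is archimedean and hence regular, engineered to have exactly the indices $[\Gamma:p\Gamma]=\chi(p)$ (localize $\mathbb{Z}$ away from the primes where $\chi(p)=1$ and adjoin suitably many algebraically independent reals, in appropriately localized directions, to fix $\dim_{\mathbb{F}_p}\Gamma/p\Gamma$ for the remaining primes). The main obstacle is the dense back-and-forth / quantifier-elimination step: one must isolate that regularity is exactly the hypothesis allowing a prescribed cut and prescribed congruences to be realized simultaneously, and keep the congruence bookkeeping over all moduli coherent under $f$ so that the transported type stays consistent.
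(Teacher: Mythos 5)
The paper does not actually prove this statement: it is quoted verbatim as Robinson and Zakon's result and cited as \cite[Theorem 4.7]{RZ}, with no argument given. So there is no paper proof to compare against, and your proposal must be judged on its own.

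As a reconstruction of the classical Robinson--Zakon argument, your outline is sound. The ``soft'' direction (the invariants are first-order), the discrete case reducing to completeness of Presburger arithmetic, and the dense case via an enriched back-and-forth in a language with congruence predicates are all the right moves, and your consistency discussion correctly observes that $\chi(p)$ must be a power of $p$ (or $\infty$) and that archimedean witnesses inside $\mathbb{R}$ suffice. Two small remarks. First, in the discrete case the clause ``so every block of $n$ consecutive elements contains exactly one multiple of $n$'' does not follow from discreteness alone; existence of the multiple is exactly where $n$-regularity enters, and only uniqueness comes from minimality of $1$ --- you invoke regularity a clause too late, though the content is there. Second, and more substantively, the entire weight of the dense case rests on the back-and-forth/quantifier-elimination engine, which you describe but do not carry out: one must actually define the right notion of ``good substructure'' (finitely generated plus its trace of all $n\Gamma$-cosets), check that $f$ induces compatible isomorphisms $\Gamma_1/(A_1+N\Gamma_1)\to\Gamma_2/(A_2+N\Gamma_2)$ for every $N$ (this uses the index computation you mention, but also that these quotient groups are abstractly isomorphic, not just equinumerous, which for a single prime power follows from being elementary abelian and in general from CRT), and verify that a cut plus a coherent system of congruences is always realizable. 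You explicitly flag this as the remaining obstacle, which is an honest and accurate assessment; as a blind proposal it captures the right strategy for a result that in the paper is a black-boxed citation.
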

Robinson and Zakon proved as well that each of these completions is the theory of some archimedean group. In particular, any discrete regular group is elementarily equivalent to $(\mathbb{Z},\leq, +, 0)$.\\

The following definitions were introduced by Schmitt in \cite{supersch}.
\begin{definition}  We fix an ordered abelian group $\Gamma$ and $n\in \mathbb{N}_{\geq 2}$. Let $\gamma \in \Gamma$. We define:
\begin{itemize}
    \item $A(\gamma)=$ the largest convex subgroup of $\Gamma$ not containing $\gamma$.
    \item $B(\gamma)=$ the smallest convex subgroup of $\Gamma$ containing $\gamma$.
    \item $C(\gamma)=B(\gamma)/A(\gamma)$.
    \item $A_{n}(\gamma)=$ the smallest convex subgroup $C$ of $\Gamma$ such that $B(g)/C$ is $n$-regular. 
    \item $B_{n}(g)=$ the largest convex subgroup $C$ of $\Gamma$ such that $C/A_{n}(\gamma)$ is $n$-regular.
\end{itemize}
\end{definition}
In \cite[Chapter 2]{supersch}, Schmitt shows that the groups $A_{n}(\gamma)$ and $B_{n}(\gamma)$ are definable in the language of ordered abelian groups $\mathcal{L}_{OAG}=\{ +,-, \leq, 0\}$ by a first order formula using only the parameter $\gamma$.\\
We recall that the set of convex subgroups of an ordered abelian group is totally ordered by inclusion.
\begin{definition}
Let $\Gamma$ be an ordered abelian group and $n \in \mathbb{N}_{\geq 2}$, we define the \emph{$n$-regular rank} to be the order type of: \\
\begin{align*}
\big( \{ A_{n}(\gamma) \ | \ \gamma \in \Gamma \backslash \{ 0\}\}, \subseteq \big).
    \end{align*}
\end{definition}
The $n$-regular rank of an ordered abelian group $\Gamma$ is a linear order, and when it is finite we can identify it with its cardinal. In \cite{Farre}, Farr\'e emphasizes that we can characterize the $n$-regular rank without mentioning the subgroups $A_{n}(\gamma)$. The following is \cite[Remark 2.2]{Farre}. 
\begin{definition}
Let $\Gamma$ be an ordered abelian group and $n \in \mathbb{N}_{\geq 2}$, then: 
\begin{enumerate}
\item $\Gamma$ has $n$-regular rank equal to $0$ if and only if $\Gamma=\{0\}$,
\item $\Gamma$ has $n$-regular rank equal to $1$ if and only if $\Gamma$ is $n$-regular and not trivial, 
\item $\Gamma$ has $n$-regular rank equal to $m$ if there are $\Delta_{0}, \dots, \Delta_{m}$ convex subgroups of $\Gamma$, such that:
\begin{itemize}
\item $\{0\}=\Delta_{0} < \Delta_{1} < \dots < \Delta_{m}=\Gamma$,
\item for each $0 \leq i< m$, the quotient group $\Delta_{i+1}/\Delta_{i}$ is $n$-regular,
 \item the quotient group $\Delta_{i+1}/\Delta_{i}$ is not $n$-divisible for $0 < i < m$. 
\end{itemize}
In this case we define $RJ_{n}(\Gamma)=\{ \Delta_{0}, \dots, \Delta_{m-1}\}$. The elements of this set are called the \emph{$n$-regular jumps}.
\end{enumerate}
\end{definition}
\begin{definition}\label{polyreg} Let $\Gamma$ be an ordered abelian group. We say that it is \emph{poly-regular} if it is elementarily equivalent to a subgroup of the lexicographically ordered group $(\mathbb{R}^{n},+, \leq_{lex},0)$.
\end{definition}
In \cite{Belegradek} Belegradek studied poly-regular groups and proved that an ordered abelian group is poly-regular if and only if it has finitely many proper definable convex subgroups, and all the proper definable subgroups are definable over the empty set. In \cite[Theorem 2.9]{poly} Weispfenning obtained quantifier elimination for the class of poly-regular groups in the language of ordered abelian groups extended with predicates to distinguish the subgroups $\Delta+\ell \Gamma$ where $\Delta$ is a convex subgroup and $\ell \in \mathbb{N}_{\geq 2}$.
\begin{definition}
Let $\Gamma$ be an ordered abelian group. We say that it has \emph{bounded regular rank} if it has finite $n$-regular rank for each $n \in \mathbb{N}_{\geq 2}$.  For notation, we will use $\displaystyle{RJ(\Gamma)= \bigcup_{n \in \mathbb{N}_{\geq 2}} RJ_{n}(\Gamma)}$. 
\end{definition}

The class of ordered abelian groups of bounded regular rank  extends the class of poly-regular groups and regular groups. The terminology of \emph{bounded regular rank} becomes clear with the following Proposition (item $3$). 
\begin{proposition}\label{charbounded} Let $\Gamma$ be an ordered abelian group. The following are all equivalent: 
\begin{enumerate}
\item $\Gamma$ has finite $p$-regular rank for each prime number $p$.
\item $\Gamma$ has finite $n$-regular rank for each $n \geq 2$.
\item There is some cardinal $\kappa$ such that for any $H \equiv \Gamma$, $|RJ(H)|\leq \kappa$.
\item For any $H \equiv \Gamma$, any definable convex subgroup of $H$  has a definition without parameters.
\item There is some cardinal $\kappa$ such that for any $H \equiv \Gamma$, $H$ has at most $\kappa$  definable convex subgroups. 
\end{enumerate}
Moreover, in this case $RJ(\Gamma)$ is the collection of all proper definable convex subgroups of $\Gamma$ and all are definable without parameters. In particular, there are only countably many definable convex subgroups. 
\end{proposition}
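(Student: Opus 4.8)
The plan is to run the implications $(2)\Rightarrow(1)$, $(1)\Rightarrow(2)$, $(2)\Leftrightarrow(3)$, $(2)\Rightarrow(4)$ (picking up the ``moreover'' clause along the way), $(4)\Rightarrow(5)$ and $(5)\Rightarrow(3)$; since $(3)\Rightarrow(2)$ is obtained en route, this closes the loop on all five conditions. Two preliminary remarks will do most of the bookkeeping. First, for each fixed $n$ the subgroup $A_{n}(\gamma)$ is $\mathcal{L}_{OAG}$-definable from the parameter $\gamma$ (Schmitt), so for every $m\in\mathbb{N}$ the sentence ``there exist $\gamma_{1},\dots,\gamma_{m}\neq 0$ with $A_{n}(\gamma_{1})\subsetneq\cdots\subsetneq A_{n}(\gamma_{m})$'' expresses ``the $n$-regular rank is $\geq m$''; hence the $n$-regular rank --- which equals $|RJ_{n}(\Gamma)|=|\{A_{n}(\gamma):\gamma\neq 0\}|$ when finite --- is an invariant of $\operatorname{Th}(\Gamma)$, and every element of $RJ_{n}(\Gamma)$ is itself a definable convex subgroup of $\Gamma$. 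Second, the family $\{A_{n}(\gamma):\gamma\neq 0\}=RJ_{n}(\Gamma)$ is totally ordered by inclusion, so as soon as it is finite each of its members is $\emptyset$-definable: the $i$-th smallest one is the intersection of those $A_{n}(\delta)$ that have exactly $i-1$ strict predecessors in the family, and this is a $\emptyset$-definable set.

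For $(1)\Leftrightarrow(2)$, only the forward direction needs input, and there I would invoke Schmitt's reduction of $n$-regularity to the prime divisors of $n$: from it one gets $RJ_{n}(\Gamma)\subseteq\bigcup_{p\mid n}RJ_{p}(\Gamma)$, so finiteness of every $p$-regular rank forces finiteness of every $n$-regular rank. For $(2)\Rightarrow(3)$ I would use the first remark: each $RJ_{n}(H)$ with $H\equiv\Gamma$ is finite, hence $RJ(H)=\bigcup_{n\geq 2}RJ_{n}(H)$ has size at most $\aleph_{0}$, so $\kappa=\aleph_{0}$ works. For $(3)\Rightarrow(2)$ I would argue contrapositively: if some $n$-regular rank of $\Gamma$ is infinite, then ``$n$-regular rank $\geq m$'' holds in $\Gamma$ for all $m$, so given any $\kappa$ a $\kappa^{+}$-saturated $H\equiv\Gamma$ realizes a strictly increasing $\kappa^{+}$-chain of subgroups $A_{n}(\gamma_{\alpha})$, whence $|RJ(H)|\geq|RJ_{n}(H)|>\kappa$ and no $\kappa$ witnesses $(3)$.

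The substance of $(2)\Rightarrow(4)$ and of the ``moreover'' clause is Schmitt's structural description of definable convex subgroups: every definable convex subgroup of $H\equiv\Gamma$ is $\{0\}$, $H$, or of the form $A_{n}(\gamma)$ or $B_{n}(\gamma)$ for some $n\geq 2$ and $\gamma\in H$. Assuming $(2)$, for each $n$ the family of $A_{n}(\gamma)$'s equals the finite set $RJ_{n}(H)$ and so consists of $\emptyset$-definable subgroups by the second remark, and the same holds for the (finitely many) $B_{n}(\gamma)$'s, each being the $\subseteq$-successor of the corresponding $A_{n}(\gamma)$ in $RJ_{n}(H)\cup\{H\}$; consequently every definable convex subgroup of $H$ is $\emptyset$-definable, which is $(4)$, and there are at most $\aleph_{0}$ of them. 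For the ``moreover'': each member of $RJ_{n}(\Gamma)$ is proper since $\Gamma\notin RJ_{n}(\Gamma)$, and conversely the structure theorem puts every proper definable convex subgroup of $\Gamma$ into some $RJ_{n}(\Gamma)$, so $RJ(\Gamma)$ is exactly the set of proper definable convex subgroups and it is countable. Finally, $(4)\Rightarrow(5)$ is immediate --- under $(4)$ the definable convex subgroups of any $H\equiv\Gamma$ are among its $\emptyset$-definable subsets, of which there are at most $\aleph_{0}$ because $\mathcal{L}_{OAG}$ is countable --- and $(5)\Rightarrow(3)$ follows from the first remark, since $RJ(H)$ is a set of definable convex subgroups of $H$, so $|RJ(H)|\leq\kappa$ whenever $(5)$ holds with bound $\kappa$.

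The genuinely non-formal ingredients are the two results of Schmitt I would quote --- that the $A_{n}(\gamma)$ and $B_{n}(\gamma)$ are definable and that $n$-regularity is governed by the prime divisors of $n$ --- together with the structural classification of definable convex subgroups used for $(2)\Rightarrow(4)$. I expect the classification to be the main obstacle: pinning down that, modulo the trivial subgroups, a convex subgroup of an ordered abelian group is definable exactly when it arises as one of the $A_{n}(\gamma)$ or $B_{n}(\gamma)$, and in particular that every proper definable convex subgroup of $\Gamma$ occurs among the $n$-regular jumps. Everything else is soft: first-order expressibility of the $n$-regular rank, a compactness/saturation argument for the two converse directions, and the elementary observation that the members of a finite, inclusion-ordered, automorphism-invariant family of convex subgroups are individually $\emptyset$-definable.
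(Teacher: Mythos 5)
The paper offers no independent proof of this proposition; it simply cites \cite[Proposition 2.3]{Farre}, so there is no argument in the paper to compare yours against. Assessed on its own merits, your reconstruction is correct and well-organized. Your first remark (first-order expressibility of ``$n$-regular rank $\geq m$'' via the uniform $\gamma$-definability of $A_{n}(\gamma)$) does double duty: it makes the $n$-regular rank a theory-invariant, so $(2)$ transfers to every $H\equiv\Gamma$, and it drives the compactness/saturation argument for $(3)\Rightarrow(2)$. Your second remark (a finite, inclusion-linear, uniformly $\gamma$-definable family of convex subgroups consists of $\emptyset$-definable subgroups) is exactly what converts parameter-definability into $\emptyset$-definability in $(2)\Rightarrow(4)$. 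The identity $A_{n}(\gamma)=\max_{p\mid n}A_{p}(\gamma)$, which yields $RJ_{n}(\Gamma)\subseteq\bigcup_{p\mid n}RJ_{p}(\Gamma)$, is the clean way to phrase the ``Schmitt reduction'' for $(1)\Rightarrow(2)$, and $(4)\Rightarrow(5)\Rightarrow(3)$ is soft counting as you say.

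You also correctly locate the genuine content: everything non-formal sits in the classification lemma that, modulo $\{0\}$ and the whole group, every definable convex subgroup of an ordered abelian group is some $A_{n}(\gamma)$ or $B_{n}(\gamma)$. That single fact, due to Schmitt and packaged by Farré, powers both $(2)\Rightarrow(4)$ and the ``moreover'' clause, and the rest of your argument only needs it as a black box. One detail you elide but would want to spell out in a complete write-up is why $B_{n}(\gamma)$ is the $\subseteq$-successor of $A_{n}(\gamma)=\Delta_{i}$ in $RJ_{n}(H)\cup\{H\}$: this uses that convex subgroups of $n$-regular groups are $n$-regular and that the quotient of an $n$-regular group by a nontrivial proper convex subgroup is $n$-divisible, so that $\Delta_{i+2}/\Delta_{i}$ being $n$-regular would force $\Delta_{i+2}/\Delta_{i+1}$ to be $n$-divisible, contradicting the definition of the jump. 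That is a small gap in exposition, not in the argument, and overall this is a faithful reconstruction of what the cited result must contain.
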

\begin{proof}
This is \cite[Proposition 2.3]{Farre}. 
\end{proof}
The first results about the model completions of ordered abelian groups appear in \cite{RZ} (1960), where the notion of $n$-regularity was isolated.  
\begin{definition}
\begin{enumerate}
\item Let $\Gamma$ be an ordered abelian group and $\gamma \in \Gamma$, we say that $\gamma$ is $n$-divisible if there is some $\beta \in \Gamma$ such that $\gamma=n\beta$. 
\item  Let $n \in \mathbb{N}_{\geq 2}$. An ordered abelian group $\Gamma$ is said to be \emph{$n$-regular} if any interval with at least $n$-points contains an $n$-divisible element. 
\item Let $\Gamma$ be an ordered abelian group, we say that it is \emph{regular} if it is $n$-regular for all $n \in \mathbb{N}_{\geq 2}$. 
\end{enumerate}
\end{definition}

\subsubsection{Quantifier elimination and the quotient sorts}\label{OAGBR}
 In \cite{Immi-Cluckers} Cluckers and Halupczok introduced a language $\mathcal{L}_{qe}$  to obtain  quantifier elimination for ordered abelian groups relative to the \emph{auxiliary sorts} $S_{n}$, $T_{n}$ and $T_{n}^{+}$, whose precise description can be found in \cite[Definition 1.5]{Immi-Cluckers}. This language is similar in spirit to the one introduced by Schmitt in  \cite{supersch}, but has lately been preferred by the community as it is more in line with the many-sorted language of Shelah's imaginary expansion $\mathfrak{M}^{eq}$. Schmitt does not distinguish between the sorts $S_{n}$, $T_{n}$ and $T_{n}^{+}$. Instead for each $n \in \mathbb{N}$ he works with a single sort $Sp_{n}(\Gamma) $ called the \emph{$n$-spine} of $\Gamma$, whose description can be found in  \cite[Section 2]{gurevich}. In  \cite[Section 1.5]{Immi-Cluckers} it is explained how the auxiliary sorts of Cluckers and Halupczok are related to the $n$-spines $Sp_{n}(\Gamma)$ of Schmitt. In \cite[Section 2]{Farre}, it is shown that an ordered abelian group $\Gamma$ has bounded regular rank if and only if all the $n$-spines are finite, and $Sp_{n}(\Gamma)=RJ_{n}(\Gamma)$. In this case, we define the regular rank of $\Gamma$ as the cardinal $|RJ(\Gamma)|$, which is either finite or $\aleph_{0}$. Instead of saying that $\Gamma$ is an ordered abelian group with finite spines, we prefer to use the classical terminology of bounded regular rank, as it emphasizes the relevance of the $n$-regular jumps and the role of the divisibilities to describe the definable convex subgroups.\\

We define the \emph{Presburger Language}  $\mathcal{L}_{\Pres}=\{ 0,1,+,  -, <, (P_{m})_{m \in \mathbb{N}_{\geq 2}}\}$. Given an ordered abelian group $\Gamma$ we naturally see it as a $\mathcal{L}_{\Pres}$-structure. The symbols $\{0,+, - ,<\}$ take their obvious interpretation. If $\Gamma$ is discrete, the constant symbol $1$ is interpreted as the least positive element of $\Gamma$,  and by $0$ otherwise. For each $m \in \mathbb{N}_{\geq 2}$ the symbol $P_{m}$ is a unary predicate interpreted as $m\Gamma$.

\begin{definition}\label{lanbounded} [The language $\mathcal{L}_{b}$]
Let $\Gamma$ be an ordered abelian group with bounded regular rank, we view $\Gamma$ as a multi-sorted structure where: 
\begin{enumerate}
\item We add a sort for the ordered abelian group $\Gamma$, and we equip it with a copy of the language $\mathcal{L}_{\Pres}$ extended with predicates to distinguish each of the convex subgroups $\Delta \in RJ(\Gamma)$. We refer to this sort as the \emph{main sort}. 
\item We add a sort for each of the ordered abelian groups $\Gamma/\Delta$, equipped with a copy of the language $\mathcal{L}_{\Pres}^{\Delta}=\{ 0^{\Delta}, 1^{\Delta}, +^{\Delta}, -^{\Delta}, <^{\Delta}, (P_{m}^{\Delta})_{m \in \mathbb{N}_{\geq 2}}\}$. We add as well a map $\rho_{\Delta}: \Gamma \rightarrow \Gamma/\Delta$, interpreted as the natural projection map. 
\end{enumerate}
\end{definition}
\begin{remark} To keep the notation as simple and clear as possible, for each $\Delta \in RJ(\Gamma)$ and $n\in \mathbb{N}_{\geq 2}$ and $\beta \in \Gamma/\Delta$ we will write $\beta \in n(\Gamma/\Delta)$ instead of $P_{n}^{\Delta}(\beta)$.
\end{remark}
The following statement is a direct consequence of  \cite[Proposition 3.14]{distal}.
\begin{theorem} \label{QEbounded} Let $\Gamma$ be an ordered abelian group with bounded regular rank. Then $\Gamma$ admits quantifier elimination in the language $\mathcal{L}_{b}$. 
\end{theorem}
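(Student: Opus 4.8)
The plan is to deduce the theorem from the Cluckers--Halupczok quantifier elimination, via the cited refinement \cite[Proposition 3.14]{distal}, by checking that in the bounded regular rank case the auxiliary machinery of $\mathcal{L}_{qe}$ collapses into exactly the data carried by $\mathcal{L}_{b}$.

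\textbf{Step 1: collapsing the auxiliary sorts.} Recall that every ordered abelian group admits quantifier elimination in $\mathcal{L}_{qe}$ relative to the auxiliary sorts $S_{n}$, $T_{n}$ and $T_{n}^{+}$. When $\Gamma$ has bounded regular rank, each $n$-spine is finite with $Sp_{n}(\Gamma) = RJ_{n}(\Gamma)$ (by \cite[Section 2]{Farre}), so, through the dictionary of \cite[Section 1.5]{Immi-Cluckers}, each of $S_{n}$, $T_{n}$, $T_{n}^{+}$ is a \emph{finite} set, and by Proposition \ref{charbounded} its elements are definable without parameters. Hence in any $\mathcal{L}_{qe}$-formula every quantifier ranging over an auxiliary sort can be replaced by a finite Boolean combination over its named elements; what remains on the auxiliary side is the linear order of the spine, the ``leading term'' maps $\Gamma \to S_{n}$, and the congruence predicates living on $T_{n}$, $T_{n}^{+}$, all pulled back to the main sort.

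\textbf{Step 2: the dictionary with $\mathcal{L}_{b}$.} Each spine point at level $n$ is (the class of) a $\emptyset$-definable convex subgroup $\Delta \in RJ(\Gamma)$; the main-sort predicate for $\Delta$ together with the Presburger predicates $P_{m}$ records where an element of $\Gamma$ sits relative to the finitely many convex subgroups and what its divisibility type is, while the quotient sort $\Gamma/\Delta$ equipped with its copy of $\mathcal{L}_{\Pres}$ and the projection $\rho_{\Delta}$ records precisely the congruence conditions modulo $m$ that survive after quotienting by $\Delta$, i.e. the content of the predicates on $T_{n}$, $T_{n}^{+}$ at the level $\Delta$. One then checks, one atomic formula at a time and after the finite case split over the spines, that every atomic $\mathcal{L}_{qe}$-formula is equivalent to a quantifier-free $\mathcal{L}_{b}$-formula, and conversely that every $\mathcal{L}_{b}$-atom is $\mathcal{L}_{qe}$-definable; since $\mathcal{L}_{qe}$ has quantifier elimination and the two languages are thereby interdefinable over $\Gamma$, quantifier elimination transfers to $\mathcal{L}_{b}$, which is the content of \cite[Proposition 3.14]{distal}.

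\textbf{Main obstacle.} The work is entirely bookkeeping: one must pin down how the finitely many auxiliary-sort relations at each level $n$ translate into the combination of the convex-subgroup predicates on the main sort and the Presburger predicates $P_{m}^{\Delta}$ on the quotients $\Gamma/\Delta$, and verify that no quantification over $\Gamma$ itself survives --- in particular that the images $\rho_{\Delta}(\Delta')$ of the larger convex subgroups $\Delta'$ and the induced divisibility predicates on $\Gamma/\Delta$ are quantifier-free definable from the listed data. This is exactly the verification performed in the proof of \cite[Proposition 3.14]{distal}, so once the dictionary between its language and $\mathcal{L}_{b}$ is fixed the present statement follows immediately.
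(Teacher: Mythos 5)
Your proposal is consistent with the paper's treatment: the paper gives no proof at all beyond the single sentence ``The following statement is a direct consequence of \cite[Proposition 3.14]{distal},'' so the entire content of Theorem~\ref{QEbounded} is delegated to that citation, which is exactly where your argument ends up. What you have done is reconstruct, in outline, the reasoning \emph{inside} that cited proposition --- start from Cluckers--Halupczok relative QE in $\mathcal{L}_{qe}$, use bounded regular rank to make the spines $Sp_n(\Gamma)=RJ_n(\Gamma)$ finite and $\emptyset$-definable (Proposition~\ref{charbounded}), eliminate auxiliary-sort quantifiers by a finite case split, and then match the residual atomic data against the main-sort convex-subgroup predicates together with the $\mathcal{L}_{\Pres}^{\Delta}$ structure on the quotients $\Gamma/\Delta$. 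That is indeed the standard route, and nothing in your sketch contradicts it. The only caveat is that your Step 2 is explicitly left as ``bookkeeping''; the delicate points there (that the induced divisibility predicates on $\Gamma/\Delta$ and the relative position of elements with respect to the finitely many $\Delta\in RJ(\Gamma)$ are already quantifier-free expressible in $\mathcal{L}_b$, with no hidden existential over $\Gamma$) are precisely what the cited proposition verifies, so you are right to defer to it rather than claim an independent proof.
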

We will consider an extension of this language that we will denote as  $\mathcal{L}_{bq}$, where for each natural number $n \geq 2$ and $\Delta \in RJ(\Gamma)$ we add a sort for the quotient group $\Gamma/(\Delta+n\Gamma)$ and a map $\pi_{\Delta}^{n}: \Gamma \rightarrow \Gamma/(\Delta+n\Gamma)$.  We will refer to the sorts in the language $\mathcal{L}_{bq}$ as \emph{quotient sorts}. \\
The following is  \cite[Theorem 5.1]{Vicaria}. 
\begin{theorem}\label{WEIbounded} Let $\Gamma$ be an ordered abelian group with bounded regular rank. Then $\Gamma$ admits weak elimination of imaginaries in the language $\mathcal{L}_{bq}$, i.e. once one adds all the quotient sorts.
\end{theorem}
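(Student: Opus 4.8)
The plan is to establish Theorem~\ref{WEIbounded} by reducing, via the quantifier elimination of Theorem~\ref{QEbounded}, to an explicit analysis of definable sets, and then inducting on the number of definable convex subgroups. First I would record that weak elimination of imaginaries is equivalent to the statement that every imaginary $e$ is interalgebraic with a real tuple $c$ from the sorts of $\mathcal{L}_{bq}$ such that $e\in\dcl^{eq}(c)$. By Theorem~\ref{QEbounded} every definable set is a Boolean combination of formulas each of which mentions only finitely many of the subgroups $\Delta\in RJ(\Gamma)$ and finitely many projections $\rho_\Delta$, so — using that by Proposition~\ref{charbounded} all convex subgroups are $\emptyset$-definable, together with a compactness argument that needs some care — it suffices to treat the case where $RJ(\Gamma)$ is finite, say $|RJ(\Gamma)|=m$, and induct on $m$.

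For the base case $m=1$ the group $\Gamma$ is regular, hence by the Robinson--Zakon classification (Theorem~\ref{charreg}) it is either a $\mathbb{Z}$-group or an elementary extension of a dense archimedean group; in both situations $\mathcal{L}_{bq}$ reduces to the Presburger language together with a sort for each finite-index quotient $\Gamma/n\Gamma$ (an essentially structureless abelian group), and weak elimination of imaginaries follows from the cell decomposition of definable subsets of $\Gamma^{N}$ into arithmetic-progressions-times-boxes: each cell is coded by its finitely many endpoints together with the code, living in $\Gamma/n\Gamma$, of the relevant coset of $n\Gamma$. Alternatively — and more in the spirit of the rest of this paper — one can verify Hrushovski's two conditions directly here: density of definable types in one-variable definable sets (definable types in an ordered abelian group are readily produced by pushing to $+\infty$ inside the relevant convex subgroup subject to prescribed congruences), and coding of their canonical bases, which are visibly cut out by data in the quotient sorts.

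For the inductive step, choose $\Delta\in RJ(\Gamma)$ proper and nontrivial and put $\bar\Gamma:=\Gamma/\Delta$; both $\Delta$ and $\bar\Gamma$ have fewer convex jumps, and — after some bookkeeping — their own quotient sorts are interpretable among those of $\Gamma$ (e.g.\ $\bar\Gamma/((\Delta'/\Delta)+n\bar\Gamma)=\Gamma/(\Delta'+n\Gamma)$ for $\Delta\subseteq\Delta'\in RJ(\Gamma)$), so by the induction hypothesis both weakly eliminate imaginaries. Given a definable $X\subseteq\Gamma^{N}$ with code $e$, I would code $X$ by decomposing it along the short exact sequence $0\to\Delta\to\Gamma\xrightarrow{\rho_\Delta}\bar\Gamma\to 0$ into three pieces: (i) the pushforward $\rho_\Delta(X)\subseteq\bar\Gamma^{N}$ with its induced structure, coded in $\bar\Gamma$'s quotient sorts; (ii) for each of the definably many $\Delta^{N}$-cosets meeting $X$, the fibre, which after translation is a definable subset of $\Delta^{N}$, coded in $\Delta$'s quotient sorts; and (iii) the gluing data specifying how the fibres sit over the base and how the congruences on $\Gamma$ interact with $\Delta$ and $\bar\Gamma$. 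The role of the sorts $\Gamma/(\Delta+n\Gamma)$, i.e.\ the images of the maps $\pi_\Delta^{n}$, is precisely to carry the data in (iii): they let one pin down a $\Delta^{N}$-coset ``modulo $n$'', relative to the possibly non-split extension, without choosing a section $\bar\Gamma\to\Gamma$.

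The hard part is exactly step (iii): the extension $0\to\Delta\to\Gamma\to\bar\Gamma\to 0$ need not split, so the fibrewise coding must be organized to be uniform and definable over the quotient sorts rather than over a (nonexistent) choice of representatives; one must also reconcile the congruence lattices of $\Delta$, $\Gamma$ and $\bar\Gamma$ so that the finite quotients of $\Delta$ appearing on the fibre side are genuinely recoverable from the sorts $\Gamma/(\Delta+n\Gamma)$ present in $\mathcal{L}_{bq}$, and deal with the non-generic fibres and the boundary cells of the decomposition. Making all of this yield a code for $X$ that is interalgebraic with $e$ — and checking that it meshes with the preliminary reduction to finitely many convex subgroups — is where the bulk of the argument goes.
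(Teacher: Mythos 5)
The first thing to observe is that the paper does not actually prove Theorem \ref{WEIbounded}: the line preceding the statement reads ``The following is \cite[Theorem 5.1]{Vicaria}'', and \cite{Vicaria} is the author's companion paper, listed in the bibliography as ``in preparation.'' There is therefore no proof in the present document for yours to be compared against; all I can do is assess your sketch on its own terms.

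Your plan has two concrete difficulties. First, the reduction to $|RJ(\Gamma)|$ finite is not available in the form you describe. By Proposition \ref{charbounded}, $RJ(\Gamma)$ is an invariant of the complete theory of $\Gamma$, so you cannot simply ``treat the case $|RJ(\Gamma)|=m$'' and induct on $m$; and if you instead pass to a reduct in which only the finitely many $\Delta$'s appearing in a given formula are distinguished, you lose the quantifier elimination of Theorem \ref{QEbounded}, which requires every $\Delta \in RJ(\Gamma)$ to be named in $\mathcal{L}_b$. Some localization of this kind may well be achievable, but it requires an actual argument that a code for an imaginary only ever needs data from the $\Delta$'s occurring in the formula defining the underlying equivalence relation, and this is not a formal consequence of compactness. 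Second, you correctly identify step (iii)---the gluing data across the generally nonsplit extension $0 \to \Delta \to \Gamma \to \bar\Gamma \to 0$---as the crux, but you leave it entirely open; for a result of this kind, that is where almost all of the work lives. By contrast, the alternative you mention in passing---verifying Hrushovski's two conditions directly, using generic types in definable end-segments---is visibly the route the surrounding material in this paper is engineered for: Subsection \ref{OAGBR}, Proposition \ref{charend}, Corollary \ref{complete}, Fact \ref{stab1} and the definition of $\Sigma_S^{gen}$ set up exactly the classification of definable end-segments by divisibility end-segments $S_n^{\Delta}(\beta)$ and the stabilizer analysis that such an argument over $\Gamma$ would need, in close analogy with the valued-field case carried out in Sections \ref{density}--\ref{dpminimal}. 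If you want to produce a complete proof, that is the direction to develop rather than the short-exact-sequence fibering.
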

\paragraph{Definable end-segments in ordered abelian groups with bounded regular rank}
\begin{definition}
\begin{enumerate}
\item A non-empty set $S \subset \Gamma$ is said to be an end-segment if for any $x \in S$ and $y \in \Gamma$, $x < y$ we have that $y \in S$. 
\item Let $n \in \mathbb{N}$, $\Delta \in RJ(\Gamma)$, $\beta \in \Gamma \cup \{ -\infty\}$ and $\square \in \{ \geq, >\} $. The set: 
\begin{align*}
S_{n}^{\Delta}(\beta):=\{ \eta \in \Gamma \ | \ n\eta + \Delta \square  \beta +\Delta\} 
\end{align*}
is an \emph{end-segment} of $\Gamma$. We call any of the end-segments of this form as \emph{divisibility end-segments}.
\item Let $S \subseteq \Gamma$ be a definable end-segment and $\Delta \in RJ(\Gamma)$. 
We consider the projection map $\rho_{\Delta}: \Gamma \rightarrow \Gamma/\Delta$, 
and we write $S_{\Delta}$ to denote $\rho_{\Delta}(S)$. This is a definable end-segment of $\Gamma/\Delta$.
\item Let $\Delta \in RJ(\Gamma)$ and $S \subseteq \Gamma$ an end-segment. We say that $S$ is \emph{$\Delta$-decomposable} if it is a union of $\Delta$-cosets. 
\item We denote as $\Delta_{S}$ the \emph{stabilizer of $S$}, i.e. $\displaystyle{\Delta_{S}:= \{ \eta \in \Gamma \ | \ \eta+S=S \}.}$ 
\end{enumerate}
\end{definition}

\begin{definition} Let $\Gamma$ be an ordered abelian group.  Let  $S, S' \subseteq \Gamma$ be definable end-segments. We say that $S$ is a \emph{translate} of $S'$ if  there some $\beta \in \Gamma$  such that $S=\beta+S'$. Given a family $\mathcal{S}$ of definable end-segments we say that $\mathcal{S}$ is \emph{complete} if every definable end-segment is a translate of some $S' \in \mathcal{S}$. 
\end{definition}
\begin{fact}\label{Mari} Let $\Gamma$ be an ordered abelian group with bounded regular rank. Let $\beta, \gamma \in \Gamma$, $\Delta \in RJ(\Gamma)$ and $n \in \mathbb{N}_{\geq 2}$. If $\beta - \gamma \in \Delta+ n\Gamma$ then $S_{n}^{\Delta}(\gamma)$ is a translate of $S_{n}^{\Delta}(\beta)$. 
\end{fact}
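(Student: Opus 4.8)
The plan is to make the translation explicit. Use the hypothesis to write $\beta-\gamma = \delta + n\alpha$ with $\delta \in \Delta$ and $\alpha \in \Gamma$. I claim that $S_{n}^{\Delta}(\beta) = \alpha + S_{n}^{\Delta}(\gamma)$, and equivalently $S_{n}^{\Delta}(\gamma) = (-\alpha) + S_{n}^{\Delta}(\beta)$, which is exactly the assertion that $S_{n}^{\Delta}(\gamma)$ is a translate of $S_{n}^{\Delta}(\beta)$.

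To verify the claim I would pass to the quotient ordered abelian group $\Gamma/\Delta$, which is well defined and ordered since $\Delta$ is a convex subgroup (being an $n$-regular jump). Since $\delta \in \Delta$, in $\Gamma/\Delta$ we have the identity $\beta + \Delta = \gamma + n\alpha + \Delta$. Hence, for the fixed choice of $\square \in \{\geq, >\}$ used to define these end-segments and any $\eta \in \Gamma$,
\[
\eta + \alpha \in S_{n}^{\Delta}(\beta) \iff n(\eta+\alpha) + \Delta \ \square\ \beta + \Delta \iff n\eta + n\alpha + \Delta \ \square\ \gamma + n\alpha + \Delta .
\]
Because translation by the fixed element $n\alpha + \Delta$ is an order automorphism of $\Gamma/\Delta$, the last condition is equivalent to $n\eta + \Delta \ \square\ \gamma + \Delta$, i.e. to $\eta \in S_{n}^{\Delta}(\gamma)$. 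This gives $\alpha + S_{n}^{\Delta}(\gamma) = S_{n}^{\Delta}(\beta)$, as desired.

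I do not expect any real obstacle here: the statement reduces to the elementary fact that addition is order preserving in an ordered abelian group, once one works in $\Gamma/\Delta$ so as to absorb the $\Delta$-part of $\beta-\gamma$ and turn the $n\Gamma$-part into an honest translation. The only point worth noting is that the displayed equivalences are valid verbatim for both $\square = \geq$ and $\square = >$, so the conclusion holds uniformly for the corresponding families of divisibility end-segments.
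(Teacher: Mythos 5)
Your proof is correct, and the argument is the natural one. The paper states Fact~\ref{Mari} without a proof (it is used only to justify Corollary~\ref{complete}), so there is no paper argument to compare against; your explicit translation via $\beta - \gamma = \delta + n\alpha$ and passage to the quotient $\Gamma/\Delta$ (which is again an ordered abelian group because $\Delta$ is convex) supplies exactly what the paper leaves as folklore, and you correctly note that the equivalence holds uniformly for both choices $\square \in \{\geq, >\}$.
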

The following is \cite[Proposition 3.3]{Vicaria}. 
\begin{proposition}\label{charend} Let $\Gamma$ be an ordered abelian group of bounded regular rank. Any definable end-segment is a divisibility end-segment. 
\end{proposition}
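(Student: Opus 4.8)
The plan is to combine the quantifier elimination of Theorem~\ref{QEbounded} with a reduction to the stabilizer of $S$.

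\emph{Reduction to trivial stabilizer.} Put $\Delta_S = \{\eta \in \Gamma : \eta + S = S\}$. This is a subgroup, and it is convex: if $0 \le \eta \le \eta'$ with $\eta' \in \Delta_S$, then for every $s \in S$ we have $s - \eta \ge s - \eta' \in S$, so $s - \eta \in S$ (as $S$ is an end-segment), hence $\eta \in \Delta_S$. Being a definable convex subgroup of an ordered abelian group of bounded regular rank, $\Delta_S$ lies in $RJ(\Gamma)$ or equals $\Gamma$ (Proposition~\ref{charbounded}). If $\Delta_S = \Gamma$ then $S$ is translation invariant, forcing $S = \Gamma$ (a proper nonempty end-segment cannot be translation invariant, since $m\gamma > \gamma$ for $\gamma > 0$ makes the group cofinal and coinitial over every point), and $\Gamma = S_1^{\Gamma}(-\infty)$. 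So assume $\Delta_S \in RJ(\Gamma)$. Then $S$ is a union of $\Delta_S$-cosets, so $S = \rho_{\Delta_S}^{-1}(S_{\Delta_S})$ where $S_{\Delta_S} = \rho_{\Delta_S}(S)$ is a definable end-segment of $\Gamma/\Delta_S$; moreover $S_{\Delta_S}$ has trivial stabilizer in $\Gamma/\Delta_S$ (if $\bar\eta$ stabilizes it, any lift $\eta$ satisfies $\rho_{\Delta_S}(\eta + S) = \rho_{\Delta_S}(S)$, and since both $\eta + S$ and $S$ are unions of $\Delta_S$-cosets this gives $\eta + S = S$, i.e. $\eta \in \Delta_S$). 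As $\Gamma/\Delta_S$ is again an ordered abelian group of bounded regular rank, it suffices to prove the statement for a definable end-segment $T$ with trivial stabilizer in such a group $H$: I claim $T = \{\eta : n\eta \ \square\ \beta\} = S_n^{\{0\}}(\beta)$ for some $n$, some $\square \in \{\ge,>\}$, and some $\beta \in H \cup \{-\infty\}$; pulling back through $\rho_{\Delta_S}$ then yields $S = S_n^{\Delta_S}(\tilde\beta)$ for a lift $\tilde\beta$ of $\beta$.

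\emph{The trivial-stabilizer case.} By Theorem~\ref{QEbounded}, $T$ is defined by a quantifier-free $\mathcal{L}_b$-formula, which, in disjunctive normal form, exhibits $T$ as a Boolean combination of: order conditions $a\eta\ \square\ \gamma$ in $H$ and $\rho_\Delta(a\eta)\ \square\ \delta$ in the quotients $H/\Delta$ with $\Delta \in RJ(H)$; convex-subgroup conditions $a\eta + \gamma \in \Delta$; and congruence conditions $a\eta + \gamma \in mH$ and $\rho_\Delta(a\eta + \gamma) \in m(H/\Delta)$. Now an order condition $\rho_\Delta(a\eta)\ \square\ \delta$ with $\square \in \{\ge,>\}$ is itself a divisibility end-segment $S_a^{\Delta}(\tilde\delta)$, its negation the complementary initial segment, and all of these are linearly ordered by inclusion (as any two end-segments of a linear order are). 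The heart of the matter is that the congruence and convex-subgroup conditions cannot refine the end-segment structure: $mH$ and $m(H/\Delta)$ are cofinal and coinitial, so every congruence class is met arbitrarily far out on each side; $a\eta+\gamma \in \Delta$ fails for all sufficiently large and all sufficiently small $\eta$ when $\Delta \ne H$; and — this is where triviality of the stabilizer of $T$ is used — a similar oscillation persists near the cut defined by $T$. Hence the defining formula of $T$ may be replaced by a Boolean combination of order conditions in the main sort alone, so $T$ lies in the Boolean algebra generated by the divisibility end-/initial-segments $S_n^{\{0\}}(\beta)$; since $T$ is itself an end-segment, it collapses to a single one of them, which is the desired conclusion.

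\emph{Main obstacle.} The delicate step is the one just sketched: rigorously showing that a quantifier-free $\mathcal{L}_b$-formula defining an end-segment of trivial stabilizer is equivalent, for the purpose of defining that end-segment, to a Boolean combination of order conditions in the main sort. This is exactly where the bounded-regular-rank hypothesis (finite spines) is exploited, through the precise form of quantifier elimination in $\mathcal{L}_b$; by contrast, the stabilizer reduction, the cofinality of $mH$, and the linear ordering of divisibility end-segments are routine.
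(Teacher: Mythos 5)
Your reduction to the trivial-stabilizer case by quotienting by $\Delta_S$ is essentially correct, and it is a sensible way to organize the proof. Two small slips there: when $\Delta_S=\Gamma$ the expression $S_1^{\Gamma}(-\infty)$ is not well-formed since $\Gamma\notin RJ(\Gamma)$ (write $S_1^{\{0\}}(-\infty)$); and the parenthetical justification that a proper nonempty end-segment cannot be translation-invariant is garbled --- the clean argument is to take $s\in S$, $s'\notin S$, and observe that $\eta:=s-s'>0$ satisfies $\eta+s'=s\in S$ while $s'\notin S$, so $\eta+S\neq S$.

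The genuine gap is precisely the step you flag yourself as ``delicate'': the claim that a definable end-segment $T$ of trivial stabilizer in a dense $H$ of bounded regular rank must be $\{x : nx\ \square\ \beta\}$. Asserting that the congruence and quotient-sort conditions cannot ``refine the end-segment structure'' because of an ``oscillation near the cut'' is not yet an argument; this is the real mathematical content of the proposition, and filling it in is not routine. What is actually needed is, roughly: (i) dispose of the discrete case separately (trivial stabilizer applied to $\eta=1$ forces $T$ to have a minimum); (ii) for $H$ dense, prove the coinitality lemma that $mH\cap(0,a)\neq\emptyset$ for every $a\in H^{>0}$ and $m\geq 1$, by reducing to the smallest nonzero convex subgroup and using that density of $H$ makes its Archimedean model dense in $\mathbb{R}$; (iii) localize: for $\Delta_0$ the smallest nonzero convex subgroup occurring in the quantifier-free formula, there is a unique $\Delta_0$-coset $K_0$ meeting both $T$ and $L:=H\setminus T$ (else $\Delta_0\subseteq\Delta_T$), and inside $K_0$ all quotient-sort atoms are constant; (iv) within $K_0$, the remaining atoms are $ax\ \square\ \gamma$ (carving cells with cut-points $\gamma/a$ in the divisible hull) and $P_m(ax+\gamma)$ (defining an $NH$-invariant set $C$, $N=\operatorname{lcm}$ of moduli), and one must show the cut of $T$ coincides with one of the cell cut-points. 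Step (iv) is where trivial stabilizer enters: if the cut lay strictly inside a cell one would pick $\eta\in NH^{>0}$ small enough --- and bounding ``small enough'' below by an element of $H^{>0}$ requires (ii) again --- so that $t\in T$, $l\in L$ with $t-l=\eta$ both lie in that cell; then $t$ and $l$ are $NH$-congruent, yet $T$ restricted to the cell agrees with $C$, a contradiction. None of this appears in your sketch, and without it the proposal identifies the right framework but does not prove the statement.
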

\begin{remark}\label{suff} Let $\Gamma$ be an ordered abelian group and $\Delta$ be a convex subgroup. Any complete set of representatives in $\Gamma$ modulo $k\Gamma$ for $k\in \mathbb{N}$ is also a complete set of representative of $\Gamma$ modulo $\Delta+k\Gamma$. Moreover, there is and $\emptyset$-definable surjective function $f: \Gamma/ k\Gamma \rightarrow \Gamma/(\Delta+k\Gamma)$. 
\end{remark}
\begin{proof}
For the first part of the statement take $\gamma, \beta \in \Gamma$, if $\gamma-\beta \in k\Gamma$ then $\gamma-\beta \in \Delta+k\Gamma$. For the second part, consider the $\emptyset$-definable function:
\begin{align*}
    f: \Gamma/k\Gamma &\rightarrow \Gamma/ (\Delta+k\Gamma)\\
    \gamma+k\Gamma &\rightarrow \gamma+(\Delta+k\Gamma).
\end{align*}
This function is surjective by the first part of the statement. 
\end{proof}
\begin{corollary} \label{complete} Let $\Gamma$ be an ordered abelian group with bounded regular rank. For each $n \in \mathbb{N}_{\geq 2}$ let $\mathcal{C}_{n}$ be a complete set of representatives of the cosets $n\Gamma$ in $\Gamma$. 
Define $\mathcal{S}_{n}^{\Delta}:= \{ S_{n}^{\Delta}(\beta) \ | \ \beta \in \mathcal{C}_{n}\}$. Then $\displaystyle{\mathcal{S}=\bigcup_{\Delta \in RJ(\Gamma), n \in \mathbb{N}_{\geq 2}} \mathcal{S}_{n}^{\Delta}}$ is a complete family.  
\end{corollary}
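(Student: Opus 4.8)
The claim is that $\mathcal{S} = \bigcup_{\Delta \in RJ(\Gamma),\, n \in \mathbb{N}_{\geq 2}} \mathcal{S}_n^\Delta$ is a complete family of definable end-segments, i.e., every definable end-segment of $\Gamma$ is a translate of one appearing in $\mathcal{S}$.

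The plan is to combine Proposition \ref{charend} with Fact \ref{Mari}. First I would invoke Proposition \ref{charend}: since $\Gamma$ has bounded regular rank, an arbitrary definable end-segment $S$ is a \emph{divisibility} end-segment, hence of the form $S = S_n^\Delta(\beta)$ for some $n \in \mathbb{N}_{\geq 2}$, some $\Delta \in RJ(\Gamma)$, and some $\beta \in \Gamma$ (one also has to dispense with the degenerate cases $\beta = -\infty$ and the trivial/whole-group end-segments, which are translates of each other and can be absorbed, or handled by noting $S_n^\Delta(-\infty) = \Gamma$ is the translate of any end-segment equal to $\Gamma$ — a minor bookkeeping point). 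Next, fix such a representation $S = S_n^\Delta(\beta)$. By construction $\mathcal{C}_n$ is a complete set of representatives of $\Gamma / n\Gamma$, so there is some $\beta_0 \in \mathcal{C}_n$ with $\beta - \beta_0 \in n\Gamma \subseteq \Delta + n\Gamma$. Then Fact \ref{Mari} applies directly (with $\gamma = \beta_0$): since $\beta - \beta_0 \in \Delta + n\Gamma$, the end-segment $S_n^\Delta(\beta_0)$ is a translate of $S_n^\Delta(\beta) = S$, equivalently $S$ is a translate of $S_n^\Delta(\beta_0) \in \mathcal{S}_n^\Delta \subseteq \mathcal{S}$. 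This exhibits $S$ as a translate of a member of $\mathcal{S}$, which is exactly what completeness requires.

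I would write the argument as a short chain: (1) reduce to divisibility end-segments via Proposition \ref{charend}; (2) replace the parameter $\beta$ by its representative $\beta_0 \in \mathcal{C}_n$; (3) quote Fact \ref{Mari} to conclude the two end-segments differ by a translation. The only subtlety worth a sentence is the direction of ``translate'': the relation ``$S$ is a translate of $S'$'' as defined ($S = \beta + S'$) is symmetric up to replacing $\beta$ by $-\beta$, so whether Fact \ref{Mari} gives ``$S_n^\Delta(\beta_0)$ is a translate of $S_n^\Delta(\beta)$'' or the reverse is immaterial.

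There is no real obstacle here — this is a routine assembly of the two preceding results. The ``hard work'' lives entirely in Proposition \ref{charend} (classification of definable end-segments) and Fact \ref{Mari} (translation invariance of divisibility end-segments under shifts in $\Delta + n\Gamma$), both of which are cited from \cite{Vicaria}. The corollary is just the observation that a complete set of coset representatives modulo $n\Gamma$ suffices — and, via Remark \ref{suff}, modulo $\Delta + n\Gamma$ as well, so one could even use the coarser quotients $\Gamma/(\Delta + n\Gamma)$ if desired, but the statement as given with $\mathcal{C}_n \subseteq \Gamma$ representing $\Gamma/n\Gamma$ is the cleanest.
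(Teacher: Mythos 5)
Your argument is correct and is exactly the route the paper takes: the paper's proof is a one-line citation of Proposition \ref{charend}, Fact \ref{Mari}, and Remark \ref{suff}, and you have simply unwound that citation (using the inclusion $n\Gamma \subseteq \Delta + n\Gamma$, which is the content of Remark \ref{suff}, to pass from a representative modulo $n\Gamma$ to the hypothesis of Fact \ref{Mari}). The bookkeeping remarks about the degenerate $\beta = -\infty$ case and the symmetry of the translate relation are accurate but not developed further in the paper either.
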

\begin{proof}
It is an immediate consequence of Proposition \ref{charend}, Fact \ref{Mari} and Remark \ref{suff}.
\end{proof}
The following is \cite[Fact 4.1]{Vicaria}.
\begin{fact}\label{stab1}Let $S \subseteq \Gamma$ be a definable end-segment. Then $\Delta_{S}$ is a definable convex subgroup of $\Gamma$, therefore $\Delta_{S} \in RJ(\Gamma)$. Furthermore, $\displaystyle{\Delta_{S}= \bigcup_{\Delta \in \mathcal{C}} \Delta}$, where $\mathcal{C}=\{ \Delta \in RJ(\Gamma) \ | \ S \text{\ is $\Delta$-decomposable} \}$. 
\end{fact}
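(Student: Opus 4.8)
The plan is to prove both inclusions in the claimed description of $\Delta_S$ and deduce from them that $\Delta_S$ is a definable convex subgroup. First I would show that $\Delta_S$ is convex: if $0 \le \eta \le \eta'$ and $\eta' + S = S$, then since $S$ is an end-segment, $\eta + S \subseteq S$ (adding a smaller nonnegative element moves points up less), and $\eta + S \supseteq \eta' + S = S$ because $\eta - \eta' \le 0$ means $(\eta - \eta') + S \supseteq S$; hence $\eta + S = S$. It is a subgroup by definition, so a convex subgroup. Then I would address definability: the relation ``$\eta + S = S$'' is defined by $\forall x (x \in S \leftrightarrow \eta + x \in S)$, so $\Delta_S$ is definable (over the same parameters as $S$); being a definable convex subgroup and using Proposition~\ref{charbounded} (the ``moreover'' clause), it lies in $RJ(\Gamma)$.

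For the description $\Delta_S = \bigcup_{\Delta \in \mathcal{C}} \Delta$ with $\mathcal{C} = \{\Delta \in RJ(\Gamma) : S \text{ is }\Delta\text{-decomposable}\}$, I would argue as follows. For the inclusion $\supseteq$: if $S$ is $\Delta$-decomposable then every element of $\Delta$ stabilizes $S$ (translating a union of $\Delta$-cosets by an element of $\Delta$ permutes those cosets), so $\Delta \subseteq \Delta_S$. For $\subseteq$: $\Delta_S$ itself is in $RJ(\Gamma)$ by the first part, and $S$ is a union of $\Delta_S$-cosets since $\Delta_S + S = S$ means $S$ is invariant under the $\Delta_S$-action, i.e. $S = \bigcup_{s \in S}(\Delta_S + s)$; hence $\Delta_S \in \mathcal{C}$, and trivially $\Delta_S \subseteq \bigcup_{\Delta \in \mathcal{C}}\Delta$. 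Combining the two inclusions gives equality, and in fact shows $\Delta_S$ is the $\subseteq$-largest element of $\mathcal{C}$ (the union of a chain of convex subgroups, each contained in $\Delta_S$, all being $\le \Delta_S \in \mathcal{C}$).

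The only genuinely delicate point is verifying that ``$\eta + S = S$ for all $\eta \ge 0$ in a convex subgroup $\Delta$'' really does force $S$ to be $\Delta$-decomposable, i.e. closed under adding \emph{negative} elements of $\Delta$ as well: this follows because $\Delta_S$ is a group, so $\eta \in \Delta_S \Rightarrow -\eta \in \Delta_S$, and then $S = \eta + (-\eta + S) = \eta + S$ forces $-\eta + S = S$ too. So no separate argument is needed beyond the group axiom. I expect the main (minor) obstacle to be bookkeeping the direction of the order in the end-segment inequalities when proving convexity; everything else is formal once Proposition~\ref{charbounded} is invoked to place $\Delta_S$ inside $RJ(\Gamma)$.
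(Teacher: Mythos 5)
The paper cites this result directly from \cite[Fact 4.1]{Vicaria} and gives no proof, so there is no in-text argument to compare against; on its own terms, your argument is correct. You verify that $\Delta_S$ is a subgroup, use the end-segment (up-closed) property together with the group structure to get convexity, observe that the defining formula $\forall x\,(x\in S \leftrightarrow \eta+x\in S)$ shows definability, and invoke the ``moreover'' clause of Proposition \ref{charbounded} to conclude $\Delta_S\in RJ(\Gamma)$ (you tacitly use that $\Delta_S$ is a \emph{proper} subgroup, which is fine since $S$ is a non-empty proper end-segment, so $\Delta_S=\Gamma$ would force $S=\Gamma$); the two inclusions in $\Delta_S=\bigcup_{\Delta\in\mathcal{C}}\Delta$ are proved exactly as one would expect, with $\subseteq$ coming from $\Delta_S$ itself lying in $\mathcal{C}$. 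Two small cosmetic points: the parenthetical ``adding a smaller nonnegative element moves points up less'' is unnecessary --- $\eta+S\subseteq S$ holds for \emph{every} $\eta\geq 0$ just because $S$ is up-closed, independently of $\eta'$; and the ``genuinely delicate point'' you flag at the end is not delicate, since the stabilizer $\Delta_S$ is automatically a group (closed under inverses), so negative translations come for free.
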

\begin{definition} Let $S \subseteq \Gamma$ be a definable end-segment. Let 
\begin{align*}
\Sigma_{S}^{gen}(x):&= \{ x \in S \} \cup \{ x \notin B \ | \ B \subsetneq S\  \text{and $B$ is a definable end-segment }\}.
\end{align*}
 We refer to this partial type as the \emph{generic type in $S$}. This partial type is $\ulcorner S \urcorner$-definable.
\end{definition}

\subsubsection{The dp-minimal case}
In $1984$ the classification of the model theoretic complexity of ordered abelian groups was initiated by Gurevich and Schmitt, who proved that no ordered abelian group has the independence property. During the last years finer classifications have been achieved, in particular dp-minimal ordered abelian groups have been characterized in \cite{dpminimal}. 
\begin{definition} Let $\Gamma$ be an ordered abelian group and let $p$ be a prime number. We say that $p$ is a \emph{singular prime} if $[\Gamma: p\Gamma]= \infty$.
\end{definition}
The following result corresponds to \cite[Proposition 5.1]{dpminimal}.
\begin{proposition}\label{dpmin} Let $\Gamma$ be an ordered abelian group, the following conditions are equivalent:
\begin{enumerate}
\item $\Gamma$ does not have singular primes,
\item $\Gamma$ is $dp$-minimal.
\end{enumerate}
\end{proposition}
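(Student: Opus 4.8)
The plan is to treat the two implications separately: $(2)\Rightarrow(1)$ goes by contraposition and is the easy direction, while $(1)\Rightarrow(2)$ carries the real content. For $(2)\Rightarrow(1)$, suppose $p$ is a singular prime, so that $[\Gamma:p\Gamma]=\infty$, and pass to a monster model $\Gamma^{*}$. I would build an ict-pattern of depth $2$, which shows that the dp-rank of $\Gamma$ is at least $2$ and hence that $\Gamma$ is not dp-minimal; the two independent ``directions'' are the order and congruence modulo $p$. Take $\phi_{1}(x;y_{1},y_{2}):=y_{1}<x<y_{2}$ with a sequence of nested intervals $b^{1}_{j}=(c_{j},c_{j+1})$ built from an order-indiscernible sequence $\dots<c_{-1}<c_{0}<c_{1}<\dots$ of elements of $p\Gamma^{*}$ with large gaps, and take $\phi_{2}(x;y):=p\mid x-y$ with a sequence $b^{2}_{i}=d_{i}$ of elements in pairwise distinct cosets of $p\Gamma^{*}$, all sitting inside the gap $(c_{0},c_{1})$; having infinitely many such $d_{i}$ is exactly the singular-prime hypothesis. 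Then for any $\eta\colon\{1,2\}\to\omega$ the element $a:=c_{\eta(1)}+d_{\eta(2)}$ lies in the interval $b^{1}_{\eta(1)}$ and in no other, and, since $c_{\eta(1)}\in p\Gamma^{*}$, is congruent to $d_{i}$ modulo $p$ exactly when $i=\eta(2)$, so $a$ realizes the required pattern. That the two sequences can be taken mutually indiscernible and that the configuration is consistent would be verified against the Cluckers--Halupczok quantifier elimination; the prototype to have in mind is $\bigoplus_{\omega}\mathbb{Z}$ with the lexicographic order, where one may take $c_{j}=pj\cdot e_{0}$ and $d_{i}=e_{i}$ for $i\ge 1$.

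For $(1)\Rightarrow(2)$, assume $[\Gamma:p\Gamma]<\infty$ for every prime $p$. This hypothesis forces bounded regular rank, so by Proposition \ref{charbounded} there are, in any model, only finitely many definable convex subgroups $\{0\}=\Delta_{0}\subsetneq\dots\subsetneq\Delta_{m}=\Gamma$, and the successive quotients are, up to elementary equivalence, regular groups all of whose divisibility indices are finite; by Robinson--Zakon (Theorem \ref{charreg}) these are $\mathbb{Z}$-groups, divisible groups, or dense regular groups with all $[\,\cdot:p\,\cdot\,]$ finite, each of which is dp-minimal. Hence $\Gamma$ is, up to elementary equivalence, a finite lexicographic sum of dp-minimal regular pieces. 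The genuine difficulty is that this is not yet enough, since subadditivity of dp-rank only bounds the dp-rank of $\Gamma$ by $m$, whereas we need it to be $1$. To get the sharp bound I would pass to the quantifier elimination relative to the auxiliary sorts $S_{n}$, $T_{n}$, $T_{n}^{+}$ and argue that a $1$-type over a model is pinned down by its cut in the order together with its images in the \emph{finite} groups $\Gamma/(\Delta_{i}+n\Gamma)$ and its position relative to the finitely many divisibility end-segments --- the finiteness of all these indices being exactly what blocks a second ``continuous'' coordinate from appearing on top of the order. One then checks dp-minimality via the mutual-indiscernibility criterion: given mutually indiscernible sequences $(a_{i})$ and $(b_{j})$ and a single element $c$, the cut of $c$ and its finitely-many-valued congruence data can destroy the indiscernibility of at most one of the two sequences.

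The step I expect to be the main obstacle is this last one in $(1)\Rightarrow(2)$: showing that the finite lexicographic combination of dp-minimal regular groups does not accumulate dp-rank, i.e.\ that once the order is accounted for the congruence and spine data contribute nothing further. This requires a careful bookkeeping of $1$-types through the ordered-abelian-group quantifier elimination, and is where essentially all of the work behind \cite[Proposition 5.1]{dpminimal} lies.
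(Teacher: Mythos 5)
The paper gives no proof of this proposition: immediately before the statement it says ``The following result corresponds to [dpminimal, Proposition~5.1]'' and simply cites Jahnke--Simon--Walsberg. So there is no in-paper argument to compare your proposal against.

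Evaluated on its own terms, your sketch is a reasonable reconstruction of the strategy one would expect. The $(2)\Rightarrow(1)$ direction via a depth-$2$ ict-pattern using the order in one row and congruence mod $p$ in the other is the standard move; the one thing to be careful about is arranging, in the monster, that the coset representatives $d_i$ all fit strictly between consecutive $c_j$'s — you need the $c_j$'s to be chosen \emph{after} the $d_i$'s, with gaps dominating all the $d_i$'s, and you should check that this is consistent with taking the $c_j$'s order-indiscernible and in $p\Gamma^{*}$; the prototype $\bigoplus_{\omega}\mathbb{Z}$ you mention is the right sanity check, but the general consistency claim still needs the quantifier elimination as you say. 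For $(1)\Rightarrow(2)$ your plan correctly identifies the structure (no singular primes $\Rightarrow$ bounded regular rank, finite lexicographic tower of regular pieces each with finite $[\,\cdot:p\,\cdot\,]$), but as you candidly note, subadditivity of dp-rank over the tower only gives dp-rank $\le m$, and the sharpening to dp-rank $1$ — showing that the order contributes the single ``coordinate'' while the congruence/spine data is finite-valued and so cannot furnish a second one — is precisely the content of the cited result and is not actually carried out here. As a proof the proposal is therefore incomplete, but as a blueprint it is sound and matches the approach of the reference the paper points to.
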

\begin{definition}\label{landp}[The language $\mathcal{L}_{dp}$]  Let $\Gamma$ be a dp-minimal ordered abelian group. We consider the language extension $\mathcal{L}_{dp}$ of  $\mathcal{L}_{bq}$ [see Definition \ref{lanbounded}] where for each $n \in \mathbb{N}_{\geq 2}$ we add a set of constant for the elements of the finite group $\Gamma/n\Gamma$.
\end{definition}

The following is  \cite[Corollary 5.2]{Vicaria}. 
\begin{corollary}\label{EIdpmin} Let $\Gamma$ be a $dp$-minimal ordered abelian group. Then $\Gamma$ admits elimination of imaginaries in the language $\mathcal{L}_{dp}$.
\end{corollary}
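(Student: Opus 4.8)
\textbf{Proof plan for Corollary \ref{EIdpmin}.}
The plan is to derive this statement as a direct consequence of Theorem \ref{WEIbounded}, upgrading weak elimination of imaginaries to full elimination of imaginaries by using the extra constants that $\mathcal{L}_{dp}$ provides. First I would recall the standard fact that a theory weakly eliminates imaginaries and codes finite sets of (real, now including the quotient-sort) tuples if and only if it eliminates imaginaries; so it suffices to show that in the language $\mathcal{L}_{dp}$ every finite set of tuples from the home sort and the quotient sorts $\Gamma/\Delta$ and $\Gamma/(\Delta + n\Gamma)$ has a code. Since a $dp$-minimal ordered abelian group has no singular primes, Proposition \ref{charbounded} applies (finite $p$-regular rank for every $p$, as $[\Gamma:p\Gamma]<\infty$), so $\Gamma$ has bounded regular rank and Theorem \ref{WEIbounded} gives weak elimination of imaginaries in $\mathcal{L}_{bq}$, hence a fortiori in the larger language $\mathcal{L}_{dp}$.

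Next I would address coding finite sets. The key structural input is that for a $dp$-minimal group each quotient $\Gamma/n\Gamma$ is \emph{finite} (this is exactly the no-singular-primes condition), and $\mathcal{L}_{dp}$ adds a constant naming each of its elements; by Remark \ref{suff} the same constants name the (finitely many) elements of each $\Gamma/(\Delta+n\Gamma)$. So a tuple from a quotient sort $\Gamma/(\Delta+n\Gamma)$ is already a tuple of constants, i.e. it is $\emptyset$-definable, and finite sets of such tuples are trivially coded. For tuples living in the main sort $\Gamma$ or in a sort $\Gamma/\Delta$ (which is again an ordered abelian group of the same type, so the argument is uniform), the remaining task is to code a finite set $\{\bar a_1,\dots,\bar a_k\}\subseteq \Gamma^m$. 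I would reduce to $m=1$ by the usual trick of coding coordinatewise together with the bijection data, and then for a finite set $\{a_1<\dots<a_k\}\subseteq\Gamma$ use that a finite subset of a linear order is coded by the tuple listing its elements in increasing order --- this tuple is definable from any code of the set and conversely. The one genuine subtlety is that an unordered tuple of \emph{pairs} (coordinate value, residue in some $\Gamma/n\Gamma$) must be disentangled; here the named constants on $\Gamma/n\Gamma$ let one definably sort by the second coordinate, after which each block is a finite subset of the linearly ordered $\Gamma$ and is coded by its increasing enumeration.

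The main obstacle, and the place where the $dp$-minimality hypothesis is really used rather than just bounded regular rank, is precisely the coding of finite sets of tuples that mix main-sort and quotient-sort coordinates: in the general bounded-regular-rank setting a finite subset of $\Gamma/n\Gamma$ need not be coded (the group $\Gamma/n\Gamma$ can be infinite and need not eliminate imaginaries for finite sets on the nose), which is why Theorem \ref{WEIbounded} only gives \emph{weak} elimination. Adding constants for the elements of every $\Gamma/n\Gamma$ collapses this difficulty entirely, since those groups are now finite and fully named. So the proof amounts to: (i) invoke Proposition \ref{charbounded} to get bounded regular rank from $dp$-minimality; (ii) invoke Theorem \ref{WEIbounded} for weak EI in $\mathcal{L}_{bq}\subseteq\mathcal{L}_{dp}$; (iii) observe that every finite subset of each $\Gamma/n\Gamma$ and $\Gamma/(\Delta+n\Gamma)$ is $\emptyset$-definable in $\mathcal{L}_{dp}$; (iv) code arbitrary finite sets of tuples by reducing to finite subsets of a linear order via increasing enumeration, using (iii) to handle the mixed coordinates; (v) conclude by the weak-EI-plus-finite-sets criterion for full EI.
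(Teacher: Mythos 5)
Your argument is correct, but note that the paper itself does not prove Corollary \ref{EIdpmin}: it simply cites it as \cite[Corollary 5.2]{Vicaria}, so there is no in-paper proof to compare against. Your route --- get bounded regular rank from the no-singular-primes characterization via Proposition \ref{charbounded}, apply Theorem \ref{WEIbounded} to obtain weak elimination of imaginaries (which persists under the addition of constants since the sorts are unchanged), and then upgrade to full elimination by coding finite sets of tuples --- is the natural one and does go through. The central point you correctly isolate is that $dp$-minimality makes each $\Gamma/n\Gamma$ finite, so naming its elements (and hence, via the $\emptyset$-definable surjection of Remark \ref{suff}, the elements of each $\Gamma/(\Delta+n\Gamma)$) renders every finite subset of a quotient sort $\emptyset$-definable, while finite subsets of $\Gamma$ and of each $\Gamma/\Delta$ admit a canonical increasing enumeration because these sorts are linearly ordered; together these give a canonical enumeration of any finite set of tuples across all sorts, and coding finite sets of tuples follows by the standard fibering induction on coordinates. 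One small presentational caveat: the paper's Fact \ref{all} is stated for finite subsets of a single sort, whereas the weak-EI-to-EI criterion requires coding finite sets of \emph{tuples}; your sketch handles the tuple case, which is what is actually needed, so this is fine, just make sure to state the criterion in the tuple form if writing this up formally.
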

The following will be a very useful fact.
\begin{fact}\label{genericodp} Let $\Gamma$ be a dp-minimal ordered abelian group and let $S \subseteq \Gamma$ be a definable end-segment. Then any complete type $q(x)$ extending $\Sigma_{S}^{gen}(x)$ is $\ulcorner S \urcorner$-definable.
\end{fact}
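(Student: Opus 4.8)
The plan is to reduce, via quantifier elimination for $\Gamma$, to computing the $\psi$-definition of $q$ for each atomic formula $\psi$, and then to observe that the generic type $\Sigma_{S}^{gen}$ already fixes, $\ulcorner S\urcorner$-definably, every piece of data about a realization $x$ except its class in the finite quotients $\Gamma/m\Gamma$, which are taken care of by the constants of $\mathcal{L}_{dp}$. First I would note that a dp-minimal $\Gamma$ has no singular primes (Proposition \ref{dpmin}), so $[\Gamma:m\Gamma]<\infty$ for every $m\geq 2$; hence $\Gamma$ has bounded regular rank (Proposition \ref{charbounded}) and all the groups $\Gamma/m\Gamma$, and a fortiori their further quotients $\Gamma/(\Delta+m\Gamma)$, are finite. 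Passing to $\mathcal{L}_{dp}$ — in which each such group has constants naming all its elements, and in which quantifier elimination still holds by Theorem \ref{QEbounded} (adjoining constants is harmless) — it suffices to show that for every atomic $\mathcal{L}_{dp}$-formula $\psi(x,y)$ the set $\{b:\psi(x,b)\in q\}$ is $\ulcorner S\urcorner$-definable; a Boolean-combination argument then yields the $\phi$-definition for an arbitrary $\phi(x,y)$.

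The key point is that $\Sigma_{S}^{gen}$ implies, for \emph{every} definable end-segment $B$ of $\Gamma$, that $x\in B$ when $B\supseteq S$ and $x\notin B$ otherwise (end-segments of a linear order are totally ordered by inclusion, so no incomparable case arises), and that "$B\supseteq S$" is an $\ulcorner S\urcorner$-definable condition on a code of $B$. Applying this to the end-segments $\{y:ny\ \square\ \gamma\}$ of $\Gamma$ and to the $\Delta$-decomposable end-segments $\rho_{\Delta}^{-1}(\{w:nw\ \square\ \delta\})$, one sees that $\Sigma_{S}^{gen}$ determines the full cut of each $nx$ in $\Gamma$ and of each $n\rho_{\Delta}(x)$ in $\Gamma/\Delta$, uniformly $\ulcorner S\urcorner$-definably in the parameters; this settles all atomic formulas $nx+t(b)\ \square\ 0$ and $n\rho_{\Delta}(x)+s\ \square\ 0$ with $\square\in\{<,=,>\}$. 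The convex-subgroup predicates $nx+t(b)\in\Delta$ (equivalently $n\rho_{\Delta}(x)=-\rho_{\Delta}(t(b))$) reduce to this as well: if $\rho_{\Delta}(S)$ has a least element $e_{0}$, one checks that the truncation of $S$ lying strictly above the $e_{0}$-coset is again a definable end-segment properly contained in $S$, so $\Sigma_{S}^{gen}\models\rho_{\Delta}(x)=e_{0}$ and the predicate holds iff $-\rho_{\Delta}(t(b))=ne_{0}$ (and $e_{0}=\min\rho_{\Delta}(S)$ is $\ulcorner S\urcorner$-definable); if $\rho_{\Delta}(S)$ has no least element, then $\rho_{\Delta}(x)$, hence $n\rho_{\Delta}(x)$, realizes an open cut and the predicate never holds.

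It remains to treat the divisibility predicates $P_{m}(nx+t(b))$ and $P_{m}^{\Delta}(\rho_{\Delta}(nx+s))$. The truth of $nx+t(b)\in m\Gamma$ depends only on $t(b)$ and on the class $x+m\Gamma$; since $q$ is complete it fixes $x+m\Gamma$ to a single element of $\Gamma/m\Gamma$, which in $\mathcal{L}_{dp}$ equals a constant $c_{m}$, so $P_{m}(nx+t(b))\in q$ iff $t(b)+nc_{m}\in m\Gamma$ — an $\emptyset$-definable, in particular $\ulcorner S\urcorner$-definable, condition on $b$. The quotient-sort predicates are handled identically after replacing $x+m\Gamma$ by its image $x+(\Delta+m\Gamma)$ under the canonical $\emptyset$-definable surjection $\Gamma/m\Gamma\to\Gamma/(\Delta+m\Gamma)$ (Remark \ref{suff}), the target again being a finite group all of whose elements lie in $\dcl(\emptyset)$. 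Combining the three cases, every atomic — hence every — formula $\phi(x,y)$ has an $\ulcorner S\urcorner$-definable $\phi$-definition, so $q$ is $\ulcorner S\urcorner$-definable.

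The main obstacle is the claim underlying the third paragraph: that all the "excess" information of $q$ over $\Sigma_{S}^{gen}$ is confined to the finite quotients $\Gamma/m\Gamma$. This is exactly where dp-minimality (forcing finiteness of these quotients) and the passage to $\mathcal{L}_{dp}$ (making their elements definable — without which the statement is false, e.g. the parity of the second coordinate in $\mathbb{Z}^{2}$ ordered lexicographically is not $\emptyset$-definable) are both indispensable. A more routine but still delicate point is verifying that the truncations of $S$ used to pin $\rho_{\Delta}(x)$ down to $\min\rho_{\Delta}(S)$ are genuine definable end-segments properly contained in $S$.
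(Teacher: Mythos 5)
Your argument is correct and takes essentially the same route as the paper: both reduce via Theorem \ref{QEbounded} to quantifier-free formulas, and the pivotal step in both is that dp-minimality forces each $\Gamma/m\Gamma$ to be finite, so that the residue of $x$ modulo $m\Gamma$ fixed by $q$ is an $\mathcal{L}_{dp}$-constant in $\dcl^{eq}(\emptyset)$, which renders the congruence $\phi$-definitions $\ulcorner S\urcorner$-definable. Where the paper simply asserts that verifying the congruence formulas ``is sufficient,'' you additionally spell out why the order and convex-subgroup atomic formulas are already $\ulcorner S\urcorner$-definably determined by $\Sigma_{S}^{gen}$ alone, a point the paper leaves implicit.
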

\begin{proof}
Let $\Sigma_{S}^{gen}(x)$ be the generic type of $S$ and $q(x)$ be any complete extension. $\Sigma_{S}^{gen}(x)$ is $\ulcorner S\urcorner$-definable, and by Theorem  \ref{QEbounded} $q(x)$ is completely determined by the quantifier free formulas. It is sufficient to verify that for each $\Delta \in RJ(\Gamma)$, $k\in \mathbb{Z}$ and $n \in \mathbb{N}$ the set:\\
\begin{align*}
   Z= \{ \beta \in \Gamma \ | \ \big(\rho_{\Delta}(x)-\rho_{\Delta}(\beta)+k^{\Delta}\in n (\Gamma/\Delta)\big) \in q(x)\}
\end{align*}
is $\ulcorner S \urcorner$-definable. First, we note that there is a canonical one to one correspondence
\begin{align*}
    g:&= (\Gamma/\Delta)/ n\big(\Gamma/\Delta\big) \rightarrow \Gamma/(\Delta+n\Gamma).
\end{align*}
Let $c= g(k^{\Delta}+ n(\Gamma/\Delta)) \in \dcl^{eq}(\emptyset)$. Take $\mu \in \Gamma/n\Gamma$ be such that $\pi_{k}(x)=\mu \in q(x)$. Let $f$ be the $\emptyset$-definable function given by Remark \ref{suff}.
 Then $\beta \in Z$ if and only if $\displaystyle{\vDash \pi_{\Delta}^{n}(\beta)= f(\mu)+c}$, and $f(\mu)+c \in \dcl^{eq}(\emptyset)$.
\end{proof}
We conclude this subsection with the following Remark, that simplifies the presentation of a complete family in the dp-minimal case. 
\begin{remark} \label{completedp} Let $\Gamma$ be a dp-minimal ordered abelian group. For each $n \in \mathbb{N}_{\geq 2}$ let $\Omega_{n}$ be a finite set of constants in $\Gamma$ to distinguish representatives for each of the cosets of $n\Gamma$ in $\Gamma$.\\
Let $\mathcal{S}_{n}^{\Delta}:= \{ S_{n}^{\Delta}(d) \ | \ d \in \Omega^{n}\}$. The set $\displaystyle{\mathcal{S}_{dp}=\bigcup_{\Delta \in RJ(\Gamma), n \in \mathbb{N}_{\geq 2}} \mathcal{S}_{n}^{\Delta}}$ is a complete family whose elements are all definable over $\emptyset$. 
\end{remark}

\subsection{Henselian valued fields of equicharacteristic zero with residue field algebraically closed and value group with bounded regular rank}
The main goal of this section is to describe the $1$-definable subsets $X \subseteq K$, where $K$ is a valued field with residue field algebraically closed and with value group of bounded regular rank.  
\subsubsection{The language $\mathcal{L}$}\label{language}
Let $(K,v)$ be a valued field of equicharactieristic zero, whose residue field is algebraically closed and whose value group is of bounded regular rank. We will view this valued field as an $\mathcal{L}$-structure, where $\mathcal{L}$ is the language extending $ \mathcal{L}_{\val}$ in which the value group sort is equipped with the language $\mathcal{L}_{b}$ described in Definition \ref{lanbounded}. Let $T$ be the complete $\mathcal{L}$-first ordered theory of $(K,v)$. (In particular, we are fixing a complete theory for the value group) 
\begin{corollary}\label{QEregular}  The first order theory $T$ admits quantifier elimination in the language $\mathcal{L}$. 
\end{corollary}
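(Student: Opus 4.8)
The statement to prove is Corollary~\ref{QEregular}: the theory $T$ of a henselian valued field of equicharacteristic zero with residue field algebraically closed and value group of bounded regular rank admits quantifier elimination in the language $\mathcal{L}$, where the value group sort carries the language $\mathcal{L}_b$.

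\medskip

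\textbf{The plan.} The plan is to combine the two relative quantifier elimination results already available in the excerpt: the Extension Theorem (Theorem~\ref{extension}), which gives quantifier elimination for henselian valued fields of equicharacteristic zero with algebraically closed residue field \emph{relative to the value group} in the three-sorted language $\mathcal{L}_{\val}$, and Theorem~\ref{QEbounded}, which gives outright quantifier elimination for ordered abelian groups of bounded regular rank in the language $\mathcal{L}_b$. Since $\mathcal{L}$ is by definition $\mathcal{L}_{\val}$ with the value group sort enriched to $\mathcal{L}_b$, the two results should simply compose. Concretely, I would verify quantifier elimination via the standard back-and-forth/embedding test: given two models $\mathcal{K}_1, \mathcal{K}_2 \models T$ with $\mathcal{K}_2$ sufficiently saturated, a common substructure $\mathcal{E}$ (in the sense of $\mathcal{L}$-substructure, so $\Gamma_E$ is now an $\mathcal{L}_b$-substructure of $\Gamma_1$), and an $\mathcal{L}$-embedding $g : \mathcal{E} \to \mathcal{K}_2$, one must extend $g$ to any prescribed element $b$ of $K_1$, $k_1$, or $\Gamma_1$.

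\medskip

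\textbf{Key steps.} First, the crucial observation is that an $\mathcal{L}$-embedding of substructures automatically restricts to an $\mathcal{L}_b$-embedding on the value group sort; since $\Gamma_i \models \operatorname{Th}_{\mathcal{L}_b}(\Gamma)$ and the latter theory has quantifier elimination by Theorem~\ref{QEbounded}, any such $\mathcal{L}_b$-embedding is automatically a \emph{partial elementary} map between $\Gamma_1$ and $\Gamma_2$. This is exactly the hypothesis needed to invoke the Extension Theorem: the triple $g$ is then an admissible map (and, after passing to a saturated enough $\mathcal{K}_2$, an admissible map with small domain). Second, to extend $g$: if $b \in K_1$ is a main-sort element, apply the Extension Theorem directly to obtain an admissible $\hat g$ with $b$ in its domain, and note $\hat g$ is in particular an $\mathcal{L}$-embedding. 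If $b \in \Gamma_1$, use quantifier elimination for $\Gamma$ in $\mathcal{L}_b$ (Theorem~\ref{QEbounded}) to extend $g_v$ to an $\mathcal{L}_b$-embedding with $b$ in the domain — here one must also realize, inside $\Gamma_2$, the images of $b$ under the quotient maps $\rho_\Delta$ and the various $\mathcal{L}_{\Pres}^\Delta$-structure, which saturation of $\mathcal{K}_2$ provides — then extend the rest of $g$ by the identity on $K$ and $k$; the resulting map is an $\mathcal{L}_{\val}$-embedding by the Extension Theorem's framework and an $\mathcal{L}_b$-embedding on $\Gamma$ by construction, hence an $\mathcal{L}$-embedding. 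If $b \in k_1$ is a residue-sort element, extend it using algebraic closedness of the residue field (choosing a compatible residue of a field element if needed), again falling under the scope of the Extension Theorem. Third, conclude by the standard criterion that a theory eliminating quantifiers is equivalent to every embedding between models (with target saturated) being extendable one element at a time on each sort.

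\medskip

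\textbf{Main obstacle.} The only genuinely delicate point is bookkeeping across the three sorts simultaneously: one must ensure that when extending $g$ to an element of one sort, the induced constraints on the other sorts (e.g., adding $\gamma$ to the value group forces new divisibility conditions $S_n^\Delta$ and new elements of $\Gamma/(\Delta + n\Gamma)$, which in turn may interact with henselianity via $p$-th roots, exactly the step flagged in the proof of Theorem~\ref{extension} where algebraic closedness of the residue field is used) are all consistent and realizable in the saturated model $\mathcal{K}_2$. But the Extension Theorem has already absorbed precisely this interaction — its statement is engineered so that the value group is handled as a black box via partial elementarity — so the real content here is just to check that ``$\mathcal{L}_b$-embedding on $\Gamma$'' upgrades to ``partial elementary on $\Gamma$'' (immediate from Theorem~\ref{QEbounded}), and that the enriched language $\mathcal{L}$ adds no new substructure data beyond what $\mathcal{L}_{\val}$ and $\mathcal{L}_b$ already control. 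Thus the corollary is essentially a formal consequence, and I would present it as such, spelling out the back-and-forth only to the extent of confirming the three extension cases.
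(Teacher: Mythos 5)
Your proposal is correct and follows exactly the paper's approach: the paper states the corollary as a direct consequence of Theorem~\ref{extension} and Theorem~\ref{QEbounded}, and your expansion correctly identifies the key point (an $\mathcal{L}_b$-embedding on $\Gamma$ is automatically partial elementary by Theorem~\ref{QEbounded}, so any $\mathcal{L}$-embedding of substructures is an admissible map, to which the Extension Theorem applies). The paper leaves these details implicit, but your writeup is the intended argument.
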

\begin{proof}
This is a direct consequence of Theorem \ref{extension} and Theorem \ref{QEbounded}.
\end{proof}
\subsubsection{Description of definable sets in $1$-variable}
In this Subsection we give a description of the definable subsets in $1$-variable $X \subseteq K$, where $K \vDash T$. We denote as $\mathcal{O}$ its valuation ring. 
\begin{definition} Let $(K,\mathcal{O})$ be a henselian valued field of equicharacteristic zero and let $\Gamma$ be its value group. Let $\Delta$ be a convex subgroup of $\Gamma$ then the map:
\begin{center}$v_{\Delta}: \begin{cases}
K &\rightarrow \Gamma/\Delta \\
x & \mapsto v(x)+\Delta,
\end{cases}$\end{center}
is a henselian valuation on $K$ and it is commonly called as \emph{the coarsened valuation induced by $\Delta$}. Note that $v_{\Delta}= \rho_{\Delta} \circ v$. 
\end{definition}
The following is a folklore fact. 
\begin{fact}\label{endsegement}There is a one-to-one correspondence between the $\mathcal{O}$-submodules of $K$ and the end-segments of $\Gamma$. Given $M \subseteq K$ an $\mathcal{O}$-submodule, we have that $S_{M}:= \{ v(x) \ | \ x \in M\}$ is an end-segment of $\Gamma$. We refer to $S_{M}$ as the end-segment induced by $M$. And given an end-segment $S \subseteq \Gamma$, the set $M_{S}:= \{ x \in K \ | \ v(x) \in S\}$ is an $\mathcal{O}$-submodule of $K$.
\end{fact}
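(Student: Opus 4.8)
The statement to prove is Fact~\ref{endsegement}: the claim that $\mathcal{O}$-submodules of $K$ correspond bijectively to end-segments of $\Gamma$.

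\textbf{Proof plan.} The strategy is to exhibit the two maps $M \mapsto S_M$ and $S \mapsto M_S$ and verify they are mutually inverse, with the bulk of the work being the two well-definedness checks (that $S_M$ is genuinely an end-segment and that $M_S$ is genuinely an $\mathcal{O}$-submodule). First I would check that $S_M := v(M \setminus \{0\})$ is an end-segment: given $x \in M$ with $v(x) = \gamma \in S_M$, and any $\delta \in \Gamma$ with $\delta > \gamma$, pick $a \in K$ with $v(a) = \delta - \gamma > 0$, so $a \in \mathcal{O}$; then $ax \in M$ because $M$ is an $\mathcal{O}$-module, and $v(ax) = \delta$, hence $\delta \in S_M$. (If $\delta = \gamma$ there is nothing to do.) This uses only that $v$ is surjective onto $\Gamma$ and that $\mathcal{O} = \{x : v(x) \geq 0\}$. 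Conversely, for an end-segment $S \subseteq \Gamma$, I would check $M_S := \{x \in K : v(x) \in S\}$ (with the convention $v(0) = \infty \in S$ automatically, or one adjoins $0$ explicitly) is an $\mathcal{O}$-submodule: it is closed under addition since $v(x+y) \geq \min(v(x), v(y))$ and $S$ is upward closed; it is closed under multiplication by $a \in \mathcal{O}$ since $v(ax) = v(a) + v(x) \geq v(x)$ and again $S$ is upward closed. These are the standard ultrametric inequalities.

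\textbf{The inverse check.} Next I would verify $M_{S_M} = M$ and $S_{M_S} = S$. For the latter: $S_{M_S} = \{v(x) : v(x) \in S\} = S$ since $v$ is surjective onto $\Gamma$, so every $\gamma \in S$ is realized as $v(x)$ for some $x$, which then lies in $M_S$. For $M_{S_M} = M$: the inclusion $M \subseteq M_{S_M}$ is immediate from the definition of $S_M$. For the reverse, take $x \in M_{S_M}$, i.e.\ $v(x) \in S_M$, meaning $v(x) = v(y)$ for some $y \in M$; then $v(x/y) = 0$, so $x/y \in \mathcal{O}^\times \subseteq \mathcal{O}$, and hence $x = (x/y) \cdot y \in M$ since $M$ is an $\mathcal{O}$-module. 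This closes the argument.

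\textbf{Main obstacle.} There is essentially no serious obstacle here — this is a routine folklore fact and the paper labels it as such. The only points requiring a small amount of care are the handling of $0$ (one should fix the convention that $0 \in M_S$ and that $0$'s valuation $\infty$ is treated as lying above every end-segment, or equivalently work with nonzero elements throughout and append $0$ by fiat) and making sure the module structure is used in exactly the right place, namely to multiply an element of $M$ by a unit or by an element of positive valuation. One could also remark in passing that this bijection is inclusion-preserving in both directions, and that it is $\emptyset$-definable in families, which is what makes it useful later when transporting definability of end-segments (Proposition~\ref{charend}) to definability of $\mathcal{O}$-submodules of $K$; but strictly for the stated fact, the four verifications above suffice.
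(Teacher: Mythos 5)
Your proof is correct, and since the paper labels this as a folklore fact and supplies no proof of its own, there is nothing to compare against; the four verifications you give (that $S_M$ is upward closed, that $M_S$ is closed under addition and $\mathcal{O}$-scaling, and the two inverse identities $S_{M_S}=S$, $M_{S_M}=M$) are exactly the expected route, and the key step in the last one — $v(x)=v(y)$ with $y\in M$ forces $x/y\in\mathcal{O}^\times$ so $x\in M$ — is where the module structure does its work. Your side remark about the degenerate cases is apt: with the paper's convention that end-segments are non-empty subsets of $\Gamma$, the zero submodule $\{0\}$ falls outside the bijection (its induced set is $\emptyset$ or $\{\infty\}$), so strictly the correspondence is between \emph{nonzero} $\mathcal{O}$-submodules of $K$ and end-segments of $\Gamma$; the paper glosses over this, and in any case later explicitly excises $\{0\}$ and $K$ when fixing the family $\mathcal{I}$.
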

\begin{definition}
 \begin{enumerate}
 \item Let $M$ and $N$ be $\mathcal{O}$-submodules of $K$, we say that $M$ is a \emph{scaling} of $N$ if there is some $b \in K$ such that $M= bN$. 
\item A family $\mathcal{F}$ of definable $\mathcal{O}$-submodules of $K$ is said to be \emph{complete} if any definable submodule $M \subseteq K$ is a scaling of some $\mathcal{O}$-submodule $N \in \mathcal{F}$. 
\end{enumerate}
\end{definition}
\begin{fact}\label{completemodules} Let $\mathcal{F}= \{ M_{S} \ | \ S \in \mathcal{S}\}$, where $\mathcal{S}$ is the complete family of definable end-segments described in Corollary \ref{complete}. Then $\mathcal{F}$ is a complete family of $\mathcal{O}$-submodules of $K$. 
\end{fact}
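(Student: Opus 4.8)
\textbf{Proof plan for Fact \ref{completemodules}.}

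The plan is to transport the completeness statement for end-segments (Corollary \ref{complete}) through the one-to-one correspondence of Fact \ref{endsegement}, checking only that this correspondence is compatible with the two notions of "sameness up to the group action" — translation by $\beta \in \Gamma$ on the side of end-segments, and scaling by $b \in K^\times$ on the side of $\mathcal O$-submodules. Concretely, I would first record the obvious identity $b M_S = M_{v(b)+S}$ for every $b \in K^\times$ and every end-segment $S$: indeed $x \in bM_S$ iff $b^{-1}x \in M_S$ iff $v(x) - v(b) \in S$ iff $v(x) \in v(b)+S$. Dually, $M_{\beta + S} = b M_S$ for any $b \in K^\times$ with $v(b) = \beta$, and such $b$ exists since $\Gamma = v(K^\times)$. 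This shows that the bijection $S \mapsto M_S$ carries the translation orbit of $S$ onto the scaling orbit of $M_S$, and that definability is preserved in both directions (a submodule $M$ is definable iff $S_M$ is, because $S_M = v(M)$ and $M = M_{S_M}$, and $v$, together with the projection maps, is part of the language).

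Given this, the argument is short. Let $M \subseteq K$ be an arbitrary definable $\mathcal O$-submodule. By Fact \ref{endsegement}, $S_M := v(M)$ is a definable end-segment of $\Gamma$ and $M = M_{S_M}$. By Corollary \ref{complete}, $\mathcal S = \bigcup_{\Delta \in RJ(\Gamma),\, n \in \mathbb{N}_{\geq 2}} \mathcal S_n^\Delta$ is a complete family of definable end-segments, so there is some $S' \in \mathcal S$ and some $\beta \in \Gamma$ with $S_M = \beta + S'$. Pick $b \in K^\times$ with $v(b) = \beta$. Then $M = M_{S_M} = M_{\beta + S'} = b\, M_{S'}$, and $M_{S'} \in \mathcal F$ by definition of $\mathcal F$. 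Hence every definable $\mathcal O$-submodule of $K$ is a scaling of a member of $\mathcal F$, which is precisely the assertion that $\mathcal F$ is complete. One should also note in passing that each $M_{S'}$ with $S' \in \mathcal S$ really is a \emph{definable} $\mathcal O$-submodule, but this is immediate since $S'$ is a definable end-segment and $M_{S'} = \{x \in K \mid v(x) \in S'\}$.

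I do not expect any genuine obstacle here: the statement is essentially a formal consequence of the two cited facts once one observes that $v: K^\times \to \Gamma$ is surjective and intertwines scaling with translation. The only points requiring a word of care are (i) handling the trivial submodules $\{0\}$ and $K$, which correspond to the empty end-segment and all of $\Gamma$ — one either excludes them by convention or checks that the correspondence and the orbit structure degenerate harmlessly; and (ii) being explicit that "complete family of $\mathcal O$-submodules" is defined via the $K^\times$-action rather than the $\mathcal O$-action, so that surjectivity of $v$ on $K^\times$ (not merely on $\mathcal O \setminus \{0\}$) is what is used. Neither of these is a real difficulty.
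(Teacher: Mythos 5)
The paper states this Fact without proof (treating it as an immediate consequence of Fact~\ref{endsegement} and Corollary~\ref{complete}), and your argument supplies exactly the transport-of-structure proof one expects: the identity $bM_S = M_{v(b)+S}$ (equivalently $M_{\beta+S}=bM_S$ for any $b$ with $v(b)=\beta$, using surjectivity of $v$) shows the bijection $S \mapsto M_S$ carries translation orbits to scaling orbits and preserves definability, after which completeness of $\mathcal{S}$ yields completeness of $\mathcal{F}$. Your proof is correct and is the natural filling-in of the detail the author elided.
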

\begin{definition}
\begin{enumerate}
\item Let $w: K \rightarrow \Gamma_{w}$ be a valuation, $\gamma \in \Gamma_{w}$ and $a \in K$. The \emph{closed ball of radius $\gamma$ centered at $a$ according to the valuation $w$} is the set of the form $\bar{B}_{\gamma}(a)=\{ x \in K \ | \ \gamma \leq w(x-a) \}$, and the \emph{open ball of radius $\gamma$ centered at $a$ according to the valuation $w$} is the set of the form $\displaystyle{B_{\gamma}(a)=\{x \in K \ | \ \gamma< w(x-a)\}}$. 
\item  A \emph{swiss cheese according to the valuation $w$} is a set of the form $\displaystyle{A \backslash (B_{1}\cup \dots \cup B_{n})}$ where for each $i \leq n$, $B_{i} \subsetneq A$ and the $B_{i}$ and $A$ are balls according to the original valuation $w: K \rightarrow \Gamma_{w}$. 
\item A $1$-torsor of $K$ is a set of the form $a+bI$ where $a,b \in K$ and $I \in \mathcal{F}$.

\item A \emph{generalized swiss cheese} is either a singleton element in the field $\{a\}$ or a set of the form   $\displaystyle{A \backslash (B_{1}\cup \dots \cup B_{n})}$ where $A$ is a $1$-torsor for each $i \leq n$, $B_{i} \subsetneq A$ and the $B_{i}$ is either a $1$-torsor or a singleton element $\{b_{i}\}$ of the field. 
\item A \emph{basic positive congruence formula in the valued field} is a formula of the form $\displaystyle{zv_{\Delta}(x- \alpha)- \rho_{\Delta} (\beta) + k_{\Delta} \in n (\Gamma/\Delta)}$, where $k, z \in \mathbb{Z}$, $\alpha \in K$, $\beta \in \Gamma$, $n \in \mathbb{N}_{\geq 2}$ and $k_{\Delta}=k \cdot 1^{\Delta}$, where $1^{\Delta}$ is the minimum positive element of $\Gamma/\Delta$ if it exists. 
\item A \emph{basic negative congruence formula in the valued field} is a formula of the form $\displaystyle{zv_{\Delta}(x- \alpha)- \rho_{\Delta} (\beta) + k_{\Delta} \notin n(\Gamma/\Delta)}$, where $k, z \in \mathbb{Z}$, $\alpha \in K$, $\beta \in \Gamma$, $n \in \mathbb{N}_{\geq 2}$ and $k_{\Delta}=k \cdot 1^{\Delta}$, where $1^{\Delta}$ is the minimum positive element of $\Gamma/\Delta$ if it exists. 
\item A \emph{basic congruence formula in the valued field} is either a \emph{basic positive congruence formula in the valued fied} or a  \emph{basic negative congruence formula in the valued field}.
\item A \emph{finite congruence restriction in the valued field} is a finite conjunction of basic congruence formulas in the valued field. 
\item A \emph{nice set} is a set of the form $S \cap C$ where $S$ is a generalized swiss cheese and $C$ is the set defined by a finite congruence restriction in the valued field. 
\end{enumerate}
\end{definition}

To describe completely the definable subsets of $K$ we will need the following lemmas, which permit us to reduce the valuation of a polynomial into the valuation of linear factor of the form $v(x-a)$. We recall a definition and some results present in \cite{Flenner} that will be useful for this purpose. 
\begin{definition} Let $(K,w)$ be a henselian valued field,  $\alpha \in K$ and $S$ a swiss cheese. Let $p(x) \in K[x]$, we define: 
\begin{align*}
m(p,\alpha, S):= \max \{ i \leq d \ | \ \exists x \in S \ \forall j \leq d \ \big( w(a_{i}(x-\alpha)^{i}) \leq w\big( a_{j} (x-\alpha)^{j} \big) \},
\end{align*}
where the $a_{i}$ are the coefficients of the expansion of $p$ around $\alpha$, i.e. $\displaystyle{p(x)= \sum_{i \leq d} a_{i}(x-\alpha)^{i}}$.\\
Thus $m(p,\alpha, S)$ is the highest order term in $p$ centered at $\alpha$ which can have minimal valuation (among the other terms of $p$) in $S$. 
\end{definition}
The following is \cite[Proposition 3.4]{Flenner}. 
\begin{proposition} \label{decompositionFlenner}Let $\mathcal{K}$ be a valued field of characteristic zero. Let $p(x) \in K[x]$ and $S$ be a \emph{swiss cheese} in $K$. Then there are (disjoint) sub-swiss cheeses $T_{1}, \dots, T_{n} \subseteq S$ and $\alpha_{1}, \dots, \alpha_{n} \in K$ such that $\displaystyle{S = \bigcup_{1\leq i \leq n} T_{i}}$, where for all $x \in T_{i}$ 
$\displaystyle{w(p(x))= w \big( a_{im_{i}} (x-\alpha_{i})^{m_{i}}\big)}$, where $\displaystyle{p(x)= \sum_{n=0}^{d} a_{in}(x-\alpha_{i})^{n}}$ and $m_{i}= m(p,\alpha_{i}, T_{i})$. Furthermore, $\alpha_{1},\dots, \alpha_{k}$ can be taken algebraic over the subfield of $K$ generated by the coefficients of $p(x)$. 
\end{proposition}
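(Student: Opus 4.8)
The plan is to prove the statement by induction on $d=\deg p$, reducing $w(p(x))$ on each resulting piece to the valuation of a single monomial $a_{im}(x-\alpha_i)^{m}$; the argument is organized around the (finite) ultrametric tree of balls spanned by the roots of $p$ in $K^{\mathrm{alg}}$. First I would reduce to the case $S=K$: once $K$ has been decomposed into finitely many swiss cheeses with the required property, intersecting each of them with the given swiss cheese $S$ yields a Boolean combination of balls, which by Holly's swiss-cheese theorem is again a finite disjoint union of swiss cheeses, each contained in one of the original pieces; the associated center and exponent are simply inherited, so the algebraicity of the centers over the field $F$ generated by the coefficients of $p$ is preserved. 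The cases $d\le 1$ are immediate, since for $d=1$ the unique root $-a_0/a_1$ already lies in $K$.

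For $d\ge 2$, write $p=c\prod_{k\le d}(x-\beta_k)$ with the $\beta_k\in K^{\mathrm{alg}}$ algebraic over $F$, extend $w$ to a splitting field, and let $\bar B_r(c_0)$ be the smallest $K$-rational closed ball containing all the $\beta_k$ (irrational root distances cause no trouble, since the corresponding spheres carry no point of $K$). Because $\operatorname{char} K=0$, the Galois average $[K(c_0):K]^{-1}\sum_\sigma\sigma c_0$ lies in this ball, so I may take $c_0\in K$, algebraic over $F$; the same averaging provides $K$-rational, algebraic-over-$F$ centers $c_j$ for each maximal proper $K$-rational sub-ball $\bar B_{r_j}(c_j)$ that contains a root. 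The key point is that, since $c_j\in K$, the set $C_j$ of roots inside $\bar B_{r_j}(c_j)$ is $\operatorname{Gal}(K^{\mathrm{alg}}/K)$-invariant, so the partial product $q_j:=c\prod_{k\in C_j}(x-\beta_k)$ lies in $K[x]$ and has degree $|C_j|<d$ (the strict inequality by minimality of $\bar B_r(c_0)$). I would then split $K$ into: the exterior $\{w(x-c_0)<r\}$, on which the top term of the $c_0$-expansion of $p$ is strictly dominant, so $w(p(x))=w(a_d(x-c_0)^d)$; the root-free annuli between $\bar B_r(c_0)$ and its sub-balls, on each of which a single term of the expansion at the relevant center is strictly dominant; the spherical shell $\{w(x-c_0)=r\}\smallsetminus\bigcup_j\bar B_{r_j}(c_j)$ (disjointified via Holly), on which $w(x-c_0)$ is pinned to $r$, so that the finitely many terms lying on the slope-$(-r)$ edge of the Newton polygon all take the common value $w(p(x))=w(c)+dr=w(a_d(x-c_0)^d)$; and, for each child, the region $\bar B_{r_j}(c_j)$, where the factor $\prod_{k\notin C_j}(x-\beta_k)$ contributes a constant to the valuation $w(p(x))$ and, after expanding at a point $\alpha$ of the child, a short Newton-polygon-of-a-product computation gives $w(p(x))=w(q_j(x))+\text{const}$ with the dominant term of $p$ governed by that of $q_j$ — so the inductive hypothesis applied to $q_j$ on sub-swiss-cheeses of $\bar B_{r_j}(c_j)$ finishes these regions. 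Since the tree of balls of the roots has fewer than $d$ branch points, this produces finitely many pieces, and every center used is a ball center, hence a Galois average of a subset of $\{\beta_k\}$, hence algebraic over $F$.

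The main obstacle is the bookkeeping that makes the reduction to the sub-products $q_j$ rigorous and keeps the number of pieces finite: one must show that on a child $\bar B_{r_j}(c_j)$ the $0$-th term of the complementary factor's expansion at $\alpha$ strictly dominates, pass this through the product to identify $m(p,\alpha,T)$ with the exponent the induction produces for $q_j$, and verify that the specific index $m(p,\alpha,T)$ — the \emph{largest} term index attaining the minimum, not merely some index — yields $w(p(x))=w(a_m(x-\alpha)^m)$; this forces a further refinement so that on each final piece a unique term is strictly dominant (the shell pieces being the lone exception, where the tie is harmless because $w(x-c_0)$ is constant there). The characteristic-zero hypothesis enters only through the Galois-averaging used to place the ball centers in $K$, which is precisely where a mixed-characteristic analogue would require extra work.
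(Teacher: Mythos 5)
The paper does not actually prove this proposition; it is quoted verbatim as \cite[Proposition 3.4]{Flenner}, so there is no internal proof to compare against. Your high-level strategy --- induction on $\deg p$ organized around the ultrametric tree of balls spanned by the roots in $K^{\mathrm{alg}}$, with Galois averaging used to place $K$-rational, algebraic-over-$F$ centers, and the Newton polygon controlling which term of the $c_0$-expansion dominates on each piece --- is indeed the standard route and is essentially what Flenner does, and you correctly isolate the one place characteristic zero is needed.

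There is, however, a concrete gap in your treatment of the spherical shell. You define the shell as $\{w(x-c_0)=r\}\setminus\bigcup_j \bar{B}_{r_j}(c_j)$ with $r_j>r$, and you assert that on it $w(p(x))=w(c)+dr$. That identity requires $w(x-\beta_k)=r$ for every root $\beta_k$, which fails on part of the shell as you have cut it. If $\beta_k$ lies on the sphere $w(y-c_0)=r$ and $c_j$ is its $K$-rational approximation with child ball $\bar{B}_{r_j}(c_j)\ni\beta_k$, then $\bar{B}_{r_j}(c_j)\subseteq B_r(\beta_k)$, and any $x\in B_r(c_j)\setminus\bar{B}_{r_j}(c_j)$ (such points exist whenever this containment is strict, e.g.\ when the value group is dense) satisfies $r<w(x-c_j)=w(x-\beta_k)$, so $w(p(x))=w(c)+\sum_k w(x-\beta_k)>w(c)+dr$. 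The correct shell piece must therefore remove the \emph{open} balls $B_r(c_j)$ for each $c_j$ on the sphere, not only the closed children $\bar{B}_{r_j}(c_j)$; the residual annuli $B_r(c_j)\setminus\bar{B}_{r_j}(c_j)$ then need to appear as separate pieces of the decomposition, handled by expanding around $c_j$. Relatedly, as written the four regions you list (exterior, annuli, shell, children) are not visibly disjoint and do not obviously partition $K$: your ``root-free annuli between $\bar{B}_r(c_0)$ and its sub-balls'' overlaps the shell for $c_j$ on the sphere and collapses to $B_r(c_0)\setminus\bar{B}_{r_j}(c_j)$ for $c_j$ in the interior. None of this breaks the strategy, but the shell/annulus bookkeeping has to be redone with the open/closed distinction kept explicit before the claimed valuation identities hold piece by piece.
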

Though the preceding proposition is stated for a single polynomial, the same result will hold for any finite number of polynomials $\Sigma$. To obtain the desired decomposition, simply apply the proposition to each $p(x) \in \Sigma$, then intersect the resulting partitions to get one that works for all $p(x) \in \Sigma$, using the fact that intersection of two swiss cheeses is again a swiss cheese. 

\begin{fact}\label{difpol}Let $(K,w)$ be a henselian valued field of equicharacteristic zero, and $Q_{1}(x), Q_{2}(x) \in K[x]$ be two polynomials in a single variable. Let $R=\{ x \in K \ | \ Q_{2}(x)=0\}$. There is a finite union of swiss cheeses $\displaystyle{K=\bigcup_{i \leq k}T_{i}}$, coefficients $\epsilon_{i} \in K$, elements $\gamma_{i} \in \Gamma$ and integers $z_{i} \in \mathbb{Z}$ such that for any $x \in T_{i} \backslash R$:
\begin{align*}
w(Q_{1}(x))-w(Q_{2}(x))=\gamma_{i}+ z_{i}w(x-\epsilon_{i}).
\end{align*}
\end{fact}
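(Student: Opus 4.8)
\textbf{Proof proposal for Fact \ref{difpol}.}

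The plan is to reduce the statement about the quotient $w(Q_1(x)) - w(Q_2(x))$ to a statement about each factor separately, using the decomposition result of Flenner recalled in Proposition \ref{decompositionFlenner}. First I would apply Proposition \ref{decompositionFlenner} simultaneously to the two polynomials $Q_1(x)$ and $Q_2(x)$ — as remarked immediately after that proposition, the common refinement of the two partitions of $K$ (viewed as a trivial swiss cheese) into sub-swiss cheeses still works for both, since the intersection of two swiss cheeses is a swiss cheese. This yields a finite partition $K = \bigcup_{i \leq k} T_i$ into swiss cheeses, together with centers $\alpha_i, \alpha_i' \in K$ (algebraic over the field generated by the coefficients of $Q_1$, resp. $Q_2$) and indices $m_i = m(Q_1, \alpha_i, T_i)$, $m_i' = m(Q_2, \alpha_i', T_i)$, such that for all $x \in T_i$ one has $w(Q_1(x)) = w\big(a_{i m_i}(x - \alpha_i)^{m_i}\big) = w(a_{i m_i}) + m_i\, w(x - \alpha_i)$ and similarly $w(Q_2(x)) = w(b_{i m_i'}) + m_i'\, w(x - \alpha_i')$, where the $b$'s are the coefficients of the expansion of $Q_2$ around $\alpha_i'$.

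With this in hand, on each piece $T_i$ (intersected with $K \setminus R$, so that $Q_2$ does not vanish) we get
\begin{align*}
w(Q_1(x)) - w(Q_2(x)) = \big(w(a_{i m_i}) - w(b_{i m_i'})\big) + m_i\, w(x - \alpha_i) - m_i'\, w(x - \alpha_i').
\end{align*}
The only remaining issue is that this expression involves two distinct linear terms $w(x-\alpha_i)$ and $w(x-\alpha_i')$ rather than a single one of the form $z_i\, w(x - \epsilon_i)$. To fix this I would further subdivide $T_i$ according to the comparison of $w(x - \alpha_i)$ and $w(x - \alpha_i')$: on the sub-swiss cheese where $w(x - \alpha_i) < w(x - \alpha_i')$ we have $w(x - \alpha_i) = w(x - \alpha_i')$ by the ultrametric inequality unless $w(x-\alpha_i) > w(\alpha_i - \alpha_i')$; more precisely, for $x$ with $w(x - \alpha_i) \leq w(\alpha_i - \alpha_i')$ one has $w(x-\alpha_i') = w(x-\alpha_i)$, and for $x$ with $w(x-\alpha_i) > w(\alpha_i - \alpha_i')$ one has $w(x-\alpha_i') = w(\alpha_i - \alpha_i') \in \Gamma$ is constant on that piece. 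Each of these conditions carves out a ball (hence intersecting with $T_i$ still yields a swiss cheese), and on each such piece the difference collapses to the desired form $\gamma_j + z_j\, w(x - \epsilon_j)$ with $z_j \in \{m_i - m_i', -m_i'\}$ or $\{m_i\}$ and $\epsilon_j$ equal to $\alpha_i$ or $\alpha_i'$; the extra constant terms $m_i'\, w(\alpha_i - \alpha_i')$ etc.\ are absorbed into $\gamma_j$. Relabelling the finitely many resulting pieces gives the statement.

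The main obstacle I anticipate is the bookkeeping in the last step: making sure that after all the subdivisions the pieces $T_i$ are still finite unions of swiss cheeses (which is fine, since we only ever intersect with balls and the class of finite unions of swiss cheeses is closed under this), and verifying that the degenerate cases — $m_i = 0$, $\alpha_i = \alpha_i'$, $x \in R$ meeting some $T_i$ — are handled correctly; none of these is conceptually hard, but they need care. One should also note that $w(p(x)) = w(a_{im}) + m\, w(x-\alpha)$ as an identity in $\Gamma$ uses that the value group is written additively and that $\Gamma$ is torsion-free only insofar as $m\, w(x-\alpha)$ is just iterated addition; no divisibility hypothesis on $\Gamma$ is needed here, so the equicharacteristic-zero, bounded-regular-rank setting is not actually used beyond henselianity and characteristic zero for Proposition \ref{decompositionFlenner}.
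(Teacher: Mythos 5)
Your overall strategy is the one the paper intends (it leaves this to the reader after Proposition~\ref{decompositionFlenner}): apply Flenner's decomposition simultaneously to $Q_1$ and $Q_2$ to get a common partition of $K$ into swiss cheeses with centers $\alpha_i$ and $\alpha_i'$, then refine each piece so that the two linear terms $w(x-\alpha_i)$ and $w(x-\alpha_i')$ collapse to a single one. The one place where your write-up is actually wrong is the boundary of the ultrametric trichotomy: you assert that for $w(x-\alpha_i)\leq w(\alpha_i-\alpha_i')$ one has $w(x-\alpha_i')=w(x-\alpha_i)$, but the ultrametric inequality only gives this for \emph{strict} inequality. On the annulus $\{x:\ w(x-\alpha_i)=w(\alpha_i-\alpha_i')\}$ (which is a closed ball minus an open ball, hence a swiss cheese, so refining along it is harmless), the two summands of $x-\alpha_i'=(x-\alpha_i)+(\alpha_i-\alpha_i')$ have equal valuation and may cancel, so $w(x-\alpha_i')$ can be strictly larger and indeed varies over the piece. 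The correct move there is the opposite substitution: on that annulus $w(x-\alpha_i)$ is \emph{constant}, equal to $w(\alpha_i-\alpha_i')$, so one keeps $-m_i'\,w(x-\alpha_i')$ as the variable term with $\epsilon=\alpha_i'$, $z=-m_i'$, absorbing $m_i\,w(\alpha_i-\alpha_i')$ into $\gamma$. You do list $-m_i'$ among the possible values of $z_j$, so you evidently expect this case to occur, but the dichotomy as written does not produce it; with the trichotomy spelled out correctly (giving $z\in\{m_i-m_i',\ m_i,\ -m_i'\}$ on the three sub-pieces) the argument goes through.
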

\begin{proof}
The statement is a straightforward computation after applying Proposition \ref{decompositionFlenner}, and it is left to the reader. 
\end{proof}

\begin{proposition}\label{des} Let $K \vDash T$, for each $\Delta \in RJ(\Gamma)$ let $v_{\Delta}: K \rightarrow \Gamma/\Delta$ be the coarsened valuation induced by $\Delta$. Let  $Q_{1}(x),Q_{2}(x) \in K[x]$ and $R=\{ x \in K \ | \ Q_{1}(x)=0$ or $Q_{2}(x)=0\}$. 
Let $X \subseteq K \backslash R$ be the set defined by a formula of the form:
\begin{align*}
\gamma \leq^{\Delta} v_{\Delta}(Q_{1}(x))- v_{\Delta}(Q_{2}(x)) \ \text{or} \ v_{\Delta}\left(\frac{Q_{1}(x)}{Q_{2}(x)}\right)- \gamma \in n \left( \Gamma/ \Delta \right);
\end{align*}
 where $\gamma \in \Gamma/\Delta$ and $n \in \mathbb{N}$.
 Then $X$ is a finite union of nice sets.
\end{proposition}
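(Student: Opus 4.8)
The plan is to reduce the statement defining $X$ to conditions on $v(x-\alpha)$ for finitely many algebraic points $\alpha$ and then recognize the resulting sets as nice sets. First I would apply Fact \ref{difpol} to the pair $Q_1(x), Q_2(x)$ (in the original valuation $w = v$), obtaining a finite cover $K = \bigcup_{i \le k} T_i$ by swiss cheeses, together with $\epsilon_i \in K$, $\gamma_i \in \Gamma$ and $z_i \in \mathbb{Z}$ such that for every $x \in T_i \setminus R$ one has $v(Q_1(x)) - v(Q_2(x)) = \gamma_i + z_i v(x - \epsilon_i)$. Applying the coarsening projection $\rho_\Delta : \Gamma \to \Gamma/\Delta$ and using $v_\Delta = \rho_\Delta \circ v$, this yields, on each piece, $v_\Delta(Q_1(x)) - v_\Delta(Q_2(x)) = \rho_\Delta(\gamma_i) + z_i v_\Delta(x - \epsilon_i)$. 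Since intersecting a definable subset of $K$ with a swiss cheese is harmless (an intersection of swiss cheeses is a swiss cheese, and a swiss cheese is in particular a generalized swiss cheese), it suffices to analyze $X \cap T_i$ for each $i$ separately.

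Next I would substitute this expression into each of the two defining alternatives. For the inequality $\gamma \le^\Delta v_\Delta(Q_1(x)) - v_\Delta(Q_2(x))$, on $T_i \setminus R$ it becomes $\gamma \le^\Delta \rho_\Delta(\gamma_i) + z_i v_\Delta(x-\epsilon_i)$, i.e.\ $z_i v_\Delta(x - \epsilon_i) \ge^\Delta \gamma - \rho_\Delta(\gamma_i)$ (reversing the inequality and accounting for sign if $z_i < 0$, with the degenerate case $z_i = 0$ giving either $T_i \setminus R$ or $\emptyset$). The set of $x$ with $z_i v_\Delta(x - \epsilon_i)$ in a fixed end-segment of $\Gamma/\Delta$, after pulling back along $\rho_\Delta$, is exactly the set $\{ x : v(x - \epsilon_i) \in S\}$ for a suitable definable end-segment $S$ of $\Gamma$ that is a union of $\Delta$-cosets; by Fact \ref{endsegement} and Fact \ref{completemodules} this is a scaling of an $\mathcal{O}$-submodule $M_S$ with $S$ in the complete family, hence (up to translation by $\epsilon_i$) a $1$-torsor, which is a generalized swiss cheese. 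Intersecting with $T_i$ and removing the finitely many points of $R$ in $T_i$ again gives a generalized swiss cheese (or a finite union of them, once we write the torsor minus the smaller balls/points in swiss-cheese normal form). For the congruence alternative $v_\Delta(Q_1(x)/Q_2(x)) - \gamma \in n(\Gamma/\Delta)$, the same substitution turns it into $z_i v_\Delta(x - \epsilon_i) + \rho_\Delta(\gamma_i) - \gamma \in n(\Gamma/\Delta)$, which is precisely a basic positive congruence formula in the valued field centered at $\alpha = \epsilon_i$ (with the coefficient $z$, the constant $\beta$ absorbing $\gamma_i - \gamma$, and possibly a shift by a representative $k_\Delta$). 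Thus $X \cap T_i$ is the intersection of a generalized swiss cheese with a congruence-defined set, i.e.\ a nice set, and $X$ is the finite union of these.

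The main obstacle I expect is bookkeeping at the boundary: the decomposition pieces $T_i$ are swiss cheeses in the field, not $1$-torsors, and the zero set $R$ of $Q_1 Q_2$ must be excised; one has to check carefully that intersecting a $1$-torsor (coming from the end-segment/module description) with a swiss cheese $T_i$ and then deleting finitely many points still lands inside the class of generalized swiss cheeses — possibly after splitting into a finite union. This is a routine but slightly delicate normal-form argument: a $1$-torsor $a + bI$ intersected with a ball, minus finitely many sub-balls and points, decomposes into finitely many generalized swiss cheeses because the relevant collection of sets is nested (balls and torsors for a fixed valuation are linearly ordered by inclusion once they meet), so the standard swiss-cheese decomposition lemma applies verbatim. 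The other point requiring a little care is the sign of $z_i$ and the degenerate case $z_i = 0$ in translating inequalities on $z_i v_\Delta(x-\epsilon_i)$ into end-segment membership of $v(x-\epsilon_i)$, but this only affects which end-segment $S$ one reads off, not the structure of the argument.
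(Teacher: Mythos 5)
Your proposal is correct and is essentially the paper's intended proof: apply Fact \ref{difpol} to reduce the valuation of the rational function to a linear expression $\gamma_i + z_i\, v(x - \epsilon_i)$ (and hence, after projecting, $\rho_\Delta(\gamma_i) + z_i\, v_\Delta(x-\epsilon_i)$) on each piece of a swiss-cheese decomposition, and then recognize the resulting pieces as nice sets. The only cosmetic difference is that the paper's hint is to apply Fact \ref{difpol} directly with $w = v_\Delta$, using the preliminary observation that a $v_\Delta$-swiss cheese is a generalized swiss cheese with respect to $v$, whereas you apply it with $w = v$ and push the resulting identity forward along $\rho_\Delta$; both lead to the same conclusion.
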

\begin{proof}
First we observe that a swiss cheese with respect to the coarsened valuation $v_{\Delta}$ is a generalized swiss cheese with respect to $v$. The statement follows by a  straightforward computation after applying Fact \ref{difpol}, and it is left to the reader. 
\end{proof}
We conclude this section by characterizing the definable sets in $1$-variable.
\begin{theorem}\label{1def} Let $K \vDash T$ and $X \subseteq K$ be a definable set. Then $X$ is a finite union of nice sets. 
\end{theorem}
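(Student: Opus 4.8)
The plan is to apply relative quantifier elimination and then reduce, literal by literal, to the three basic ingredients of a nice set (generalized swiss cheeses, whose behaviour is controlled by the completeness of the module family $\mathcal{F}$, and congruence restrictions), using Fact \ref{difpol} and Proposition \ref{des} to deal with everything involving the value group. By Corollary \ref{QEregular}, $X$ is defined by a quantifier-free $\mathcal{L}$-formula $\phi(x)$ in the single main-field variable $x$. Putting $\phi$ in disjunctive normal form, it suffices to show that the set defined by a single literal (atomic or negated atomic) is a finite union of nice sets, and that the class of finite unions of nice sets is closed under finite intersection; the outer disjunction and the conjunctions of clauses are then handled by finite unions. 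The literals involving $x$ fall into three families: (i) main-field equations $P(x)=0$ and their negations; (ii) residue-sort relations, i.e. polynomial relations among terms $\res(P_j(x))$ for rational functions $P_j$, together with their negations; (iii) value-group (in)equalities, divisibility predicates $P_n$, and convex-subgroup predicates, applied to $\mathbb{Z}$-linear combinations of terms $v(P_j(x))$ or of their coarsenings $v_\Delta(P_j(x))=\rho_\Delta(v(P_j(x)))$ for $\Delta\in RJ(\Gamma)$, together with their negations.

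Family (i) is immediate: $P(x)=0$ defines all of $K$ or a finite union of singletons, $P(x)\neq 0$ a cofinite set, and all of these are generalized swiss cheeses, hence nice. For family (ii) I would first observe that $\res(P_j(x))=\infty$ is the valuation condition $v(P_j(x))<0$, so after a finite case-split on which of the (finitely many) $P_j(x)$ lie in $\mathcal{O}$ — each alternative being a condition of family (iii) — one may assume the relevant $P_j(x)$ all lie in $\mathcal{O}$, where $\res$ restricts to a ring homomorphism. Lifting the coefficients of the relation to $\mathcal{O}$ by surjectivity of $\res$, a relation $R\big(\res(P_1(x)),\dots,\res(P_m(x))\big)=0$ becomes $\res(Q(x))=0$ for a single $Q\in K[x]$, i.e. the valuation condition $v(Q(x))>0$, whose negation is $v(Q(x))\le 0$. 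Thus family (ii) reduces to family (iii).

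For family (iii), write each $\mathbb{Z}$-linear combination $\sum_j z_j v(P_j(x))$ as $v(Q_1(x))-v(Q_2(x))$ with $Q_1=\prod_{z_j>0}P_j^{z_j}$ and $Q_2=\prod_{z_j<0}P_j^{-z_j}$, and apply Fact \ref{difpol} simultaneously to all the polynomials that occur, refining to a common swiss-cheese partition of $K$ (legitimate because intersections of swiss cheeses are swiss cheeses): on each piece $T_i$ the combination has the form $\gamma_i+z_i\,v(x-\epsilon_i)$, and composing with $\rho_\Delta$ handles the coarsened combinations. One is then exactly in the setting of Proposition \ref{des} — after the trivial remarks that an equation is a conjunction of two inequalities, that $t\in\Delta$ is the equation $\rho_\Delta(t)=0$ in $\Gamma/\Delta$, which on each piece becomes $z_i v_\Delta(x-\epsilon_i)=\eta_0$ (with at most one such $\eta_0$, since $\Gamma/\Delta$ is torsion-free) and hence cuts out a $v_\Delta$-swiss cheese, and that every swiss cheese for a coarsened valuation $v_\Delta$ is a generalized swiss cheese for $v$, which is where Fact \ref{completemodules} enters. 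Hence each set in family (iii), and its negation, is a finite union of nice sets.

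Finally, for closure under finite intersection: given two nice sets $S_1\cap C_1$ and $S_2\cap C_2$, the conjunction $C_1\wedge C_2$ is again a finite congruence restriction, while $S_1\cap S_2$ is a generalized swiss cheese or empty — the point being that the intersection of two $1$-torsors $a_1+I_1$ and $a_2+I_2$, if non-empty, is a coset of the definable $\mathcal{O}$-submodule $I_1\cap I_2$, which by completeness of $\mathcal{F}$ (Fact \ref{completemodules}) is a scaling of a module in $\mathcal{F}$ and hence again a $1$-torsor, and the finitely many holes carry over after intersecting with the new outer torsor. Combining everything, each clause of the disjunctive normal form of $\phi$ defines a finite union of nice sets, and so does $\phi$; thus $X$ is a finite union of nice sets, proving Theorem \ref{1def}. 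The step I expect to be the main obstacle is the careful coordination of all these reductions on a single common partition of $K$: one must arrange the case-split on $\mathcal{O}$-membership, the coarsenings by the various $\Delta\in RJ(\Gamma)$, and the applications of Fact \ref{difpol} and Proposition \ref{des} so that every relevant expression becomes linear of the form $\gamma_i+z_i v(x-\epsilon_i)$ on each piece, and then to recognize the resulting atoms — balls for the coarsened valuations (translates of modules in $\mathcal{F}$) and basic congruence formulas — as precisely the constituents allowed in a nice set.
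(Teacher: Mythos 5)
Your proposal is correct and takes essentially the same route as the paper: quantifier elimination (Corollary \ref{QEregular}), reduction via Fact \ref{difpol} and Proposition \ref{des}, and closure of generalized swiss cheeses under boolean operations. You in fact spell out, more carefully than the paper's terse proof, why polynomial equations and residue-sort atomics reduce to the two value-group forms handled by Proposition \ref{des}, and why the class of finite unions of nice sets is closed under finite intersection.
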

\begin{proof}
By Corollary \ref{QEregular} , $X$ is a boolean combination of sets defined by formulas of the form\\
$\displaystyle{\gamma \leq^{\Delta} v_{\Delta}(Q_{1}(x))- v_{\Delta}(Q_{2}(x))}$ or
$\displaystyle{v_{\Delta}\left(\frac{Q_{1}(x)}{Q_{2}(x)}\right)- \gamma \in n \left( \Gamma/ \Delta \right)}$,
where $\Delta \in RJ(\Gamma)$, $\gamma \in \Gamma/\Delta$ and $n \in \mathbb{N}_{\geq 2}$. By Proposition \ref{des} each of these formulas defines a finite union of nice sets. Because the intersection of two generalized swiss cheeses is again a generalized swiss cheese and the complement of a generalized swiss cheese is a finite union of generalized swiss cheeses the statement follows. 
\end{proof}

\subsection{$\mathcal{O}$-modules and homomorphisms in maximal valued fields}
In this section we recall some results about modules over maximally complete valued fields. We follow ideas of Kaplansky in \cite{Kaplansky} to characterize the $\mathcal{O}$-submodules of finite dimensional $K$-vector spaces. 
\begin{definition} 
\begin{enumerate}
\item Let $K$ be a valued field and $\mathcal{O}$ its valuation ring. We say that $K$ is \emph{maximal}, if whenever $\alpha_{r} \in K$ and (integral or fractional) ideals $I_{r}$ are such that the congruences $x-\alpha_{r} \in I_{r}$ are pairwise consistent, then there exists in $K$ a simultaneous solution of all the congruences. 
\item Let $K$ be a valued field and $M \subseteq K^{n}$ be an $\mathcal{O}$-module. We say that $M$ is \emph{maximal} if whenever ideals $I_{r} \subseteq \mathcal{O}$ and elements $s_{r} \in M$ are such that $x-s_{r} \in I_{r}M$ is pairwise consistent in $M$, then there exists in $M$ a simultaneous solution of all the congruences. 
  \item Let $N\subseteq K^{n}$ be an $\mathcal{O}$-submodule. Let $x \in N$ we say that $x$ is \emph{$\alpha$-divisible in $N$} if there is some $n \in N$ such that $x=\alpha n$. 
\end{enumerate}
\end{definition}
We start by recalling a very useful fact.
\begin{fact}\label{maxext} Let $K$ be a henselian valued field of equicharacteristic zero, then there is an elementary extension $K \prec K'$ that is maximal. 
\end{fact}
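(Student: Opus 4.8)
The plan is to build an elementary extension of $K$ that is maximal by a standard saturation-plus-embedding argument, exploiting the fact that maximality of a valued field can be detected by the consistency of small systems of ball-inclusions, i.e. pseudo-Cauchy behaviour. First I would recall that a valued field is maximal (spherically complete) precisely when it has no proper immediate extension; equivalently, every pseudo-Cauchy sequence has a pseudo-limit in the field (this is Kaplansky's characterization, and it is exactly the "pairwise consistent congruences $x - \alpha_r \in I_r$ have a simultaneous solution" condition in the definition above). Since immediate extensions do not change the value group or the residue field, any maximal extension $K'$ we produce will automatically have equicharacteristic zero and algebraically closed residue field, so the only thing to check is maximality itself.

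The key steps, in order. First, pass to a sufficiently saturated elementary extension: choose a cardinal $\kappa > |K|$ and let $K \prec K'$ with $K'$ being $\kappa^{+}$-saturated (such $K'$ exists by general model theory). Second, claim $K'$ is maximal. Suppose toward a contradiction that it is not; then there is a pseudo-Cauchy sequence $(\alpha_r)_{r < \lambda}$ in $K'$, indexed by some limit ordinal $\lambda$, with no pseudo-limit in $K'$. One may take $\lambda$ to be of cofinality at most $|K'|$, but after passing to a cofinal subsequence we may as well assume $\lambda \le |K'| < \kappa$ — here is where saturation enters. Third, the partial type $p(x)$ over the parameter set $\{\alpha_r : r < \lambda\}$ consisting of the formulas $v(x - \alpha_r) = v(\alpha_{r+1} - \alpha_r)$ for all $r < \lambda$ (or the corresponding ball-membership formulas expressing that $x$ lies in the chain of shrinking balls determined by the sequence) is finitely satisfiable in $K'$: any finite subset mentions only finitely many indices, and a sufficiently late $\alpha_s$ in the sequence already satisfies those finitely many conditions, since the balls are nested. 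The parameter set has size $\lambda < \kappa$, so by $\kappa^{+}$-saturation $p(x)$ is realized by some $b \in K'$; but then $b$ is a pseudo-limit of $(\alpha_r)$ in $K'$, contradicting our assumption.

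The main obstacle — really the only place one must be slightly careful — is making sure the reduction to a pseudo-Cauchy sequence of length smaller than the saturation degree is legitimate: one needs that failure of maximality is witnessed by a pseudo-Cauchy sequence of length equal to its cofinality, and that this cofinality is a cardinal $\le |K'|$, so that it can be absorbed by choosing $\kappa$ large enough at the outset. This is handled by the standard fact that if a valued field is not spherically complete then it has a pseudo-Cauchy sequence without pseudo-limit of length some regular cardinal $\le |K'|$ (pass to a cofinal well-ordered subsequence and then to one indexed by the cofinality). Once that is in place, the saturation argument goes through verbatim, and since $K \prec K'$ the extension is elementary and equicharacteristic zero with algebraically closed residue field is preserved. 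I would cite Kaplansky \cite{Kaplansky} for the pseudo-Cauchy characterization of maximality and leave the routine saturation computation to the reader.
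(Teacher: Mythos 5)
Your proposal contains a genuine gap, and it is a conceptual one rather than a matter of detail. You reduce to a pseudo-Cauchy sequence $(\alpha_r)_{r<\lambda}$ of regular length $\lambda$ and then try to realize the corresponding partial type by $\kappa^{+}$-saturation, asserting along the way that $\lambda \le |K'| < \kappa$. But a $\kappa^{+}$-saturated model $K'$ necessarily has $|K'| \ge \kappa^{+} > \kappa$, so that inequality is vacuous; and more importantly, the cofinality of a pseudo-Cauchy sequence in $K'$ is bounded only by quantities such as the coinitiality of segments of $\Gamma(K')$, which grows as $K'$ is made more saturated. In a $\kappa^{+}$-saturated model those cofinalities can be $\kappa^{+}$ or larger, so the relevant partial type has too many parameters for $\kappa^{+}$-saturation to apply. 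No choice of $\kappa$ at the outset repairs this, since enlarging the saturation degree also enlarges the set of possible lengths of pseudo-Cauchy sequences. In fact the statement your argument would establish --- that any sufficiently saturated elementary extension of a valued field is spherically complete --- is false: a saturated model of $ACVF_{0,0}$ of size $\kappa$ has value group $\Gamma'$ and residue field $k'$ both of size $\kappa$, while its spherical completion $k'((t^{\Gamma'}))$ has size $\kappa^{\kappa} > \kappa$, so the saturated model is a proper subfield of its spherical completion and hence not maximal.

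This is also why your proposal never uses the hypotheses that $K$ is henselian of equicharacteristic zero --- a warning sign, since those hypotheses are essential. The intended argument (and the one the paper gives) goes in a different direction: inside the monster model of $\mathrm{Th}(K)$, take a maximal immediate extension $K \subseteq F$ of $K$ (this exists by the Krull/Kaplansky theorem, which is an application of Zorn's lemma and gives no control over cardinality or saturation), and then invoke the Ax--Kochen--Ershov-type result that in equicharacteristic zero, an immediate extension of a henselian valued field is an elementary extension. That second step is exactly where henselianity and equicharacteristic zero are used, and it bypasses the cofinality problem entirely because maximality of $F$ is supplied directly by Zorn rather than by realizing a type.
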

\begin{proof}
Let $K$ be a henselian valued field of equicharacteristic zero, let $T$ be its $\mathcal{L}_{\val}$-complete first order theory and $\mathfrak{C}$ the monster model of $T$. By \cite[Lemma 4.30]{dries} there is some maximal immediate extension of $K \subseteq F \subseteq \mathfrak{C}$. By \cite[Theorem 7.12]{dries} $K \prec F$. 
\end{proof}
The following is \cite[Lemma 5]{Kaplansky}.
\begin{lemma}\label{product} Let $K$ be a maximal valued field, then any (integral or fractional) ideal $I$ of $\mathcal{O}$ is maximal as an $\mathcal{O}$-submodule of $K$. Moreover, any finite direct sum of maximal $\mathcal{O}$-modules is also maximal. 
\end{lemma}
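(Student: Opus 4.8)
The statement to prove is Lemma~\ref{product}, which I will restate for clarity: if $K$ is a maximal valued field, then any (integral or fractional) ideal $I$ of $\mathcal{O}$ is maximal as an $\mathcal{O}$-submodule of $K$, and any finite direct sum of maximal $\mathcal{O}$-modules is again maximal.

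\textbf{Plan for the ideal part.} The plan is to unwind the definition of maximality for the $\mathcal{O}$-module $I$ and reduce it to the maximality of $K$ itself. Suppose we are given elements $\alpha_r \in I$ and ideals $J_r \subseteq \mathcal{O}$ such that the congruences $x - \alpha_r \in J_r I$ are pairwise consistent in $I$. The key observation is that each $J_r I$ is again an ideal of $\mathcal{O}$ (integral or fractional) contained in $I$: since $I$ is an ideal and $J_r \subseteq \mathcal{O}$, the product $J_r I$ is an $\mathcal{O}$-submodule of $K$ of the form $\{x : v(x) \in S\}$ for the appropriate end-segment, hence an ideal. Pairwise consistency of $x - \alpha_r \in J_r I$ in $I$ is the same as pairwise consistency of the system of congruences $x - \alpha_r \in J_r I$ in $K$ (the $\alpha_r$ already lie in $I \subseteq K$, and $J_r I$ is an ideal of $K$). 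By maximality of $K$ there is a simultaneous solution $\beta \in K$. It then remains to check $\beta \in I$: pick any fixed index $r_0$; then $\beta - \alpha_{r_0} \in J_{r_0} I \subseteq I$ and $\alpha_{r_0} \in I$, so $\beta \in I$. Thus $\beta$ is a simultaneous solution inside $I$, proving $I$ is maximal as an $\mathcal{O}$-module.

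\textbf{Plan for the direct sum part.} Here I would argue it suffices to treat $M \oplus N$ where $M$ and $N$ are maximal $\mathcal{O}$-modules, and then induct on the number of summands. Given ideals $I_r \subseteq \mathcal{O}$ and elements $s_r = (m_r, n_r) \in M \oplus N$ with the congruences $x - s_r \in I_r(M \oplus N) = I_r M \oplus I_r N$ pairwise consistent in $M \oplus N$, the point is that consistency decouples coordinatewise: $(m, n) - (m_r, n_r) \in I_r M \oplus I_r N$ iff $m - m_r \in I_r M$ and $n - n_r \in I_r N$. Hence the system $x - m_r \in I_r M$ is pairwise consistent in $M$ and the system $y - n_r \in I_r N$ is pairwise consistent in $N$; by maximality of $M$ and $N$ respectively we obtain $m \in M$, $n \in N$ solving each, and then $(m,n) \in M \oplus N$ solves the original system. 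Induction extends this to any finite direct sum.

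\textbf{Main obstacle.} The genuinely routine part is the direct-sum argument, which is just bookkeeping with coordinates. The slightly more delicate point, and the one I would be careful to spell out, is that in the ideal part the products $J_r I$ are again ideals of $\mathcal{O}$ (so that the hypothesis of maximality of $K$ actually applies) and that the resulting solution $\beta$ genuinely lands in $I$ rather than merely in $K$ — both of which follow cleanly once one uses Fact~\ref{endsegement} identifying $\mathcal{O}$-submodules of $K$ with end-segments of $\Gamma$, under which the product $J_r I$ corresponds to the translate of the end-segment of $I$ by the end-segment of $J_r$. I do not anticipate a serious difficulty; the content is entirely in correctly translating ``maximal as an $\mathcal{O}$-module'' back into ``maximal as a valued field'' for the ideal case.
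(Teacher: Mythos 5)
Your proof is correct, and it is the standard argument (the paper itself gives no proof, citing this as Lemma~5 of Kaplansky's \emph{Modules over Dedekind rings and valuation rings}). For the ideal case you correctly observe that each $J_rI$ is again an (integral or fractional) ideal so that maximality of $K$ applies, and that the resulting solution lands back in $I$ because $J_{r_0}I\subseteq I$; for the direct-sum case the coordinatewise decoupling via $I_r(M\oplus N)=I_rM\oplus I_rN$ is exactly right, and finite induction finishes.
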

\begin{fact}\label{1dimmod} Let $N \subseteq K$ be a non-trivial $\mathcal{O}$-submodule. Let $n \in N\backslash \{0\}$ then $N=n I$ where $I$ is a copy of $K$, $\mathcal{O}$ or an (integral or fractional) ideal of $\mathcal{O}$. 
\end{fact}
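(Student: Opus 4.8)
The statement to prove is \textbf{Fact \ref{1dimmod}}: a non-trivial $\mathcal{O}$-submodule $N \subseteq K$ decomposes as $N = nI$ for any chosen $n \in N \setminus \{0\}$, where $I$ is a copy of $K$, of $\mathcal{O}$, or of a (integral or fractional) ideal of $\mathcal{O}$.

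The plan is to reduce everything to the correspondence between $\mathcal{O}$-submodules of $K$ and end-segments of $\Gamma$ recorded in Fact \ref{endsegement}. First I would fix $n \in N \setminus \{0\}$ and set $I := n^{-1}N = \{ n^{-1}x \ | \ x \in N \}$; since multiplication by the unit $n^{-1} \in K^{\times}$ is an $\mathcal{O}$-module automorphism of $K$, the set $I$ is again an $\mathcal{O}$-submodule of $K$, and it contains $1$ (because $n \in N$), so in particular $\mathcal{O} \cdot 1 = \mathcal{O} \subseteq I$. Thus $I$ is an $\mathcal{O}$-submodule of $K$ lying between $\mathcal{O}$ and $K$; equivalently, by Fact \ref{endsegement}, the end-segment $S_I = \{ v(x) \ | \ x \in I \}$ satisfies $S_I \subseteq \Gamma_{\geq 0} \cup$ (whatever) — more precisely $S_I$ is an end-segment of $\Gamma$ containing $0$, hence containing all of $\Gamma_{\geq 0}$, and $I = M_{S_I}$.

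Next I would run through the possibilities for the end-segment $S_I$. Either $S_I = \Gamma$, in which case $I = K$; or $S_I \subsetneq \Gamma$. In the latter case the complement $\Gamma \setminus S_I$ is a non-empty initial segment contained in $\Gamma_{<0}$. If this initial segment has a supremum realized as $-v(a)$ for some $a$, or more simply: $I$ is an $\mathcal{O}$-submodule of $K$ with $\mathcal{O} \subseteq I \subsetneq K$, so $I$ is either $\mathcal{O}$ itself (when $S_I = \Gamma_{\geq 0}$), or a fractional ideal of $\mathcal{O}$ strictly containing $\mathcal{O}$ (when $S_I$ is a proper end-segment strictly containing $\Gamma_{\geq 0}$). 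The point is just that every $\mathcal{O}$-submodule of $K$ sitting between $\mathcal{O}$ and $K$ is by definition a fractional ideal (an $\mathcal{O}$-submodule $I$ with $\mathcal{O} \subseteq I$ and, if one wants a principal bound, $\alpha I \subseteq \mathcal{O}$ for suitable $\alpha$ — but here I only need the general notion, and the statement explicitly allows "$\mathcal{O}$ or an integral or fractional ideal"); the cases $I = K$, $I = \mathcal{O}$, and $I$ a proper ideal exhaust all options. Translating back, $N = nI$ with $I$ of one of the three listed types.

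I do not expect a genuine obstacle here: the content is entirely the dictionary of Fact \ref{endsegement} plus the trivial observation that scaling by a nonzero field element is a module isomorphism. The only thing to be slightly careful about is bookkeeping of which end-segments give $K$, which give $\mathcal{O}$, and which give a (possibly fractional, possibly integral) ideal, and to note that the decomposition holds for \emph{every} choice of $n \in N \setminus \{0\}$, not just a distinguished one — but that is immediate since the construction $I = n^{-1}N$ is performed for an arbitrary such $n$. So the proof is a short unwinding of definitions, and I would present it as such rather than belabor the case analysis.
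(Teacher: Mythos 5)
The paper states Fact \ref{1dimmod} without proof, so there is no official argument to compare against. Your argument is correct and is the natural one: $I := n^{-1}N$ is an $\mathcal{O}$-submodule of $K$ containing $1$ (since $n \in N$), hence $\mathcal{O} \subseteq I$, and the trichotomy $I = K$, $I = \mathcal{O}$, or $\mathcal{O} \subsetneq I \subsetneq K$ exhausts all possibilities. You are right to flag the terminological point, and it genuinely matters: the last case yields a ``fractional ideal'' only in the broad sense the paper inherits from Kaplansky (any $\mathcal{O}$-submodule of $K$ strictly between $0$ and $K$), not in the narrower commutative-algebra sense requiring $\alpha I \subseteq \mathcal{O}$ for some $\alpha \in K^\times$. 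Under the strict definition the Fact would be false -- take $N = \mathcal{O}_\Delta$ for a nontrivial coarsening; then $n^{-1}N$ has unbounded-below value set and no $\alpha$ clears denominators -- so the paper must be read with the loose convention, which your proof correctly exploits. One observation worth stating outright rather than leaving implicit: since $n \in N$ forces $\mathcal{O} \subseteq I$, the ``integral ideal'' alternative listed in the statement never actually occurs for $I = n^{-1}N$; it appears only so the phrasing matches the more general decomposition language of Theorem \ref{characterizationmodules}.
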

\begin{definition} Let $K$ be a field and $n \in \mathbb{N}_{\geq 1}$, we say that a set $\{ a_{1}, \dots, a_{n}\}$ is an \emph{upper triangular basis} of the vector space $K^{n}$ if it is a $K$-linearly independent set and the matrix $[a_{1},\dots,a_{n}]$ is upper triangular. 
\end{definition}
  \begin{theorem} \label{characterizationmodules}
  Let $K$ be a maximal valued field and $n \in \mathbb{N}_{\geq 1}$. Let $N \subseteq K^{n}$ be an $\mathcal{O}$-submodule then $N$ is maximal, and $N$ is definably isomorphic to a direct sum of copies of $K$, $\mathcal{O}$ and (integral or fractional)  ideals of $\mathcal{O}$. 
 Moreover, if $\displaystyle{N \cong \bigoplus_{i \leq n} I_{i}}$ where each $I_{i}$ is either a copy of $K$, $\mathcal{O}$ and (integral or fractional)  ideals of $\mathcal{O}$ one can find an upper triangular basis $\{a_{1}, \dots, a_{n}\}$ of $K^{n}$ such that $\displaystyle{N= \{a_{1}x_{1}+ \dots+a_{n}x_{n}\ | \ x_{i}\in I_{i} \}}$. In this case we say that $[a_{1}, \dots, a_{n}]$ is a \emph{representation matrix for the module $N$}. \end{theorem}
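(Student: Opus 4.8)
The plan is to reduce everything to the one-dimensional case (Fact \ref{1dimmod}) by an inductive argument on $n$, using the projection onto the last coordinate, and then to invoke Kaplansky's maximality machinery (Lemma \ref{product}) to split off a direct summand. So suppose the statement is known for $K$-vector spaces of dimension $< n$, and let $N \subseteq K^{n}$ be an $\mathcal{O}$-submodule. First I would consider the projection $\pi \colon K^{n} \to K$ onto the last coordinate; its image $\pi(N)$ is an $\mathcal{O}$-submodule of $K$, hence by Fact \ref{1dimmod} of the form $\pi(N) = c I_{n}$ for some $c \in K$ (or $c=0$, in which case $N$ lies in $K^{n-1}$ and we just apply the inductive hypothesis) and $I_{n}$ a copy of $K$, of $\mathcal{O}$, or of a (integral or fractional) ideal of $\mathcal{O}$. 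The kernel $N_{0} := N \cap \ker\pi$ is an $\mathcal{O}$-submodule of $K^{n-1} \times \{0\} \cong K^{n-1}$, so by induction it is maximal and admits a representation matrix: there is an upper triangular basis $\{a_{1}, \dots, a_{n-1}\}$ of $K^{n-1}$ with $N_{0} = \{ \sum_{i \le n-1} a_{i} x_{i} \mid x_{i} \in I_{i}\}$.

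Next I would lift a generator of $\pi(N)$: pick $a_{n} \in N$ with $\pi(a_{n}) = c$ (after rescaling, we may arrange the last coordinate of $a_{n}$ to be exactly a generator of $I_n$; if $I_n$ has no single generator — e.g. $I_n$ is the maximal ideal or $I_n = K$ — one works with the canonical choice of $a_n$ having last coordinate $1$ and notes the argument below goes through verbatim since the relevant statement is about which last-coordinates occur). After completing $\{a_1,\dots,a_{n-1}\}$ (viewed inside $K^n$ via the inclusion $K^{n-1} \hookrightarrow K^{n-1}\times\{0\}$) with $a_n$, the set $\{a_1,\dots,a_n\}$ is a $K$-basis of $K^n$, and I would arrange it to be upper triangular by choosing the $a_i$ (for $i<n$) to already be upper triangular in $K^{n-1}$ and noting $a_n$ can have arbitrary earlier coordinates, which we are free to adjust by subtracting elements of $N_0$ — but in fact the cleanest route is to not insist $a_n$ lie in any particular form and instead observe that the map sending $(x_1,\dots,x_n) \in I_1 \times \cdots \times I_n$ to $\sum a_i x_i$ is a bijection onto $N$: surjectivity because every element of $N$ has its last coordinate in $\pi(N) = c I_n$, so subtracting the appropriate $\mathcal{O}$-multiple of $a_n$ lands us in $N_0$; injectivity because the $a_i$ are $K$-linearly independent. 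This gives $N \cong \bigoplus_{i\le n} I_i$ as $\mathcal{O}$-modules via a definable isomorphism (definable since it is $K$-linear with parameters $a_1,\dots,a_n$), and maximality of $N$ then follows from Lemma \ref{product}: each $I_i$ is a maximal $\mathcal{O}$-module, and a finite direct sum of maximal $\mathcal{O}$-modules is maximal, and maximality transfers along the definable $\mathcal{O}$-module isomorphism.

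Finally, for the upper-triangularity of the representation matrix: I would be a little more careful. Having fixed the basis $\{a_1,\dots,a_{n-1}\}$ of $K^{n-1}$ upper triangular (by induction the representation matrix for $N_0$ is upper triangular), the vectors $a_1,\dots,a_{n-1}$ regarded in $K^n$ have a zero last coordinate, so the matrix $[a_1,\dots,a_{n-1},a_n]$ is upper triangular precisely when $a_n$ has nonzero last coordinate — which it does, equal to (a generator of, or the canonical representative of) $I_n$, which is nonzero since $N$ is assumed nontrivial in the last coordinate. So $[a_1,\dots,a_n]$ is upper triangular and serves as a representation matrix for $N$. The main obstacle, and the place requiring genuine care rather than bookkeeping, is the handling of the "ideal without a single generator" cases ($I_n = K$, or $I_n$ the maximal ideal, or a non-principal fractional ideal $M_S$ coming from an end-segment $S$ with no least element): here "lift a generator" must be replaced by the observation that $\pi(N)$ and $I_n$ are literally the same set of last-coordinates up to the scalar $c$, so the fibering $N = \bigsqcup_{t \in I_n} (N_0 + t a_n)$ is still valid with $a_n$ any fixed element whose last coordinate is a unit multiple of the relevant scalar; the bijection $I_1 \times \cdots \times I_n \to N$ and hence the whole conclusion go through unchanged. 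Everything else is routine linear algebra over $\mathcal{O}$ together with the already-cited facts.
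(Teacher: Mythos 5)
Your proposal follows the same inductive skeleton as the paper — project to the last coordinate, handle the kernel $N_0 = N \cap (K^{n-1}\times\{0\})$ by induction, and adjoin a lifted vector $a_n$. But there is a genuine gap precisely at the point you flag as "requiring genuine care": the existence of a \emph{good} lift $a_n$. The problem is not, as you suggest, about whether $I_n$ has a single generator; it is that an arbitrary $a_n \in N$ with the correct last coordinate will in general \emph{not} satisfy $t a_n \in N$ for all $t \in I_n$, and then your proposed fibering $N = \bigsqcup_{t\in I_n}(N_0 + t a_n)$ and the map $(x_1,\dots,x_n)\mapsto \sum a_i x_i$ simply do not land in $N$. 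Concretely: take $N = \{(x,y)\in K^2 : v(x-y)\ge 0\}$, so $N_0 = \mathcal{O}\times\{0\}$ and $\pi_2(N)=K$. Picking $a_2 = (1+u,1)$ with $v(u)=0$ gives $a_2\in N$, yet $t a_2 = (t+tu, t)\notin N$ whenever $v(t)<0$, so $K a_2 \not\subseteq N$ and the decomposition fails; one must choose $a_2 = (c,c)$. Your surjectivity argument, "subtracting the appropriate $\mathcal{O}$-multiple of $a_n$ lands us in $N_0$," only handles multiples in $\mathcal{O}$ and silently assumes $I_n\subseteq\mathcal{O}$; when $I_n$ strictly contains $\mathcal{O}$ (e.g. $I_n = K$, or a fractional ideal), that step breaks.

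What the paper does at exactly this spot is prove a nontrivial claim (Claim~\ref{c1}): one can choose $x\in N$ with $\pi(x)=m$ such that whenever $m$ is $\alpha$-divisible in $M=\pi(N)$ for $\alpha\in\mathcal{O}$, $x$ is $\alpha$-divisible in $N$. This divisibility-compatibility is precisely what guarantees the section $s\colon M\to N$, $\alpha m\mapsto\alpha x$, is well-defined and valued in $N$, i.e. that $I_n a_n\subseteq N_0 \oplus \mathcal{O} a_n$. Its proof is a pseudo-Cauchy/system-of-congruences argument using the \emph{maximality} of $N_0$ obtained from the inductive hypothesis — so maximality is used essentially in the inductive step, not merely invoked at the end as in your write-up. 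Your proposal never constructs such a compatible lift and has no mechanism to rule out bad choices of $a_n$; fixing this requires supplying an analogue of Claim~\ref{c1}, which is the mathematical heart of the theorem.
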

  \begin{proof}
 We proceed by induction on $n$, the base case is given by Fact \ref{1dimmod} and Lemma \ref{product}. For the inductive step, let $\pi: K^{n+1} \rightarrow K$ be the projection into the last coordinate and let $M= \pi(N)$. We consider the exact sequence of $\mathcal{O}$-modules
$\displaystyle{0 \rightarrow N \cap \big(K^{n} \times \{ 0 \}\big) \rightarrow N \rightarrow M \rightarrow 0}.$\\
By induction, $N \cap (K^{n} \times \{ 0 \})$ is maximal and of the required form. And there is an upper triangular basis $\{ a_{1},\dots,a_{n}\}$ of $K^{n} \times \{ 0\}$ such that $[a_{1},\dots,a_{n}]$ is a representation matrix for $N \cap (K^{n} \times \{ 0 \})$. 
If $M=\{0\}$ we are all set, so we may take $m \in M$ such that $m \neq 0$. \\
\begin{claim}\label{c1} There is some element $x \in N$ such that  $\pi(x)=m$ and  for any $\alpha \in \mathcal{O}$, if $m$ is $\alpha$-divisible in $M$ then $x$ is $\alpha$-divisible in $N$.
\end{claim}
       \begin{proof}
   Let  $J= \{ \alpha \in \mathcal{O} \ | \ m$ is $\alpha$-divisible in $M \}$. For each $\alpha \in J$, let $m_{\alpha} \in M$ be such that $m=\alpha m_{\alpha}$ and take $n_{\alpha} \in \pi^{-1}(m_{\alpha}) \cap N$. Fix an element $y \in N$ satisfying $\pi(y)=m$ and let 
    $s_{\alpha}=y- \alpha n_{\alpha} \in N \cap (K^{n} \times \{0\})$. \\
Consider $\mathcal{S}=\{x-s_{\alpha} \in \alpha N \cap (K^{n}\times \{0\}) | \ \alpha \in J\}$ this is system of congruences in $N \cap (K^{n} \times \{0\} )$. We will argue that it is pairwise consistent. Let $\alpha, \beta \in \mathcal{O}$, then either $\frac{\alpha}{\beta} \in \mathcal{O}$ or $\frac{\beta}{\alpha} \in \mathcal{O}$ (or both). Without loss of generality assume that $\frac{\alpha}{\beta} \in \mathcal{O}$, then:
     \begin{align*}
         s_{\alpha}-s_{\beta}&= (y-\alpha n_{\alpha})-(y- \beta n_{\beta})=\beta n_{\beta}-\alpha n_{\alpha}= \beta \underbrace{\big( n_{\beta}-\frac{\alpha}{\beta} n_{\alpha}\big)}_{ \in N \cap (K^{n} \times \{0\})}
     \end{align*}
Thus  $s_{\alpha}$ is a solution to the system $\{ x-s_{\alpha} \in \alpha N \cap (K^{n} \times \{0\})\}  \cup \{  x-s_{\beta} \in \beta N \cap (K^{n} \times \{0\})\}$. By maximality of $N \cap (K^{n} \times \{0\})$ we can find an element $z \in N \cap (K^{n} \times \{0 \})$ such that $z$ is a simultaneous solution to the whole system of congruences in $\mathcal{S}$. Let $x=y-z \in N$, then $x$ satisfies the requirements. Indeed, for each $\alpha \in J$, we had chosen $z-s_{\alpha} \in \alpha  N \cap (K^{n} \times \{0\})$, so $z=s_{\alpha}+\alpha w$ for some $w \in N \cap (K^{n} \times \{0\})$. Thus, $\displaystyle{ x= y-z= y- s_{\alpha}-\alpha w=y- (y- \alpha n_{\alpha} )- \alpha w= \alpha(n_{\alpha}-w) \in \alpha N}$, as desired. 
     \end{proof}

Let $s:M \rightarrow N$ be the map sending an element $\alpha m $ to $\alpha x$, where $\alpha \in K$. As $N$ is a torsion free module, $s$ is well defined. One can easily verify that $s$ is a homomorphism such that $\pi \circ s= id_{M}$. Thus, $N$ is the direct sum of $N \cap (K^{n} \times \{0\})$ and $s(M)$, so it is maximal by Lemma \ref{product}. Moreover, $[a_{1},\dots,a_{n},x]$ is a representation matrix for $N$, as required.  
    \end{proof}
    
 \begin{proposition}\label{homomaximal}Let $K$ be a maximal valued field. Let $M,N \subseteq K$ be $\mathcal{O}$-submodules. For any $\mathcal{O}$-homomorphism $h: M \rightarrow K/N$ there is some $a \in K$ such that for any $x \in M$, 
$h(x)=ax+N.$
\end{proposition}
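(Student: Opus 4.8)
The plan is to reduce the problem to the one-dimensional structure theory of $\mathcal{O}$-submodules of $K$ already available, namely Fact \ref{1dimmod} and Lemma \ref{product}, together with the defining property of maximality. First I would dispose of trivial cases: if $N=K$ then $K/N=0$ and any $a$ works, and if $M=0$ the zero map is $h(x)=0\cdot x+N$; so assume $M\neq 0$ and $N\neq K$. By Fact \ref{1dimmod}, after scaling we may write $M=b_0 I$ and $N=c_0 J$ for suitable $b_0,c_0\in K$ and $I,J$ each a copy of $K$, of $\mathcal{O}$, or an ideal of $\mathcal{O}$; since $M$ and $N$ are torsion-free and $K$ is a field, establishing the claim for a single generator's worth of data will propagate by $K$-linearity wherever the generator has infinitely divisible behaviour, and the subtle part is exactly when it does not.

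The core of the argument is the following approximation scheme. Pick $m_0\in M\setminus\{0\}$ and try to choose $a\in K$ so that $h(m_0)=am_0+N$; this fixes $a$ modulo $N/m_0 = \{c\in K : cm_0\in N\}$, which is an $\mathcal{O}$-submodule of $K$, hence of the form described in Fact \ref{1dimmod}. The issue is that different generators of $M$ impose different constraints on $a$, and these must be shown to be simultaneously satisfiable. For each $u\in M$ write $u=\alpha_u m_0$ with $\alpha_u\in K$ (possible since $M\subseteq K$ and $M$ is torsion-free over the domain $\mathcal{O}$, so $M$ spans a one-dimensional $K$-space); then requiring $h(u)=au+N$ amounts to $a\alpha_u m_0 \equiv h(u)\pmod N$, i.e. a congruence $a\in a_u + I_u$ where $a_u$ is any particular solution and $I_u$ is a fractional ideal of $\mathcal{O}$ (the "error ideal" $\{c : c\alpha_u m_0\in N\}$). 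I would verify pairwise consistency of the family $\{a\in a_u+I_u : u\in M\}$ exactly as in the proof of Claim \ref{c1}: given $u,v\in M$, one of $\alpha_u/\alpha_v$, $\alpha_v/\alpha_u$ lies in $\mathcal{O}$, and using that $h$ is an $\mathcal{O}$-homomorphism one computes $a_u-a_v$ to lie in $I_u+I_v$, the larger of the two ideals being the relevant one. Then maximality of $K$ (the definition in the excerpt, applied to these congruences $x-a_u\in I_u$) produces a single $a\in K$ lying in every $a_u+I_u$, and this $a$ satisfies $h(x)=ax+N$ for all $x\in M$ simultaneously.

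The main obstacle I anticipate is bookkeeping the case distinctions on the types of $M$ and $N$ so that the "error ideals" $I_u$ are genuinely (integral or fractional) ideals of $\mathcal{O}$ — in particular handling the case $N=\mathcal{O}$ or $N$ an ideal while $M$ contains elements arbitrarily deep inside $N$, where $I_u$ can be all of $K$ and the congruence becomes vacuous, versus $M$ avoiding $N$, where $I_u$ is proper. One must check that in every configuration the congruences $x-a_u\in I_u$ are of the shape to which maximality of $K$ applies (finitely many is automatic; here we have possibly infinitely many, but maximality as defined handles arbitrary pairwise-consistent families). A clean way to package this is to note that the family $\{I_u\}$ is totally ordered by inclusion (being fractional ideals of a valuation ring), so pairwise consistency immediately upgrades to consistency of the whole chain, and one invokes maximality once. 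I would present the torsion-freeness and the "$h$ is $\mathcal{O}$-linear forces the error to land in $I_u$" computations as routine and left to the reader, keeping the written proof focused on the consistency check and the appeal to maximality.
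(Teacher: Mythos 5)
Your proof is correct and takes a genuinely different, shorter route than the paper's. The paper reduces to $M=I$, exhausts $I$ by a chain of principal submodules $x_\alpha\mathcal{O}$ indexed by a co-initial sequence in the end-segment $S_I$, finds for each $\alpha$ an approximate slope $a_\alpha$ realizing $h$ on $x_\alpha\mathcal{O}$, records the compatibility $a_\beta - a_\alpha \in x_\beta^{-1}N$, and then---rather than applying maximality at that point---runs a case analysis and a transfinite recursion to extract a strictly pseudo-convergent subsequence, finally invoking maximality through the pseudolimit characterization. Your version skips the chain and the pseudo-convergence machinery entirely: each $u\in M\setminus\{0\}$ imposes the single congruence $a\in a_u + u^{-1}N$ with $a_u = u^{-1}h_u$ for any representative $h_u$ of $h(u)$; $\mathcal{O}$-linearity of $h$ gives pairwise consistency in one line (if $u/v\in\mathcal{O}$ then $h_u - (u/v)h_v \in N$, so $a_u - a_v = u^{-1}\bigl(h_u - (u/v)h_v\bigr) \in u^{-1}N$, the larger of the two nested fractional ideals $u^{-1}N\supseteq v^{-1}N$); and the paper's own congruence-form definition of maximality then yields the simultaneous solution $a$ in a single application. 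This is preferable: it invokes maximality in the exact form in which it was defined, avoids the transfinite bookkeeping, and exposes the error ideal of each constraint as simply $u^{-1}N$. Two small cleanups: the degenerate cases $N=\{0\}$ (all $a_u$ then coincide) and $N=K$ (take $a=0$) should be set aside first, since these need not count as integral or fractional ideals under the stated definition; and the opening normalization via Fact \ref{1dimmod} that you mention is not actually needed for your argument---the congruence setup applies directly to arbitrary $\mathcal{O}$-submodules $M,N\subseteq K$.
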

\begin{proof}
By Fact \ref{1dimmod} $M=bI$ where $I$ is a copy of $K$, $\mathcal{O}$ or an (integral or fractional ideal) of $\mathcal{O}$. It is sufficient to prove the statement for $b=1$. Let $S_{I}=\{ v(y) \ | \ y \in I\}$ be the end-segment induced by $I$. Let $\{ \gamma_{\alpha} \ | \ \alpha \in \kappa\}$ be  a co-initial decreasing sequence in $S_{I}$. Choose an element $x_{\alpha} \in K$ such that $v(x_{\alpha})=\gamma_{\alpha}$, then for each $\alpha < \beta < \kappa$,  $x_{\beta}\mathcal{O} \subseteq x_{\alpha} \mathcal{O}$ and $\displaystyle{I=\bigcup_{\alpha \in \kappa} x_{\alpha} \mathcal{O}}$.\\
\begin{claim}\label{1}
For each $\alpha \in \kappa$ there is an element $a_{\alpha} \in K$ such that for all  $x \in x_{\alpha} \mathcal{O}$ we have $h(x)=a_{\alpha}x +N$.
\end{claim}
For each $\alpha$ choose an element $y_{\alpha}$ such that $h(x_{\alpha})= y_{\alpha}+N$ and let $a_{\alpha}= x_{\alpha}^{-1} y_{\alpha}$. Fix an element $x \in x_{\alpha}\mathcal{O}$, then:
\begin{align*}
h(x)&= h(x_{\alpha} \underbrace{(x^{-1}_{\alpha}x)}_{\in \mathcal{O}})= \big(x^{-1}_{\alpha} x\big) \cdot h(x_{\alpha})=(x^{-1}_{\alpha} x) \cdot (a_{\alpha} x_{\alpha}+ N)=a_{\alpha}x + N.
\end{align*}
\begin{claim}\label{2} Given $\beta < \alpha  < \kappa$, then $a_{\beta}-a_{\alpha} \in x_{\beta}^{-1} N$. 
\end{claim}
Note that $x_{\beta} \in x_{\beta}\mathcal{O} \subseteq x_{\alpha}\mathcal{O}$, by Claim \ref{1} we have $h(x_{\beta})=a_{\alpha}x_{\beta} + N= a_{\beta}x_{\beta}+ N$, then $(a_{\alpha}-a_{\beta})x_{\beta} \in N$. Hence, $(a_{\alpha}-a_{\beta}) \in x_{\beta}^{-1} N$.\\
\begin{claim}\label{3} Without loss of generality we may assume that for any $\alpha < \kappa$ there is some $\alpha <\alpha'< \kappa$ such that for any $\alpha' <\alpha'' < \kappa \ $ $a_{\alpha}-a_{\alpha''} \notin x_{\alpha''}^{-1} N$.
\end{claim}
Suppose the statement is  false. Then there is some $\alpha$ such that for any $\alpha< \alpha'$ we can find $\alpha' < \alpha''$ such that $a_{\alpha}-a_{\alpha''} \in x_{\alpha''}^{-1} N$. Define:
\begin{center}
$h^{*}: \begin{cases}
I &\rightarrow K/ N\\
 x &\rightarrow a_{\alpha}x + N. 
\end{cases}$
\end{center}
We will show that for any $x \in I$, $h(x)=h^{*}(x)$. Fix an element $x \in I$, since $< \gamma_{\alpha} \ | \ \alpha \in \kappa>$ is coinitial and decreasing in $S_{I}$ we can find an element $\alpha'> \alpha$ such that $v(x)> \gamma_{\alpha'}$, so $x \in x_{\alpha'} \mathcal{O} \subseteq x_{\alpha''} \mathcal{O}$.  Then 
\begin{align*}
(a_{\alpha}- a_{\alpha'})x= \underbrace{(a_{\alpha}-a_{\alpha''})x}_{\in x_{\alpha''}^{-1}x N\subseteq N}+ \underbrace{(a_{\alpha''}-a_{\alpha'})x}_{{\in x_{\alpha'}^{-1}x N\subseteq N}}.
\end{align*}
we conclude that $(a_{\alpha}- a_{\alpha'})x \in N$. By Claim \ref{1} we have $h(x)= a_{\alpha'}x+N$, thus $h^{*}(x)=h(x)$ and $h^{*}$ 
witnesses the conclusion of the statement. \\
\begin{claim} There is a subsequence $<b_{\alpha} \ | \ \alpha \in \cof(\kappa)>$ of $<a_{\alpha} \ | \ \alpha \in \kappa>$ 
that is pseudo-convergent.
\end{claim}
\begin{proof}
Let $g:\cof(\kappa)\rightarrow \kappa$ be a cofinal function in $\kappa$ i.e. for any $\delta \in \kappa$ there is some $\alpha \in \cof(\kappa)$ such that $g(\alpha)>\delta$. We construct the desired sequence by transfinite recursion in $\cof(\kappa)$, 
building a strictly increasing function $f: \cof(\kappa) \rightarrow \kappa$ satisfying the following conditions:
\begin{enumerate}
\item for each $\alpha < \cof(\kappa)$ we have $b_{\alpha}= a_{f(\alpha)}$ and $f(\alpha)>g(\alpha)$,
\item for any $\alpha < \cof(\kappa)$ the sequence $(b_{\eta} \ | \ \eta < \alpha)$ is pseudo-convergent. 
This is, given $\eta_{1}< \eta_{2} < \eta_{3}< \alpha$ 
\begin{equation*}
v(b_{\eta_{3}}-b_{\eta_{2}})> v(b_{\eta_{2}}-b_{\eta_{1}}),    
\end{equation*}

\item for each $\alpha<\cof(\kappa)$ we have that: for any $\eta<\alpha$, and $f(\alpha)< \eta'<\kappa$ 
\begin{equation*}
    v(a_{\eta'}-b_{\alpha})> v(b_{\eta}-b_{\alpha}) 
\ \text{and} \ a_{\eta'}-b_{\alpha} \notin x_{\eta'}^{-1}N.
\end{equation*}
\end{enumerate}
For the base case, set $b_{0}= a_{0}$ and $f(0)=g(0)+1$. Suppose that for $\mu < \cof(\kappa)$, $f\upharpoonright_{\mu}$ 
has been defined and $< b_{\eta} \ | \ \eta < \mu>$ has been constructed. Let $\mu^{*}= \sup\{ f(\eta) \ | \ \eta < \mu \}$,
by Claim \ref{3} (applied to $\alpha=\max\{\mu^{*},g(\mu)\}$) there is some $\max\{\mu^{*},g(\mu)\} < v < \kappa$ satisfying the following property:\\
\begin{center}
\emph{for any $v<\eta'<\kappa$, $a_{\alpha}-a_{\eta'}\notin x_{\eta'}^{-1}N$.}\\
\end{center}
Set $f(\mu)=v$ and $b_{\mu}=a_{v}$. We continue verifying that the three conditions are satisfied. The first condition  $b_{\mu}=a_{f(\mu)}$ and $f(\mu)>g(\mu)$ follows immediately by construction. \\

We continue checking that $(b_{\eta} \ | \ \eta \leq \mu)$ is a pseudo-convergent sequence. Fix $\eta_{1}< \eta_{2}< \mu$ we must show that $\displaystyle{v(b_{\mu}- b_{\eta_{2}})> v(b_{\eta_{2}}-b_{\eta_{1}})}$. By construction $b_{\mu}=a_{f(\mu)}=a_{v}$ and $f(\mu)=v> \mu^{*}\geq f(\eta_{2})$. Since the third condition holds for $\eta_{2}$ 
we must have $v(a_{v}-b_{\eta_{2}})>v(b_{\eta_{2}}-b_{\eta_{1}})$, as required. \\

Lastly, we verify that the third condition holds for $\mu$.
Let $\eta < \mu$ and $v=f(\mu)<\eta'$ , we aim to show
$\displaystyle{v(a_{\eta'}-b_{\mu})> v(b_{\mu}-b_{\eta})}$. Suppose by contradiction that this inequality does not hold, then $\frac{b_{\mu}-b_{\eta}}{a_{\eta'}-b_{\mu}} \in \mathcal{O}$.
 Because the third condition holds for $\eta$ and by construction $v=f(\mu)> f(\eta)$, we have that $b_{\mu}-b_{\eta}=a_{v}-a_{f(\eta)} \notin x_{v}^{-1}N$.\\
 By Claim \ref{2} $a_{\eta'}-b_{\mu}=a_{\eta'}-a_{v} \in x_{v}^{-1}N$, then:
 \begin{align*}
 b_{\mu}-b_{\eta}=\underbrace{\frac{b_{\mu}-b_{\eta}}{a_{\eta'}-b_{\mu}}}_{\in \mathcal{O}} (a_{\eta'}-b_{\mu}) \in x_{v}^{-1}N,
 \end{align*}
 which leads us to a contradiction. It is only left to show that for any $f(\mu)<\eta'<\kappa$ we have that $a_{\eta'}-b_{\mu} \notin x_{\eta'}^{-1}N$. By construction, we have chose $b_{\mu}=a_{v}$ where $\alpha=\max \{ g(\mu),\mu^{*}\}< v< \kappa$ and for any $v<\eta^{'}<\kappa$ we have:
 \begin{equation*}
     a_{\alpha}-a_{\eta'} \notin x_{\eta'}^{-1} N. 
 \end{equation*}
 As $\alpha<f(\mu)$, by Claim \ref{2} $\displaystyle{a_{\alpha}-a_{f(\mu)} \in x_{\mu}^{-1}N \subseteq x_{\eta'}^{-1}N}$.\\

Fix $\eta'>v=f(\mu)$, then $a_{\eta'}-b_{\mu}=a_{\eta'}-a_{f(\mu)}\notin x_{\eta'}^{-1}N$. Otherwise,
\begin{equation*}
    a_{\eta'}-a_{\alpha}= \underbrace{(a_{\eta'}-a_{f(\mu)})}_{\in x_{\eta'}^{-1}N}+\underbrace{(a_{f(\mu)}- a_{\alpha})}_{\in x_{\eta'}^{-1}N}  \in x_{\eta'}^{-1}N \ \text{because $x_{\eta}^{-1}$N is an $\mathcal{O}$-submodule of $K$,}\
\end{equation*}
which leads us to a contradiction.\\
\end{proof}
 Since $K$ is maximal there is some $a \in K$ that is a pseudolimit of $<b_{\alpha} \ | \ \alpha \in \cof(\kappa)>$. We aim to prove that $h(x)= ax+N$ for $x\in I$. Fix an element $x \in I$. The function $f$ is cofinal in $\kappa$ because of the first condition combined with the fact that $g$ is cofinal in $\kappa$. We can find some $\alpha \in \cof(\kappa)$ such that $x \in x_{f(\alpha)} \mathcal{O} \subseteq I$. By Claim \ref{1} $h(x)=a_{f(\alpha)}x+N$, hence it is sufficient to prove that $(a-a_{f(\alpha)}) x \in N$. As $x \in x_{f(\alpha)} \mathcal{O}$ it is enough to show that $(a-a_{f(\alpha)})=(a-b_{\alpha}) \in x_{f(\alpha)}^{-1} N$. Let $\alpha< \beta< \kappa$, by Claim \ref{2} $(b_{\beta}-b_{\alpha})= (a_{f(\beta)}-a_{f(\alpha)}) \in x_{f(\alpha)^{-1}}N$.  Also, $v(a-a_{f(\alpha)})=v(a_{f(\beta)}-a_{f(\alpha)})$ thus $(a-a_{f(\alpha)})= u(a_{f(\beta)}-a_{f(\alpha)})$ for some $u \in \mathcal{O}^{\times}$, thus  $(a-a_{f(\alpha)}) \in x_{f(\alpha)}^{-1}N$, as desired. \end{proof}
    
\subsection{Valued vector spaces}
We introduce valued vector spaces and some facts that will be required through this paper. An avid and curious reader can consult \cite[Section 2.3]{BookVal} for a more exhaustive presentation. Through this section we fix $(K,\Gamma, v)$ a valued field and $V$ a $K$-vector space.
\begin{definition} A tuple $(V,\Gamma(V),val,+)$ is a \emph{valued vector space} structure if:
\begin{enumerate}
    \item $\Gamma(V)$ is a linear order,
    \item there is an  action $+: \Gamma \times \Gamma(V) \rightarrow \Gamma(V)$ which is order preserving in each coordinate,
    \item $val: V \rightarrow \Gamma(V)$ is a map such that for all $v,w \in V$ and $\alpha \in K$ we have:
    \begin{itemize}
        \item $val(v+w) \geq \min \{ val(w),val(v)\}$,
        \item $val(\alpha v)= v(\alpha)+val(v)$. 
    \end{itemize}
\end{enumerate}
\end{definition}
The following Fact is \cite[Remark 1.2]{will}. 
\begin{fact}\label{finiteorbits} Let $V$ be a finite dimensional valued vector space over $K$, then the action of $\Gamma(K)$ over $\Gamma(V)$ has finitely many orbits. In fact, $|\Gamma(V)/ \Gamma(K)| \leq dim_{K}(V)$. 
\end{fact}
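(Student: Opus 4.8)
The statement to prove is Fact~\ref{finiteorbits}: if $V$ is a finite-dimensional valued vector space over $K$, then $\Gamma(K)$ acts on $\Gamma(V)$ with at most $\dim_K(V)$ orbits.

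\medskip

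The plan is to argue by induction on $n = \dim_K V$, choosing a well-adapted basis. For the base case $n = 1$, pick any nonzero $v \in V$; then every vector is $\alpha v$ for $\alpha \in K$, so $\val(\alpha v) = v(\alpha) + \val(v)$ ranges over the single $\Gamma(K)$-orbit of $\val(v)$, giving exactly one orbit. For the inductive step, suppose the result holds for dimension $n-1$. Fix a nonzero $v_1 \in V$ and let $W$ be a $K$-linear complement so that $V = K v_1 \oplus W$ with $\dim_K W = n-1$. By induction $\Gamma(W)$ has at most $n-1$ orbits under $\Gamma(K)$. The claim will be that $\Gamma(V)$ has at most one more orbit than $\Gamma(W)$ together with the orbit of $\val(v_1)$; more precisely, I would show that for every nonzero $x = \alpha v_1 + w \in V$ with $w \in W$, the value $\val(x)$ lies either in the $\Gamma(K)$-orbit of $\val(v_1)$ or in a $\Gamma(K)$-orbit already represented in $\Gamma(W)$.

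\medskip

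The key computational step is the following ultrametric-type observation. Given $x = \alpha v_1 + w$, by the valuation inequality $\val(x) \geq \min\{\val(\alpha v_1), \val(w)\} = \min\{v(\alpha) + \val(v_1),\ \val(w)\}$, and when the two quantities $v(\alpha) + \val(v_1)$ and $\val(w)$ are comparable and distinct we in fact get equality $\val(x) = \min\{v(\alpha)+\val(v_1), \val(w)\}$, so $\val(x)$ is in the $\Gamma(K)$-orbit of $\val(v_1)$ or of $\val(w)$. The delicate case is when $v(\alpha) + \val(v_1)$ and $\val(w)$ are ``equal'' (or incomparable): then $\val(x)$ could a priori be strictly larger and land in a genuinely new orbit. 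The right way to handle this is to choose $v_1$ optimally at the outset: among all nonzero vectors whose value lies in a $\Gamma(K)$-orbit not meeting $\Gamma(K)\cdot\Gamma(W_0)$ for an appropriately chosen hyperplane $W_0$, pick one; iterate this choice of basis vectors $v_1, \dots, v_n$ so that at each stage $v_i$ realizes a new orbit not obtainable from the span of $v_{i+1}, \dots, v_n$. I expect the cleanest route is actually to prove directly that any set of vectors $v_1, \dots, v_k \in V$ whose values $\val(v_1), \dots, \val(v_k)$ lie in pairwise distinct $\Gamma(K)$-orbits is automatically $K$-linearly independent: indeed if $\sum \alpha_i v_i = 0$ nontrivially, then the values $v(\alpha_i) + \val(v_i)$ for $\alpha_i \neq 0$ cannot all be distinct (else the minimum is attained uniquely and $\sum \alpha_i v_i \neq 0$), so $v(\alpha_i) + \val(v_i) = v(\alpha_j) + \val(v_j)$ for some $i \neq j$, forcing $\val(v_i)$ and $\val(v_j)$ into the same $\Gamma(K)$-orbit, a contradiction.

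\medskip

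Granting that linear independence statement, the bound $|\Gamma(V)/\Gamma(K)| \leq \dim_K(V)$ is immediate: pick one vector in $V$ for each $\Gamma(K)$-orbit appearing in $\val(V \setminus \{0\})$; these vectors are $K$-linearly independent, hence there are at most $\dim_K V = n$ of them, so at most $n$ orbits. The main obstacle is thus concentrated in making the ``values in distinct orbits $\Rightarrow$ linearly independent'' argument fully rigorous, in particular justifying that in the ultrametric inequality, when the minimum among $\{v(\alpha_i) + \val(v_i) : \alpha_i \neq 0\}$ is attained at a \emph{unique} index the sum is nonzero --- this is the usual valued-vector-space analogue of the standard ultrametric fact, and it uses that the $\Gamma(K)$-action on $\Gamma(V)$ is order-preserving in the $\Gamma(V)$-coordinate so that strict inequalities are preserved. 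I would record this as a short preliminary lemma and then the induction/counting is routine.
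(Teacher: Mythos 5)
The paper does not prove this fact; it simply cites \cite[Remark 1.2]{will}, so there is no in-paper argument to compare against. Your proposal, once you settle on the direct route, is correct and is the standard argument: the key observation is that if $\val(v_1),\dots,\val(v_k)$ lie in pairwise distinct $\Gamma(K)$-orbits then $v_1,\dots,v_k$ are $K$-linearly independent, and from there the bound $|\Gamma(V)/\Gamma(K)| \leq \dim_K V$ is immediate by choosing one representative vector per orbit. Your justification of the linear-independence lemma is sound: in a nontrivial dependence $\sum_i \alpha_i v_i = 0$, the quantities $\val(\alpha_i v_i) = v(\alpha_i) + \val(v_i)$ (over $i$ with $\alpha_i \ne 0$) cannot be pairwise distinct, since otherwise the ultrametric equality at the unique minimum would force the sum to be nonzero; equality of two of them, say $v(\alpha_i)+\val(v_i) = v(\alpha_j)+\val(v_j)$, puts $\val(v_i)$ and $\val(v_j)$ in the same $\Gamma(K)$-orbit, a contradiction. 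Two small remarks for a cleaner writeup. First, the opening dimension-induction and the choice of a complement $W$ is scaffolding you rightly abandon; you should lead directly with the linear-independence lemma, since induction is not needed. Second, the ultrametric equality (if $\val(x) < \val(y)$ then $\val(x+y)=\val(x)$) follows purely from the two valuation axioms (subadditivity and $\val(\alpha v) = v(\alpha)+\val(v)$, the latter giving $\val(-y)=\val(y)$); order-preservation of the $\Gamma(K)$-action is not what is being used there. Also note that since $\Gamma(V)$ is a linear order, the ``incomparable'' case you flag does not arise. With those trims the argument is complete.
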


\begin{definition} Let $(V,\Gamma(V), val, +)$ be a valued vector space:
\begin{enumerate}
    \item Let $a \in V$ and $\gamma \in \Gamma(V)$. A \emph{ball} in $V$ is a set of the form:
    \begin{equation*}
       \overline{Ball_{\alpha}(a)}= \{ x \in V \ | \ val(x-a)\geq \gamma\} \ \text{or} \  Ball_{\alpha}(a)=\{ x \in V \ | \ val(x-a)> \gamma\}.
    \end{equation*}
    \item We say that $(V,\Gamma(V), val, +)$ is \emph{maximal} if every nested family of balls in $V$ has non-empty intersection.
\end{enumerate}
\end{definition}

\begin{definition} Let $(V, \Gamma(V), val, +)$ be a valued vector space and let $W$ be a subspace of $V$. Then $(W,\Gamma(W),val,+)$ is also a valued vector space, where $\Gamma(W)=\{ val(w) \ | \ w \in W\}$. \\
We say that:
\begin{enumerate}
\item \emph{$W$ is maximal in $V$} if every family of nested balls 
\begin{equation*}
    \{ Ball_{\alpha}(x_{\alpha}) \ | \ \alpha \in S\}, \ \text{where} \ S\subseteq \Gamma(W) \  \text{and for each $\alpha \in S \ x_{\alpha} \in W$. }
\end{equation*}
that has non-empty intersection in $V$ has non-empty intersection in $W$.
\item $W \leq V$ has the \emph{optimal approximation property} if for any $v \in V \backslash W$ the set $ \{val(v-w) \ | \ w \in W\}$  attains a maximum.
\end{enumerate}
\end{definition}
The following is a folklore fact. 
\begin{fact}\label{optclosed} Let $(V, \Gamma(V),val, +)$ be a valued vector space, and $W$ a subspace of $V$ the following statements are equivalent:
\begin{enumerate}
    \item $W$ is maximal in $V$,
    \item $W$ has the optimal approximation property in $V$. 
    \end{enumerate}
    Additionally, if $W$ is maximal then it is maximal in $V$.
\end{fact}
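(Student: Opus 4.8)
The plan is to establish the equivalence $(1)\Leftrightarrow(2)$ first and then the last assertion separately. For $(1)\Rightarrow(2)$ I would argue by contradiction: suppose $W$ is maximal in $V$ and there is $v\in V\setminus W$ for which $A:=\{\val(v-w)\mid w\in W\}$ has no maximum. The key preliminary observation is that $A\subseteq\Gamma(W)$; indeed, given $\gamma=\val(v-w)\in A$, choose $w'\in W$ with $\val(v-w')>\gamma$ (possible since $A$ has no maximum), and then the ultrametric inequality forces $\val(w-w')=\gamma$, so $\gamma\in\Gamma(W)$ because $w-w'\in W$. Now for each $w\in W$ let $B_w$ be the open ball of radius $\val(v-w)$ that contains $v$; since any $w'\in W$ with $\val(v-w')>\val(v-w)$ already lies in $B_w$, each $B_w$ may be centred at a point of $W$ and has radius in $\Gamma(W)$, so $\{B_w\mid w\in W\}$ is a family of the kind allowed in the definition of $W$ being maximal in $V$. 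All the $B_w$ contain $v$, hence they are linearly ordered by inclusion (the smaller radius giving the larger ball), so the family is nested and $v\in\bigcap_{w\in W}B_w$. Maximality of $W$ in $V$ then produces $u\in W\cap\bigcap_{w\in W}B_w$; but $u\in B_u$ means $\val(v-u)>\val(v-u)$, a contradiction.

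The converse $(2)\Rightarrow(1)$ is short. Let $\{\mathrm{Ball}_{\gamma_\alpha}(w_\alpha)\mid\alpha\in S\}$ be a nested family of open balls with $w_\alpha\in W$ and $\gamma_\alpha\in\Gamma(W)$ whose intersection contains some $v\in V$; if $v\in W$ we are done, so assume not. The optimal approximation property supplies $u\in W$ with $\val(v-u)=\max\{\val(v-w)\mid w\in W\}$. Then $\val(v-u)\geq\val(v-w_\alpha)>\gamma_\alpha$, and hence $\val(u-w_\alpha)\geq\min\{\val(u-v),\val(v-w_\alpha)\}>\gamma_\alpha$, for every $\alpha$; thus $u\in W\cap\bigcap_\alpha\mathrm{Ball}_{\gamma_\alpha}(w_\alpha)$, so $W$ is maximal in $V$.

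For the last assertion, assume $W$ is maximal, i.e.\ every nested family of balls of $W$ has non-empty intersection, and take a nested family $\{\mathrm{Ball}_{\gamma_\alpha}(w_\alpha)\mid\alpha\in S\}$ with $w_\alpha\in W$, $\gamma_\alpha\in\Gamma(W)$, whose intersection meets $V$. Intersecting each ball with $W$ yields the nested family of balls of $W$ with terms $\mathrm{Ball}_{\gamma_\alpha}(w_\alpha)\cap W$, and each of these is non-empty since a non-empty ball contains its own centre; by maximality of $W$ their intersection is non-empty, and it equals $W\cap\bigcap_\alpha\mathrm{Ball}_{\gamma_\alpha}(w_\alpha)$, which is precisely what is needed. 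I expect the implication $(1)\Rightarrow(2)$ to be the crux of the whole argument, the delicate point being the observation $A\subseteq\Gamma(W)$ that makes the balls $B_w$ eligible for the (deliberately restrictive) definition of relative maximality.
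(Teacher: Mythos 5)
The paper states this result as a folklore fact and gives no proof at all, so there is nothing to compare against; what you have done is fill in the gap. Your argument is correct and complete. The observation that $A=\{\val(v-w)\mid w\in W\}\subseteq\Gamma(W)$ (via the ultrametric equality $\val(w-w')=\val(v-w)$ when $\val(v-w')>\val(v-w)$) is indeed the crucial point that makes the balls $B_w$ admissible in the (restrictive) definition of ``maximal in $V$''; without it the $(1)\Rightarrow(2)$ direction would break down. The converse direction and the final implication (maximal $\Rightarrow$ maximal in $V$, by intersecting the nested family with $W$ and noting each resulting ball of $W$ contains its centre $w_\alpha$) are both standard and correctly carried out. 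One cosmetic remark: in the $(1)\Rightarrow(2)$ step the family $\{B_w\mid w\in W\}$ is indexed by $W$ rather than by a subset of $\Gamma(W)$ as the definition literally demands, but since two balls of the same radius through $v$ coincide, the family re-indexes canonically over $A\subseteq\Gamma(W)$, so this is harmless.
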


We conclude this subsection with the definition of separated basis. 
\begin{definition}Let $(V, \Gamma(V), val, +)$ be a valued vector space. Assume that $V$ is a $K$-vector space of dimension $n$. A basis $\{v_{1},\dots,v_{n}\} \subseteq V$ is a \emph{separated basis} if for any  $\alpha_{1},\dots,\alpha_{n} \in K$ we have that:
\begin{equation*}
val(\sum_{i\leq n} \alpha_{i}v_{i})=\min\{ val(\alpha_{i}v_{i}) \ | \ i \leq n \}.
\end{equation*}
\end{definition}

   \section{Definable modules}\label{modules}
    In this section we study definable $\mathcal{O}$-submodules in henselian valued fields of equicharacteristic zero.  
    \begin{corollary}\label{basismodule}Let $(F,v)$ be a henselian valued field of equicharacteristic zero and $N$ be a definable $\mathcal{O}$-submodule of $F^{n}$. Then $N$ is definably isomorphic to a direct sum of copies of $F$, $\mathcal{O}$, or (integral or fractional) ideals of $\mathcal{O}$. Moreover, if 
    $\displaystyle{N \cong \oplus_{i \leq n} I_{i}}$ there is some upper triangular basis $\{ a_{1}, \dots, a_{n}\}$ of $F^{n}$ such that $[a_{1}, \dots, a_{n}]$ is a representation matrix of $N$.
    \end{corollary}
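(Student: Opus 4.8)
The plan is to reduce Corollary~\ref{basismodule} to Theorem~\ref{characterizationmodules}, which gives exactly the desired conclusion under the hypothesis that $K$ is \emph{maximal}, by passing to a maximal elementary extension and descending the structure obtained there back to $F$. First I would invoke Fact~\ref{maxext} to obtain an elementary extension $F \prec F'$ with $F'$ maximal. Since $N \subseteq F^{n}$ is definable (say over parameters in $F$), the same formula defines an $\mathcal{O}_{F'}$-submodule $N' \subseteq (F')^{n}$. Applying Theorem~\ref{characterizationmodules} to $N'$ yields an upper triangular basis $\{a_{1}',\dots,a_{n}'\}$ of $(F')^{n}$ such that $[a_{1}',\dots,a_{n}']$ is a representation matrix for $N'$, i.e. $N' = \{ a_{1}'x_{1} + \dots + a_{n}'x_{n} \mid x_{i} \in I_{i}'\}$ where each $I_{i}'$ is a copy of $F'$, $\mathcal{O}_{F'}$, or an (integral or fractional) ideal of $\mathcal{O}_{F'}$.

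The key point is then that the existence of such a basis is an elementary property expressible with parameters naming $N$. Concretely: for each fixed choice of which $I_i$ is of which of the three types, and — in the ideal case — whether the ideal is open or closed and the datum of its "radius" $\gamma_i$ expressed relative to the valuation of the diagonal entry $a_{ii}$ (here I would use Fact~\ref{endsegement} and the description of definable $\mathcal{O}$-submodules of $K$ via end-segments, noting that over a fixed complete theory the relevant end-segment data is controlled by the value group which is elementarily equivalent in $F$ and $F'$), the assertion "there exist upper triangular $a_1,\dots,a_n$ with the prescribed diagonal type data such that $N$ equals the module they represent" is first-order over the parameters defining $N$. Since $F \prec F'$ and this sentence holds in $F'$, it holds in $F$; this produces the required upper triangular basis $\{a_1,\dots,a_n\}$ of $F^n$ and representation matrix over $F$ itself. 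The first part of the statement — that $N$ is definably isomorphic to a direct sum of copies of $F$, $\mathcal{O}$, and ideals of $\mathcal{O}$ — follows immediately from the representation matrix, since the map $(x_1,\dots,x_n) \mapsto \sum_i a_i x_i$ is a definable isomorphism $\bigoplus_i I_i \to N$.

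The main obstacle I anticipate is making the descent step genuinely first-order: one must check that the "shape" of $N'$ — the number of summands, which summands are all of $F'$, which are $\mathcal{O}_{F'}$, and the precise end-segments (ideals) appearing — can be captured by finitely much data that transfers between $F$ and $F'$. The subtlety is that the family of definable $\mathcal{O}$-submodules of $K$ can be infinite and is indexed by definable end-segments of $\Gamma$, so one cannot simply quantify over "all ideals"; instead I would use Corollary~\ref{complete} / Fact~\ref{completemodules}, which give a complete family $\mathcal{F}$ of definable submodules of $K$ parametrized by the finitely many $\Delta \in RJ(\Gamma)$, the $n \in \mathbb{N}_{\geq 2}$, and coset representatives, so that each $I_i'$ is a scaling of a member of $\mathcal{F}$ by some element of $F'$ — and then the relevant scaling factors and indices live in sorts that are elementarily well-behaved under $F \prec F'$. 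With that in hand the transfer is routine. An alternative, perhaps cleaner, route avoiding the explicit elementarity bookkeeping: argue directly that since $F \prec F'$, $N = N' \cap F^n$, and then show by induction on $n$ (mirroring the proof of Theorem~\ref{characterizationmodules}, but only using the maximality of $F'$ as an auxiliary tool to locate the basis vectors, which can then be perturbed into $F^n$ because $F$ is relatively algebraically closed in the relevant sense inside $F'$ for the data at hand) that the basis can be chosen in $F^n$; I would present whichever of these is shorter, but I expect the elementarity argument via the complete family to be the safest.
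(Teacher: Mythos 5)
Your overall strategy is exactly the paper's: pass to a maximal elementary extension $F'$ via Fact~\ref{maxext}, apply Theorem~\ref{characterizationmodules} there, and descend to $F$ by observing that the conclusion is first-order over the parameters defining $N$. The paper dispatches this in one sentence, so the content of the comparison is really about how you propose to make the ``first-order expressible'' step rigorous, and there you overcomplicate things in a way that also weakens the result.

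Your worry that one ``cannot simply quantify over all ideals'' is legitimate, but your fix --- routing through the complete family $\mathcal{F}$ of Fact~\ref{completemodules} and Corollary~\ref{complete} --- silently imports the hypothesis that the value group has bounded regular rank, since that is what guarantees $\mathcal{F}$ is countable and controlled by the $\emptyset$-definable subgroups in $RJ(\Gamma)$. Corollary~\ref{basismodule} as stated requires no such hypothesis on $\Gamma$, and the paper's proof does not use it. The cleaner observation, which is what the paper is leaning on, is that you never need to quantify over an external family of ideals at all: once $N$ is fixed, the rank-one pieces $a_{i}I_{i}$ are themselves definable from $N$ via coordinate projections (inductively, $\pi_{n}(N)\subseteq K$ and $N\cap(K^{n-1}\times\{0\})$, exactly as in the proof of Theorem~\ref{characterizationmodules}). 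So the sentence ``there exists an upper triangular tuple $(a_{1},\dots,a_{n})$ such that $[a_{1},\dots,a_{n}]$ is a representation matrix for $N$'' is a single $\mathcal{L}$-formula with parameters those defining $N$, and its transfer under $F\prec F'$ is immediate --- no bookkeeping about shapes, radii, or cosets of $n\Gamma$ is needed. Your proposed alternative route (working with $N=N'\cap F^{n}$ and perturbing a basis from $F'$ into $F^{n}$) would require a separate argument and is not what the paper does; I would drop it. With the definability-of-the-$I_{i}$-from-$N$ observation substituted for the complete-family detour, your proof becomes the paper's proof.
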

    \begin{proof}
    By Fact \ref{maxext} we can find $F'$ an elementary extension of $F$ that is maximal, so we can apply Theorem \ref{characterizationmodules}. As the statement that we are trying to show is first order expressible, it must hold as well in $F$. 
    \end{proof}
   \begin{corollary}\label{homomorphism} Let $(F,v)$ be a henselian valued field of equicharacteristic zero and let $N,M \subseteq F$ be a definable $\mathcal{O}$-submodules.
Then for any definable $\mathcal{O}$-homomorphism $h: M \rightarrow K/N$. Then there is some $b \in F$ satisfying that for any $y \in M$, $h(y)=by+N$. 
 \end{corollary}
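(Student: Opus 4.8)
The plan is to reduce the statement for a henselian valued field $F$ of equicharacteristic zero to the corresponding statement over a maximal valued field, exactly as in the proof of Corollary \ref{basismodule}. By Fact \ref{maxext} there is an elementary extension $F \prec F'$ with $F'$ maximal. Since $M$ and $N$ are definable (over some parameters in $F$), they extend to definable $\mathcal{O}_{F'}$-submodules $M', N' \subseteq F'$, and the definable homomorphism $h \colon M \to F/N$ extends to a definable homomorphism $h' \colon M' \to F'/N'$ (the graph of $h$ is a definable subset of $M \times F/N$, and in $F'$ the same formula defines the graph of an $\mathcal{O}_{F'}$-homomorphism by elementarity, since "being the graph of an $\mathcal{O}$-homomorphism" is first-order expressible).

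Next I would apply Proposition \ref{homomaximal} to $h' \colon M' \to F'/N'$ over the maximal field $F'$: this yields some $a \in F'$ such that $h'(x) = ax + N'$ for all $x \in M'$. The point now is to descend $a$ back to $F$. The key observation is that the statement
\[
\exists b \ \forall y \in M \ \big( h(y) = by + N \big)
\]
is a first-order sentence in the language $\mathcal{L}$ with parameters from $F$ (using the definitions of $M$, $N$ and the graph of $h$). Since $F \prec F'$ and this sentence holds in $F'$ (witnessed by $a$), it holds in $F$ as well; hence there is some $b \in F$ with $h(y) = by + N$ for all $y \in M$, which is exactly the conclusion.

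The main obstacle — and the only point requiring genuine care — is making precise that the hypotheses transfer correctly to $F'$: one must check that the formula defining the graph of $h$ continues, in $F'$, to define (the graph of) a function $M' \to F'/N'$ that is $\mathcal{O}_{F'}$-linear, rather than merely a relation. This is handled by observing that "$\theta(x,y)$ is the graph of an $\mathcal{O}$-module homomorphism from the set defined by $\mu$ to the quotient of the field by the set defined by $\nu$" is expressible by a first-order sentence over the parameters, so it passes from $F$ to $F'$ by elementarity; the quotient $F'/N'$ is an imaginary sort, but since $N'$ is an $\mathcal{O}_{F'}$-submodule of $F'$ this causes no difficulty in formalizing the statement. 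With that in place, the two-step transfer (up to $F'$, apply Proposition \ref{homomaximal}, back down to $F$) completes the argument; no further computation is needed.
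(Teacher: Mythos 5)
Your proposal is correct and follows the paper's proof exactly: pass to a maximal elementary extension $F'$ via Fact \ref{maxext}, apply Proposition \ref{homomaximal} there, and descend to $F$ by elementarity since the conclusion is first-order expressible over the parameters defining $M$, $N$, and $h$. The extra care you take in verifying that the graph of $h$ still defines an $\mathcal{O}$-homomorphism in $F'$ is a useful elaboration of the paper's brief ``because it is first order expressible,'' but the argument is the same.
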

 \begin{proof}
 By Fact \ref{maxext} we can find an elementary extension  $F \prec F'$ that is maximal. The statement follows by applying Proposition \ref{homomaximal}, because it is first order expressible.
 \end{proof}
 \subsection{Definable modules in valued fields of equicharacteristic zero with residue field algebraically closed and value group with bounded regular rank}
Let $(K,v)$ be a henselian valued field of equicharacteristic zero with residue field algebraically closed and value group with bounded regular rank. Let $\mathcal{O}$ be its valuation ring and $T$ be the complete $\mathcal{L}$-first order theory of $(K,v)$. In this section we study the definable $\mathcal{O}$-modules and torsors. Let $\mathcal{I}'$ be the complete family of $\mathcal{O}$-submodules of $K$ described in Fact \ref{completemodules}. From now on we fix a complete family $\displaystyle{\mathcal{I}=\mathcal{I}' \backslash \{ 0, K\}}$.

. 

    \begin{remark} If  $K \vDash T$, then   $\displaystyle{N \cong \oplus_{i \leq n} I_{i}}$, where each $I_{i} \in \mathcal{F} \cup \{ 0, K\}$. This follows because $\mathcal{F}$ is a complete family of $\mathcal{O}$-modules.
    \end{remark}
   
  \begin{definition} Let $K \vDash T$. A \emph{ definable torsor} $U$ is a coset in $K^{n}$ of a definable $\mathcal{O}$-submodule of $K^{n}$, if $n=1$ we say that $U$ is a $1$-torsor. Let $U$ be a definable $1$-torsor, we say that $U$ is:
  \begin{enumerate}
\item  \emph{closed} if it is a translate of a submodule of $K$ of the form $a\mathcal{O}$. 
\item it is \emph{open} if it is either $K$ or a translate of a submodule of the form $a I$ for some $a \in K$, where $I\in \mathcal{F} \backslash \mathcal{O}$. 
\end{enumerate}
\end{definition}
 \begin{definition} Let $(I_{1}, \dots, I_{n}) \in \mathcal{F}^{n}$ be a fixed tuple. 
 \begin{enumerate} 
\item  An $\mathcal{O}$-module $M \subseteq K^{n}$ is of \emph{type $(I_{1}, \dots, I_{n})$}  if $M \cong \bigoplus_{i \leq n} I_{i}$.
\item An $\mathcal{O}$-module $M \subseteq K^{n}$ of type $(\mathcal{O}, \dots, \mathcal{O})$ is said to be an $\mathcal{O}$-lattice of rank $n$. 
\item  A torsor $Z$ is \emph{of type $(I_{1},\dots,I_{n})$}, if $Z= \bar{d}+M$ where  $M \subseteq K^{n}$ is an $\mathcal{O}$-submodule of $K^{n}$ of type $(I_{1},\dots,I_{n})$.
\end{enumerate}
\end{definition}

\begin{proposition}\label{torsormodule} Let $Z$ be a torsor of type $(I_{1}, \dots, I_{n})$. Then there is some $\mathcal{O}$-module $L\subseteq K^{n+1}$ of type $(I_{1}, \dots, I_{n},\mathcal{O})$ such that $\ulcorner Z \urcorner$ and $\ulcorner L \urcorner $ are interdefinable. 
\end{proposition}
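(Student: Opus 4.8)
The plan is to realize the torsor $Z$ inside $K^{n+1}$ as (essentially) the affine cone over $Z$, so that the extra coordinate records the "homogenizing" scalar. Concretely, write $Z = \bar d + M$ with $M \subseteq K^n$ an $\mathcal{O}$-submodule of type $(I_1,\dots,I_n)$, and set
\begin{equation*}
L := \{\, (\lambda \bar d + \bar m,\ \lambda) \ \mid\ \lambda \in \mathcal{O},\ \bar m \in M \,\} \subseteq K^{n+1}.
\end{equation*}
First I would check that $L$ is an $\mathcal{O}$-submodule of $K^{n+1}$: it is visibly closed under $\mathcal{O}$-scaling and under addition since $M$ is. Next I would identify its type. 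Fixing (via Corollary \ref{basismodule}) an upper triangular representation matrix $[a_1,\dots,a_n]$ for $M$, so that $M = \{ \sum a_i x_i \mid x_i \in I_i \}$, one writes $\bar d = \sum a_i d_i + (\text{correction})$ — more carefully, one enlarges the basis: the vectors $(a_1,0),\dots,(a_n,0),(\bar d,1)$ form an upper triangular basis of $K^{n+1}$ (the last pivot being the $1$ in the final coordinate), and with respect to this basis $L = \{\, \sum_{i\le n}(a_i,0) x_i + (\bar d,1)\lambda \ \mid\ x_i \in I_i,\ \lambda \in \mathcal{O} \,\}$. Hence $L$ has type $(I_1,\dots,I_n,\mathcal{O})$, as required.

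Then comes the interdefinability of the codes. One direction is easy: $Z$ is defined from $L$ by $Z = \{\, \bar z \in K^n \ \mid\ (\bar z,1) \in L \,\}$, the fibre of $L$ over the last coordinate equal to $1$; this is a formula with parameter $\ulcorner L\urcorner$, so $\ulcorner Z\urcorner \in \dcl^{eq}(\ulcorner L\urcorner)$. For the converse I would recover $L$ from $Z$: the fibre of $L$ over $\lambda$ in the last coordinate is $\lambda \bar d + M = \lambda Z$ when $\lambda \in \mathcal{O}\setminus\{0\}$ — wait, $\lambda Z = \lambda\bar d + \lambda M$, and $\lambda M = M$ only fails when $\lambda \notin \mathcal{O}^\times$. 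So instead the clean description is: the fibre over $\lambda$ is $\lambda \bar d + M$, and since $M$ is recoverable from $Z$ as its stabilizer $M = \{\, \bar v \ \mid\ \bar v + Z = Z \,\}$ (a $\ulcorner Z\urcorner$-definable set), and $\bar d + M$ is $Z$ itself, the whole set $L = \bigcup_{\lambda\in\mathcal{O}} \big(\lambda(\bar d + M) + (1-\lambda)M\big)\times\{\lambda\}$ — equivalently $L = \{(\bar w,\lambda) : \lambda\in\mathcal{O},\ \bar w - \lambda\bar d \in M\}$ and both $M$ and "$\bar w - \lambda\bar d$" are expressible from $Z$ (pick any $\bar d \in Z$; the coset $\bar d + M$ does not depend on the choice). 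Thus $\ulcorner L\urcorner \in \dcl^{eq}(\ulcorner Z\urcorner)$, giving interdefinability.

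The main point requiring care — and the step I expect to be the real obstacle — is the converse definability: making sure that $L$ is genuinely definable from $\ulcorner Z\urcorner$ \emph{without} a spurious choice of basepoint $\bar d$, and that the type computation of $L$ is robust. The subtlety is that a single $Z$ may admit many representation matrices, and one must verify that the construction $(\bar z,1)\in L \Leftrightarrow \bar z \in Z$ together with "$L$ is an $\mathcal{O}$-module generated over $\mathcal{O}$ by $Z\times\{1\}$" pins $L$ down uniquely; i.e. $L$ is the unique $\mathcal{O}$-submodule of $K^{n+1}$ whose fibre over $1$ is $Z$ and whose fibre over $0$ is the stabilizer module $M$ of $Z$. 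Once uniqueness is established, $L$ is $\emptyset$-definable from $Z$ and the interdefinability of codes follows formally. I would also double-check, using Fact \ref{finiteorbits} or a direct argument, that no degeneracy occurs when some $I_i = K$, though since our fixed complete family $\mathcal{I}$ excludes $0$ and $K$ this case does not actually arise for the $I_i$ coming from $Z$.
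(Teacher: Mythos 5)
Your proof is correct and essentially identical to the paper's: both define $L = (N\times\{0\}) + (\bar d,1)\mathcal{O}$ for $Z = \bar d + N$ and verify interdefinability by noting that the construction is independent of the choice of basepoint $\bar d \in Z$ and that $Z$ is recovered as the fibre of $L$ over $1$ in the last coordinate. You supply the type computation (upper triangular basis) and the independence-from-$\bar d$ argument in more detail than the paper, which leaves the latter as ``a standard computation.''
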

\begin{proof}
Let $N \subseteq K^{n}$ be the $\mathcal{O}$-submodule and take $\bar{d} \in K^{n}$ be such that $Z=\bar{d}+N$. Let $N_{2}=N\times \{0\}$ which is an $\mathcal{O}$ submodule of $K^{n+1}$ and let $\bar{b}=\begin{bmatrix} \bar{d}\\1 \end{bmatrix}$. Define the  $\mathcal{O}$-module of $K^{n+1}$:
\begin{align*}
L_{\bar{d}}&:=N_{2}+\bar{b}\mathcal{O}=\{\begin{bmatrix} n+\bar{d}r\\r \end{bmatrix} \ | \ r \in \mathcal{O}, n \in N\}.
\end{align*}
By a standard computation, one can verify that the definition of $L_{\bar{d}}$ is independent of the choice of $\bar{d}$, i.e. if $\bar{d}-\bar{d}' \in N$ then $L_{\bar{d}}=L_{\bar{d}'}$. 
So we can denote $L=L_{\bar{d}}$, and we aim to show that $L$ and $Z$ are interdefinable. It is clear that $\ulcorner L \urcorner \in dcl^{eq}(\ulcorner Z \urcorner)$, while $\ulcorner Z \urcorner \in dcl^{eq}(\ulcorner L \urcorner)$ because $Z=\pi_{2 \leq n+1 }\big(L \cap  (K^{n}\times\{1\}) \big)$ where $\pi_{2 \leq n+1 }: K^{n+1} \rightarrow K^{n}$ is the projection into the last $n$-coordinates. 
\end{proof}

\subsection{Definable 1-$\mathcal{O}$-modules}
In this subsection we study the quotient modules of $1$-dimensional modules. 
\begin{notation} Let $M \subseteq K$ be a definable $\mathcal{O}$-module. We denote by $S_{M}:= \{ v(x) \ | \ x \in M\}$ the \emph{end-segment induced by $M$}. We recall as well that we write $\mathcal{F}$ to denote the complete family of $\mathcal{O}$-submodules of $K$ previously fixed.
\end{notation}

\begin{definition} A \emph{definable $1$-$\mathcal{O}$-module} is an $\mathcal{O}$-module which is definably isomorphic to a quotient of a definable $\mathcal{O}$-submodule of $K$ by another, i.e. something of the form $\displaystyle{aI/bJ}$ where $a,b \in K$ and $I,J \in \mathcal{F}\cup \{ 0, K\}$. 
\end{definition}

  The following operation between $\mathcal{O}$-modules will be particularly useful in our setting.
 \begin{definition} Let $N, M$ be $\mathcal{O}$-submodules of $K$, we define the \emph{colon module} $\displaystyle{ Col(N: M)=\{ x \in K \ | \ x M \subseteq N\}}$.  It is a well known fact from Commutative Algebra that $Col(N:M)$ is also an $\mathcal{O}$-module. 
 \end{definition}

\begin{lemma}\label{homreg} Let $K \vDash T$.  Let $A$ be a $1$-definable $\mathcal{O}$-module. 
Suppose that $A=A_{1}/A_{2}$, where $A_{2} \leq A_{1}$ are $\mathcal{O}$-submodules of $K$. Then the $\mathcal{O}$-module  $Hom_{\mathcal{O}}(A,A)$ is definably isomorphic to the $1$-definable 
 $\mathcal{O}$-module 
 \begin{equation*}
\big( Col(A_{1}:A_{1})\cap Col(A_{2}:A_{2})\big)/ Col(A_{2},A_{1}). 
 \end{equation*}
\end{lemma}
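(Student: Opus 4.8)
The plan is to construct an explicit $\mathcal{O}$-linear isomorphism between $\operatorname{Hom}_{\mathcal{O}}(A,A)$ and the displayed quotient, and to verify well-definedness and bijectivity using the structure of $1$-dimensional modules over a maximal valued field, together with Corollary \ref{homomorphism}. First I would lift: given $\varphi \in \operatorname{Hom}_{\mathcal{O}}(A_{1}/A_{2}, A_{1}/A_{2})$, compose with the quotient map $A_{1} \to A_{1}/A_{2}$ to get an $\mathcal{O}$-homomorphism $A_{1} \to A_{1}/A_{2} \subseteq K/A_{2}$; since $A_{1}, A_{2}$ are definable $\mathcal{O}$-submodules of $K$, Corollary \ref{homomorphism} (passing to a maximal elementary extension and using that the relevant statements are first order) supplies some $a \in K$ with $\varphi(x + A_{2}) = ax + A_{2}$ for all $x \in A_{1}$. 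The constraints that this formula be well-defined and land inside $A_{1}/A_{2}$ translate exactly into $a A_{2} \subseteq A_{2}$ and $a A_{1} \subseteq A_{1}$, i.e. $a \in \operatorname{Col}(A_{1}:A_{1}) \cap \operatorname{Col}(A_{2}:A_{2})$; and two scalars $a, a'$ induce the same endomorphism iff $(a-a')A_{1} \subseteq A_{2}$, i.e. $a - a' \in \operatorname{Col}(A_{2}:A_{1})$. This yields the candidate isomorphism
\begin{equation*}
\Psi \colon \big( \operatorname{Col}(A_{1}:A_{1})\cap \operatorname{Col}(A_{2}:A_{2})\big)/ \operatorname{Col}(A_{2}:A_{1}) \longrightarrow \operatorname{Hom}_{\mathcal{O}}(A,A), \qquad a + \operatorname{Col}(A_{2}:A_{1}) \longmapsto \big(x + A_{2} \mapsto ax + A_{2}\big).
\end{equation*}

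Next I would check the three things that make $\Psi$ a well-defined $\mathcal{O}$-module isomorphism. Well-definedness and injectivity are exactly the two containments noted above: for $a \in \operatorname{Col}(A_{1}:A_{1}) \cap \operatorname{Col}(A_{2}:A_{2})$ the map $x+A_{2} \mapsto ax+A_{2}$ is a genuine $\mathcal{O}$-endomorphism of $A_{1}/A_{2}$, and it is the zero endomorphism precisely when $aA_{1} \subseteq A_{2}$. That $\Psi$ is $\mathcal{O}$-linear is immediate since $(a+a')x = ax + a'x$ and $(ra)x = r(ax)$. Surjectivity is precisely the content of the lifting step: every $\varphi \in \operatorname{Hom}_{\mathcal{O}}(A,A)$ arises as $\Psi(a + \operatorname{Col}(A_{2}:A_{1}))$ for the scalar $a$ produced by Corollary \ref{homomorphism}. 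Finally I would remark that the isomorphism is definable — $\Psi$ and its inverse are given by the explicit algebraic formula above with parameters the codes of $A_{1}, A_{2}$ — and that the target is a $1$-definable $\mathcal{O}$-module because $\operatorname{Col}(A_{2}:A_{1})$ is contained in $\operatorname{Col}(A_{1}:A_{1}) \cap \operatorname{Col}(A_{2}:A_{2})$ (any $a$ with $aA_{1} \subseteq A_{2} \subseteq A_{1}$ certainly satisfies $aA_{1}\subseteq A_{1}$ and $aA_{2} \subseteq A_{2}$), so the quotient makes sense and, after fixing nonzero elements of $A_{1}$ and $A_{2}$, is visibly of the form $cI/dJ$.

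The main obstacle I anticipate is the surjectivity step, specifically making sure Corollary \ref{homomorphism} applies in the right form. The corollary as stated produces $b \in F$ with $h(y) = by + N$ for an $\mathcal{O}$-homomorphism $h \colon M \to K/N$, so I need to set $M = A_{1}$, $N = A_{2}$, and $h$ equal to the composite $A_{1} \hookrightarrow A_{1} \xrightarrow{\text{quotient}} A_{1}/A_{2} \xrightarrow{\varphi} A_{1}/A_{2} \hookrightarrow K/A_{2}$; one should check this composite is genuinely $\mathcal{O}$-linear (it is, being a composition of $\mathcal{O}$-maps) and definable (it is, being built from $\varphi$ and the module codes). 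A secondary subtlety is the degenerate cases where $A_{1}$ or $A_{2}$ is $0$ or $K$ — e.g. if $A_{2} = 0$ then $\operatorname{Col}(A_{2}:A_{1}) = 0$ and the statement reduces to $\operatorname{Hom}_{\mathcal{O}}(A_{1},A_{1}) \cong \operatorname{Col}(A_{1}:A_{1})$, which is fine, and if $A_{1} = K$ or $A_{2} = A_{1}$ the claim is trivial — but these should all be handled uniformly by the same formula, so I would either note they cause no trouble or dispatch them in a sentence.
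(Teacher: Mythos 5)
Your approach is essentially the paper's, just run in the opposite direction: the paper constructs a map $\phi$ from the set $\mathcal{B}$ of homomorphisms $A_1 \to A_1/A_2$ vanishing on $A_2$ (canonically identified with $Hom_{\mathcal{O}}(A,A)$) into the quotient module, while you construct its inverse $\Psi$. The content of the two claims you verify — that a linear representation $a$ of such a homomorphism lies in $Col(A_1:A_1)\cap Col(A_2:A_2)$, and that the ambiguity in the choice of $a$ is exactly $Col(A_2:A_1)$ — is identical to the two claims in the paper's proof. Your handling of linearity and of the degenerate cases is fine, and your observation that $Col(A_2:A_1)\subseteq Col(A_1:A_1)\cap Col(A_2:A_2)$ is correct and worth stating.

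However, the justification you give for the surjectivity step does not hold up as written. You apply Corollary \ref{homomorphism} to the composite $h\colon A_1 \to K/A_2$ and assert that $h$ ``is definable, being built from $\varphi$ and the module codes.'' This is not right: $\varphi$ ranges over the abstract $\mathcal{O}$-module $Hom_{\mathcal{O}}(A,A)$, which is not a definable set, so a generic $\varphi$ is not a definable map and the composite $h$ need not be definable. As stated, the hypothesis of Corollary \ref{homomorphism} is simply not met. The correct move — and the one the paper's proof makes — is to first invoke Fact \ref{maxext} to reduce to the case where $K$ is maximal, and then apply Proposition \ref{homomaximal} rather than Corollary \ref{homomorphism}: that proposition is stated for \emph{arbitrary} $\mathcal{O}$-homomorphisms $h\colon M\to K/N$ over a maximal field, with no definability hypothesis. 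Your parenthetical about ``passing to a maximal elementary extension'' shows you sense the right mechanism, but the purpose of that pass is precisely to drop the definability requirement, not to make $h$ definable. Delete the definability claim, reduce to maximal $K$ at the outset, cite Proposition \ref{homomaximal}, and the rest of your argument matches the paper's.
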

\begin{proof}
By Fact \ref{maxext} without loss of generality we may assume $K$ to be maximal, because the statement is first order expressible. Let 
\begin{equation*}
\mathcal{B}=\{ f: A_{1} \rightarrow A_{1}/ A_{2} \ | \ \text{$f$ is a homomorphism and}\ A_{2}\subseteq ker(f)\}.
\end{equation*}
$\mathcal{B}$ is canonically in one-to-one correspondence with $Hom_{\mathcal{O}}(A,A)$. By Corollary \ref{homomorphism}, 
for every homomorphism $f \in \mathcal{B}$ there is some $b_{f} \in K$ satisfying that for any $x \in A_{1}$, $f(x)=b_{f}x +A_{2}$ and we say that \emph{$b_{f}$ is a linear representation of $f$}.
\begin{claim} {Let $f\in \mathcal{B}$ if $b_{f}$ is a linear representation of $f$, then $b_{f} \in Col(A_{1}:A_{1}) \cap Col(A_{2}:A_{2})$.}
\end{claim}
\begin{proof}
First we verify that $b_{f}\in Col(A_{1}:A_{1})$. Let $x \in A_{1}$, by hypothesis $f(x)=b_{f}x+A_{2}\in A_{1}/A_{2}$. Then there is some $y \in A_{1}$ such that $b_{f}x+A_{2}=y+A_{2}$ and therefore $b_{f}x-y \in A_{2} \subseteq A_{1}$. Consequently, $b_{f}x \in y+A_{1}=A_{1}$, and as $x$ is an arbitrary element we conclude that $b_{f}\in Col(A_{1}:A_{1})$. We check now that $b_{f} \in Col(A_{2}:A_{2})$, and we fix an element $x \in A_{2}$. By hypothesis, $b_{f}x+A_{2}=A_{2}$ so $b_{f}x \in A_{2}$, and as $x \in A_{2}$ is an arbitrary element we conclude that $b_{f}\in Col(A_{2}:A_{2})$. 
\end{proof}
\begin{claim}\label{wd} {Let $f\in \mathcal{B}$ if $b_{f},b_{f}'$ are linear representations of $f$, then $b_{f}-b_{f}' \in Col(A_{2}:A_{1})$}
\end{claim}
\begin{proof}
Let $x \in A_{1}$, by hypothesis $f(x)=b_{f}x+A_{2}=b_{f}'x+A_{2}$, so $(b_{f}-b_{f}')x \in A_{2}$. Because $x$ is arbitrary in $A_{1}$ we have that $(b_{f}-b_{f}') \in Col(A_{2}:A_{1})$.
\end{proof}
We consider the map  $\displaystyle{\phi: \mathcal{B} \rightarrow \big(Col(A_{1}:A_{1})\cap Col(A_{2}:A_{2})\big)/ Col(A_{2}:A_{1})}$ that sends an $\mathcal{O}$-homomorphism $f$ to the coset $b_{f}+Col(A_{1}:A_{2})$. By Claim \ref{wd} such map is well defined.\\
By a standard computation $\phi$ is an injective $\mathcal{O}$-homomorphism. To show that $\phi$ is surjective, let 
\begin{equation*}
 b \in Col(A_{1}:A_{1}) \cap Col(A_{2}:A_{2}),
\end{equation*}
 and consider $f_{b}: A_{1}\rightarrow A_{1}/A_{2}$, the map that sends the element $x$ to $bx+A_{2}$. Because $b \in Col(A_{2}:A_{2})$, for any $x \in A_{2}$ we have that $bx \in A_{2}$ thus $A_{2} \subseteq ker(f_{b})$. Consequently, $f_{b} \in \mathcal{B}$ and  $\phi(f_{b})=b+Col(A_{1}:A_{2})$.
\end{proof}
\begin{lemma}\label{quotients} Let $n \in \mathbb{N}_{\geq 2}$ and $M \subseteq K^{n}$ be an $\mathcal{O}$-module.
 \begin{enumerate}
 \item Let $\pi^{n-1}: K^{n} \rightarrow K^{n-1}$ be the projection into the first $(n-1)$-coordinates and $B_{n-1}=\pi^{n-1}(M)$. Take $A_{1} \subseteq K$ be the $\mathcal{O}$-module such that $ker(\pi^{n-1})= M \cap (\{0\}^{n-1} \times K)= (\{ 0 \} ^{n-1} \times A_{1})$. 
 \item Let $\pi_{n}: K^{n} \rightarrow K$ be the projection into the last coordinate and $B_{1}=\pi_{n}(M)$. Let  $A_{n-1} \subseteq K^{n-1}$ be the $\mathcal{O}$-module such that $ker(\pi_{n})= M \cap ( K^{n-1}\times \{0\})= (A_{n-1} \times \{ 0\})$. 
 \end{enumerate}
 Then $A_{n-1} \leq B_{n-1}$ and both lie in $K^{n-1}$, and $A_{1} \leq B_{1}$ and both lie in $K$. The map $\phi: B_{n-1} \rightarrow B_{1}/A_{1}$ given by $b \mapsto a+A_{1} \text{ where $(b,a) \in M$}$, is a well defined homomorphism of $\mathcal{O}$-modules whose kernel is $A_{n-1}$. In particular, $B_{n-1}/ A_{n-1} \cong B_{1}/A_{1}$. Furthermore if $M$ is definable, $\phi$  is also definable. 
 \end{lemma}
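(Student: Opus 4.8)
The statement is essentially a first isomorphism theorem adapted to the submodule $M\subseteq K^n$, so the plan is to verify the claimed chain of inclusions, that $\phi$ is well defined, that it is an $\mathcal{O}$-homomorphism, and that its kernel is exactly $A_{n-1}$. First I would unwind the definitions: an element of $B_{n-1}=\pi^{n-1}(M)$ is a tuple $b\in K^{n-1}$ such that $(b,a)\in M$ for some $a\in K$, and by definition of $B_1=\pi_n(M)$ any such $a$ lies in $B_1$. The inclusion $A_1\le B_1$ is immediate since $A_1=\{a : (0,\dots,0,a)\in M\}\subseteq \pi_n(M)=B_1$; similarly $A_{n-1}\le B_{n-1}$ because $(c,0)\in M$ forces $c=\pi^{n-1}(c,0)\in B_{n-1}$.

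The main point is well-definedness of $\phi$. Given $b\in B_{n-1}$, suppose $(b,a)\in M$ and $(b,a')\in M$; then $(0,\dots,0,a-a')=(b,a)-(b,a')\in M$, so $a-a'\in A_1$, hence $a+A_1=a'+A_1$ and the value $\phi(b)$ does not depend on the choice of lift $a$. That $\phi$ is an $\mathcal{O}$-homomorphism follows from linearity of $M$: if $(b,a),(b',a')\in M$ and $r\in\mathcal{O}$, then $(b+b', a+a')\in M$ and $(rb, ra)\in M$, so $\phi(b+b')=a+a'+A_1=\phi(b)+\phi(b')$ and $\phi(rb)=ra+A_1=r\phi(b)$. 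For the kernel, $\phi(b)=0$ means there is $a\in A_1$ with $(b,a)\in M$; since $(0,\dots,0,a)\in M$ as well, subtracting gives $(b,0)\in M$, i.e.\ $b\in A_{n-1}$. Conversely if $b\in A_{n-1}$ then $(b,0)\in M$, so $\phi(b)=0$. Hence $\ker\phi=A_{n-1}$, and since $\phi$ is surjective onto $B_1/A_1$ (any $a\in B_1$ has some $b$ with $(b,a)\in M$, so $a+A_1$ is hit), the first isomorphism theorem yields $B_{n-1}/A_{n-1}\cong B_1/A_1$.

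Finally, definability: if $M$ is definable, then $B_{n-1}$, $B_1$, $A_1$, $A_{n-1}$ are all definable as projections and sections of $M$, and the graph of $\phi$, namely $\{(b, a+A_1) : (b,a)\in M\}$ inside $K^{n-1}\times (K/A_1)$, is definable since it is the image of $M$ under the definable quotient-by-$A_1$ map on the last coordinate; hence $\phi$ is a definable homomorphism. I do not expect any genuine obstacle here — the only thing to be careful about is keeping the roles of the two kernels straight and checking that surjectivity of $\phi$ uses precisely the definition of $B_1$ as the full projection $\pi_n(M)$ rather than something smaller.
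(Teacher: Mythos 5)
Your proof is correct and takes the same approach as the paper, which merely asserts that the verification of well-definedness, the homomorphism property, the kernel computation, and definability are "straightforward computations"; you have simply filled in those routine checks explicitly.
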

\begin{proof}
 Let $\bar{m} \in A_{n-1}$, then $(\bar{m}, 0) \in M$ thus $\pi^{n-1}(\bar{m},0)=\bar{m} \in B_{n-1}$. We conclude that $A_{n-1}$ is a submodule of $B_{n-1}$. Likewise $A_{1} \leq B_{1}$. For the second part of the statement, it is a straightforward computation to verify that the map $\phi: B_{n-1} \rightarrow B_{1}/A_{1}$( defined as in the statement) , is a well defined surjective homomorphism of $\mathcal{O}$-modules whose kernel is $A_{n-1}$. Lastly, the definability of $\phi$ follows immediately by the definability of $M$. 
 \end{proof}

  \section{The Stabilizer sorts}\label{stabilizer}

\subsection{An abstract criterion to eliminate imaginaries}\label{criterion}
We start by recalling Hurshovski's criterion, The following is \cite[Lemma 1.17]{criteria}.
\begin{theorem}\label{weakEI}Let $T$ be a first order theory with home sort $K$ (meaning that $\mathfrak{M}^{eq}=dcl^{eq}(K)$). Let $\mathcal{G}$ be some collection of sorts. If the following conditions all hold, then $T$ has weak elimination of imaginaries in the sorts $\mathcal{G}$. 
\begin{enumerate}
\item \emph{Density of definable types: } for every non-empty definable set $X \subseteq K$ there is an $acl^{eq}(\ulcorner X \urcorner )$-definable type in $X$.
\item \emph{Coding definable types:}  every definable type in $K^{n}$ has a code in $\mathcal{G}$ (possibly infinite). This is, if $p$ is any (global) definable type in $K^{n}$, then the set $\ulcorner p \urcorner$ of codes of the definitions of $p$ is interdefinable with some (possibly infinite) tuple from $\mathcal{G}$.
\end{enumerate}
\end{theorem}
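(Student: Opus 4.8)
The plan is to verify the standard reformulation of weak elimination of imaginaries: it suffices to show that every $e \in \mathfrak{M}^{eq}$ lies in $\dcl^{eq}(a)$ for some tuple $a$ from the sorts $\mathcal{G}$ with $a \subseteq \acl^{eq}(e)$. Since $\mathfrak{M}^{eq} = \dcl^{eq}(K)$, I would write $e \in \dcl^{eq}(c)$ for a real tuple $c \in K^{m}$ and fix a $\emptyset$-definable (partial) function $f$ with $f(c) = e$. Put $X := f^{-1}(e) = \{\, x \in K^{m} : f(x) = e \,\}$. Then $X$ is a non-empty (it contains $c$) definable subset of $K^{m}$, $\ulcorner X \urcorner \in \dcl^{eq}(e)$, and $e = f(x)$ for every $x \in X$ yields $e \in \dcl^{eq}(\ulcorner X \urcorner)$; hence $e$ and $\ulcorner X \urcorner$ are interdefinable and $\acl^{eq}(e) = \acl^{eq}(\ulcorner X \urcorner)$, so it is enough to produce a weak code for $\ulcorner X \urcorner$.

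\textbf{Upgrading density to $m$ variables.} Hypothesis (1) only provides density in one variable, so the first substantive step — and the main technical one — is to bootstrap it: for every non-empty definable $Y \subseteq K^{m}$ there is a global definable type $p$ with $Y \in p$ and $\ulcorner p \urcorner \in \acl^{eq}(\ulcorner Y \urcorner)$. I would argue by induction on $m$. For the inductive step fix a coordinate projection $\pi \colon K^{m} \to K$; apply one-variable density to $\pi(Y)$ to get a definable type $r$ on $\pi(Y)$ with $\ulcorner r \urcorner \in \acl^{eq}(\ulcorner Y \urcorner)$, realize $r$ by some $\gamma$ in a bigger monster, apply the inductive hypothesis to the fibre $Y_{\gamma} \subseteq K^{m-1}$ to obtain a definable type $s$ with $\ulcorner s \urcorner \in \acl^{eq}(\ulcorner Y \urcorner, \gamma)$, and reassemble $r$ and $s$ into the integrated definable type $\int_{r} s$ concentrated on $Y$. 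The delicate point is to check, via the usual formalism of definable families of types and their germs, that the canonical base of $\int_{r} s$ lands in $\acl^{eq}(\ulcorner Y \urcorner)$ and does not retain the auxiliary parameter $\gamma$. Applying this to $X$ gives a global definable type $p$ with $X \in p$ and $\ulcorner p \urcorner \in \acl^{eq}(\ulcorner X \urcorner) = \acl^{eq}(e)$.

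\textbf{Coding $p$ and recovering $e$.} By hypothesis (2), $\ulcorner p \urcorner$ is interdefinable with a (possibly infinite) tuple $a$ from $\mathcal{G}$, and since $\ulcorner p \urcorner \subseteq \acl^{eq}(e)$ we get $a \subseteq \acl^{eq}(e)$. To finish I would show $e \in \dcl^{eq}(\ulcorner p \urcorner)$: if $\sigma \in \operatorname{Aut}(\mathfrak{M})$ fixes $\ulcorner p \urcorner$ then $\sigma(p) = p$, and since $f$ is $\emptyset$-definable it commutes with $\sigma$, so $\sigma(X) = \sigma(f^{-1}(e)) = f^{-1}(\sigma(e))$; as $X \in p$ this forces $f^{-1}(\sigma(e)) = \sigma(X) \in \sigma(p) = p$, so the complete type $p$ contains the two fibres $f^{-1}(e)$ and $f^{-1}(\sigma(e))$, which are disjoint unless $\sigma(e) = e$. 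Hence $\sigma(e) = e$, so $e \in \dcl^{eq}(\ulcorner p \urcorner) = \dcl^{eq}(a)$; replacing $a$ by a finite subtuple (legitimate since $\dcl^{eq}$ is finitary) yields a finite tuple from $\mathcal{G}$, contained in $\acl^{eq}(e)$, over which $e$ is definable. Note that density only delivers $\ulcorner p \urcorner$ inside $\acl^{eq}(e)$ and not inside $\dcl^{eq}(e)$, which is precisely why the argument produces weak, rather than full, elimination of imaginaries; correspondingly, the main obstacle in the whole proof is the multivariable bootstrapping of density, everything else being a short manipulation of definable types and automorphisms.
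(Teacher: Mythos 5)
Your proposal is correct and reconstructs the same argument as Johnson's proof in \cite[Theorem 6.3]{will}, which the paper cites in place of giving its own: reduce to coding the fibre $X = f^{-1}(e)$ of a $\emptyset$-definable function, bootstrap one-variable density of definable types to $K^{m}$ by induction, apply hypothesis (2) to the resulting definable type, and recover $e$ from the code using that distinct fibres of $f$ are disjoint and hence cannot both belong to a complete type. One caution on the bootstrap, where your phrasing slightly understates the work: the inductive hypothesis only gives the \emph{existence} of some definable $s_{\gamma}$ on each fibre $Y_{\gamma}$ with canonical base in $\acl^{eq}(\ulcorner Y_{\gamma}\urcorner)$, not a definable family $\gamma \mapsto s_{\gamma}$, so the delicate point is not merely to check that the canonical base of the reassembled type forgets $\gamma$ but to make the reassembled type well-defined at all --- one takes a choice $\gamma \mapsto s_{\gamma}$ definable over a small model and shows that the $r$-germ of this choice has a finite orbit over $\ulcorner Y \urcorner$, which is precisely the point at which the criterion must be phrased with $\acl^{eq}$ rather than $\dcl^{eq}$.
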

\begin{proof}
 A very detailed proof can be found in \cite[Theorem 6.3]{will}. The first part of the proof shows weak elimination of imaginaries as it is shown that for any imaginary element $e$ we can find a tuple $a \in \mathcal{G}$ such that $e \in dcl^{eq}(a)$ and $a \in acl^{eq}(e)$. $\square$  
\end{proof}
We start by describing the sorts that are required to be added to apply this criterion and show that any valued field of equicharacteristic zero, with residue field algebraically closed and value group with bounded regular rank admits weak elimination of imaginaries.
\begin{definition}
 For each $n \in \mathbb{N}$,  let $\{ e_{1},\dots, e_{n}\}$ be the canonical basis of $K^{n}$ and  $(I_{1},\dots, I_{n})\in \mathcal{F}^{n}$. 
\begin{enumerate}
\item Let  $\displaystyle{C_{(I_{1}, \dots, I_{n})}= \{ \sum_{1 \leq i \leq n} x_{i}e_{i} \ | \ x_{i} \in I_{i}\}}$, we refer to this module as \emph{the canonical $\mathcal{O}$-submodule of $K^{n}$ of type $(I_{1},\dots,I_{n})$}. 
\item We denote as $B_{n}(K)$ the multiplicative group of $n\times n$-upper triangular and invertible matrices.
\item  We define the subgroup $Stab_{(I_{1}, \dots, I_{n})}= \{ A \in B_{n}(K) \ | \ AC_{(I_{1}, \dots, I_{n})}= C_{(I_{1}, \dots, I_{n})}\}$.
\item Let $ \Lambda_{(I_{1},\dots, I_{n})}  := \{  M  \ | \ M \subseteq K^{n}$ is an $\mathcal{O}$-module of type $(I_{1},\dots,I_{n})\}$. 
\item Let  $\mathcal{U}_{n} \subseteq (K^{n})^{n}$ be the set of $n$-tuples $(\bar{b}_{1}, \dots,\bar{b}_{n})$, such that $B=[\bar{b}_{1},\dots,\bar{b}_{n}]$ is an invertible upper triangular matrix. We define the equivalence relation $E_{(I_{1},\dots, I_{n})}$ on $\mathcal{U}_{n}$ as: 
\begin{equation*}
E_{(I_{1},\dots, I_{n})} \big(\bar{a}_{1},\dots,\bar{a}_{n} ; \bar{b}_{1},\dots, \bar{b}_{n} \big) \ \text{holds if and only if}
\end{equation*}
 $(\bar{a}_{1}, \dots, \bar{a}_{n})$ and $(\bar{b}_{1}, \dots, \bar{b}_{n})$ generate the same $\mathcal{O}$-module of type $(I_{1},\dots, I_{n})$, i.e.
\begin{equation*}
  \{ \sum_{1 \leq i \leq n} x_{i}\bar{a}_{i} \ | \ x_{i} \in I_{i}\}= \{ \sum_{1 \leq i \leq n} x_{i}\bar{b}_{i} \ | \ x_{i} \in I_{i}\}. \\
\end{equation*}
\item We denote as $\bar{\rho}_{(I_{1},\dots, I_{n})}$ the canonical projection map:
\begin{center}
$\bar{\rho}_{(I_{1},\dots, I_{n})}: \begin{cases}
\mathcal{U}_{n} & \rightarrow \mathcal{U}_{n}/ E_{(I_{1},\dots,I_{n})}\\
 (\bar{a}_{1},\dots, \bar{a}_{n}) &\mapsto [(\bar{a}_{1}, \dots, \bar{a}_{n} )]_{E_{(I_{1},\dots,I_{n})}}
\end{cases}$
\end{center}
\end{enumerate}
\end{definition}
\begin{remark}\label{identifications}
\begin{enumerate}
\item The set $\{ \ulcorner M \urcorner \ | \ M \in \Lambda_{(I_{1},\dots, I_{n})}\}$ can be canonically identified with $B_{n}(K)/Stab_{(I_{1},\dots, I_{n})}$. Indeed, by Corollary \ref{basismodule} given any $\mathcal{O}$-module $M$ of type $(I_{1},\dots,I_{n})$ we can find an upper triangular basis $\{ \bar{a}_{1},\dots, \bar{a}_{n}\}$ of $K^{n}$  such that $[a_{1}, \dots, a_{n}]$ is a matrix representation of $M$. The code $\ulcorner M \urcorner$ is interdefinable with the coset $[a_{1}, \dots, a_{n}] Stab_{(I_{1},\dots, I_{n})}$. 
\item Fix some $n \in \mathbb{N}_{\geq 2}$ and let  $(I_{1},\dots, I_{n})$ be a fixed tuple. The sort $B_{n}(K)/ Stab_{(I_{1}, \dots, I_{n})}$ is in definable bijection with the equivalence classes of $\mathcal{U}_{n}/ E_{(I_{1},\dots,I_{n})}$. In fact we can consider the $\emptyset$-definable map:
\begin{center}
$f: \begin{cases}
\mathcal{U}_{n}/ E_{(I_{1},\dots,I_{n})} &\rightarrow B_{n}(K)/ Stab_{(I_{1},\dots,I_{n})}\\
[(\bar{a}_{1},\dots, \bar{a}_{n})]_{E_{(I_{1},\dots, I_{n})}} & \mapsto [\bar{a}_{1},\dots,\bar{a}_{n}] Stab_{(I_{1},\dots, I_{n})}.
\end{cases}$
\end{center}
We denote as $\rho_{(I_{1},\dots,I_{n})}= \mathcal{U}_{n} \rightarrow B_{n}(K)/Stab_{(I_{1,\dots,I_{n}})}$  the composition maps $\rho_{(I_{1},\dots,I_{n})}=f \circ \bar{\rho}_{(I_{1},\dots,I_{n})}$.
\end{enumerate}
\end{remark}
\begin{definition}\label{defstab}[The stabilizer sorts] We consider the language $\mathcal{L}_{\mathcal{G}}$ extending the three sorted language $\mathcal{L}$ (defined in Subsection \ref{language}), where: 
\begin{enumerate}
\item  We equipped the value group with the multi-sorted language $\mathcal{L}_{bq}$ introduced in Subsection \ref{OAGBR}. 
\item For each $n \in \mathbb{N}$ we consider the parametrized family of sorts $B_{n}(K)/Stab_{(I_{1},\dots,I_{n})}$ and  maps
\begin{equation*}\rho_{(I_{1},\dots,I_{n})}: \mathcal{U}_{n} \rightarrow B_{n}(K)/Stab_{(I_{1},\dots,I_{n})}
\end{equation*}
where $(I_{1},\dots,I_{n}) \in \mathcal{F}^{n}$. 
\end{enumerate}
We refer to the sorts in the language $\mathcal{L}_{\mathcal{G}}$ as the \emph{stabilizer sorts}. We denote as $\mathcal{G}$ their union, i.e.
\begin{align*}
K \cup k \cup \Gamma \cup \{ \Gamma/\Delta \ | \ \Delta \in RJ(\Gamma)\} \cup \{\Gamma/\Delta+n\Gamma \ | \ \Delta \in RJ(\Gamma), n \in \mathbb{N}_{\geq 2}\}  \cup \{ B_{n}(K)/Stab_{(I_{1},\dots,I_{n})} \ | \ n \in \mathbb{N}, (I_{1}, \dots, I_{n}) \in \mathcal{F}^{n}\}.
\end{align*}
\end{definition}
\begin{remark}\label{ACVF}
The geometric sorts for the case of $ACVF$ are a particular instance of the stabilizer sorts. Let $S_{n}$ denotes the set of $\mathcal{O}$-lattices of $K^{n}$ of rank $n$, these are simply the $\mathcal{O}$-modules of type $\underbrace{(\mathcal{O}, \dots, \mathcal{O})}_{n- \text{times}}$. For each $\Lambda \in S_{n}$, let $res(\Lambda)=\Lambda \otimes_{\mathcal{O}}k= \Lambda/ \mathcal{M} \Lambda$, which is a $k$ -vector space of dimension $n$.\\
 Let $\displaystyle{T_{n}=\bigcup_{\Lambda \in S_{n}} res(\Lambda)=\{(\Lambda,x) \ | \ \Lambda \in S_{n}, x \in res(\Lambda)\}}$.  Each of these torsors is considered in the stabilizer sorts as the code of an $\mathcal{O}$ module of type $( \mathcal{M}, \dots, \mathcal{M},\mathcal{O})$, because  any torsor of the form $a + \mathcal{M} \Lambda$ for some $\Lambda \in S_{n}$ can be identified with an $\mathcal{O}$-module of type $(\underbrace{\mathcal{M}, \dots, \mathcal{M}}_{n- \text{times}},\mathcal{O})$ (see Proposition \ref{torsormodule}).
\end{remark}

\subsection{An explicit description of the stabilizer sorts}
In this subsection we state an explicit description of the subgroups $Stab_{(I_{1},\dots,I_{n})}$. 

\begin{notation} For each $\Delta \in RJ(\Gamma)$  we denote as $\mathcal{O}_{\Delta}$ the valuation ring of $K$ of the coarsened valuation $v_{\Delta}: K^{\times}\rightarrow \Gamma/\Delta$ induced by $\Delta$.
\end{notation}
\begin{fact} \label{stabideals} Let $I \in \mathcal{F}$ and let $S_{I}=\{v(x) \ | \ x \in I\}$. Then $Stab(I)=\mathcal{O}_{\Delta_{S_{I}}}^{\times}=\{x \in K \ | \ v(x) \in \Delta_{S_{I}}\}$. 
\end{fact}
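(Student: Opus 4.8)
The plan is to unwind the definitions and reduce the statement about the stabilizer of an $\mathcal{O}$-submodule $I$ of $K$ to the computation of $\Delta_{S_I}$, the stabilizer of the associated end-segment, which was already worked out in the ordered abelian group setting (Fact \ref{stab1}). Recall that $Stab(I) = \{ x \in K \mid xI = I \}$. The first observation is that $x \in Stab(I)$ if and only if $xI \subseteq I$ and $x^{-1}I \subseteq I$, i.e. $v(x) + S_I \subseteq S_I$ and $-v(x) + S_I \subseteq S_I$. Since $S_I$ is an end-segment, $v(x) + S_I \subseteq S_I$ holds automatically whenever $v(x) \geq 0$; the content is that both $v(x)$ and $-v(x)$ must translate $S_I$ into itself, which by definition of $\Delta_{S_I}$ as $\{\eta \in \Gamma \mid \eta + S_I = S_I\}$ is exactly the condition $v(x) \in \Delta_{S_I}$. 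One subtlety: a priori $\Delta_{S_I}$ is defined via the equality $\eta + S_I = S_I$ rather than the two inclusions, so I would note that for a convex subgroup $\Delta_{S_I}$ (which it is, by Fact \ref{stab1}), $\eta + S_I \subseteq S_I$ together with $-\eta + S_I \subseteq S_I$ forces $\eta + S_I = S_I$; conversely equality gives both inclusions. Hence $v(Stab(I)) = \Delta_{S_I}$ as a subset of $\Gamma$.

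Next I would identify $\{ x \in K \mid v(x) \in \Delta_{S_I} \}$ with $\mathcal{O}_{\Delta_{S_I}}^{\times}$. By definition of the coarsened valuation $v_{\Delta}$ (from the Definition preceding Fact \ref{endsegement}), for $\Delta = \Delta_{S_I}$ we have $x \in \mathcal{O}_\Delta^\times$ iff $v_\Delta(x) = 0$ in $\Gamma/\Delta$, which is iff $v(x) \in \Delta$. So $\mathcal{O}_{\Delta_{S_I}}^\times = \{ x \in K \mid v(x) \in \Delta_{S_I}\}$, giving the displayed equality in the Fact. Combining with the previous paragraph yields $Stab(I) = \mathcal{O}_{\Delta_{S_I}}^\times$.

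The only genuine work is checking that $xI = I$ really reduces to $v(x) \in \Delta_{S_I}$, and in particular being careful that $I$ ranges over $\mathcal{F}$, i.e. (by the construction of $\mathcal{F}$ in Fact \ref{completemodules} and the definition of $\mathcal{I}$) $I$ is neither $0$ nor $K$ — so $S_I$ is a proper nonempty end-segment and $M_{S_I} = I$ by the one-to-one correspondence of Fact \ref{endsegement}. For such $I$, the implication $xI = I \Rightarrow v(x) + S_I = S_I$ is immediate from applying $v$, and conversely if $v(x) + S_I = S_I$ then $xI$ and $I$ are $\mathcal{O}$-submodules of $K$ with the same induced end-segment, hence equal again by the bijection of Fact \ref{endsegement}. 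So the proof is essentially: apply $v$, invoke the bijection between $\mathcal{O}$-submodules of $K$ and end-segments, and cite Fact \ref{stab1} for the claim that $v(Stab(I))$ is the convex subgroup $\Delta_{S_I}$. I expect no real obstacle here; the main thing to get right is the bookkeeping between $xI = I$ as an equality of submodules and $v(x) + S_I = S_I$ as an equality of end-segments, which the correspondence in Fact \ref{endsegement} handles cleanly.
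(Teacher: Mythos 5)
Your proof is correct and takes essentially the same route the paper has in mind: the paper's one-line proof (``immediate consequence of Fact~\ref{stab1}'') is exactly the unwinding you supply, namely using the bijection of Fact~\ref{endsegement} to reduce $xI=I$ to $v(x)+S_I=S_I$, invoking Fact~\ref{stab1} for the fact that $\Delta_{S_I}$ is a convex subgroup so that $\mathcal{O}_{\Delta_{S_I}}$ is a well-defined valuation ring with $\mathcal{O}_{\Delta_{S_I}}^{\times}=\{x\mid v(x)\in\Delta_{S_I}\}$. (One very minor remark: the step from the two inclusions $\eta+S_I\subseteq S_I$ and $-\eta+S_I\subseteq S_I$ to $\eta+S_I=S_I$ is pure set theory --- translate the second inclusion by $\eta$ --- and does not actually require knowing $\Delta_{S_I}$ is convex, so your caveat there is unneeded, though harmless.)
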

\begin{proof}
This is an immediate consequence of Fact \ref{stab1}.
\end{proof}
\begin{proposition}\label{stab}
 Let $n \in \mathbb{N}$, and $(I_1,\dots, I_{n})  \in \mathcal{F}^{n}$.
Then 
\begin{align*}
Stab_{(I_{1}, \dots, I_{n})}= \{ ((a_{i,j})_{1 \leq i,j \leq n}  \in B_{n}(K) \ | \ a_{ii} \in \mathcal{O}_{\Delta_{S_{I_{i}}}}^{\times}\ \wedge a_{ij} \in Col(I_{i},I_{j})  \   \text{for each $1 \leq i < j \leq n$ }\} 
\end{align*}
\end{proposition}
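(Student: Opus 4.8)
The plan is to characterize exactly which upper triangular matrices $A = (a_{ij}) \in B_n(K)$ preserve the canonical module $C_{(I_1,\dots,I_n)} = \bigoplus_{i\le n} I_i e_i$. First I would observe that since $A$ is upper triangular, the image $A e_j = \sum_{i \le j} a_{ij} e_i$, so for $\bar{x} = \sum_j x_j e_j$ with $x_j \in I_j$ we get $A\bar{x} = \sum_i \big(\sum_{j \ge i} a_{ij} x_j\big) e_i$. Thus $A C_{(I_1,\dots,I_n)} \subseteq C_{(I_1,\dots,I_n)}$ if and only if for every $i$ and every choice of $x_j \in I_j$ ($j \ge i$) we have $\sum_{j\ge i} a_{ij} x_j \in I_i$; since the $x_j$ vary independently and $0 \in I_j$, this is equivalent to $a_{ij} I_j \subseteq I_i$ for all $i \le j$, i.e. $a_{ij} \in \operatorname{Col}(I_i : I_j)$ for all $i \le j$. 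In particular the diagonal condition reads $a_{ii} \in \operatorname{Col}(I_i : I_i) = \operatorname{Stab}(I_i) \cup \{\text{non-units shrinking } I_i\}$; more precisely $a_{ii} I_i \subseteq I_i$ is the condition defining $\operatorname{Col}(I_i:I_i)$, which by Fact \ref{stabideals} equals $\mathcal{O}_{\Delta_{S_{I_i}}}$ as a ring (the valuation ring of the coarsened valuation), since an element multiplies $I_i$ into itself iff its valuation lies in $\Delta_{S_{I_i}} \cup S_{\ge 0}$—one needs to be slightly careful here, but the upshot is $\operatorname{Col}(I_i:I_i) = \mathcal{O}_{\Delta_{S_{I_i}}}$.

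Next I would handle the reverse inclusion, namely that these necessary conditions are also sufficient for $A$ to lie in the stabilizer rather than merely satisfy $A C \subseteq C$: one must upgrade $A C_{(I_1,\dots,I_n)} \subseteq C_{(I_1,\dots,I_n)}$ to equality, which forces $A$ to be invertible in $B_n(K)$ with $A^{-1}$ also preserving $C$. The key point is that $A^{-1}$ is again upper triangular with diagonal entries $a_{ii}^{-1}$, and $A^{-1} C \subseteq C$ forces $a_{ii}^{-1} \in \operatorname{Col}(I_i:I_i) = \mathcal{O}_{\Delta_{S_{I_i}}}$ as well, so $a_{ii}$ is a unit in this ring, i.e. $a_{ii} \in \mathcal{O}_{\Delta_{S_{I_i}}}^\times$, exactly as in the statement; and conversely if every $a_{ij} \in \operatorname{Col}(I_i:I_j)$ for $i<j$ and $a_{ii} \in \mathcal{O}_{\Delta_{S_{I_i}}}^\times$, a direct computation of $A^{-1}$ (solving the triangular system recursively, using that $\operatorname{Col}(I_i:I_j)$ for $i<j$ is closed under the $\mathcal{O}$-module operations appearing and that the $\mathcal{O}_{\Delta}^\times$ diagonal entries are invertible) shows $A^{-1}$ has entries in the same colon modules, hence $A^{-1} C \subseteq C$ too, giving $A C = C$. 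I would then note that for $i = j$, $\operatorname{Col}(I_i:I_i) \cap B_n(K)^\times$-entries being units in $\mathcal{O}_{\Delta_{S_{I_i}}}$ is precisely the condition $a_{ii} \in \mathcal{O}_{\Delta_{S_{I_i}}}^\times$, and for $i<j$ the colon module condition is unchanged, which assembles into the displayed description.

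The main obstacle I anticipate is the off-diagonal bookkeeping when verifying that the inverse of a matrix satisfying the stated conditions again satisfies them: one needs that products and sums of entries lying in the various modules $\operatorname{Col}(I_i:I_k)$ and $\operatorname{Col}(I_k:I_j)$ land in $\operatorname{Col}(I_i:I_j)$ (a transitivity property of colon modules, $\operatorname{Col}(I_i:I_k)\cdot\operatorname{Col}(I_k:I_j)\subseteq\operatorname{Col}(I_i:I_j)$), and that dividing by the unit diagonal entries $a_{ii} \in \mathcal{O}_{\Delta_{S_{I_i}}}^\times$ does not leave the relevant module — this holds because $\mathcal{O}_{\Delta_{S_{I_i}}}^\times \cdot \operatorname{Col}(I_i:I_j) = \operatorname{Col}(I_i:I_j)$ as $\operatorname{Col}(I_i:I_j)$ is a module over $\operatorname{Col}(I_i:I_i) = \mathcal{O}_{\Delta_{S_{I_i}}}$. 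These are all routine commutative algebra facts about colon ideals in a valuation ring once the one-variable classification (Fact \ref{1dimmod}) and the identification $\operatorname{Col}(I:I) = \mathcal{O}_{\Delta_S}$ via Fact \ref{stab1} and Fact \ref{stabideals} are in hand, so the proof reduces to carefully writing out the triangular elimination and invoking these closure properties.
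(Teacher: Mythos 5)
Your argument is correct, and since the paper itself only says the proof is ``a straightforward computation left to the reader,'' you are essentially supplying the computation the authors had in mind rather than diverging from it. The three ingredients you identify are exactly the ones needed: (i) for $A \in B_n(K)$, the inclusion $AC_{(I_1,\dots,I_n)} \subseteq C_{(I_1,\dots,I_n)}$ is equivalent to $a_{ij} I_j \subseteq I_i$ for all $i \le j$, which follows by specializing the column vector to a single nonzero slot; (ii) the identification $\operatorname{Col}(I:I) = \mathcal{O}_{\Delta_{S_I}}$; and (iii) the recursive triangular inversion, combined with the transitivity $\operatorname{Col}(I_i:I_k)\cdot\operatorname{Col}(I_k:I_j) \subseteq \operatorname{Col}(I_i:I_j)$ and $\mathcal{O}$-module closure, to upgrade $AC \subseteq C$ to $AC = C$.

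One place where you flag yourself as hand-wavy is worth tightening for the record. You invoke Fact~\ref{stabideals} to justify $\operatorname{Col}(I:I) = \mathcal{O}_{\Delta_{S_I}}$, but that fact only gives $\operatorname{Stab}(I) = \mathcal{O}_{\Delta_{S_I}}^\times$, i.e.\ the units. The full identification goes through the end-segment picture: writing $\gamma = v(x)$, one has $xI \subseteq I$ iff $\gamma + S_I \subseteq S_I$. If $\gamma \ge 0$ this holds because $S_I$ is an end-segment; if $\gamma < 0$ and $\gamma + S_I \subseteq S_I$ then also $-\gamma + S_I \subseteq S_I$ (since $-\gamma > 0$), whence $\gamma + S_I = S_I$ and $\gamma \in \Delta_{S_I}$. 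Conversely $\gamma \in \Delta_{S_I}$ gives $\gamma + S_I = S_I$. So $\operatorname{Col}(I:I) = \{x : v(x) \ge 0\} \cup \{x : v(x) \in \Delta_{S_I}\}$, and since $\Delta_{S_I}$ is a convex subgroup containing $0$, this union is precisely $\{x : v(x) \in \Delta_{S_I} \text{ or } v(x) > \Delta_{S_I}\} = \mathcal{O}_{\Delta_{S_I}}$. With that lemma in hand, the rest of your proof is a clean induction on $j - i$ for the off-diagonal entries of $A^{-1}$, exactly as you describe.
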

\begin{proof}
This is a straightforward computation and it is left to the reader. 
\end{proof}

    \section{Weak Elimination of imaginaries for henselian valued field with value group with bounded regular rank}\label{density}
Let $(K,v)$ be a henselian valued field of equicharacteristic zero, with residue field algebraically closed and value group with bounded regular rank. Let $T$ be its complete $\mathcal{L}_{\mathcal{G}}$-first order theory and $\mathfrak{M}$ its monster model. \\
In this section we show that both conditions required by Hrushovski's criterion to obtain weak elimination of imaginaries down to the stabilizer sorts hold.  
\subsection{Density of definable types}
In this section we prove density of definable types for $1$-definable sets $X \subseteq \mathfrak{M}$. There are two ways to tackle this problem. One can either use the quantifer elimination (see Corollary \ref{QEregular}) and obtain a \emph{canonical} decomposition of $X$ into nice sets $T_{i} \in \acl^{eq}(\ulcorner X \urcorner)$ and then build a global type $p(x) \in x \in T_{i}$ which is $\acl^{eq}(\ulcorner T_{i}\urcorner)$. This approach was successfully achieved by Holly in \cite{Holly} for the case of $ACVF$ and real closed valued fields, and her work essentially gives a way to code one-definable sets in the main field down to the geometric sorts. It is worth pointing out, that finding \emph{a canonical decomposition} is often a detailed technical work. Instead of following this strategy, we follow a different approach that exploits the power of generic types, which are definable partial types.
\begin{definition}\label{generico} Let $U\subseteq \mathfrak{M}$ be a definable $1$-torsor, let
\begin{equation*}
    \Sigma_{U}^{gen}(x)=\{ x \in U\} \cup \{ x \notin B \ | \ B \subsetneq U \ | \ \text{$B$ is a proper sub-torsor of $U$}\}.
\end{equation*}
This is a $\ulcorner U \urcorner$-definable partial type.
\end{definition}
\begin{proposition}\label{uniqueclosed} Let $U \subseteq \mathfrak{M}$ be a definable closed $1$-torsor. Then there is a \emph{unique} complete global type $p(x)$ extending $\Sigma_{U}^{gen}(x)$ i.e. $\Sigma_{U}^{gen}(x) \subseteq p(x)$. Moreover, $p(x)$ is $\ulcorner U \urcorner$-definable.
\end{proposition}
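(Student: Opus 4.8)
The plan is to reduce to the case $U=\mathcal{O}$ by an affine change of coordinates, give an explicit description of $\Sigma_{\mathcal{O}}^{gen}$, deduce uniqueness from the classification of $1$-definable sets (Theorem \ref{1def}), and finally extract definability from the same analysis. For the reduction: a closed $1$-torsor has the form $U=d+a\mathcal{O}$ with $a\in\mathfrak{M}^{\times}$, so consider the definable bijection $\sigma\colon y\mapsto d+ay$ from $\mathcal{O}$ onto $U$. Since the definable $\mathcal{O}$-submodules of $K$ are totally ordered by inclusion and are permuted by scalings, $\sigma$ carries the proper sub-torsors of $\mathcal{O}$ bijectively onto those of $U$; hence $\sigma_{*}\Sigma_{\mathcal{O}}^{gen}=\Sigma_{U}^{gen}$ and $\sigma$ induces a bijection between the complete global extensions of the two partial types. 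It therefore suffices to prove that $\Sigma_{\mathcal{O}}^{gen}$ has a unique complete global extension $p_{\mathcal{O}}$ and that $p_{\mathcal{O}}$ is $\emptyset$-definable: then $p_{U}:=\sigma_{*}p_{\mathcal{O}}$ is the unique extension of $\Sigma_{U}^{gen}$, and since any two affine bijections $\mathcal{O}\to U$ differ by a map $y\mapsto uy+m$ with $u\in\mathcal{O}^{\times}$, $m\in\mathcal{O}$ — which fixes $\Sigma_{\mathcal{O}}^{gen}$ setwise and hence fixes $p_{\mathcal{O}}$ by uniqueness — the type $p_{U}$ depends only on the torsor $U$ and is $\ulcorner U\urcorner$-definable.

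\emph{The generic type of $\mathcal{O}$.} An $\mathcal{O}$-submodule of $K$ contained in $\mathcal{O}$ is an ideal of $\mathcal{O}$, and if proper it is contained in $\mathcal{M}$; thus every proper sub-torsor of $\mathcal{O}$ lies inside some coset $c+\mathcal{M}$ with $c\in\mathcal{O}(\mathfrak{M})$, or is a singleton, and conversely each such set is a proper sub-torsor. So for $x$ in a saturated $\mathfrak{M}'\succ\mathfrak{M}$ the partial type $\Sigma_{\mathcal{O}}^{gen}(x)$ is equivalent to "$x\in\mathcal{O}$ and $\res(x)\neq\res(c)$ for all $c\in\mathcal{O}(\mathfrak{M})$", i.e.\ — the residue field being algebraically closed — to "$x\in\mathcal{O}$ and $\res(x)$ is transcendental over the residue field of $\mathfrak{M}$"; in particular it is consistent, and it forces $v(x-\alpha)=\min\{0,v(\alpha)\}$ for every $\alpha\in\mathfrak{M}$.

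\emph{Uniqueness.} Let $p(x)$ be any complete global extension of $\Sigma_{\mathcal{O}}^{gen}(x)$ and fix a formula $\phi(x,\bar m)$ with $\bar m\in\mathfrak{M}$. By Theorem \ref{1def} the set it defines in $\mathfrak{M}'$ is a finite union of nice sets $S_{j}\cap C_{j}$ with parameters from $\mathfrak{M}$. Any $1$-torsor $B$ over $\mathfrak{M}$ is either nested with $\mathcal{O}$ or disjoint from it (two torsors are nested or disjoint, their underlying submodules being comparable): whether $x\in B$ is then forced by $\Sigma_{\mathcal{O}}^{gen}$ in each case ($x\in B$ when $\mathcal{O}\subseteq B$; $x\notin B$ when $B\subsetneq\mathcal{O}$, when $B$ is a singleton inside $\mathcal{O}$, or when $B\cap\mathcal{O}=\emptyset$), so membership of $x$ in each generalized swiss cheese $S_{j}$ is decided. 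Likewise every basic congruence formula in $C_{j}$ involves only terms $v_{\Delta}(x-\alpha)=\rho_{\Delta}(\min\{0,v(\alpha)\})$ with $\alpha\in\mathfrak{M}$, so it reduces to a condition on elements of $\mathfrak{M}$ and is decided as well. Hence $p$ decides $\phi(x,\bar m)$, and $p=p_{\mathcal{O}}$ is unique.

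\emph{Definability, and the main obstacle.} The hard part is to upgrade the previous paragraph to a \emph{uniform} statement: for each $\phi(x,y)$ one wants an $\mathcal{L}_{\mathcal{G}}(\emptyset)$-formula $d_{p}\phi(y)$ with $\phi(x,b)\in p_{\mathcal{O}}\iff\mathfrak{M}\models d_{p}\phi(b)$ for all $b\in\mathfrak{M}$. The plan is to obtain, via compactness, a parametrized version of the nice-set decomposition of Theorem \ref{1def}, so that on each piece of a finite $\emptyset$-definable partition of the $y$-space $\phi(x,y)$ cuts out a definable family of nice sets; on each such piece the conditions "$\mathcal{O}$ is nested above or below the relevant torsor, or disjoint from it" and "$v(x-\alpha(y))=\min\{0,v(\alpha(y))\}$" are definable, and reading off whether the resulting nice set meets $p_{\mathcal{O}}$ yields $d_{p}\phi$. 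An alternative route is to observe that $\res_{*}p_{\mathcal{O}}$ is the generic (transcendental) type of the algebraically closed residue field — which is $\emptyset$-definable — that $p_{\mathcal{O}}$ contributes nothing to the value group or the quotient sorts, and then to assemble the definition using the relative quantifier elimination of Corollary \ref{QEregular} together with the stable embeddedness and orthogonality of the residue field and value group (Corollary \ref{stableembeddedness}); the difficulty there is precisely to verify that these projections capture all of $\tp(x/\mathfrak{M})$, which is again what the nice-set analysis provides.
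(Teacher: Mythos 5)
Your uniqueness argument is sound and takes a genuinely different route from the paper. You reduce to $U=\mathcal{O}$ by an affine change of variables, characterize $\Sigma_{\mathcal{O}}^{gen}$ by the condition that $\res(x)$ be transcendental over $k(\mathfrak{M})$ (which uses that every proper subtorsor of $\mathcal{O}$ is either a singleton or lies in a coset of $\mathcal{M}$), note the consequence $v(x-\alpha)=\min\{0,v(\alpha)\}$ for all $\alpha\in\mathfrak{M}$, and then invoke Theorem~\ref{1def} to decide membership in every $\mathfrak{M}$-definable set via the nested-or-disjoint dichotomy for torsors. The paper instead keeps an arbitrary closed $U$, extracts the minimum $\gamma=\min\{v(x-a)\ |\ x\in U\}\in\dcl^{eq}(\ulcorner U\urcorner)$, and works directly through the quantifier-free congruence and coset formulas, observing that $v(c-a)$ is $\gamma$ for $a\in U(\mathfrak{M})$ and equals $v(b-a)$ for any $b\in U(\mathfrak{M})$ when $a\notin U(\mathfrak{M})$ — which is exactly your $\min\{0,v(\alpha)\}$ in the normalized case. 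Your reduction buys a more transparent picture of the generic type at the cost of an extra step to recover $\ulcorner U\urcorner$-definability from $\emptyset$-definability.

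The definability half, however, is not a proof: you flag it yourself as ``the main obstacle'' and neither route you sketch is carried out. The compactness/uniformization of the nice-set decomposition is precisely what would need to be justified, and the stable-embeddedness route is left at the same level of vagueness. This is a genuine gap. The irony is that the ingredients to close it are already on the table in your uniqueness paragraph: the identity $v(x-\alpha)=\min\{0,v(\alpha)\}$ holds \emph{uniformly} in $\alpha\in\mathfrak{M}$, so — exactly as the paper does — one can skip the nice-set machinery for this step and directly write down an $\mathcal{L}(\emptyset)$-definable scheme for each basic congruence and coset formula in the quantifier elimination of Corollary~\ref{QEregular}: for $a\in\mathcal{O}$ the left-hand side $v_{\Delta}(x-a)$ is $0$, for $a\notin\mathcal{O}$ it is $v_{\Delta}(a)$, and the case split is $\emptyset$-definable in $a$. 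That, together with the invariance argument you already give to pass from $\emptyset$-definability of $p_{\mathcal{O}}$ to $\ulcorner U\urcorner$-definability of $p_{U}$, would complete the proof. As written, it is incomplete.
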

\begin{proof}
Let $a \in U$ and $Y_{a}=\{ v(x-a)\ | \ x \in U\} \subseteq \Gamma$. As $U$ is a closed $\mathcal{O}$-module then $Y_{a}$ has a minimum element $\gamma$. For any other element $b \in U$, we have that $\gamma=\min(Y_{a})=\min(Y_{b})$, thus $\gamma \in \dcl^{eq}(\ulcorner U \urcorner)$.\\
By quantifier elimination (see Corollary \ref{QEregular}) it is sufficient to show that $\Sigma_{U}^{gen}(x)$ determines also the congruence and coset formulas. Let $c$ be a realization of $\Sigma^{gen}_{U}(x)$ and then for any $a \in U(\mathfrak{M})$ we have that $v(c-a)=\gamma$. Let $p(x)=\tp(c/\mathfrak{M})$, $\Delta \in RJ(\Gamma)$, $\ell \in \mathbb{N}$ and $\beta \in \Gamma$. If $a \in U(\mathfrak{M})$ then:
\begin{align*}
    \vDash v_{\Delta}(c-a)-\rho_{\Delta}(\beta)+k^{\Delta} \in \Delta  \ &\text{if and only if} \ \vDash \phi_{\Delta}^{k}(\beta):=\gamma-\rho_{\Delta}(\beta)+k^{\Delta}, \ \text{and}\\
     \vDash v_{\Delta}(c-a)-\rho_{\Delta}(\beta)+k^{\Delta} \in \ell \big(\Gamma/\Delta) \ &\text{if and only if} \ \vDash \psi_{\Delta}^{k}(\beta):=\gamma-\rho_{\Delta}(\beta)+k^{\Delta} \in \ell(\Gamma/\Delta).
\end{align*}
We observe that $\psi_{\Delta}^{k}(\beta)$ and $\phi_{\Delta}^{k}(\beta)$ are  $\mathcal{L}(\dcl^{eq}(\ulcorner U\urcorner))$-formulas, and their definition is completely  independent from the choice of $c$.\\
If $a \notin U(\mathfrak{M})$, then for any $b \in U(\mathfrak{M})$ we have that $v(c-a)=v(b-a)$. Therefore
\begin{align*}
    \vDash v_{\Delta}(c-a)-\rho_{\Delta}(\beta)+k^{\Delta} \in \Delta  \ &\text{if and only if} \ \vDash \epsilon_{\Delta}^{k}(a,\beta):= \exists b \in U \big( v_{\Delta}(b-a)-\rho_{\Delta}(\beta)+k^{\Delta}\big), \ \text{and}\\
     \vDash v_{\Delta}(c-a)-\rho_{\Delta}(\beta)+k^{\Delta} \in \ell \big(\Gamma/\Delta) \ &\text{if and only if} \ \vDash \eta_{\Delta}^{k}(a, \beta):=\exists b \in U \big( v_{\Delta}(b-a)-\rho_{\Delta}(\beta)+k^{\Delta} \in \ell(\Gamma/\Delta)\big).
\end{align*}
Both formulas $\epsilon_{\Delta}^{k}(a,\beta)$ and $\eta_{\Delta}^{k}(a, \beta)$ are $\mathcal{L}(\ulcorner U \urcorner)$-definable and completely independent from the choice of $c$.\\
We conclude that $p(x)$ is a $\ulcorner U \urcorner$- definable type. Furthermore, for any possible realization $c \vDash \Sigma_{U}^{gen}(x)$ we obtain the same scheme of definition. Hence, there is a \emph{unique} extension $p(x)$ of $\Sigma_{U}^{gen}(x)$. 
\end{proof}

\begin{theorem}\label{densitydef} For every non-empty definable set $X \subseteq \mathfrak{M}$, there is a $acl^{eq}(\ulcorner X \urcorner)$-definable global type $p(x) \vdash x \in X$.
\end{theorem}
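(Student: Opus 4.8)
The argument splits on whether $X$ is finite. If $X$ is finite, every element of $X$ lies in $\acl^{eq}(\ulcorner X\urcorner)$; picking any $a\in X$, the realized type $\tp(a/\mathfrak M)$ is defined by the formula $x=a$, so it is $a$-definable, hence $\acl^{eq}(\ulcorner X\urcorner)$-definable, and it concentrates on $X$. So assume $X$ is infinite.

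By Theorem \ref{1def}, $X=N_1\cup\dots\cup N_m$ with each $N_i=S_i\cap C_i$ a nice set, $S_i=A_i\setminus(B_1\cup\dots\cup B_{n_i})$ a generalized swiss cheese and $C_i$ cut out by a finite congruence restriction; some piece, say $N_1$, is infinite. The plan is to produce a \emph{closed} $1$-torsor $U$ with $\ulcorner U\urcorner\in\acl^{eq}(\ulcorner X\urcorner)$ whose unique generic type $p_U$ (Proposition \ref{uniqueclosed}) satisfies $x\in X\in p_U$; then $p_U$, being $\ulcorner U\urcorner$-definable, is the type we want. First I would check that \emph{some} such $U$ exists, ignoring canonicity: if the outer torsor $A_1$ is closed, then $p_{A_1}$, being a non-realized type that avoids every proper sub-torsor of $A_1$, avoids all the holes $B_j$, so $p_{A_1}\vdash x\in S_1$, and by the computation in the proof of Proposition \ref{uniqueclosed} it also decides every basic congruence formula. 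If it decides $C_1$ positively we take $U=A_1$; otherwise we pass to a closed sub-torsor $A_1'$ of $A_1$ not contained in any hole and lying in the finitely many congruence classes imposed by $C_1$, so that $p_{A_1'}\vdash x\in N_1$ — such an $A_1'$ exists because on a fixed closed torsor the congruences constrain only finitely many $v_\Delta$-layers and the residue field is infinite and algebraically closed, leaving room for a generic choice. If $A_1$ is open, replace it first by an arbitrary closed sub-torsor not contained in any hole and argue as above.

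To upgrade this to canonicity, I would consider $\mathcal V=\{\,U : U\text{ a closed }1\text{-torsor},\ x\in X\in p_U\,\}$. Because the defining scheme of $p_U$ in Proposition \ref{uniqueclosed} is uniform in $\ulcorner U\urcorner$, the condition ``$x\in X\in p_U$'' is a formula in $\ulcorner U\urcorner$ and $\ulcorner X\urcorner$, so $\mathcal V$ is $\ulcorner X\urcorner$-definable, and it is non-empty by the previous paragraph. Taking the $\subseteq$-maximal elements of $\mathcal V$ gives an $\ulcorner X\urcorner$-definable set which is \emph{finite} — the number of maximal good closed torsors is controlled by the number and shape of the nice sets in the decomposition — so each of its elements is in $\acl^{eq}(\ulcorner X\urcorner)$, and any one of them serves as $U$.

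The hard part is this canonicity step, in particular the case where $\mathcal V$ has \emph{no} $\subseteq$-maximal element: geometrically $X$ then looks, near the relevant scale, like an \emph{open} torsor $\mathfrak o$, with the closed torsors of $\mathcal V$ forming a chain descending to $\mathfrak o$, so $\ulcorner\mathfrak o\urcorner\in\acl^{eq}(\ulcorner X\urcorner)$. One cannot use a generic type here (the generic type of an open torsor is inconsistent when the value group is dense); instead one uses a ``degenerate'' type that pushes $x$ toward $\mathfrak o$ with $v$-value cofinal in $\Gamma$ and $n$-divisible for every $n$. This type is $\acl^{eq}(\ulcorner X\urcorner)$-definable precisely because the divisibility predicates $P_n$ and $P_n^{\Delta}$ already belong to $\mathcal L_{bq}$, so no non-canonical coset of $\Gamma$ must be named; this is also where only \emph{weak} elimination of imaginaries is obtained and where the dp-minimal refinement will later need the constants naming $\Gamma/\ell\Gamma$. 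What remains is routine bookkeeping: checking that the reductions through the holes and the congruence conditions preserve $\ulcorner X\urcorner$-definability of $\mathcal V$ and of its set of maximal elements.
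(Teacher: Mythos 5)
Your overall plan — find a $1$-torsor $U$ with $\ulcorner U\urcorner\in\acl^{eq}(\ulcorner X\urcorner)$ whose generic type is consistent with $x\in X$, then pass to a complete extension — matches the paper's proof. But you insist that $U$ be \emph{closed}, and the two places where you try to enforce this both have genuine gaps.

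First, the canonicity step via $\subseteq$-maximal elements of $\mathcal V$: you assert without argument that the set of maximal good closed torsors is finite, ``controlled by the number and shape of the nice sets.'' This is not established, and the paper does not prove anything of this form. The paper's replacement is an explicit tree argument on $\mathcal F/\sim$ (where $\mathcal F$ is the family of closed balls meeting $X$ and $B_1\sim B_2$ iff $B_1\cap X=B_2\cap X$): it shows the tree has \emph{finite ramification} at each node and that every \emph{path is finite}, using the fact that an infinite path would produce a definable subset of $K$ that is not a finite union of nice sets. The output $U_\mu$ of that tree is allowed to be an open torsor; the paper never needs the extra claim that a good closed torsor exists. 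You correctly observe that $\mathcal V$ may have no maximal element at all, but your set-up forces you into exactly the case you have no tool for.

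Second, and more seriously, your fix for that case does not work. You propose a ``degenerate'' type that makes $v(x-a)$ coinitial in the relevant end-segment and $n$-divisible for every $n$. Over an arbitrary ordered abelian group of bounded regular rank this is inconsistent: for instance, with $\Gamma=\mathbb Z\times\mathbb Z$ (lex) and $S=\{(a,b):a>0\}$, the generic type of $S$ forces $\gamma\notin\{(a,b):a\geq 2\}$ (a proper definable sub-end-segment), hence $\gamma=(1,b)$, which is never in $2\Gamma$. The paper's proof instead builds a partial type $\Sigma_\infty(x)$ by transfinitely (in fact countably) iterating over the pairs $(\Delta_n,\ell)$: at each stage it first tries to \emph{avoid} fixing the congruence (adding ``$v_{\Delta_n}(x-a)-\rho_{\Delta_n}(\beta)\notin\ell(\Gamma/\Delta_n)$'' for all $a,\beta$), and only when that fails does it observe that the set $A_k$ of consistent congruence values is \emph{finite}, hence contained in $\acl^{eq}(\ulcorner X\urcorner)$, and picks one. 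The finiteness of $A_k$ and its $\Aut(\mathfrak M/\acl^{eq}(\ulcorner X\urcorner))$-invariance are the actual content of the argument and are not ``routine bookkeeping.'' Your remark about $P_n$ and $P_n^\Delta$ living in $\mathcal L_{bq}$ does not rescue the step: the trivial coset $0+(\Delta+n\Gamma)$ is indeed $\emptyset$-definable, but the problem is consistency, not definability. (Note also that the clean statement ``any completion of $\Sigma^{gen}_U(x)$ is $\ulcorner U\urcorner$-definable,'' Proposition \ref{open}, is proved only in the dp-minimal section, where $\Gamma/(\Delta+\ell\Gamma)$ is finite and its elements are named by constants; it is not available here.)

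In short: the first two paragraphs correctly identify the shape of the problem and the non-canonical existence of a good torsor, but the canonicity argument both leaves an unproven finiteness claim (replaced in the paper by the tree argument) and handles the open-torsor case with a type that need not exist (replaced in the paper by the $\Sigma_\infty$ construction).
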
 
\begin{proof}
Let $X \subseteq \mathfrak{M}$ be a $1$-definable set.\\
\begin{claim} There is a $1$-torsor $U$ such that $\ulcorner U \urcorner \in \acl^{eq}(\ulcorner X \urcorner)$ and the partial type:
\begin{equation*}
\Sigma^{gen}_{U}(x) \cup \{ x \in X\} \ \text{is consistent.}
\end{equation*}
\end{claim}
\begin{proof}
Let $\mathcal{F}$ be the family of closed balls $B$ such that $B\cap X \neq \emptyset$. We say that $B_{1} \sim B_{2}$ if and only if $B_{1}\cap X=B_{2} \cap X$. This is a $\ulcorner X \urcorner$-definable equivalence relation over $\mathcal{F}$.\\ 
Let $\pi: \mathcal{F} \rightarrow \mathcal{F}/\sim$, the natural $\ulcorner X \urcorner$-definable map sending a closed ball to its class $[B]_{\sim}$. For each class $\mu \in \mathcal{F}/\sim$ the set $\displaystyle{U_{\mu}=\bigcap_{B \in \mathcal{F}, \pi(B)=\mu}B}$ is a $\mu$-definable $1$-torsor. Moreover, for any $B \in \mathcal{F}$, $B \cap X=U_{\mu}\cap X$ if and only if $\pi(B)=\mu$. In particular, if $B$ is a proper closed subball of $U_{\mu}$, then $\pi(B)\neq \mu$.\\ 
Then set $\mathcal{F}/\sim$ admits a partial $\ulcorner X\urcorner$-definable order defined as:
\begin{equation*}
    \mu_{1} \triangleleft \mu_{2} \ \text{if and only} \ B_{1}\cap X \subsetneq B_{2}\cap X \ \text{where} \ \pi(B_{1})=\mu_{1} \ \text{and} \ \pi(B_{2})=\mu_{2}.
\end{equation*}
 The set $\big(\mathcal{F}/\sim, \triangleleft\big)$ is a tree with a maximal element $\mu_{0} \in \acl^{eq}(\ulcorner X \urcorner)$. This class is obtained by taking the projection of a ball $B_{0}$ such that $B_{0}\cap X=X$. By quantifier elimination (see Corollary \ref{QEregular}) $X$ is a finite union of nice sets, thus such ball $B_{0}$ exists.\\

For each $\mu \in \mathcal{F}/\sim$, we write $P(\mu)$ to denote the set of immediate predecessors of $\mu$ (if they exists). This is 
\begin{equation*}
\displaystyle{P(\mu):=\{ \beta \in \mathcal{F}/\sim \ | \ \beta \triangleleft \mu \ \text{and} \neg \exists z (\beta \triangleleft z \triangleleft \mu)\}}.
\end{equation*}
If $\Sigma_{U_{\mu}}^{gen}(x)\cup \{ x \in X\}$ is inconsistent then $P(\mu)$ is finite and has size at least $2$. Indeed,  $\Sigma_{U_{\mu}}^{gen}(x)\cup \{ x \in X\}$ is consistent if and only if 
\begin{equation*}
    \{ x \in U_{\mu} \} \cup \{ x \notin B \ | \ B\subseteq U \ \text{is a closed ball}\} \cup \{ x \in X\} \ \text{is consistent}. 
\end{equation*}
Hence, if $\Sigma_{U_{\mu}}^{gen}(x)\cup \{ x \in X\}$ is inconsistent, by compactness we can find finitely many disjoint closed balls $B_{1}, \dots, B_{k}$ such that $B_{i}\cap X \neq \emptyset$  and
\begin{equation*}
    U_{\mu}\cap X \subseteq \bigcup_{i\leq k} B_{i} \ 
\end{equation*}
Let $\beta_{i}=\pi(B_{i}) \triangleleft \mu$. Then $P(\mu)=\{\beta_{i} \ | \ i \leq k\} \subseteq \acl^{eq}(\ulcorner X \urcorner, \mu)$.\\

We now start looking for the $1$-torsor $U \in \acl^{eq}(\ulcorner X \urcorner)$ such that $\Sigma_{U}^{gen}(x) \cup \{ x \in X\}$ is consistent.\\
Let $\mu_{0}\in \acl^{eq}(\ulcorner X \urcorner)$ be the maximal element of $(\mathcal{F}/\sim, \triangleleft)$, if $\Sigma_{U_{\mu_{0}}}^{gen}(x) \cup \{ x \in X \}$ is consistent, the torsor $U_{\mu_{0}}$ satisfies the required conditions.  We may assume that $\Sigma_{U_{\mu_{0}}}^{gen}(x) \cup \{ x \in X \}$ is inconsistent, thus it has finitely many predecessors $P(\mu_{0})\subseteq \acl^{eq}(\ulcorner X \urcorner)$. For each $\beta \in P(\mu_{0})$ exactly one of the following cases hold:
\begin{enumerate}
    \item $\Sigma_{U_{\beta}}^{gen}(x) \cup \{ x \in X\}$ is consistent, then the torsor $U_{\beta}$ satisfies the required conditions of the claim; or
    \item $\Sigma_{U_{\beta}}^{gen}(x) \cup \{ x \in X\}$ is inconsistent, and $\beta$ has finitely many predecessors $P(\beta) \subseteq \acl^{eq}(\ulcorner X \urcorner, \beta)\subseteq \acl^{eq}(\ulcorner X \urcorner)$.
\end{enumerate}
By iterating this process for each of the predecessors, we build a discrete tree $\mathcal{T} \subseteq \mathcal{F}/\sim$ of finite ramification.\\
\begin{center}
\begin{tikzpicture}
\coordinate[label=below:$\mu_{0}$] (A) at (7,-1);
\node at (A)[circle,fill,inner sep=1pt]{};
\coordinate[label=below:$\mu_{00}$] (B) at (1,0.5);
\node at (B)[circle,fill,inner sep=1pt]{};
\coordinate[label=below:$\mu_{01}$] (C) at (4,0.5);
\node at (C)[circle,fill,inner sep=1pt]{};
\coordinate[label=below:$\mu_{02}$] (D) at (9,0.5);
\node at (D)[circle,fill,inner sep=1pt]{};
\coordinate[label=below:$\mu_{000}$] (B1) at (-1,2);
\node at (B1)[circle,fill,inner sep=1pt]{};
\coordinate[label=below:$\mu_{001}$] (B2) at (1.5,2);
\node at (B2)[circle,fill,inner sep=1pt]{};
\coordinate (E1) at (-1,2.3);
\node at (E1)[circle,fill,inner sep=0.5pt]{};
\coordinate (E2) at (-1,2.5);
\node at (E2)[circle,fill,inner sep=0.5pt]{};
\coordinate (E3) at (-1,2.7);
\node at (E3)[circle,fill,inner sep=0.5pt]{};
\coordinate (F1) at (1.5,2.3);
\node at (F1)[circle,fill,inner sep=0.5pt]{};
\coordinate (F2) at (1.5,2.5);
\node at (F2)[circle,fill,inner sep=0.5pt]{};
\coordinate (F3) at (1.5,2.7);
\node at (F3)[circle,fill,inner sep=0.5pt]{};
\coordinate[label=below:$\mu_{010}$] (C1) at (3,2);
\node at (C1)[circle,fill,inner sep=1pt]{};
\coordinate[label=below:$\mu_{011}$] (C2) at (4,2);
\node at (C2)[circle,fill,inner sep=1pt]{};
\coordinate[label=below:$\mu_{011}$] (C3) at (5,2);
\node at (C3)[circle,fill,inner sep=1pt]{};
\coordinate (C11) at (3,2.3);
\node at (C11)[circle,fill,inner sep=0.5pt]{};
\coordinate (C22) at (3,2.5);
\node at (C22)[circle,fill,inner sep=0.5pt]{};
\coordinate (C33) at (3,2.7);
\node at (C33)[circle,fill,inner sep=0.5pt]{};
\coordinate (C10) at (4,2.3);
\node at (C10)[circle,fill,inner sep=0.5pt]{};
\coordinate (C20) at (4,2.5);
\node at (C20)[circle,fill,inner sep=0.5pt]{};
\coordinate (C30) at (4,2.7);
\node at (C30)[circle,fill,inner sep=0.5pt]{};
\coordinate (M1) at (5,2.3);
\node at (M1)[circle,fill,inner sep=0.5pt]{};
\coordinate (M2) at (5,2.5);
\node at (M2)[circle,fill,inner sep=0.5pt]{};
\coordinate (M3) at (5,2.7);
\node at (M3)[circle,fill,inner sep=0.5pt]{};
\coordinate[label=below:$\mu_{020}$] (D1) at (7,2);
\node at (D1)[circle,fill,inner sep=1pt]{};
\coordinate[label=below:$\mu_{021}$] (D2) at (9,2);
\node at (D2)[circle,fill,inner sep=1pt]{};
\coordinate[label=below:$\mu_{022}$] (D3) at (11,2);
\node at (D3)[circle,fill,inner sep=1pt]{};
\coordinate (C111) at (7,2.3);
\node at (C111)[circle,fill,inner sep=0.5pt]{};
\coordinate (C221) at (7,2.5);
\node at (C221)[circle,fill,inner sep=0.5pt]{};
\coordinate (C331) at (7,2.7);
\node at (C331)[circle,fill,inner sep=0.5pt]{};
\coordinate (C101) at (9,2.3);
\node at (C101)[circle,fill,inner sep=0.5pt]{};
\coordinate (C201) at (9,2.5);
\node at (C201)[circle,fill,inner sep=0.5pt]{};
\coordinate (C301) at (9,2.7);
\node at (C301)[circle,fill,inner sep=0.5pt]{};
\coordinate (M11) at (11,2.3);
\node at (M11)[circle,fill,inner sep=0.5pt]{};
\coordinate (M21) at (11,2.5);
\node at (M21)[circle,fill,inner sep=0.5pt]{};
\coordinate (M31) at (11,2.7);
\node at (M31)[circle,fill,inner sep=0.5pt]{};

\draw[-stealth] (1,0.5)--(7,-1);
\draw[-stealth] (4,0.5)--(7,-1);
\draw[-stealth] (9,0.5)--(7,-1);
\draw[-stealth] (-1,1.6)--(1,0.5);
\draw[-stealth] (1.5,1.6)--(1,0.5);
\draw[-stealth] (3,1.6)--(4,0.5);
\draw[-stealth] (4,1.6)--(4,0.5);
\draw[-stealth] (5,1.6)--(4,0.5);
\draw[-stealth] (7,1.6)--(9,0.5);
\draw[-stealth] (9,1.6)--(9,0.5);
\draw[-stealth] (11,1.6)--(9,0.5);
\end{tikzpicture}
\end{center}
Hence, it is sufficient to argue that every path in this tree is finite. Suppose by contradiction that a path is infinite, then we can find an infinite decreasing sequence $<\gamma_{i} \ | i \in \mathbb{N}>$ of elements in $\mathcal{F}/\sim$ such that $U_{\gamma_{0}}=U_{\mu_{0}}$, and:  
\begin{enumerate}\label{yuju}
   
    \item for each $i \in \mathbb{N}$, $P(\gamma_{i})$ is finite and of size at least $2$. Given $\eta_{1}\neq \eta_{2} \in P(\gamma_{i})$ we have that $U_{\eta_{1}}\cap U_{\eta_{2}}=\emptyset$. And
 $U_{\eta_{1}}$ is a proper subtorsor of $U_{\gamma_{i}}$.
 \item For each $\mu \in \mathcal{F}/\sim$, $U_{\mu}\subseteq U_{\gamma_{i}}$ for some $i \in \mathbb{N}$, or there is some $i \in \mathbb{N}$ such that $U_{\mu}\subseteq U_{\eta}$ for some $\eta \in P(\gamma_{i})\backslash \{ \gamma_{i+1}\}$. 
\end{enumerate}
\begin{center}
\begin{tikzpicture}
\coordinate[label=below:$U_{\gamma_{0}}$] (A) at (5,-1);
\node at (A)[circle,fill,inner sep=1pt]{};
\draw[dashed](5,-1)--(-6,6);
\draw[dashed] (5,-1)--(12,5);
\draw (5,-1)--(5.5,1);
\draw (5.5,1)--(5,3);
\draw (5.5,1)--(6,3);
\draw (5,-1)--(6.7,0.7);
\draw (6.7,0.7)--(7.2,2.7);
\draw (6.7,0.7)--(6.2,2.7);
\coordinate[label=below:$U_{\gamma_{1}}$] (B) at (3,1);
\node at (B)[circle,fill,inner sep=1pt]{};
\draw[dashed](3,1)--(-5.7,6.7);
\draw[dashed] (3,1)--(7,7);
\draw (3,1)--(3,2.5);
\draw (3,2.5)--(2.7,4.2);
\draw (3,2.5)--(3.3,4.2);
\coordinate[label=below:$U_{\gamma_{2}}$] (C) at (1,3);
\node at (C)[circle,fill,inner sep=1pt]{};
\draw[dashed](1,3)--(-5,7.5);
\draw[dashed] (1,3)--(4,7.5);
\draw(1,3)--(-0.2,4.5);
\draw(-0.2,4.5)--(-0.7,5.7);
\draw(-0.2,4.5)--(0,5.7);
\draw(1,3)--(1.5,4);
\draw(1.5,4)--(1,5.6);
\draw(1.5,4)--(2,5.6);
\coordinate[label=below:$U_{\gamma_{3}}$] (D) at (-1,5);
\node at (D)[circle,fill,inner sep=1pt]{};
\draw[dashed](-1,5)--(-5.5,8.5);
\draw[dashed] (-1,5)--(-0.5,9);
\draw(-1,5)--(-2.3,7);
\draw(-2.3,7)--(-2.5,8.5);
\draw(-2.3,7)--(-2.1,8.5);
\draw(-1,5)--(-1.2,6);
\draw(-1.2,6)--(-1.5,7.5);
\draw(-1.2,6)--(-1,7.5);
\draw (5,-1)--(-3,7);
\coordinate[label=below:$a$] (Z) at (-3.2,7.5);
\node at (Z){};
\end{tikzpicture}
\end{center}
By compactness we can find an element $a \in \mathfrak{M}$ such that $\displaystyle{a \in \bigcap_{i \in \mathbb{N}}U_{\gamma_{i}}}$. We note that $\displaystyle{\{ U_{\mu}\ | \ \mu \in \mathcal{F}/\sim\}}$ is a uniform definable family of $1$-torsors. Then we can define the set 
\begin{equation*}
D=\{ x \in K \ | \ \exists \mu \in \mathcal{F}/\sim  \ x\in U_{\mu} \ \text{and} \ a \notin U_{\mu}\}, 
\end{equation*}
but this set is not a finite union of nice sets (by the conditions in \ref{yuju}), which leads us to a contradiction.
\end{proof}
\begin{proof}
If $U$ is a closed $1$-torsor, we let $c$ be a realization of $\Sigma^{gen}_{U}(x) \cup \{ x \in X\}$. By Proposition \ref{uniqueclosed} the type $p(x)=\tp(c/\mathfrak{M})\vdash x \in X$ is $\ulcorner U \urcorner$-definable. The statement follows as $\ulcorner U \urcorner \in \acl^{eq}(\ulcorner X \urcorner)$.\\
We may assume that $U$ is an open torsor. We observe that for any realization $c \vDash \Sigma_{U}^{gen}(x)$ given $a\neq a' \in U(\mathfrak(M))$ we have that $v(c-a)=v(c-a')$.
Let $\pi:=\mathbb{N}\rightarrow \mathbb{N}\times \mathbb{N}_{\geq 1}$ be a fixed bijection. We build an increasing sequence of partial consistent types $\big(\Sigma_{k}(x) \ | \ k \in \mathbb{N})$ by induction:
\begin{itemize}
    \item \textbf{Stage $0$:} Let $\Sigma_{0}(x):=\Sigma^{gen}_{U}(x) \cup \{ x \in X\}$. 
    \item \textbf{Stage $k+1$:} Let $\pi(k)=(n,\ell)$. At this stage we decide the congruence modulo $\Delta_{n}+\ell\Gamma$. To simplify the notation we will assume that $\ell \geq 2$, otherwise the argument will follow in a similar manner (instead of working with $\ell (\Gamma/\Delta_{n})$ we argue with $\Gamma/\Delta_{n}$). Let
    \begin{equation*}
        \Lambda_{k}(x):=\Sigma_{k}(x) \cup \{ v_{\Delta_{n}}(x-a)-\rho_{\Delta_{n}}(\beta) \notin \ell(\Gamma/\Delta_{n}) \ | \ a \in U(\mathfrak{M}), \beta \in \Gamma\}.
    \end{equation*}
   If the partial type $\Lambda_{k}(x)$ is consistent, then we set $\Sigma_{k+1}(x)=\Lambda(x)$. Otherwise, let
    \begin{equation*}
    \displaystyle{A_{i}=\{\mu \in \Gamma/(\Delta_{n}+\ell\Gamma) \ | \ \Sigma_{k}(x) \cup \{ \pi_{\Delta_{n}}^{\ell}(v(x-a))=\mu \ | \ a \in U(\mathfrak{M})\} \ | \ \text{is consistent}\}}.
    \end{equation*}
     $A_{i}$ is a finite set.  We set
    \begin{equation*}\Sigma_{k+1}(x):= \Sigma_{k}(x) \cup \{ \pi_{\Delta_{n}}^{\ell}(v(x-a))=\mu \ | \ a \in U(\mathfrak{M})\}.
    \end{equation*}
\end{itemize}
Let $\mathcal{J}=\{ k \in \mathbb{N}_{\geq 1}\ | \ \Lambda_{k}(x)$ is inconsistent $\}$. 
\begin{claim}  For all $k \in \mathbb{N}$ we have that for any automorphism $\sigma \in Aut(\mathfrak{M}/\acl^{eq}(\ulcorner X \urcorner)$, $\sigma(\Sigma_{k}(x))=\Sigma_{k}(x)$ and if $k \in \mathcal{J}$ then $\sigma(A_{k})=A_{k}$. In particular, $A_{k} \subseteq \acl^{eq}(\ulcorner X \urcorner)$ for all $k \in \mathcal{J}$.
\end{claim}
\begin{proof} 
We proceed by induction, for the base case $k=0$ the statement follows because $\ulcorner U \urcorner \in \acl^{eq}(\ulcorner X \urcorner)$. We assume that for any $\sigma \in Aut(\mathfrak{M}/\ulcorner X \urcorner)$ we have that $\sigma(\Sigma_{k}(x))=\Sigma_{k}(x)$. We fix $\tau \in Aut(\mathfrak{M}/ \ulcorner X \urcorner)$ and we aim to show that $\tau(\Sigma_{k+1}(x))=\Sigma(\Sigma_{k+1}(x))$. If $\Lambda_{k}(x)$, then:
\begin{align*}
    \tau \big( \Sigma_{k+1}(x)\big)&=\tau \big( \Sigma_{k}(x) \cup \{ v_{\Delta_{n}}(x-a)-\rho_{\Delta_{n}}(\beta) \notin \ell(\Gamma/\Delta_{n}) \ | \ a \in U(\mathfrak{M}), \beta \in \Gamma\}\big)\\
    &=\Sigma_{k}(x) \cup \{ v_{\Delta_{n}}(x-\tau(a))-\rho_{\Delta_{n}}(\tau(\beta)) \notin \ell(\Gamma/\Delta_{n}) \ | \ a \in U(\mathfrak{M}), \beta \in \Gamma\}= \Sigma_{k+1}(x).
\end{align*}
If $\Lambda_{k}(x)$ is not consistent then $k \in \mathcal{J}$. And we first argue that $\tau(A_{k})=A_{k}$. By definition of $A_{k}$, given $\mu \in A_{k}$ then
\begin{equation*}
\Sigma_{k}(x) \cup \{ \pi_{\Delta_{n}}^{\ell}(v(x-a))=\mu \ | \ a \in U(\mathfrak{M})\} \ \text{is consistent},
\end{equation*}
because $\tau$ is an isomorphism,
\begin{align*}
\tau\big(\Sigma_{k}(x) \cup \{ \pi_{\Delta_{n}}^{\ell}(v(x-a))&=\mu \ | \ a \in U(\mathfrak{M})\})\\
&= \Sigma_{k}(x) \cup \{ \pi_{\Delta_{n}}^{\ell}(v(x-\tau(a)))=\tau(\mu) \ | \ a \in U(\mathfrak{M})\}\\
&=  \Sigma_{k}(x) \cup \{ \pi_{\Delta_{n}}^{\ell}(v(x-a))=\tau(\mu) \ | \ a \in U(\mathfrak{M})\}\ 
\text{is consistent},
\end{align*}
hence  $\tau(\mu) \in A_{i}$. We conclude that $\tau(A_{i})= A_{i}$, and because $\tau$ is an arbitrary element in $Aut(\mathfrak{M}/\acl^{eq}(\ulcorner X \urcorner))$ we conclude that $A_{i} \subseteq \acl^{eq}\big(\acl^{eq}(\ulcorner X \urcorner)\big)=\acl^{eq}(\ulcorner X \urcorner)$. In particular, for any $\mu \in A_{i}$, $\tau(\mu)=\mu$. Consequently,
\begin{align*}
    \tau \big( \Sigma_{k+1}(x)\big)&= \Sigma_{k}(x) \cup \{ \pi_{\Delta_{n}}^{\ell}(v(x-\tau(a)))=\mu \ | \ a \in U(\mathfrak{M})\}\\
    &=
    \Sigma_{k}(x) \cup \{ \pi_{\Delta_{n}}^{\ell}(v(x-a))=\mu \ | \ a \in U(\mathfrak{M})\}=\Sigma_{k+1}(x), \ \text{as required.}
\end{align*}
\end{proof}
Let $\displaystyle{\Sigma_{\infty}(x):= \bigcup_{k \in \mathbb{N}}\Sigma_{k}(x)}$, by construction this is a consistent partial type $\acl^{eq}(\ulcorner X \urcorner)$-definable and $\Sigma_{\infty}(x) \vdash x \in X$. By quantifier elimination, $\Sigma_{\infty}(x)$ determines a complete global type $p(x) \vdash x \in X$. This type $p(x)$ is $\acl^{eq}(\ulcorner X \urcorner)$-definable as $\Sigma_{\infty}(x)$ is.  
 \end{proof}
\end{proof}
 \subsection{Coding definable types}

 In this subsection we prove that any definable type can be coded in the stabilizer sorts $\mathcal{G}$. Let $\mathbf{x}=(x_{1},\dots,x_{k})$ be a tuple of variables in the main field sort. By quantifier elimination any definable type $p(\mathbf{x})$ over a model $K$ is completely determined by the boolean combinations formulas of the form: 
 \begin{enumerate}
 \item $Q_{1}(\mathbf{x})=0$,
 \item $v_{\Delta}(Q_{1}(\mathbf{x})) \leq v_{\Delta}(Q_{2}(\mathbf{x}))$, 
 \item $v_{\Delta}\left(\frac{Q_{1}(\mathbf{x})}{Q_{2}(\mathbf{x})}\right)-k_{\Delta} \in n (\Gamma/\Delta)$, 
 \item $v_{\Delta} \left(\frac{Q_{1}(\mathbf{x})}{Q_{2}(\mathbf{x})}\right) = k_{\Delta}$. 
 \end{enumerate}
 where $ Q_{1}(\mathbf{x}), Q_{2}(\mathbf{x}), \in K[X_{1},\dots,X_{k}]$, $n \in \mathbb{N}_{\geq 2}$, $\Delta \in RJ(\Gamma)$, $k \in \mathbb{Z}$ and $k_{\Delta}=k \cdot 1_{\Delta}$ where $1_{\Delta}$ is the minimal element of $\Gamma/\Delta$ if it exists. We will approximate such a type by considering for each $l \in \mathbb{N}$ the definable vector space  $D_{l}/ I_{l}$, where $D_{l}$ is the set of polynomials of degree at most $l$ and $I_{l}$ is the subspace of $D_{l}$ of polynomials $Q(\mathbf{x})$ such that $Q(\mathbf{x})=0$ is a formula in $p(\mathbf{x})$. The formulas of the second kind, essentially give $D_{l}/ I_{l}$ a valued vector space structure with all the coarsened valuations, while the formulas of the third and forth kind simply impose some binary relations in the linear order $\Gamma(D_{l}/ I_{l})$. This philosophy reduces the problem of coding definable types into finding a way to code the possible valuations that could be induced over some power of $K$ while taking care as well for the congruences.\\
 
 The following is \cite[Lemma 3.3]{will}. 
  \begin{fact}\label{basis} Let $K$ be any field. Let $V$ be a subspace of $K^{n}$ then $V$ can be coded by a tuple of $K$, and $V$ and $K^{n}/V$ have a $\ulcorner V \urcorner$-definable basis. 
 \end{fact}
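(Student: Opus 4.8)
The plan is to attach to $V$ its unique reduced row echelon form and to observe that this matrix is a canonical datum living in the field sort. First I would fix $d=\dim_{K}V$ and recall that $V$ has a unique spanning set arranged as the rows of a $d\times n$ matrix $M_{V}$ in reduced row echelon form: its pivot columns $P=\{j_{1}<\dots<j_{d}\}$ are determined by $V$, each pivot entry equals $1$, the other entries in the pivot columns vanish, and the remaining entries are arbitrary elements of $K$. Since the assignment $V\mapsto M_{V}$ is computed, uniformly in $d$, by a quantifier-free formula in the language of fields (Gaussian elimination), with $\emptyset$-definable inverse $M\mapsto\operatorname{rowspace}(M)$, the tuple of entries of $M_{V}$ — which we can regard as a point of the affine space of $n\times n$ matrices over $K$ after padding with zeros — is interdefinable with $\ulcorner V\urcorner$. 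This simultaneously yields a code for $V$ in the field sort and, by reading off the rows of $M_{V}$, a basis of $V$ whose members all lie in $\dcl(\ulcorner V\urcorner)$, that is, a $\ulcorner V\urcorner$-definable basis of $V$.

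For $K^{n}/V$ I would exploit the same data. The coordinate projection $\pi_{P}\colon K^{n}\to K^{P}$ onto the pivot coordinates restricts on $V$ to an isomorphism $V\xrightarrow{\sim}K^{P}$ — the $i$-th row of $M_{V}$ maps to the $i$-th standard basis vector of $K^{P}$ — while its kernel on $K^{n}$ is $\operatorname{span}\{e_{j}\mid j\notin P\}$; comparing dimensions then gives $K^{n}=V\oplus\operatorname{span}\{e_{j}\mid j\notin P\}$. Hence $\{\,e_{j}+V\mid j\notin P\,\}$ is a basis of $K^{n}/V$, and it is $\ulcorner V\urcorner$-definable because the finite set $P$ of pivot columns belongs to $\dcl(\ulcorner V\urcorner)$.

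The bookkeeping is the only thing demanding care, rather than any real difficulty: one must verify that the echelon-form algorithm is genuinely $\emptyset$-definable, so that $\sigma(M_{V})=M_{\sigma(V)}$ for every automorphism $\sigma$ — this is precisely what makes $M_{V}$ a legitimate code rather than merely some choice of basis — and one must package the codes for subspaces of different dimensions into a single field-sort tuple, for which viewing $M_{V}$ as an $n\times n$ matrix padded by zeros suffices. Since the statement is \cite[Lemma 3.3]{will}, one can of course simply invoke it; the sketch above merely records why it holds.
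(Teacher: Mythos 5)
Your proof is correct and is essentially the argument given for the cited source, Johnson's Lemma 3.3 in \cite{will}: the paper itself only cites that lemma without reproducing a proof, and Johnson's proof is precisely the reduced-row-echelon-form argument you give (uniqueness of the RREF makes $M_{V}$ a canonical code, its rows furnish a $\ulcorner V \urcorner$-definable basis of $V$, and the non-pivot standard basis vectors give a complementary $\ulcorner V \urcorner$-definable basis of $K^{n}/V$). The dimension-count you use to see $K^{n}=V\oplus\operatorname{span}\{e_{j}\mid j\notin P\}$ is correct, and your remark about padding $M_{V}$ to an $n\times n$ matrix to handle varying $d=\dim V$ uniformly is exactly the right bookkeeping.
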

 We start by coding the $\mathcal{O}$-submodules of $K^{n}$.
 \begin{lemma}\label{modulesok} Let $K \vDash T$ and $M \subseteq K^{n}$ be a definable $\mathcal{O}$-submodule. Then the code $\ulcorner M \urcorner$ can be coded in the stabilizer sorts. 
 \end{lemma}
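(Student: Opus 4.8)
The plan is to reduce in two steps to coding an $\mathcal{O}$-submodule of some $K^{k}$ that spans $K^{k}$ and contains no $K$-line; for such a module every factor of its type lies in $\mathcal{F}$, so its code sits directly in one of the sorts $B_{k}(K)/Stab_{(I_{1},\dots,I_{k})}$.

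First I would peel off the two degenerate directions. Let $V:=\mathrm{span}_{K}(M)\subseteq K^{n}$ and let $W:=\{v\in K^{n}\ |\ Kv\subseteq M\}$ be the largest $K$-subspace contained in $M$; both are $\ulcorner M\urcorner$-definable $K$-subspaces with $W\subseteq M\subseteq V$, and by Fact \ref{basis} each is coded by a tuple from the main sort $K$, hence by a tuple from $\mathcal{G}$. Set $\bar{M}:=M/W\subseteq\bar{V}:=V/W$. Since $W\subseteq M\subseteq V$ one checks that $M=\{v\in V\ |\ v+W\in\bar{M}\}$, so $\ulcorner M\urcorner$ is interdefinable with $(\ulcorner V\urcorner,\ulcorner W\urcorner,\ulcorner\bar{M}\urcorner)$ and it suffices to code $\ulcorner\bar{M}\urcorner$. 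Applying Fact \ref{basis} once to $W\subseteq K^{n}$ and once to the image of $V$ inside $K^{n}/W$ produces a $\dcl^{eq}(\ulcorner V\urcorner,\ulcorner W\urcorner)$-definable linear isomorphism $\bar{V}\xrightarrow{\ \sim\ }K^{k}$, $k=\dim_{K}\bar{V}$; transporting $\bar{M}$ through it gives a definable $\mathcal{O}$-submodule $M'\subseteq K^{k}$ whose code is interdefinable with $\ulcorner\bar{M}\urcorner$ over $\dcl^{eq}(\ulcorner V\urcorner,\ulcorner W\urcorner)$, with $M'$ spanning $K^{k}$ and containing no $K$-line. So everything reduces to coding $\ulcorner M'\urcorner$.

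Then I would run Corollary \ref{basismodule}: there is an upper triangular basis $\{a_{1},\dots,a_{k}\}$ of $K^{k}$ and $I_{i}\in\mathcal{F}\cup\{0,K\}$ with $M'=\{a_{1}x_{1}+\dots+a_{k}x_{k}\ |\ x_{i}\in I_{i}\}$. No $I_{i}$ is $0$, since otherwise the $a_{i}$ with $I_{i}\neq 0$ would already span $K^{k}$; and no $I_{i}$ is $K$, since if $I_{j}=K$ then the image of the $j$-th coordinate line $\{(0,\dots,\lambda,\dots,0):\lambda\in K\}$ under the defining isomorphism $I_{1}\times\dots\times I_{k}\to M'$, $(x_{i})\mapsto\sum a_{i}x_{i}$, is exactly the $K$-line $Ka_{j}\subseteq M'$, contradicting the reduction. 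Hence $(I_{1},\dots,I_{k})\in\mathcal{F}^{k}$, the module $M'$ has type $(I_{1},\dots,I_{k})$ with representation matrix $[a_{1},\dots,a_{k}]$, and Remark \ref{identifications}(1) identifies $\ulcorner M'\urcorner$ with the coset $[a_{1},\dots,a_{k}]\,Stab_{(I_{1},\dots,I_{k})}\in B_{k}(K)/Stab_{(I_{1},\dots,I_{k})}$, a stabilizer sort. Chaining the interdefinabilities shows $\ulcorner M\urcorner$ is interdefinable with a tuple from $\mathcal{G}$.

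The only real difficulty is the bookkeeping of the first step: because the sorts $B_{n}(K)/Stab_{(I_{1},\dots,I_{n})}$ are indexed only by tuples from $\mathcal{F}$, with $0$ and $K$ deliberately excluded, one genuinely must strip away $\mathrm{span}_{K}(M)$ and the maximal $K$-subspace inside $M$ (keeping everything definable over $K$ via Fact \ref{basis}) before Corollary \ref{basismodule} and Remark \ref{identifications} apply; the rest is routine.
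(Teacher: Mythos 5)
Your proof is correct and follows essentially the same route as the paper: both strip off the $K$-span $V^{+}=\mathrm{span}_{K}(M)$ and the maximal $K$-subspace $V^{-}\subseteq M$ via Fact \ref{basis}, identify the quotient $V^{+}/V^{-}$ with a power of $K$, and then read off the code of the image module from the sort $B_{m}(K)/Stab_{(I_{1},\dots,I_{m})}$. Your write-up is slightly more careful than the paper's in one spot worth noting: the paper simply asserts that the image has type $(I_{1},\dots,I_{m})\in\mathcal{F}^{m}$, whereas you actually justify why no factor is $0$ (else the module wouldn't span) and why none is $K$ (else there would be a $K$-line, contradicting the choice of $V^{-}$), which is precisely the reason the two reductions are needed since $\mathcal{F}$ excludes $\{0,K\}$.
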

 \begin{proof}
 Let $V^{+}$ the span of $M$ and $V^{-}$ the maximal $K$-subspace of $K^{n}$ contained in $M$. By Fact \ref{basis} the subspaces $V^{+}$ and $V^{-}$ can be coded by a tuple $c$ in $K$, and the quotient vector space $V^{+}/V^{-}$ admits a $c$-definable basis. Hence $V^{+}/V^{-}$ can be identified over $c$ with some power $K^{m}$. And $\ulcorner M\urcorner$ is interdefinable over $c$ with the code of the image $M/V^{-}$ in $K^{m}$. But this image is an $\mathcal{O}$-submodule of $K^{m}$ of type $(I_{1},\dots,I_{m}) \in \mathcal{F}^{m}$ so it admits a code in $B_{m}(K)/ Stab_{(I_{1},\dots, I_{n})}$.  So $M$ admits a code in the stabilizer sorts, as required.\end{proof}

 \begin{definition}\label{valuedrelation} [Valued relation] Let $K\vDash T$, and $\Gamma$ be its value group.  Let $V$ be some finite dimensional $K$-vector space and $R \subseteq V \times V$ be a definable subset that defines a total pre-order.We say that $R$ is a \emph{valued relation} if there is an interpretable valued vector space structure  $(V, \Gamma(V), val, +)$ in $K$ such that $(v,w) \in R$ if and only if $val(v) \leq val(w)$. 
\end{definition}
In fact, given a relation $R \subseteq V \times V$ that defines a total pre-order satisfying that:
\begin{itemize}
 \item for all $v,w \in V \ $ $(v, v+w) \in R$ or $(w,v+w)\in R$,
 \item for all $v \in V \ $ $(v,v) \in R$,
 \item for all $v,w \in V$ and $\alpha \in K$, if $(v,w)\in R$ then $(\alpha v, \alpha w) \in R$.
 \end{itemize}
 We can define an equivalence relation $E_{R}$ over $V$ as $\displaystyle{E_{R}(v,w) \leftrightarrow (v,w) \in R \wedge (w,v) \in R}$. \\
The set $\Gamma(V)= V/E_{R}$ is therefore interpretable in $K$ and we call it as the \emph{linear order induced by $R$}. Let $val: V \rightarrow \Gamma(V)$ be the canonical projection map that sends each vector to its class. We can naturally define an action of  $\Gamma$ over $\Gamma(V)$ as:
\begin{center}
$+:\begin{cases}
\Gamma \times \Gamma(V) &\rightarrow \Gamma(V)\\
(\alpha, [v]_{E_{R}}) &\mapsto [a v]_{E_{R}}, \ \text{where $a \in K$ satisfying $v(a)=\alpha$.}
\end{cases}$

\end{center}
This is a well defined map by the third condition imposed over $R$. The structure $(V, \Gamma(V), val, +)$ is an interpretable valued vector space structure over $V$ and we refer to it as the \emph{valued vector space structure induced by $R$.}
 \begin{lemma}\label{basismax} Let $K$ be a model of $T$ and let $R \subset K^{n} \times K^{n}$ be a binary relation inducing a valued vector space structure  $(K^{n}, \Gamma(K^{n}), val, +)$ over $K^{n}$. Then we can find a basis $\{ v_{1},\dots, v_{n}\}$ of $K^{n}$ such that:
 \begin{enumerate}
 \item It is a separated basis for $val$, this is given any set of coefficients $\lambda_{1}, \dots, \lambda_{n} \in K$, 
 \begin{align*}
 val \big( \sum_{i \leq n } \lambda_{i}v_{i} \big)=\min \{ v(\lambda_{i})+val(v_{i}) \ | \ i \leq n \}. 
 \end{align*}
 \item For each $i \leq n$, $\gamma_{i}=val(v_{i}) \in dcl^{eq}(\ulcorner R \urcorner)$. 
 \end{enumerate}
 \end{lemma}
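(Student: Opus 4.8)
The plan is to induct on $n=\dim_K V$, at each stage splitting off the \emph{distinguished} coordinate line $Ke_1\subseteq K^n$ rather than an arbitrary line; this is precisely what keeps the basis values definable over $\ulcorner R\urcorner$ across the recursion. First I would reduce to the case where $K$ is maximal. By Fact~\ref{maxext} there is a maximal elementary extension $K\prec K'$; any separated basis of $(K')^n$ produced below has values $\gamma_i$ in the $\ulcorner R\urcorner$-interpretable sort $\Gamma(V)$ and lying in $\dcl^{eq}(\ulcorner R\urcorner)$, and since such a $\gamma_i$ is the unique realization of a formula over $\ulcorner R\urcorner\in K^{eq}$ it already lies in $\Gamma(V)^{K}$; then ``there is a basis of $K^n$, separated for $R$, with prescribed values $\gamma_1,\dots,\gamma_n$'' is first order over $\ulcorner R\urcorner,\gamma_1,\dots,\gamma_n$, so a witness in $K'$ yields one in $K$. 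Assume then that $K$ is maximal.

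For $n=1$ the basis $\{e_1\}$ is separated trivially, and $\gamma_1=val(e_1)\in\dcl^{eq}(\ulcorner R\urcorner)$ because $e_1$ is $\emptyset$-definable and $val\colon K^n\to\Gamma(V)$ is $\ulcorner R\urcorner$-definable, being the quotient map by the $\ulcorner R\urcorner$-definable relation $E_R$. For the inductive step, put $W=Ke_1$. Being one-dimensional over the maximal field $K$, $W$ is a maximal valued vector space, so by Fact~\ref{optclosed} it has the optimal approximation property in $V$; hence $V/W$ carries the usual quotient valued vector space structure $val'(\bar u)=\max_{w\in W}val(u-w)$, and transporting this along the $\emptyset$-definable isomorphism $V/W\cong K^{n-1}$ (projection onto the last $n-1$ coordinates) yields a valued relation $R'$ on $K^{n-1}$ with $\ulcorner R'\urcorner\in\dcl^{eq}(\ulcorner R\urcorner)$ --- this last point is the whole reason for quotienting by $Ke_1$. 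Applying the inductive hypothesis to $R'$ gives a separated basis $\bar v_2,\dots,\bar v_n$ of $K^{n-1}$ with each $val'(\bar v_i)\in\dcl^{eq}(\ulcorner R'\urcorner)\subseteq\dcl^{eq}(\ulcorner R\urcorner)$, and using optimal approximation I lift each $\bar v_i$ to $v_i\in K^n$ with $val(v_i)=val'(\bar v_i)$.

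It remains to verify that $\{e_1,v_2,\dots,v_n\}$ is a separated basis (linear independence is immediate from that of the $\bar v_i$). First, $\{v_2,\dots,v_n\}$ is itself separated, since for all $\lambda_i$,
\[ \min_i val(\lambda_i v_i)\;\le\;val\Big(\sum_i\lambda_i v_i\Big)\;\le\;val'\Big(\overline{\sum_i\lambda_i v_i}\Big)\;=\;\min_i val(\lambda_i v_i), \]
the last equality by separatedness of $\{\bar v_i\}$ and optimality of the lifts. Second, $Ke_1$ is orthogonal to $\langle v_2,\dots,v_n\rangle$: if some $w$ there had $val(e_1-w)>val(e_1)$, then $val(w)=val(e_1)$, yet separatedness of $\{v_i\}$ would force $val(w)=val'(\bar w)$, contradicting $val'(\bar w)=val'(\overline{e_1-w})\ge val(e_1-w)>val(w)$. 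Rescaling, $val(\alpha e_1+\sum_i\lambda_i v_i)=\min\big(val(\alpha e_1),\min_i val(\lambda_i v_i)\big)$, which is separatedness of the full basis. Finally $\gamma_1=val(e_1)\in\dcl^{eq}(\ulcorner R\urcorner)$ as in the base case and $\gamma_i=val(v_i)=val'(\bar v_i)\in\dcl^{eq}(\ulcorner R\urcorner)$ by construction. I expect the main difficulty to be exactly this definability bookkeeping --- which is what forces the quotient to be taken along the canonical line $Ke_1$ --- together with the short but careful check that optimal lifts of a separated basis remain separated and orthogonal to the removed line.
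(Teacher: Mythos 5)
Your proof is correct and uses the same essential toolkit as the paper (reduction to $K$ maximal, induction on $n$, a coordinate-aligned, hence $\emptyset$-definable, decomposition so that definability passes through, and the optimal-approximation property supplied by maximality), but the decomposition is dual to the paper's. The paper takes the hyperplane $W = K^{n}\times\{0\}$, applies the inductive hypothesis \emph{to the subspace} $W$ to get $w_1,\dots,w_n$, then handles the one-dimensional quotient $K^{n+1}/W\cong K$ by producing a single optimal lift $v+w^\ast$; the only thing that needs to be lifted is one vector, and the separatedness check is the orthogonality of that one vector against the separated basis of $W$. You instead take the line $W=Ke_1$, apply the inductive hypothesis \emph{to the quotient} $V/W\cong K^{n-1}$ to get $\bar v_2,\dots,\bar v_n$, lift all $n-1$ of them to optimal representatives $v_i$, and prepend $e_1$; you then verify both that the lifts stay separated and that $Ke_1$ is orthogonal to their span. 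Both work; the paper's version keeps the lifting step minimal (one vector) while yours keeps the quotient-structure bookkeeping minimal (the quotient valuation is introduced once, at the top, rather than recomputed). Either choice of distinguished coordinate subspace does the job, since what matters for item (2) is that the induction is anchored to an $\emptyset$-definable subspace so that the derived relation ($R_W$ in the paper, your $R'$) has its code in $\dcl^{eq}(\ulcorner R\urcorner)$. Your orthogonality argument and the chain of inequalities establishing separatedness of the lifted family are both correct.
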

 \begin{proof}
 Because the statement we are proving is first order expressible, by Fact \ref{maxext} we may assume that $K$ is maximal.  We proceed by induction on $n$. For the base case, note that $K= span_{K}\{1\}$ then $\gamma= val(1) \in dcl^{eq}(\ulcorner R \urcorner)$. We assume the statement for $n$ and we want to prove it for $n+1$. Let $W=K^{n}\times \{0\}$, $val_{W}=v\upharpoonright_{W}$, $\Gamma(W)=\{ val(w) \ | \ w \in W\}$, and $R_{W}= R \cap (W \times W)$. Then $(W, \Gamma(W), val_{W}, +)$ is a valued vector space structure over $W$ and $\ulcorner R_{W} \urcorner \in \dcl^{eq}(\ulcorner R \urcorner)$. The subspace $W$ admits an $\emptyset$-definable basis, so it can be canonically identified with $K^{n}$. By the induction hypothesis we can find $\{ w_{1}, \dots, w_{n}\}$ a separated basis of $W$ such that $val_{W}(w_{i})\in \dcl^{eq}(\ulcorner R_{W} \urcorner) \subseteq  \dcl^{eq}(\ulcorner R \urcorner)$. As $W$ is finite dimensional it is maximal by Lemma \ref{product}. By Fact \ref{optclosed} $W$ has the optimal approximation property in $K^{n+1}$. We can therefore define the valuation over the quotient space $K^{n+1}/W$ as follows:
 \begin{center}
$val_{K^{n+1}/W}:\begin{cases}
 \big(K^{n+1}/W \big)&\rightarrow \Gamma(K^{n})\\
 v+W &\mapsto \max\{ val(v+w_{0}) \ | \ w_{0} \in W \}.
\end{cases}$
\end{center}
Define $R_{K^{n+1}/W}= \{ (w_{1}+W, w_{2}+W) \ | \ val_{K^{n+1}/W}(w_{1}+W) \leq val_{K^{n+1}/W}(w_{2}+W)\}$, which is a valued relation over the quotient space $K^{n+1}/W$. As $K^{n+1}/W= K^{n+1}/( K^{n} \times \{ 0 \})$ is definably isomorphic over the $\emptyset$-set to $K$,  we can find a non zero coset $v+W$ such that $val_{K^{n+1}/W}(v+W) \in dcl^{eq}(\ulcorner R_{K^{n+1}/W} \urcorner) \subseteq dcl^{eq}(\ulcorner R\urcorner)$. Let $w^{*} \in W$ be a vector where the maximum  of $\{ val_{K^{n+1}/W}(v+w) \ | \ w \in W\}$ is attained, i.e. $val_{K^{n+1}/W}(v+W)=val(v+w^{*})$. It is sufficient to show that $\{ w_{1}, \dots, w_{n}, v+w^{*}\}$ is a separated basis for $K^{n+1}$. \\
 Let $\alpha \in K$, we show that for any $w \in W$ $\displaystyle{val((v+w^{*})+ \alpha w)= \min \{ val(v+w^{*}), val(\alpha w)\}}$.  
If $val(v+w^{*}) \neq val(\alpha w)$ then $val((v+w^{*})+ \alpha w)= \min \{ val(v+w^{*}), val(\alpha w)\}$. So let's assume that $\gamma= val(v+w^{*})=val(\alpha w)$, by the ultrametric inequality $val((v+w^{*}) + \alpha w) \geq \gamma$. By the maximal choice of $w^{*}$, we have that $val((v+w^{*})+\alpha w) \leq val(v+w^{*})=\gamma$. So $val((v+w^{*})+ \alpha w)= \min \{ val(v+w^{*}), val(\alpha w)\}$ as required. \\
 \end{proof}
\begin{theorem}\label{codingvalued} Let $K$ be a model of $T$ and $\Gamma$ its value group. Let $R$ be a definable valued relation over $K^{n}$ and $(K^{n}, \Gamma(K^{n}), val, +)$  be the valued vector space structure induced by $R$. Then $\ulcorner R \urcorner$ is interdefinable with a tuple of elements in the stabilizer sorts and there is an $\ulcorner R \urcorner$- definable bijection $\Gamma(K^{n})$ and finitely many disjoint copies of $\Gamma$ (all contained in $\Gamma^{s}$, where $s$ is the number of $\Gamma$-orbits over $\Gamma(K^{n})$).
\end{theorem}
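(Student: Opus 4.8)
The plan is to code $R$ by finitely many definable $\mathcal{O}$-submodules of $K^{n}$ — which are in turn coded in $\mathcal{G}$ by Lemma~\ref{modulesok} — the modules being sublevel sets of $\val$ taken at the values produced by the separated basis of Lemma~\ref{basismax}.

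First I would apply Lemma~\ref{basismax} to $(K^{n},\Gamma(K^{n}),\val,+)$ to obtain a separated basis $\{v_{1},\dots,v_{n}\}$ of $K^{n}$ with $\gamma_{i}:=\val(v_{i})\in\dcl^{eq}(\ulcorner R\urcorner)$ for each $i$. The separated-basis identity $\val(\sum_{i}\lambda_{i}v_{i})=\min_{i}(v(\lambda_{i})+\gamma_{i})$ shows that every element of $\Gamma(K^{n})$ lies in the $\Gamma$-orbit of some $\gamma_{i}$; combining this with Fact~\ref{finiteorbits} and the (standard) fact that the $\Gamma$-action on the value set of a valued vector space over $K$ is free (a short ultrametric computation, cf. \cite[Section~2.3]{BookVal}), one keeps one $\gamma_{i}$ per orbit to get representatives $\gamma_{i_{1}},\dots,\gamma_{i_{s}}$ of the $s\le n$ orbits. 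Since the orbit equivalence relation on $\Gamma(K^{n})=K^{n}/E_{R}$, the $\Gamma$-action, and the $\gamma_{i_{j}}$ are all definable over $\ulcorner R\urcorner$, the maps $O_{j}\ni\gamma\mapsto\gamma-\gamma_{i_{j}}\in\Gamma$ assemble into an $\ulcorner R\urcorner$-definable bijection between $\Gamma(K^{n})$ and the disjoint union of $s$ copies of $\Gamma$ (which embeds in $\Gamma^{s}$ via $O_{j}\ni\gamma\mapsto(j,\gamma-\gamma_{i_{j}})$); this is the second assertion of the theorem.

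For the first assertion, set $M_{i}:=\{\,v\in K^{n}\ |\ \val(v)\ge\gamma_{i}\,\}$ for $i\le n$. As $\val\colon K^{n}\to\Gamma(K^{n})$, the order on $\Gamma(K^{n})$, and $\gamma_{i}$ are $\ulcorner R\urcorner$-definable, each $M_{i}$ is an $\ulcorner R\urcorner$-definable $\mathcal{O}$-submodule of $K^{n}$, so $\ulcorner M_{i}\urcorner\in\dcl^{eq}(\ulcorner R\urcorner)$ and, by Lemma~\ref{modulesok}, $\ulcorner M_{i}\urcorner$ is interdefinable with a tuple from the stabilizer sorts. Conversely, for $c\in K^{\times}$ one has $x\in cM_{i}\iff\val(x)\ge v(c)+\gamma_{i}$ and $cM_{i}$ depends only on $v(c)$ (since $\mathcal{O}^{\times}M_{i}=M_{i}$), so the initial segment $C_{i}(x):=\{\,v(c)\ |\ c\in K^{\times},\ x\in cM_{i}\,\}\subseteq\Gamma$ is definable from $x$ and $\ulcorner M_{i}\urcorner$; because every value is in the orbit of some $\gamma_{i}$, the set $\{\gamma\in\Gamma(K^{n})\ |\ \gamma\le\val(x)\}=\bigcup_{i}\{\,\delta+\gamma_{i}\ |\ \delta\in C_{i}(x)\,\}$, hence the comparison $\val(x)\le\val(y)$, is determined by $(C_{1}(x),\dots,C_{n}(x))$ and $(C_{1}(y),\dots,C_{n}(y))$. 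Concretely
\[
R(x,y)\ \iff\ \bigwedge_{i\le n}\ \forall c\in K^{\times}\,\bigl(x\in cM_{i}\ \rightarrow\ y\in cM_{i}\bigr),
\]
a first-order formula over $\ulcorner M_{1}\urcorner,\dots,\ulcorner M_{n}\urcorner$. Thus $\ulcorner R\urcorner$ is interdefinable with $(\ulcorner M_{1}\urcorner,\dots,\ulcorner M_{n}\urcorner)$, hence with a tuple from the stabilizer sorts.

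Granting Lemmas~\ref{basismax} and \ref{modulesok}, the main obstacle is precisely the reconstruction in the previous paragraph: although the separated basis itself is not canonical, the values $\gamma_{i}$ it yields are each in $\dcl^{eq}(\ulcorner R\urcorner)$ and already exhaust $\Gamma(K^{n})$ up to the $\Gamma$-action, and this is exactly what forces the canonically defined sublevel modules $M_{i}$ to carry enough information to recover $R$; care is also needed to check that $\ge$ on $\Gamma(K^{n})$ (and the quotient map $\val$) are genuinely $\ulcorner R\urcorner$-definable so that the $M_{i}$ have codes over $\ulcorner R\urcorner$.
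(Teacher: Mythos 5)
Your proof is correct and follows the same route as the paper's: apply Lemma~\ref{basismax} to get a separated basis with $\ulcorner R\urcorner$-definable values, code $R$ by the sublevel modules $\{v : \val(v)\ge\gamma_i\}$ via Lemma~\ref{modulesok}, and assemble the $\ulcorner R\urcorner$-definable bijection from $\Gamma(K^n)$ to copies of $\Gamma$ using the $\Gamma$-orbit representatives among the $\gamma_i$. You spell out a few points the paper leaves implicit (that the $\gamma_i$ exhaust the orbits via the separated-basis identity, freeness of the $\Gamma$-action, and an explicit first-order formula recovering $R$ from the $M_i$'s), but these are elaborations of the same argument rather than a different approach.
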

\begin{proof}
As the statement that we are trying to prove is first order expressible, without loss of generality we may assume that $K$ is maximal. Let $R$ be a valued relation over $K^{n}$ and let $(K^{n}, \Gamma(K^{n}), val, +)$  be the valued vector space structure induced by $R$. By Lemma \ref{basismax}, we can find a separated basis $\{ v_{1}, \dots, v_{n}\}$ of $K^{n}$, such that for each $i \leq n$, $val(v_{i}) \in dcl^{eq}(\ulcorner R \urcorner)$.  Let $\{\gamma_{1}, \dots, \gamma_{s}\} \subseteq \{ val(v_{i}) \ | \ i \leq n\}$ be a complete set of representatives of the orbits of $\Gamma$ over the linear order $\Gamma(K^{n})$, this is:
\begin{align*}
 \Gamma(K^{n})= \dot{\bigcup_{i \leq s}} \Gamma+\gamma_{i}.
 \end{align*}
  For each $i \leq s$, we define $B_{i}:= \{ x \in K^{n} \ | \ val(x) \geq \gamma_{i}\}$. Each $B_{i}$ is an $\mathcal{O}$-submodule of $K^{n}$, so by Lemma \ref{modulesok}  $\ulcorner B_{i}\urcorner$ is interdefinable with a tuple in the stabilizer sorts. The valued vector space structure over $K^{n}$ is completely determined by the closed balls containing $0$, and each of these ones is of the form $\alpha B_{i}$ for some $\alpha \in K$ and $i \leq s$.  Thus the code $\ulcorner R \urcorner$ is interdefinable with the tuple $(\ulcorner B_{1}\urcorner, \dots, \ulcorner B_{s} \urcorner)$. We conclude that $\ulcorner R \urcorner$ can be coded in the stabilizer sorts.\\
For the second part of the statement, consider the map:
\begin{center}
$\begin{cases}
f:\dot{\bigcup}_{i \leq s} \Gamma+\gamma_{i} &\rightarrow \Gamma^{s}\\
\alpha+ \gamma_{i} &\mapsto (0, \dots, 0, \underbrace{\alpha}_{i-\text{th coordinate}},0, \dots,0).\\
\end{cases}$
\end{center}
As $\{\gamma_{1},\dots,\gamma_{s}\} \subseteq dcl^{eq}(\ulcorner R \urcorner)$ this is a  $\ulcorner R \urcorner$-definable bijection between $\Gamma(K^{n})$ to finitely many disjoint copies of $\Gamma$, contained in $\Gamma^{s}$.
\end{proof}

 \begin{theorem}\label{codingdeftypes} Let $p(\mathbf{x})$ be a definable global type in $\mathfrak{M}^{n}$. Then $p(\mathbf{x})$ can be coded in $\mathcal{G} \cup \Gamma^{eq}$. 
 \end{theorem}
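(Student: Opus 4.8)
The plan is to reduce coding a definable type $p(\mathbf{x})$ in $\mathfrak{M}^n$ to the two geometric inputs already established: coding definable $\mathcal{O}$-submodules (Lemma~\ref{modulesok}) and coding definable valued relations on finite-dimensional $K$-vector spaces (Theorem~\ref{codingvalued}). The key device is the approximation of $p$ by its restrictions to the finite-dimensional spaces of polynomials. For each $\ell \in \mathbb{N}$ let $D_\ell$ be the $K$-vector space of polynomials in $X_1,\dots,X_n$ of degree $\le \ell$, and let $I_\ell \subseteq D_\ell$ be the subspace $\{Q \in D_\ell \mid (Q(\mathbf{x})=0) \in p(\mathbf{x})\}$. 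Since $p$ is a definable type, $I_\ell$ is a $\ulcorner p \urcorner$-definable subspace of $D_\ell$, hence by Fact~\ref{basis} it has a code in $K$ and both $I_\ell$ and $V_\ell := D_\ell / I_\ell$ carry a $\ulcorner I_\ell\urcorner$-definable basis, so $V_\ell$ is identified over that code with some power $K^{m_\ell}$.

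Next I would extract, for each $\ell$ and each $\Delta \in RJ(\Gamma)$, the binary relation $R_{\ell,\Delta}$ on $V_\ell$ given by $(\bar{Q}_1, \bar{Q}_2) \in R_{\ell,\Delta}$ iff $(v_\Delta(Q_1(\mathbf{x})) \le v_\Delta(Q_2(\mathbf{x}))) \in p(\mathbf{x})$. Using the fact that $p$ is a definable type together with quantifier elimination (Corollary~\ref{QEregular}) and the description of the relevant formulas given just before Fact~\ref{basis}, one checks that $R_{\ell,\Delta}$ is well-defined on the quotient $V_\ell$ (the type decides $Q_1(\mathbf{x})=0$, so the pre-order descends modulo $I_\ell$), is $\ulcorner p \urcorner$-definable, and satisfies the three conditions listed after Definition~\ref{valuedrelation}, hence induces a valued vector space structure on $V_\ell \cong K^{m_\ell}$. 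By Theorem~\ref{codingvalued}, $\ulcorner R_{\ell,\Delta}\urcorner$ is interdefinable with a tuple in the stabilizer sorts, and the induced linear order $\Gamma(V_\ell)$ is $\ulcorner R_{\ell,\Delta}\urcorner$-definably in bijection with finitely many disjoint copies of $\Gamma$ inside some $\Gamma^{s}$. Finally, the congruence and coset formulas of types (3) and (4) in the list — $v_\Delta(Q_1/Q_2) - k_\Delta \in n(\Gamma/\Delta)$ and $v_\Delta(Q_1/Q_2) = k_\Delta$ — impose, via the $\ulcorner R_{\ell,\Delta}\urcorner$-definable identification of $\Gamma(V_\ell)$ with a subset of $\Gamma^s$, only a subset of $\Gamma^{eq}$; that subset is $\ulcorner p \urcorner$-definable and, being (in each copy of $\Gamma$) a Presburger-type condition modulo $n$ and a convex subgroup, is coded by finitely many elements of $\Gamma^{eq}$ (here one invokes $RJ(\Gamma)$ being definable over $\emptyset$ and the quotient sorts in $\Gamma^{eq}$).

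Assembling these: the code $\ulcorner p \urcorner$ is interdefinable with the (countable) tuple consisting of all the codes $\ulcorner I_\ell \urcorner$, $\ulcorner R_{\ell,\Delta}\urcorner$ (for $\ell\in\mathbb{N}$, $\Delta\in RJ(\Gamma)$), together with the $\Gamma^{eq}$-codes for the congruence data. Each piece lies in $\mathcal{G} \cup \Gamma^{eq}$, so $p$ is coded in $\mathcal{G} \cup \Gamma^{eq}$, as required. One direction of the interdefinability — that each listed code lies in $\dcl^{eq}(\ulcorner p \urcorner)$ — is immediate since $p$ is definable; the other direction, that $\ulcorner p \urcorner$ lies in the definable closure of the collected codes, follows because by quantifier elimination $p$ is determined by the boolean combinations of formulas (1)--(4), and each such formula at level $\ell$ is decided by the data at level $\ell$ (and its $\Delta$'s), which is recovered from the collected codes.

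\textbf{Main obstacle.} The delicate point is verifying that the relations $R_{\ell,\Delta}$ genuinely descend to well-defined \emph{valued} relations on the quotient $V_\ell$ — in particular that the value $v_\Delta(Q(\mathbf{x}))$ according to $p$ depends only on the class of $Q$ modulo $I_\ell$ and is compatible with scaling by $K$ — and then bookkeeping how the finitely many congruence conditions of types (3) and (4), after transport through the $\ulcorner R_{\ell,\Delta}\urcorner$-definable isomorphism $\Gamma(V_\ell) \cong \bigsqcup \Gamma$, assemble into a genuinely finite amount of data in $\Gamma^{eq}$ rather than infinitely much. Both are essentially routine given quantifier elimination and the dp-minimality/bounded-rank structure theory of $\Gamma$ recalled in Section~\ref{preliminaries}, but they require care because the number of relevant $\Delta \in RJ(\Gamma)$ can be countably infinite, so one must argue the dependence is locally finite in $\ell$.
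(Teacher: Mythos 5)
Your overall strategy coincides with the paper's: approximate $p$ by its restrictions to the finite-dimensional spaces $D_\ell$, code the subspaces $I_\ell$ by Fact~\ref{basis}, use Theorem~\ref{codingvalued} to code a valued relation on $V_\ell = D_\ell/I_\ell$, and push the congruence/coset data through the resulting identification of $\Gamma(V_\ell)$ with copies of $\Gamma$ so that it can be coded in $\Gamma^{eq}$. The paper's proof, however, uses a \emph{single} valued relation $R_\ell$ per $\ell$, built from the finest valuation $v$ only, and then treats all the $\Delta$-data — the $v_\Delta$-order comparisons, the congruence conditions $v_\Delta(Q_1)-v_\Delta(Q_2)+k_\Delta\in n(\Gamma/\Delta)$, and the equalities — as auxiliary \emph{binary relations on $\Gamma(V_\ell)^2$} (called $\phi_\Delta^k, \psi_\Delta^{k,n}, \theta_\Delta^k$), which are then transported via the $\ulcorner R_\ell\urcorner$-definable bijection $f:\Gamma(V_\ell)\to\bigsqcup\Gamma\subseteq\Gamma^s$ to subsets of $\Gamma^{2s}$ and coded in $\Gamma^{eq}$.

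Your plan diverges from this by introducing a separate valued relation $R_{\ell,\Delta}$ for each $\Delta \in RJ(\Gamma)$ and applying Theorem~\ref{codingvalued} to each. This creates a genuine gap: for $\Delta \neq \{0\}$, the valued vector space structure induced by $R_{\ell,\Delta}$ has a $\Gamma$-action on its linear order $\Gamma(V_\ell)$ with \emph{stabilizer $\Delta$}, so the action is not free. The map $f$ in the proof of Theorem~\ref{codingvalued} — sending $\alpha+\gamma_i$ to $(0,\dots,\alpha,\dots,0)$ — is only well-defined and bijective when the action is free, i.e.\ for the finest valuation; for a coarsened valuation it is not a function at all, since $\alpha+\gamma_i = \alpha'+\gamma_i$ whenever $\alpha-\alpha'\in\Delta$. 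What one actually gets is a bijection of $\Gamma(V_\ell)$ with finitely many copies of $\Gamma/\Delta$, not $\Gamma$. This lands in a quotient sort of $\Gamma^{eq}$ and could be made to work, but it is not what Theorem~\ref{codingvalued} states, and your concluding step (transport of congruence data through ``the bijection with $\bigsqcup\Gamma$'') silently relies on a map that does not exist as claimed. The $R_{\ell,\Delta}$ are also redundant — the $v_\Delta$-order on $V_\ell$ is determined by the $R_\ell$-order together with the $\emptyset$-definable convex subgroup $\Delta$ — which is precisely why the paper uses only $R_\ell$ and records the $\Delta$-data as extra binary relations on $\Gamma(V_\ell)^2$. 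A final remark: your concern about ``locally finite dependence in $\ell$'' is unnecessary, since Theorem~\ref{weakEI} explicitly allows the code of a definable type to be an infinite tuple.
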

 \begin{proof}
 Let $p(\mathbf{x})$ be a definable global type, and let $K$ be a small model where $p(\mathbf{x})$ is defined. Let $q(\mathbf{x})= p(\mathbf{x})\upharpoonright_{K}$ it is sufficient to code $q(\mathbf{x})$. \\
 
 For each $\ell \in \mathbb{N}$ let $D_{\ell}$ be the space of polynomials in $K[X_{1},\dots, X_{n}]$ of degree less or equal than $\ell$. This is a finite dimensional $K$-vector space with an $\emptyset$-definable basis. Let $I_{\ell}:= \{ Q(\bar{x}) \in D_{\ell} \ | \ Q(\bar{x})=0 \in q(\bar{x})\}$, this  is a subspace of $D_{\ell}$.  Let $R_{\ell}:= \{(Q_{1}(\mathbf{x}), Q_{2}(\mathbf{x}))\in D_{\ell} \times D_{\ell} \ | \ v(Q_{1}(\mathbf{x})) \leq v(Q_{2}(\mathbf{x})) \in q(\mathbf{x})\}$, this relation induces a valued vector space structure on the quotient space $V_{\ell}=D_{\ell}/I_{\ell}$. Let $( V_{\ell}, \Gamma(V_{\ell}), val_{\ell}, +_{\ell})$ be the valued vector space structure induced by $R_{\ell}$ over $V_{\ell}$.\\
For each $\Delta \in RJ(\Gamma)$ and $k \in \mathbb{Z}$, a formula of the form $v_{\Delta}(Q_{1}(\mathbf{x}))=v_{\Delta}(Q_{2}(\mathbf{x}))+k_{\Delta}$ determines a definable relation  $\phi_{\Delta}^{k} \subseteq \Gamma(V_{\ell})^{2}$, defined as: 
\begin{align*}
(val_{\ell}(Q_{1}(\mathbf{x})), val_{\ell}(Q_{2}(\mathbf{x})) \in \phi_{\Delta}^{k} \ \text{if and only if} \ v_{\Delta}(Q_{1}(\mathbf{x}))=v_{\Delta}(Q_{2}(\mathbf{x}))+k_{\Delta} \in q(\mathbf{x}). 
\end{align*}
Similarly, for each $\Delta \in RJ(\Gamma)$, $k \in \mathbb{Z}$ and $n \in \mathbb{N}_{\geq 2}$ we consider the definable binary relation $\psi_{\Delta}^{n,k} \subseteq \Gamma(V_{\ell})^{2}$ determined as:
\begin{align*}
(val_{\ell}(Q_{1}(\mathbf{x})), val_{\ell}(Q_{2}(\mathbf{x}))) \in \psi_{\Delta}^{k,n} \ \text{if and only if} \  v_{\Delta}(Q_{1}(\mathbf{x}))-v_{\Delta}(Q_{2}(\mathbf{x}))+k_{\Delta}\in n(\Gamma/\Delta) \in q(\mathbf{x}).
\end{align*}
Likewise, for each $\Delta \in RJ(\Gamma)$  and $k \in \mathbb{Z}$ we consider the definable binary relations $\theta^{k}_{\Delta} \subseteq \Gamma(V_{\ell})^{2}$ defined as:
\begin{align*}
(val_{\ell}(Q_{1}(\mathbf{x})), val_{\ell}(Q_{2}(\mathbf{x}))) \in \theta^{k}_{\Delta} \ \text{if and only if} \  v_{\Delta}(Q_{1}(\mathbf{x}))< v_{\Delta}(Q_{2}(\mathbf{x}))+k_{\Delta} \in q(\mathbf{x}).
\end{align*}
Let
\begin{equation*}
 \mathcal{S}_{\ell}=\{ \phi_{\Delta}^{k} \ | \ \Delta \in RJ(\Gamma), k \in \mathbb{Z}\} \cup \{\psi_{\Delta}^{k,n} \ | \ \Delta \in RJ(\Gamma), k \in \mathbb{Z}, n \in \mathbb{N}_{\geq 2}\} \cup \{ \theta_{\Delta}^{k} \ | \ \Delta \in RJ(\Gamma), k \in \mathbb{Z} \}
 \end{equation*}
   We denote as $\mathcal{V}_{\ell}=(V_{\ell}, \Gamma(V_{\ell}), val_{\ell}, +_{\ell}, \mathcal{S}_{\ell})$ the valued vector space over $V_{\ell}$ with the enriched structure over the linear order $\Gamma(V_{\ell})$. By quantifier elimination (see Corollary \ref{QEregular}), the type $q(\mathbf{x})$ is completely determined by boolean combinations of formulas of the form:
 \begin{itemize}
 \item $Q_{1}(x)=0$,
 \item $v_{\Delta}(Q_{1}(\mathbf{x})) < v_{\Delta}(Q_{2}(\mathbf{x}))$, 
 \item $v_{\Delta}\left(\frac{Q_{1}(\mathbf{x})}{Q_{2}(\mathbf{x})}\right)-k_{\Delta} \in n (\Gamma/\Delta)$,
 \item $v_{\Delta} \left(\frac{Q_{1}(\mathbf{x})}{Q_{2}(\mathbf{x})}\right) = k_{\Delta}$. 
 \end{itemize}
 where $ Q_{1}(\mathbf{x}), Q_{2}(\mathbf{x}), \in K[X_{1},\dots,X_{k}]$, $n \in \mathbb{N}_{\geq 2}$, $\Delta \in RJ(\Gamma)$, $k \in \mathbb{Z}$ and $k_{\Delta}=k \cdot 1_{\Delta}$ where $1_{\Delta}$ is the minimum positive element of $\Gamma/\Delta$ if it exists. 
Hence the type $p(\mathbf{x})$ is entirely determined (and determines completely) by the sequence of valued vector spaces with enriched structure over the linear order $(\mathcal{V}_{\ell} \ | \  \ell \in \mathbb{N})$. \\
 By Fact \ref{basis} for each $\ell \in \mathbb{N}$ we can find codes $\ulcorner I_{\ell} \urcorner$ in the home sort for the $I_{\ell}'s$. After naming these codes, each quotient space $V_{\ell}=D_{\ell}/I_{\ell}$ has a definable basis, so it can be definably identified with some power of $K$. Therefore, without loss of generality we may assume that the underlying set of the valued vector space with enriched structure $V_{\ell}$ is some power of $K$. By Theorem \ref{codingvalued}, the relation $R_{\ell}$ admits a code $\ulcorner R_{\ell} \urcorner$ in the stabilizer sorts. Moreover, there is a  $\ulcorner R_{\ell} \urcorner$ definable bijection $f: \Gamma(V_{\ell}) \rightarrow \Gamma^{s}$, where  $s \in \mathbb{N}_{\geq 2}$ is the number of $\Gamma$-orbits over $\Gamma(V_{\ell})$.\\
 In particular, for each $\Delta \in RJ(\Gamma)$, $n \in \mathbb{N}$ and $k \in \mathbb{Z}$ the definable relations $\phi_{\Delta}^{k}$, $\psi_{\Delta}^{k,n}$ and $\theta_{\Delta}^{k}$ are interdefinable over $\ulcorner R \urcorner$ with  $f(\phi_{\Delta}^{k})$, $f(\psi_{\Delta}^{k,n})$ and $f(\theta_{\Delta}^{k})$, all subsets of $\Gamma^{2s}$. Consequently, the type $q(\mathbf{x})$ can be coded in the sorts $\Gamma \cup \Gamma^{eq}$, as every definable subset $D$ in some power of $\Gamma$ admits a code in $\Gamma^{eq}$.
 \end{proof}
 \begin{theorem}\label{weakEI1} Let $K$ be a valued field of equicharacteristic zero, residue field algebraically closed and value group with bounded regular rank. Then $K$ admits weak elimination of imaginaries in the language $\mathcal{L}_{\mathcal{G}}$, where the stabilizer sorts are added. 
 \end{theorem}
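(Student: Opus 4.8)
The plan is to apply Hrushovski's criterion (Theorem \ref{weakEI}) with the home sort $K$ and $\mathcal{G}$ the stabilizer sorts described in Definition \ref{defstab}. The criterion has exactly two hypotheses, and the bulk of the section has been devoted to verifying them, so the proof of Theorem \ref{weakEI1} amounts to assembling those pieces. First I would invoke Corollary \ref{QEregular} to note that $T$ has quantifier elimination in $\mathcal{L}$, and observe that the home sort is indeed $K$: every element of $k$, $\Gamma$, and the quotient sorts is interdefinable with tuples from $K$ via the valuation, the residue map, and the coarsening/projection maps, and the stabilizer sorts $B_n(K)/\mathrm{Stab}_{(I_1,\dots,I_n)}$ are quotients of tuples from $K$ as well; hence $\mathfrak{M}^{eq} = \mathrm{dcl}^{eq}(K)$.

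Next I would verify condition (1), density of definable types. This is precisely Theorem \ref{densitydef}: for every non-empty definable $X \subseteq \mathfrak{M}$ in one variable there is an $\mathrm{acl}^{eq}(\ulcorner X\urcorner)$-definable global type concentrated on $X$. One subtlety is that Theorem \ref{weakEI} as stated quantifies over definable $X \subseteq K$ in a single variable, which is exactly what Theorem \ref{densitydef} provides, so no extra work is needed here. For condition (2), coding definable types, I would cite Theorem \ref{codingdeftypes}: every definable global type $p(\mathbf{x})$ in $\mathfrak{M}^n$ has a code in $\mathcal{G} \cup \Gamma^{eq}$. Since $\Gamma$ and its imaginaries are already coded within the stabilizer sorts — the value group is equipped with $\mathcal{L}_{bq}$, so by Theorem \ref{WEIbounded} it admits weak elimination of imaginaries there, and in fact for the purposes of the criterion a possibly-infinite tuple from $\mathcal{G}$ suffices — the code of $p$ is interdefinable with a (possibly infinite) tuple from $\mathcal{G}$. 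This is what the criterion requires.

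With both conditions in hand, Theorem \ref{weakEI} yields weak elimination of imaginaries in the sorts $\mathcal{G}$, i.e. in the language $\mathcal{L}_{\mathcal{G}}$, completing the proof. The main obstacle in this whole development is not the final assembly but the two ingredients: proving density of definable types (Theorem \ref{densitydef}), where the tree-of-swiss-cheeses argument forces every path to be finite, and the coding of valued relations (Theorem \ref{codingvalued}), which is where the stabilizer sorts are genuinely used — the separated-basis argument via maximality reduces an arbitrary induced valued vector space structure to finitely many $\mathcal{O}$-submodules of powers of $K$, each coded by a $\mathrm{Stab}$-coset. For the statement of Theorem \ref{weakEI1} itself, the only thing to be careful about is checking that $\Gamma^{eq}$ appearing in Theorem \ref{codingdeftypes} does not introduce sorts outside $\mathcal{G}$: it does not, because the quotient sorts $\Gamma/\Delta$ and $\Gamma/(\Delta+n\Gamma)$ together with $\Gamma$ itself already weakly eliminate imaginaries for the value group by Theorem \ref{WEIbounded}, and these are all included in $\mathcal{G}$.

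\begin{proof}
We apply Hrushovski's criterion, Theorem \ref{weakEI}, with home sort $K$ and $\mathcal{G}$ the stabilizer sorts of Definition \ref{defstab}. First, $\mathfrak{M}^{eq} = \mathrm{dcl}^{eq}(K)$: every element of the residue field, the value group, and the quotient sorts is interdefinable with a tuple from $K$ via $v$, $\mathrm{res}$, and the maps $\rho_{\Delta}$, $\pi_{\Delta}^{n}$, while the sorts $B_{n}(K)/\mathrm{Stab}_{(I_{1},\dots,I_{n})}$ are quotients of tuples from $K$ by the maps $\rho_{(I_{1},\dots,I_{n})}$. Condition (1) of Theorem \ref{weakEI}, density of definable types, is exactly Theorem \ref{densitydef}. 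Condition (2), coding of definable types, follows from Theorem \ref{codingdeftypes}: any definable global type in $\mathfrak{M}^{n}$ is coded in $\mathcal{G} \cup \Gamma^{eq}$, and since $\Gamma$ together with the quotient sorts weakly eliminates imaginaries by Theorem \ref{WEIbounded}, every element of $\Gamma^{eq}$ is interdefinable with a (possibly infinite) tuple from $\mathcal{G}$; hence the code of the type is interdefinable with a tuple from $\mathcal{G}$, as required. Theorem \ref{weakEI} now gives weak elimination of imaginaries in the sorts $\mathcal{G}$, that is, in the language $\mathcal{L}_{\mathcal{G}}$.
\end{proof}
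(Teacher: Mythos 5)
Your overall architecture matches the paper's: invoke Hrushovski's criterion with the two ingredients being Theorem \ref{densitydef} (density) and Theorem \ref{codingdeftypes} (coding), then dispose of the $\Gamma^{eq}$ overhang via Theorem \ref{WEIbounded} and Corollary \ref{stableembeddedness}. However, your argument has a genuine gap in how you absorb $\Gamma^{eq}$ into $\mathcal{G}$. You assert that ``since $\Gamma$ together with the quotient sorts weakly eliminates imaginaries by Theorem \ref{WEIbounded}, every element of $\Gamma^{eq}$ is interdefinable with a (possibly infinite) tuple from $\mathcal{G}$.'' That is not what weak elimination of imaginaries gives: WEI only guarantees that for each $e \in \Gamma^{eq}$ there is a tuple $b$ in the quotient sorts with $e \in \dcl^{eq}(b)$ and $b \in \acl^{eq}(e)$, which is strictly weaker than interdefinability. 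Since the value group may not have full elimination of imaginaries down to the quotient sorts (indeed the dp-minimal case requires adding constants to upgrade WEI to EI, cf.\ Corollary \ref{EIdpmin}), you cannot conclude that $\ulcorner p \urcorner$ is \emph{inter}definable with a tuple from $\mathcal{G}$, so condition (2) of Theorem \ref{weakEI} is not verified for the family $\mathcal{G}$ alone.

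The correct sequencing, as in the paper, is to apply Theorem \ref{weakEI} with the larger family $\mathcal{G} \cup \Gamma^{eq}$ as the collection of sorts: Theorem \ref{codingdeftypes} gives condition (2) honestly there, so $T$ has WEI down to $\mathcal{G} \cup \Gamma^{eq}$. One then uses the transitivity of weak elimination of imaginaries, together with the pure stable embeddedness of $\Gamma$ (Corollary \ref{stableembeddedness}) which identifies $\Gamma$-imaginaries in $T$ with imaginaries of $\Gamma$ as an $\mathcal{L}_{bq}$-structure, and Theorem \ref{WEIbounded}, to further reduce from $\mathcal{G} \cup \Gamma^{eq}$ to $\mathcal{G}$: given $e$ with a weak code $(a_1, a_2) \in \mathcal{G} \times \Gamma^{eq}$, replace $a_2$ by a weak code $b$ in the quotient sorts; then $e \in \dcl^{eq}(a_1, b)$ and $(a_1, b) \in \acl^{eq}(e)$, which is still a weak code. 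Your conclusion is correct, but only because WEI composes in this two-stage fashion — not because $\Gamma^{eq}$ is interdefinably absorbed into $\mathcal{G}$, which would require full EI for the value group.
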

 \begin{proof}
 By Theorem \ref{weakEI}, $K$ admits weak elimination of imaginaries down to the sorts $\mathcal{G} \cup \Gamma^{eq}$, where $\mathcal{G}$ are the stabilizer sorts. In fact, Hrushovski's criterion requires us to verify the following two conditions:
 \begin{enumerate}
 \item the density of definable types, this is Theorem \ref{densitydef}, and
 \item the coding of definable types, this is Theorem \ref{codingdeftypes}. 
 \end{enumerate}
 By Corollary \ref{stableembeddedness} the value group $\Gamma$ is stably embedded. By Theorem \ref{WEIbounded},  the ordered abelian group with bounded regular rank $\Gamma$ admits weak elimination of imaginaries once one adds the quotient sorts,
 \begin{equation*}
 \{ \Gamma/\Delta \ | \ \Delta \in RJ(\Gamma)\} \cup \{ \Gamma/\Delta+ n\Gamma \ | \ \Delta \in RJ(\Gamma), n \in \mathbb{N}_{\geq 2}\}.
 \end{equation*}
 We conclude that $K$ admits weak elimination of imaginaries down to the stabilizer sorts $\mathcal{G}$. 
 \end{proof}

  \section{ Elimination of imaginaries for henselian valued field with dp-minimal value group}\label{dpminimal}

 Let $(K,v)$ be a henselian valued field of equicharacteristic zero, residue field algebraically closed and dp-minimal value group. We see $K$ as a multisorted structure in the language $\hat{\mathcal{L}}$ extending the language $\mathcal{L}_{\mathcal{G}}$ (described in Definition \ref{defstab}), where the value group is equipped with the language $\mathcal{L}_{dp}$ described in Subsection \ref{language}. Let $\mathcal{I}'$ be the complete family of $\mathcal{O}$-submodules of $K$ described in Fact \ref{completemodules}. From now on we fix a complete family $\displaystyle{\mathcal{F}=\mathcal{I}' \backslash \{ 0, K\}}$.\\

We refer to these sorts as the \emph{stabilizer sorts} and we denote their union 
\begin{equation*}
\mathcal{G}= K \cup k \cup \Gamma \cup \{\Gamma/\Delta \ | \ \Delta \in RJ(\Gamma)\} \cup \{ B_{n}(K)/ \Stab(I_{1},\dots,I_{n}) \ | \ (I_{1},\dots, I_{n}) \in \mathcal{I}^{n}\} 
\end{equation*}
\begin{remark} If we work with the complete family $\mathcal{I}$ of end-segments given by Remark \ref{completedp}, each of $\mathcal{O}$-modules in $\mathcal{I}$ is definable over the empty set. In this setting we are adding a finite set of constants $\Omega_{n}$ in $\Gamma$ choosing representatives of $n \Gamma$ in $\Gamma$ for each $n \in \mathbb{N}$. The results we obtain in this section will hold in the same manner if we work with this language instead.
\end{remark}
Our main goal is the following Theorem.
\begin{theorem} Let $K$ be a henselian valued field of equicharacteristic zero, residue field algebraically closed and dp-minimal value group. Then $K$ eliminates imaginaries in the language $\hat{\mathcal{L}}$, where the stabilizer sorts are added. 
\end{theorem}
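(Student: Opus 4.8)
The plan is to upgrade the weak elimination of imaginaries already in hand (Theorem~\ref{weakEI1}) to full elimination of imaginaries by coding finite sets. Recall the standard form of Hrushovski's criterion used in \cite{HHM2} and \cite{will} (see \cite{criteria}): a theory which weakly eliminates imaginaries in a collection of sorts $\mathcal{G}$ eliminates imaginaries in $\mathcal{G}$ as soon as, for every $r$, each finite subset of $\mathcal{G}^{r}$ has a code in $\mathcal{G}$ — indeed, by weak elimination every imaginary $e$ is interdefinable with the finite set of $\acl^{eq}(e)$-conjugates of any witness $a\in\mathcal{G}$ satisfying $e\in\dcl^{eq}(a)$ and $a\in\acl^{eq}(e)$, so coding that finite set codes $e$. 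A dp-minimal ordered abelian group is of bounded regular rank (it has no singular primes by Proposition~\ref{dpmin}), so Theorem~\ref{weakEI1} applies; passing from $\mathcal{L}_{\mathcal{G}}$ to $\hat{\mathcal{L}}$ only adds constants on the value-group sort and does not disturb weak elimination. Thus it remains to code finite sets of tuples from $\mathcal{G}=K\cup k\cup\{\text{value-group sorts}\}\cup\{B_{n}(K)/\Stab_{(I_{1},\dots,I_{n})}\}$.

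By the orthogonality and pure stable embeddedness of $k$ and $\Gamma$ (Corollary~\ref{stableembeddedness}), coding a finite set of mixed $\mathcal{G}$-tuples reduces, by a standard coordinate-wise analysis, to coding finite sets of tuples drawn from a single one of these four families. Finite sets of tuples from $K$ are coded in $K$ by sufficiently many power sums, using $\operatorname{char}(K)=0$; finite sets of tuples from $k$ are coded in $k$ since $k$ is algebraically closed, hence eliminates imaginaries; and finite sets of tuples from the value-group sorts are coded there by Corollary~\ref{EIdpmin}. This last point is exactly where dp-minimality and the constants for the finite groups $\Gamma/\ell\Gamma$ of $\mathcal{L}_{dp}$ are used, and it is exactly the step with no analogue in the finite-spines case, where $\Gamma$ only weakly eliminates imaginaries (which is why Theorem~\ref{weakEI1} stops at weak elimination).

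The substantive content is coding a finite set of elements of the stabilizer proper sorts, equivalently a finite set $F=\{M_{1},\dots,M_{m}\}$ of definable $\mathcal{O}$-submodules of $K^{n}$: torsors are absorbed by Proposition~\ref{torsormodule}, and a finite tuple of submodules of various $K^{n_{i}}$ is a single submodule of $K^{n_{1}}\oplus\dots\oplus K^{n_{\ell}}$. I would induct on $n$. After partitioning $F$ by the isomorphism type of its members — a finite $\emptyset$-definable partition, since the complete family $\mathcal{F}$ may be taken $\emptyset$-definable (Remark~\ref{completedp}) and each type is a $\emptyset$-definable condition on the code by Corollary~\ref{basismodule} — we may assume all $M_{i}$ share a type $(I_{1},\dots,I_{n})$, and after projecting out the free and trivial coordinates that each $I_{j}\in\mathcal{F}$. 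For $n=1$, Fact~\ref{endsegement} identifies each $M_{i}$ with the end-segment it induces, so, rewriting each as a translate of a member of the complete family of Remark~\ref{completedp}, $F$ becomes a finite set of elements of value-group quotient sorts, coded by Corollary~\ref{EIdpmin}. For $n\geq2$, one first replaces the ambient space by genuinely symmetric data: $N^{+}:=\sum_{i}M_{i}$ and $N^{-}:=\bigcap_{i}M_{i}$ are single definable submodules, coded by Lemma~\ref{modulesok} and visibly fixed by $\operatorname{Aut}(\mathfrak{M}/\ulcorner F\urcorner)$, so each $M_{i}$ corresponds to a submodule of the $1$-definable module $N^{+}/N^{-}$. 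Decomposing $N^{+}/N^{-}$ into a direct sum of subquotients of $K$ (Corollary~\ref{basismodule}) and peeling off one summand at a time through the graph-of-homomorphism description of Lemma~\ref{quotients} — the connecting homomorphisms being linear by Corollary~\ref{homomorphism} and Lemma~\ref{homreg} — reduces coding $F$ to coding a finite set of submodules of a single subquotient of $K$ (as in the base case), coding a finite set of the linear parameters of the connecting homomorphisms (in $K$, or again via Lemma~\ref{modulesok}), and an induction on the number of summands.

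The main obstacle is precisely this reduction for $n\geq2$, for two reasons. First, unlike the lattices of $ACVF$ treated in \cite{HHM2}, the modules here may be \emph{open} — scalings of ideals of coarsened valuation rings — so $N^{+}/N^{-}$ need not be internal to the residue field; one cannot push the finite set of submodules into a residue-field-internal module and invoke elimination of imaginaries for $ACF$, but must code its value-group components via elimination of imaginaries for $\Gamma$ and its residue components via that of $k$, in tandem. Second, weak elimination only supplies $\acl^{eq}$-codes, whereas coding finite sets demands honest $\operatorname{Aut}(\mathfrak{M}/\ulcorner F\urcorner)$-invariance; this is what forces the symmetric choices $N^{+}=\sum M_{i}$, $N^{-}=\bigcap M_{i}$, and, inside the induction, correspondingly symmetric choices of the intermediate sublattices and of the parameters attached to the homomorphisms, together with a careful verification that every auxiliary object produced along the way lies in $\dcl^{eq}(\ulcorner F\urcorner)$.
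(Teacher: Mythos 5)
Your outer structure is right — and it is the paper's: Fact~\ref{all} reduces the problem to coding finite sets of $\mathcal{G}^{r}$-tuples once Theorem~\ref{weakEI1} is in hand, the reduction from torsors to modules via Proposition~\ref{torsormodule} is exactly the paper's Fact~\ref{inter}, and the partition by module type is also used. Your $n=1$ observation is even a genuine simplification: a finite set of $\mathcal{O}$-submodules of $K$ is, via Fact~\ref{endsegement} and Remark~\ref{completedp}, interdefinable with a finite set of tuples in the value-group sorts, and Corollary~\ref{EIdpmin} finishes; this avoids the paper's polynomial-ideal construction $J_{F}$ of Lemma~\ref{codetorsors}, though that lemma really treats $1$-\emph{torsors} (which your reduction pushes up to complexity $2$).

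The gap is in the inductive step, and it is not cosmetic. First, the reduction ``code $\ulcorner F\urcorner$ from the projections plus the fiber-homomorphism data'' does not produce a finite \emph{set}; it produces a finite \emph{indexed family} $M_{i}\mapsto(\text{projections of }M_{i},\ \text{fiber map of }M_{i})$, and recovering $F$ from the unordered collection of pieces is exactly the problem. The paper resolves this by proving the two statements $I_{m}$ (code finite sets) and $II_{m}$ (code definable functions $f\colon F\to\mathcal{G}$) by \emph{simultaneous} induction on $m$; you cannot dispense with $II_{m}$ by coordinate-wise orthogonality, since the stabilizer sorts are not orthogonal to the home sort. Second, the ``linear parameter'' $b$ of a fiber homomorphism $h(x)=bx+N$ supplied by Corollary~\ref{homomorphism} is not canonical — only its class modulo $\operatorname{Col}(N\!:\!M)$ is (Claim~\ref{wd} inside Lemma~\ref{homreg}) — so it is not ``a finite set of parameters in $K$'' and Lemma~\ref{modulesok} does not code it. The paper's substitute is the whole germ apparatus: Corollary~\ref{torgen} replaces the parameter by $\germ(h_{Z},p_{\pi_{n}(Z)})$, Theorem~\ref{codinggerm} shows germs are codable, Proposition~\ref{nicebasis} constructs the canonical $\ulcorner M\urcorner$-definable type of basis matrices that makes Theorem~\ref{codinggerm} applicable, and Lemma~\ref{tipomaster} builds a symmetric ``master'' type over a primitive finite set of $1$-torsors so that the case of distinct projections can be handled. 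Your last paragraph names the difficulty (non-canonical choices, mixed residue/value-group content) but does not supply a mechanism. Third, the $N^{+}=\sum M_{i}$, $N^{-}=\bigcap M_{i}$ device is not sound as stated: $N^{-}$ need not be a direct summand of $N^{+}$ (e.g.\ $N^{+}=\mathcal{O}^{2}$, $N^{-}=\mathcal{M}\cdot(1,1)$, where $N^{+}/N^{-}$ has $\mathcal{O}$-torsion), so $N^{+}/N^{-}$ is a genuine $1$-definable quotient module, not a submodule of some $K^{m}$, and Corollary~\ref{basismodule} and Lemma~\ref{modulesok} do not apply to it. The paper avoids this entirely by working with primitivity and germs rather than the sum/intersection hull. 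Finally, the residue-field side of the ``tandem coding'' you allude to is provided in the paper by the flagged $k$-linear structure $VS_{k,C}$ of Theorem~\ref{EIinternalresidue}, via the embeddings of Lemmas~\ref{injectiveresidue} and \ref{tricktorsors}; without that, coding primitive finite sets of $1$-torsors or of residue-field elements (Lemma~\ref{codingeasy}) is not accessible.
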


 \begin{definition} We say that a multi-sorted first order theory $T$ \emph{codes finite sets}  if for every model $M \vDash T$, and every finite subset $S \subseteq M$, the code $\ulcorner S \urcorner$ is interdefinable with a tuple of elements in $M$.
\end{definition}
The following is a folklore fact (see for example \cite{Poizat}).
\begin{fact} \label{all} Let $T$ be a complete multi-sorted theory.  If $T$ has weak elimination of imaginaries and codes finite sets then $T$ eliminates imaginaries.
\end{fact}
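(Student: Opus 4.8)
The proof is the standard folklore argument that assembles a weak code into a genuine code using the coding of finite sets. I would work inside a monster model $\mathfrak{M} \models T$ together with its imaginary expansion $\mathfrak{M}^{eq}$, fix an arbitrary imaginary element $e$, and show that $e$ is interdefinable with a tuple from the home sorts of $T$; since $e$ is arbitrary, this is exactly elimination of imaginaries.

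First I would apply weak elimination of imaginaries to obtain a tuple $a$ from the home sorts with $e \in \dcl^{eq}(a)$ and $a \in \acl^{eq}(e)$, and fix an $\emptyset$-definable function $f$ with $f(a) = e$. Set $S := \{\sigma(a) : \sigma \in \mathrm{Aut}(\mathfrak{M}/e)\}$; this is finite because $a \in \acl^{eq}(e)$, say $S = \{a_1, \dots, a_n\}$ with $a_1 = a$, and every $a_i$ is a home-sort tuple of the same length and sort signature. The crux is that $e$ is interdefinable with the canonical code $\ulcorner S \urcorner$ of this finite set. On the one hand $\ulcorner S \urcorner \in \dcl^{eq}(e)$, since $S$ is by construction setwise fixed by every automorphism fixing $e$. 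On the other hand $e \in \dcl^{eq}(\ulcorner S \urcorner)$: first note that $e \in \dcl^{eq}(a_i)$ for every $i$, because if $\sigma \in \mathrm{Aut}(\mathfrak{M}/e)$ sends $a_1$ to $a_i$ then $e = \sigma(e) = \sigma(f(a_1)) = f(a_i)$; now any $\tau \in \mathrm{Aut}(\mathfrak{M}/\ulcorner S \urcorner)$ permutes $S$, so $\tau(a_1) = a_j$ for some $j$, and hence $\tau(e) = f(\tau(a_1)) = f(a_j) = e$.

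Finally I would invoke the hypothesis that $T$ codes finite sets, applied to the finite set $S$ of home-sort tuples: this yields a home-sort tuple $b$ interdefinable with $\ulcorner S \urcorner$, and therefore interdefinable with $e$, which completes the argument. There is essentially no obstacle in this proof; the only point deserving a word of care is the reading of ``codes finite sets'' for finite sets of $n$-tuples rather than of single elements, which is the intended meaning (and in any case reduces to the single-element case by regarding an $n$-tuple as an element of the appropriate product sort).
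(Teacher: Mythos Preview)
Your argument is correct and is precisely the standard folklore proof. The paper does not supply its own proof of this fact but simply cites Poizat, so there is nothing to compare against; your write-up is a clean rendering of the expected argument, including the appropriate remark about coding finite sets of tuples versus single elements.
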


In view of Theorem \ref{weakEI1} and Fact \ref{all} it is only left to show that any finite set can be coded in $\mathcal{G}$. 

 \begin{definition} 
 \begin{enumerate}
 \item An equivalence relation $E$ on a set $X$ is said to be \emph{proper} if it has at least two different equivalence classes. It is said to be \emph{trivial} if for any $x,y \in X$ we have $E(x,y)$ if and only if $x=y$. 
\item  A finite set $F$ is \emph{primitive over $A$} if there is no proper non-trivial $( \ulcorner F \urcorner \cup A)$-definable equivalence relation on $F$. If $F$ is primitive over $\emptyset$ we just say that it is \emph{primitive}. 
\end{enumerate} 
\end{definition} 
To code finite sets we need numerous smaller results. This section is organized as follows:
\begin{enumerate}
\item Subsection \ref{residual}: we analyze the stable and stably embedded multi-sorted structure $VS_{k,C}$, consisting of the $k$-vector spaces $\red(s)$, where $s$ is some $\mathcal{O}$-lattice definable over $C$, an arbitrary imaginary set of parameters. This structure has elimination of imaginaries by results of Hurshovski in \cite{Grupoids}.
\item Subsection \ref{germen}: we introduce the notion of germ of a definable function $f$ over a definable type $p$. We prove that germs can be coded in the stabilizer sorts. 
 \item Subsection \ref{lemas1}: later we show that the code of any $\mathcal{O}$-submodule $M \subseteq K^{n}$ is interdefinable with the code of its projection to the last coordinate and the germ of the function describing each of the fibers. We show that the same statement holds for torsors. 
 \item Subsection \ref{lemas2}: we prove several results on coding finite sets in the one-dimensional case, e.g. if $F$ is a primitive finite set of $1$-torsors then it can be coded in $\mathcal{G}$. 
 \item Subsection \ref{primitive}: we carry a simultaneous induction to prove that any finite set $F \subseteq \mathcal{G}^{r}$ can be coded in the stabilizer sorts, and any definable function $f: F \rightarrow \mathcal{G}$ admits a code in the stabilizer sorts. 
 \item Subsection \ref{todo}: We state the result on full elimination of imaginaries down to the stabilizer sorts. 
 \end{enumerate}
\subsection{The multi-sorted structure of $k$-vector spaces}\label{residual}

By Corollary \ref{stableembeddedness} the residue field $k$ is stably embedded and it is a strongly minimal structure, because it is an algebraically closed field. This enables us to construct, over any imaginary base set of parameters $C$, a part of the structure that naturally inherits stability-theoretic properties from the residue field. Given a $\mathcal{O}$-lattice $s \subseteq K^{n}$ we have $\red(s)=s/\mathcal{M}s$  is a $k$-vector space.
\begin{definition} For any imaginary set of parameters $C$, we let $VS_{k,C}$ be the many-sorted structure whose sorts are the $k$ vector spaces $\red(s)$ where $s\subseteq K^{n}$ is an $\mathcal{O}$-lattice of rank $n$ definable over $C$. Each sort $\red(s)$ is equipped with its $k$-vector space structure. In addition, $VS_{k,C}$ has any $C$-definable relation on products of the sorts. 
\end{definition}
\begin{definition} 
 A definable set $D$ is said to be \emph{internal to the residue field} if there is a finite set of parameters $F \subseteq \mathcal{G}$ such that $D \subseteq \dcl^{eq}(kF)$. 
\end{definition}
Each of the structures $\red(s)$ is internal to the residue field, and the parameters needed to witness the internality lie in $\red(s)$, so in particular each of the $k$-vector spaces $\red(s)$ is stably embedded. The entire multi-sorted structure $VS_{k,C}$ is also stably embedded and stable, and in this subsection we will prove that it eliminates imaginaries. 
\begin{notation} We recall that given an $\mathcal{O}$-submodule $M$ of $K$, we write $S_{M}$ to denote the end-segment induced by $M$, i.e.$\displaystyle{\{ v(x) \ | \ x \in M\}}$.
\end{notation}

We recall some definitions from \cite{Grupoids} to show that $VS_{k,C}$ eliminates imaginaries. 
\begin{definition} Let $t$ be a theory of fields (possibly with additional structure). A \emph{$t$-linear structure} $\mathcal{A}$ is a structure with a sort $k$ for a model of $t$, and addional sorts $(V_{i} \ | \ i \in I)$ denoting finite-dimensional vector spaces. Each $V_{i}$ has (at least) a $k$-vector space structure, and $dimV_{i}< \infty$. We assume that:
\begin{enumerate}
\item $k$ is stably embedded,
\item the induced structure on $k$ is precisely given by $t$,
\item The $V_{i}$ are closed under tensor products and duals. 
\end{enumerate}
Moreover, we say it is \emph{flagged} if for any finite dimensional vector space $V$ there is a flitration $V_{1} \subseteq V_{2} \subseteq \dots \subseteq V_{n}=V$ by subspaces, with $dimV_{i}=i$ and $V_{i}$ is one of the distinguished sorts. 
\end{definition}
The following is \cite[Lemma 5.2]{Grupoids}.
\begin{lemma}\label{Superudi} If $k$ is an algebraically closed field and $\mathcal{A}$ is a flagged $k$-linear structure, then $\mathcal{A}$ admits elimination of imaginaries.
\end{lemma}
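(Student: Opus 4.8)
The plan is to show directly that every definable set in $\mathcal{A}$ has a canonical parameter among the distinguished sorts; by Fact \ref{all} it would in fact suffice to establish weak elimination of imaginaries together with coding of finite sets, and both will follow from one structural statement. Throughout one exploits that $k$ is a \emph{pure} algebraically closed field: it is strongly minimal, stably embedded, its induced structure is exactly $t=\mathrm{ACF}$ (which already has elimination of imaginaries, $k^{eq}=\dcl(k)$), and hence $\mathcal{A}$ is stable, each sort $V_i$ is internal to $k$ via any basis, and the induced structure on any finite power $k^m$ is just the constructible one. The first task is therefore to pin down the structure of definable sets, and the rest is linear algebra plus $\mathrm{ACF}$.

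The main lemma to prove is a \emph{linear quantifier elimination}: every definable subset of a product $V_{i_1}\times\cdots\times V_{i_m}$ of distinguished sorts is a finite Boolean combination of sets of the form $L^{-1}(Y)$, where $L\colon V_{i_1}\times\cdots\times V_{i_m}\to k^{N}$ is a $k$-linear map which is itself a definable datum, i.e. a ``tensor'' lying in $\mathrm{Hom}(V_{i_1}\otimes\cdots\otimes V_{i_m},k^{N})$ — a distinguished sort by closure under tensor products and duals — and $Y\subseteq k^{N}$ is $\mathrm{ACF}$-constructible (linear equations among the vector variables being the special case $Y=\{0\}$). One proves this by induction on the multiset of sorts, peeling off one vector variable at a time: a definable family of subsets of a single sort $V$ parametrised by the remaining variables is, after coordinatising $V$ by a basis, an $\mathrm{ACF}$-constructible family, and the dependence on the coordinatisation is absorbed into the linear map $L$; flaggedness is what lets one always reduce to distinguished sorts, and closure under duals and tensors is what keeps the class of admissible $L$ closed under the operations forced by the induction. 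The upshot is that \emph{all} the genuinely new structure of $\mathcal{A}$ is confined to $k$-multilinear algebra over the field $k$.

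Granting the lemma, coding is quick. For a finite $F\subseteq V$ with $|F|=n$, pass to the dual sort $V^{*}=\mathrm{Hom}(V,k)$: for each $\lambda\in V^{*}$ the polynomial $\prod_{v\in F}(T-\lambda(v))\in k[T]$ has coefficients given up to sign by the elementary symmetric functions of $(\lambda(v))_{v\in F}$, and letting $\lambda$ vary these assemble into a $\ulcorner F\urcorner$-definable element of $\bigoplus_{j=1}^{n}\mathrm{Sym}^{j}(V)$, each $\mathrm{Sym}^{j}(V)$ being an interpretable finite-dimensional vector space, hence a distinguished sort by flaggedness; since $V^{*}$ separates points and $k$ is infinite, a generic $\lambda$ is injective on $F$, so $F$ is recovered from this datum and $\ulcorner F\urcorner$ is interdefinable with a real tuple (uniformly in parameters, this also codes definable families of finite sets). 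For a general definable $X$, the lemma presents $X$ by a tuple of tensorial maps $L_{1},\dots,L_{r}$ (real) and constructible sets $Y_{1},\dots,Y_{r}\subseteq k^{N}$, each with an $\mathrm{ACF}$-canonical parameter in $k^{eq}=\dcl(k)$; since the only ambiguity (the choice of coordinatisation) is absorbed by the $L_{i}$, the assignment $X\mapsto(L_{1},\ulcorner Y_{1}\urcorner,\dots)$ can be made $\ulcorner X\urcorner$-definable, so $\ulcorner X\urcorner$ is interdefinable with a real tuple. Applying this uniformly to the family $\{x/E\}$ of classes of a definable equivalence relation $E$, together with the coding of finite sets, yields full elimination of imaginaries (equivalently: one gets coding of definable types, density being automatic in the stable theory $\mathcal{A}$, hence weak elimination of imaginaries, and then Fact \ref{all}).

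The main obstacle is the first step: turning ``the vector sorts carry only $k$-multilinear structure'' into an actual quantifier-elimination argument. The delicate points are (i) organising the induction so that eliminating a vector quantifier never escapes the class of distinguished sorts — which is exactly why flaggedness and closure under tensor products and duals are hypotheses rather than conclusions — and (ii) positive characteristic, where $(V,+)$ may carry definable $\mathbb{F}_{p}$-linear, Frobenius-type subgroups that are not $k$-subspaces and must first be linearised by passing to Frobenius twists (again using closure under the tensor operations) before the $\mathrm{ACF}$-coding applies. Everything downstream of this structural result is routine linear algebra together with elimination of imaginaries in $\mathrm{ACF}$; the full argument is carried out in \cite[Lemma 5.2]{Grupoids}.
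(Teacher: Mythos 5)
The paper itself offers no proof of this lemma: its entire justification is the citation to \cite[Lemma 5.2]{Grupoids}, which is also how your proposal ends. So there is no in-paper argument to compare against, and your sketch has to stand on its own merits.

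The most concrete problem is your reading of ``flagged.'' You justify landing in $\bigoplus_{j}\mathrm{Sym}^{j}(V)$ by writing that each $\mathrm{Sym}^{j}(V)$ is ``an interpretable finite-dimensional vector space, hence a distinguished sort by flaggedness.'' That is not what the hypothesis says: flaggedness asks that each \emph{distinguished} sort $V$ admit a filtration $V_1\subseteq\cdots\subseteq V_n=V$ with every $V_i$ again distinguished; it is not the much stronger closure statement that every interpretable finite-dimensional $k$-vector space is among the sorts. This is visible in how the lemma is applied here: Remark~\ref{flagged} verifies flaggedness only for the sorts $\mathrm{red}(s)$ and makes no claim about arbitrary interpretable spaces. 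To make the symmetric-function code land in $\mathcal{A}$ you would have to realise $\mathrm{Sym}^{j}(V)$ inside the tensor/dual/flag closure --- e.g.\ via the polarisation map into $V^{\otimes j}$ --- which works in characteristic $0$ (the case relevant to this paper) but fails once $\mathrm{char}(k)\leq j$, the very issue you flag as delicate and then leave open. Separately, the ``linear quantifier elimination'' on which the rest of your argument rests is asserted with only a one-sentence gesture at an induction; that reduction is the actual content of Hrushovski's lemma, and the sketch does not establish it (in particular it is not clear that eliminating a quantifier over a $V_i$-variable keeps the data confined to tensorial linear maps into distinguished sorts rather than producing parameters in non-distinguished subquotients). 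As intuition for why the lemma should hold this is helpful, but it is not a replacement for the cited proof.
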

\begin{notation} Let $A$ be an $\mathcal{O}$-module. Let $\mathcal{M}A=\{ xa \ | \ x \in \mathcal{M}, \  a \in A\}$ we denote as $\red(A)$ the quotient $\mathcal{O}$-module $A/\mathcal{M} A$. 
\end{notation}
We observe that $\red(A)=A/\mathcal{M}A$ is canonically isomorphic to $A \otimes_{\mathcal{O}} k$. 

\begin{fact}\label{tensoriso} Let $A \subseteq K^{n}$ and $B \subseteq K^{m}$ be $\mathcal{O}$-lattices. Then $\red(A) \otimes_{k} \red(B)$ can be canonically identified with $\red(A \otimes_{\mathcal{O}} B)$. 
\end{fact}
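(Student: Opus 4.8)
The plan is to deduce this from the standard fact that base change commutes with tensor products. Writing $k$ as the $\mathcal{O}$-algebra $\mathcal{O}/\mathcal{M}$, for any $\mathcal{O}$-module $M$ one has the canonical identification $\red(M) = M/\mathcal{M}M = M \otimes_{\mathcal{O}} k$. Applying this to $A$, to $B$, and to $A \otimes_{\mathcal{O}} B$, the assertion becomes the natural isomorphism
\[
(A \otimes_{\mathcal{O}} k) \otimes_{k} (k \otimes_{\mathcal{O}} B) \;\cong\; (A \otimes_{\mathcal{O}} B) \otimes_{\mathcal{O}} k .
\]

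To establish this I would regroup factors using associativity and commutativity of the tensor product over the commutative ring $\mathcal{O}$ together with $k \otimes_{k} k \cong k$:
\[
(A \otimes_{\mathcal{O}} k) \otimes_{k} (k \otimes_{\mathcal{O}} B) \;\cong\; A \otimes_{\mathcal{O}} (k \otimes_{k} k) \otimes_{\mathcal{O}} B \;\cong\; A \otimes_{\mathcal{O}} k \otimes_{\mathcal{O}} B \;\cong\; (A \otimes_{\mathcal{O}} B) \otimes_{\mathcal{O}} k .
\]
Unwinding these canonical isomorphisms, the composite sends an elementary tensor $(a + \mathcal{M}A) \otimes (b + \mathcal{M}B)$ to $(a \otimes b) + \mathcal{M}(A \otimes_{\mathcal{O}} B)$. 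The only point requiring care is the routine verification that this assignment is well defined: it annihilates $\mathcal{M}A \otimes_{\mathcal{O}} B$ and $A \otimes_{\mathcal{O}} \mathcal{M}B$ since $\mathcal{M}(a \otimes b) = (\mathcal{M}a) \otimes b = a \otimes (\mathcal{M}b)$, and its evident inverse on elementary tensors is well defined for the same reason, so the two are mutually inverse $k$-linear bijections.

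Finally I would note that these maps are natural in $A$ and $B$ and are built only from the $\mathcal{O}$-module operations, so the identification is $\ulcorner A \urcorner \ulcorner B \urcorner$-definable, and uniformly so; this is the relevant form of canonicity here, and it is what permits us to regard $\red(A \otimes_{\mathcal{O}} B)$ as a sort of $VS_{k,C}$ identified with $\red(A) \otimes_{k} \red(B)$. There is no real obstacle: once the two base-change identities are granted, the argument is purely formal.
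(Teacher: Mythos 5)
The paper leaves this as ``a straightforward computation'' for the reader, so there is no written argument to compare against; your proof simply supplies the routine verification the author had in mind. The base-change route $\red(M) = M \otimes_{\mathcal{O}} k$ together with the mixed associativity $(M \otimes_{\mathcal{O}} k) \otimes_k N \cong M \otimes_{\mathcal{O}} N$ (for $N$ a $k$-module) is the standard way to see it, your explicit formula $(a+\mathcal{M}A)\otimes(b+\mathcal{M}B)\mapsto (a\otimes b)+\mathcal{M}(A\otimes_{\mathcal{O}}B)$ is correct, and the well-definedness check in both directions is exactly the content that needs recording. The closing remark about the identification being uniform in $\ulcorner A\urcorner, \ulcorner B\urcorner$ is also the right observation, since it is what the subsequent use in $VS_{k,C}$ actually requires.
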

\begin{proof}
This is a straightforward computation and it is left to the reader. 
\end{proof}
\begin{remark}\label{tensor}Given $A \subseteq K^{n}$ and $B \subseteq K^{m}$ $\mathcal{O}$-lattices, there is some $\mathcal{O}$-lattice $C \subseteq K^{mn}$ such that $A \otimes_{\mathcal{O}} B$ is canonically identified with $C$. This isomorphism induces as well a one to one correspondence between $\red(A \otimes_{\mathcal{O}} B)$ and $\red(C)$. 
\end{remark}
\begin{proof}
Given $K^{n}$ and $K^{m}$ two vector spaces, the tensor product $K^{n} \otimes K^{m}$ is a $K$ vector space whose basis is $\{ e_{i} \otimes e_{j} \ | \ i \leq n, j \leq m \}$ and it is canonically identified with $K^{nm}$, via a linear map $\phi$ that extends the bijection between the basis sending $e_{i} \otimes e_{j}$ to $ e_{ij}$. Given $A \subseteq K^{n}$ and $B \subseteq K^{m}$ $\mathcal{O}$-lattices, then $A \otimes_{\mathcal{O}} B$ is an $\mathcal{O}$-lattice of $K^{n} \otimes K^{m}$ and we denote as $C= \phi(A \otimes_{\mathcal{O}} B)$. This map induces as well an identification between $\red(A \otimes_{\mathcal{O}} B)$ and $\red(C)$ such that the following map commutes:
\begin{center}
\begin{tikzcd}
A \otimes_{\mathcal{O}} B\arrow[r, "\phi"] \arrow[d, "\red"]
& C \arrow[d, "\red" ] \\ \red(A \otimes_{\mathcal{O}}B) \arrow[r, "i " ]
&\red(C) \end{tikzcd}
\end{center}
\end{proof}
\begin{fact}\label{isores} Let $A \subseteq K^{n}$ and $B \subseteq K^{m}$ be $\mathcal{O}$-lattices. Then there is an isomorphism 
\begin{equation*}
\phi: \red(Hom_{\mathcal{O}}(A,B))\rightarrow Hom_{k}(\red(A), \red(B)),
\end{equation*}
where for any $f \in Hom_{\mathcal{O}}(A,B)$ and $a \in A$:
\begin{center}
$  \phi(f+ \mathcal{M} Hom_{\mathcal{O}}(A,B)):\begin{cases}
\red(A)&\rightarrow \red(B)\\
  a+\mathcal{M} A &\mapsto f(a)+\mathcal{M}B.
 \end{cases}$
 \end{center}
\end{fact}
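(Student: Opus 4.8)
The plan is to reduce the claim to the standard commutative-algebra fact that, for finitely generated free modules, the formation of $Hom$ commutes with the base change $-\otimes_{\mathcal{O}}k=\red(-)$. First I would note that, being an $\mathcal{O}$-lattice, $A$ is a free $\mathcal{O}$-module of finite rank, say $A\cong\mathcal{O}^{n}$ (and similarly $B\cong\mathcal{O}^{m}$); this is immediate from the definition of an $\mathcal{O}$-lattice (a module of type $(\mathcal{O},\dots,\mathcal{O})$), or from Corollary \ref{basismodule}. Hence $Hom_{\mathcal{O}}(A,B)\cong\mathcal{O}^{nm}$ is again an $\mathcal{O}$-lattice, so $\red(Hom_{\mathcal{O}}(A,B))$ is meaningful and is a $k$-vector space of dimension $nm$; and $Hom_{k}(\red(A),\red(B))$ is likewise a $k$-vector space of dimension $nm$.

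Next I would check that the displayed map $\phi$ is well defined and $k$-linear. Given $f\in Hom_{\mathcal{O}}(A,B)$, $\mathcal{O}$-linearity gives $f(\mathcal{M}A)\subseteq\mathcal{M}B$, so the rule $a+\mathcal{M}A\mapsto f(a)+\mathcal{M}B$ defines an additive map $\red(A)\to\red(B)$, which is $k$-linear because the residue-field scalars act through $\mathcal{O}$. The assignment $f\mapsto(a+\mathcal{M}A\mapsto f(a)+\mathcal{M}B)$ is thus an $\mathcal{O}$-linear map $Hom_{\mathcal{O}}(A,B)\to Hom_{k}(\red(A),\red(B))$, and it kills $\mathcal{M}\,Hom_{\mathcal{O}}(A,B)$ (if $f=\sum_{i}r_{i}g_{i}$ with $r_{i}\in\mathcal{M}$ then each $r_{i}g_{i}(a)\in\mathcal{M}B$), so it factors through $\red(Hom_{\mathcal{O}}(A,B))$, giving $\phi$.

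For injectivity: if $\phi(f+\mathcal{M}\,Hom_{\mathcal{O}}(A,B))=0$ then $f(A)\subseteq\mathcal{M}B$, i.e. $f\in Hom_{\mathcal{O}}(A,\mathcal{M}B)$. Using $A\cong\mathcal{O}^{n}$ we get $Hom_{\mathcal{O}}(A,\mathcal{M}B)\cong(\mathcal{M}B)^{n}=\mathcal{M}(B^{n})\cong\mathcal{M}\,Hom_{\mathcal{O}}(A,B)$ as submodules of $Hom_{\mathcal{O}}(A,B)\cong B^{n}$, where the middle equality is the elementary identity $(\mathcal{M}C)^{n}=\mathcal{M}(C^{n})$, valid for any $\mathcal{O}$-module $C$ and any finite $n$. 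Hence $f\in\mathcal{M}\,Hom_{\mathcal{O}}(A,B)$, so $\phi$ is injective; comparing the two $k$-dimensions $nm$ then forces $\phi$ to be an isomorphism. (Alternatively one checks surjectivity by hand: writing $\bar g\colon\red(A)\to\red(B)$ as a matrix over $k$ in the reduced bases and lifting its entries to $\mathcal{O}$ produces $g\in Hom_{\mathcal{O}}(A,B)$ with $\phi(g+\mathcal{M}\,Hom_{\mathcal{O}}(A,B))=\bar g$.)

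The argument is entirely routine; the only point deserving a moment's care, and hence the only \emph{obstacle} worth flagging, is that $\mathcal{M}$ need not be finitely generated, so one should deduce $Hom_{\mathcal{O}}(A,\mathcal{M}B)=\mathcal{M}\,Hom_{\mathcal{O}}(A,B)$ from the finite freeness of $A$ (via the identity $(\mathcal{M}C)^{n}=\mathcal{M}(C^{n})$, which holds for arbitrary $C$) rather than from any exactness property of $Hom$ in the second variable.
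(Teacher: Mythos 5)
Your proof is correct. The paper offers no argument here, dismissing the fact as ``a straightforward computation... left to the reader,'' so there is no ``official'' route to compare against; your write-up simply supplies the details. Your reduction via free bases is the natural one, and the one point you flag as deserving care is indeed the only one: since $\mathcal{M}$ need not be finitely generated one cannot invoke a right-exactness or finite-presentation argument for $\operatorname{Hom}$, and the identity $\operatorname{Hom}_{\mathcal{O}}(A,\mathcal{M}B)=\mathcal{M}\operatorname{Hom}_{\mathcal{O}}(A,B)$ must be extracted, as you do, from the elementary observation $(\mathcal{M}C)^{n}=\mathcal{M}(C^{n})$ after identifying $\operatorname{Hom}_{\mathcal{O}}(A,B)\cong B^{n}$. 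Both of your conclusions to the argument (the dimension count $nm=nm$, or the explicit matrix lift) are fine.
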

\begin{proof}
This is a straightforward computation and it is left to the reader. 
\end{proof}
\begin{remark}\label{dual} Given an $\mathcal{O}$-lattice $A \subseteq K^{n}$, then $Hom_{\mathcal{O}}(A, \mathcal{O})$ can be canonically identified with some $\mathcal{O}$-module $C$ of $K^{n}$. So there is a correspondence between $\red(Hom_{\mathcal{O}}(A,\mathcal{O}))$ and $\red(C)$.
\end{remark}
\begin{proof} Let $A$ be an $\mathcal{O}$-lattice of $K^{n}$. By linear algebra $K^{n}$ can be identified with its dual space $(K^{n})^{*}$. Let
\begin{align*}
A^{*}=\{ T \in (K^{n})^{*} \ | \ \text{for all } \ a \in A, \ T(a) \in \mathcal{O} \}.
\end{align*}
$A^{*}$ is canonically identified with $Hom_{\mathcal{O}}(A, \mathcal{O})$ via the map that sends a transformation $T$ to $T\upharpoonright_{A}$. Also $A^{*}$ is isomorphic to some $\mathcal{O}$-lattice $C$ of $K^{n}$, as there is a canonical isomorphism between $K^{n}$ and its dual space. So we have a definable $\mathcal{O}$-isomorphism $\phi$ between $Hom_{\mathcal{O}}(A, \mathcal{O})$ and $C$, and
this correspondence induces an identification $\hat{\phi}$ between $\red(Hom_{\mathcal{O}}(A,O))$ and $\red(C)$ making the following diagram commute:
\begin{center}
\begin{tikzcd}
Hom_{\mathcal{O}}(A,\mathcal{O})\arrow[r, "\phi"] \arrow[d, "\red"]
& C \arrow[d, "\red" ] \\ \red(Hom_{\mathcal{O}}(A,\mathcal{O})) \arrow[r, "\bar{\phi} " ]
&\red(C) \end{tikzcd}
\end{center}
\end{proof}

\begin{remark}\label{flagged} Let $A \subseteq K^{n}$ be an $\mathcal{O}$-lattice. There is a sequence of $\mathcal{O}$-lattices  $<A_{i} \ | \ i \leq n>$ such that $<\red(A_{i}) \ | \ i \leq n>$ is a flag of $\red(A)$ and for each $i \leq n$, $\ \ulcorner A_{i} \urcorner \in \dcl^{eq}(\ulcorner A \urcorner).$
\end{remark}
\begin{proof}
We proceed by induction on $n$, the base case is trivial. Let $A \subseteq K^{n+1}$, and $\pi_{n+1}: K^{n+1} \rightarrow K$ be the projection into the last coordinate. Let $B\subseteq K^{n}$ be the $\mathcal{O}$-lattice such that 
\begin{equation*}
\ker(\pi_{n+1})= B \times \{0\}= A \cap (K^{n} \times \{0\}).  
\end{equation*}
We observe that $\ulcorner B \urcorner, \ulcorner \pi_{n+1}(A) \urcorner \in \dcl^{eq}(\ulcorner A \urcorner)$. By Corollary \ref{basismodule} $B$ is a direct summand of $A$, so we have the exact splitting sequence
\begin{equation*}
0 \rightarrow B \rightarrow A \rightarrow \pi_{n+1}(A) \rightarrow 0.
\end{equation*}
Consequently, 
\begin{align*}
0 &\rightarrow \mathcal{M} B \rightarrow \mathcal{M} A \rightarrow \mathcal{M}  \pi_{n+1}(A) \rightarrow 0 \ \text{and}\\
0 &\rightarrow \red(B) \rightarrow \red(A) \rightarrow \red( \pi_{n+1}(A) )\rightarrow 0
\end{align*}
are exact sequences that split. By the induction hypothesis, there is a sequence $\{0\} \leq A_{1} \leq \dots \leq A_{n}=B$ such that $<\red(A_{i}) \ | \ i \leq n>$ is a flag of $\red(B)$, $dim(\red(A_{i}))=i$ and $\ulcorner A_{i} \urcorner \in \dcl^{eq}(\ulcorner B\urcorner) \subseteq \dcl^{eq}(\ulcorner A\urcorner)$. Let $A_{n+1}= A$, the sequence $<A_{i} \ | \ i \leq n+1>$ satisfies the required conditions. 
\end{proof}

\begin{theorem}\label{EIinternalresidue}Let $C \subseteq K^{eq}$, then $VS_{k,C}$ has elimination of imaginaries. 
\end{theorem}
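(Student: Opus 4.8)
The plan is to exhibit $VS_{k,C}$ as a \emph{flagged $k$-linear structure} in the sense of the definition preceding Lemma \ref{Superudi}, with $t=\mathrm{Th}(k)$ the theory of the (algebraically closed, equicharacteristic zero) residue field in $\mathcal{L}_{fields}$, and then invoke that lemma directly. The distinguished sorts of $VS_{k,C}$ are the $k$-vector spaces $\red(s)$ for $s\subseteq K^{n}$ an $\mathcal{O}$-lattice of rank $n$ with $\ulcorner s\urcorner\in\dcl^{eq}(C)$; the base field sort is $k=\red(\mathcal{O})$, arising from the $\emptyset$-definable rank-one lattice $\mathcal{O}\subseteq K$. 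Each $\red(s)$ is a finite-dimensional $k$-vector space carrying its $k$-vector space structure by definition, so it remains to check the four conditions: stable embeddedness of $k$, that the structure induced on $k$ is exactly $t$, closure of the sorts under tensor products and duals, and the flag condition.

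First, by Corollary \ref{stableembeddedness} the residue field $k$ is purely stably embedded in the valued field, hence in $K^{eq}$; in particular $k$ is stably embedded inside $VS_{k,C}$. Moreover, pure stable embeddedness means that every relation on $k^{m}$ definable with parameters from $K^{eq}$ — in particular every $C$-definable relation, which is all that is added on the sort $k$ in $VS_{k,C}$ — is already definable in the pure field language with parameters from $k$. Hence the structure induced on $k$ by $VS_{k,C}$ is precisely that of the field $k$, i.e. a model of $t$; this verifies conditions (1) and (2).

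Next, closure under tensor products: if $\red(s)$ and $\red(r)$ are distinguished sorts, then by Fact \ref{tensoriso} the space $\red(s)\otimes_{k}\red(r)$ is canonically identified with $\red(s\otimes_{\mathcal{O}}r)$, and by Remark \ref{tensor} the lattice $s\otimes_{\mathcal{O}}r$ is canonically identified with an $\mathcal{O}$-lattice $u\subseteq K^{mn}$; since $\ulcorner s\urcorner,\ulcorner r\urcorner\in\dcl^{eq}(C)$ we get $\ulcorner u\urcorner\in\dcl^{eq}(C)$, so $\red(u)$ is again a distinguished sort and all the identifications are $C$-definable. For duals, apply Fact \ref{isores} with the second lattice equal to $\mathcal{O}$, so that $\red(\mathcal{O})=k$: this gives a canonical isomorphism between the $k$-linear dual $Hom_{k}(\red(s),k)$ and $\red(Hom_{\mathcal{O}}(s,\mathcal{O}))$, and by Remark \ref{dual} the lattice $Hom_{\mathcal{O}}(s,\mathcal{O})$ is canonically identified with a $C$-definable $\mathcal{O}$-lattice of $K^{n}$; hence the dual of a distinguished sort is again a distinguished sort. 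Finally, the flag condition is exactly Remark \ref{flagged}: given a distinguished sort $\red(s)$ with $s\subseteq K^{n}$, there is a chain $\{0\}\le A_{1}\le\dots\le A_{n}=s$ of $\mathcal{O}$-lattices with $\dim_{k}\red(A_{i})=i$ and $\ulcorner A_{i}\urcorner\in\dcl^{eq}(\ulcorner s\urcorner)\subseteq\dcl^{eq}(C)$, so that $\red(A_{1})\subseteq\dots\subseteq\red(A_{n})=\red(s)$ is a filtration of $\red(s)$ by distinguished sorts of the right dimensions.

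Thus $VS_{k,C}$ is a flagged $k$-linear structure over the algebraically closed field $k$, and Lemma \ref{Superudi} yields elimination of imaginaries. The only genuinely delicate point is the verification of condition (2): one must be sure that enriching with \emph{all} $C$-definable relations, with $C$ ranging over arbitrary imaginaries of $K^{eq}$, does not secretly induce new structure on the sort $k$; this is precisely what the pure stable embeddedness statement of Corollary \ref{stableembeddedness} rules out, and the remainder of the argument is bookkeeping with the canonical identifications of Facts \ref{tensoriso}, \ref{isores} and Remarks \ref{tensor}, \ref{dual}, \ref{flagged}.
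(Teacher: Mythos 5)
Your proof is correct and follows essentially the same route as the paper: exhibit $VS_{k,C}$ as a flagged $k$-linear structure via Fact \ref{tensoriso}, Remark \ref{tensor}, Fact \ref{isores}, Remark \ref{dual}, and Remark \ref{flagged}, then invoke Lemma \ref{Superudi}. The only difference is that you spell out the verification of conditions (1) and (2) via Corollary \ref{stableembeddedness}, which the paper leaves implicit in the remarks preceding the theorem.
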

\begin{proof}
The sorts $\red(s)$ where $s$ is a $\mathcal{O}$-lattice of $K^{n}$ and $\dcl^{eq}(C)$-definable form the multi-sorted structure $\VS_{k,C}$. Each $\red(s)$ carries a $k$-vector space structure. $VS_{k,C}$ is closed under tensor operation by Remark \ref{tensor} and Fact \ref{tensoriso}. It is closed under duals by Remark \ref{dual} and Fact \ref{isores}. By Remark \ref{flagged} each sort $\red(s)$ where $s$ is an $\mathcal{O}$-lattice admits a complete filtration by $C$-definable vector spaces. Therefore, $\VS_{k,C}$ is a  flagged $k$-linear structure, so the statement is immediate consequence of \ref{Superudi}.
\end{proof}

 \subsection{Germs of functions}\label{germen}

 In this subsection we show how to code the germ of a definable function $f$ over a definable type $p(\mathbf{x})$ in the stabilizer sorts. 
\begin{definition}
Let $T$ be a complete first order theory and $M \vDash T$. Let $B \subseteq M$ and $p$ be 
 a $B$-definable type whose solution set is $P$.
 Let $f$ be an $M$-definable function whose domain contains $P$. 
 Suppose that $f=f_{c}$ is defined by the formula $\phi(x,y,c)$
  (so $f_{c}(x)=y$). We say that $f_{c}$ and $f_{c'}$ \emph{have the same germ on $P$} 
  if the formula $f_{c}(x)=f_{c'}(x)$ lies in $p$. By the definability of $p$ 
  the equivalence relation $E_{\phi}(c, c')$ that states $f_{c}$ and $f_{c'}$ 
  have the same germ on $P$ is definable over $B$.The \emph{germ of $f_{c}$ on $P$} is defined to be the class of $c$ 
under the equivalence relation $E_{\phi}(y,z)$, which is an element in
 $M^{eq}$. We write $germ (f, p)$ to denote the code
  for this equivalence class. 
\end{definition}
\begin{definition} Let $p$ be a global type definable over $B$ and let $C$ a set of parameters. We say that a realization $a$ of $p$ is sufficiently generic over $BC$ if $a \vDash p\upharpoonright_{BC}$. 
\end{definition}

We start proving some results that will be required to show how to code the germs of a definable function $f$ over a definable type $p$ in the stabilizer sorts.\\
Let $U \subseteq K$ be a $1$-torsor, we recall Definition \ref{generico}, where we defined the $\ulcorner U \urcorner$-definable partial type.
\begin{align*}
\Sigma_{U}^{gen}(x) = \{ x\in U \} \cup \{ x \notin B \ | \ B \subsetneq U \ \text{is a proper subtorsor of $U$}\}.
\end{align*}
We refer to this type as \emph{the generic type of $U$}.\\
When considering complete extensions of $\Sigma_{U}^{gen}(x)$ one finds an important distinction between the closed and the open case. In Proposition \ref{uniqueclosed} we proved that whenever $U$ is a closed $1$-torsor then $\Sigma_{U}^{gen}(x)$ admits a unique complete extension. The open case inherits a higher level of complexity.
\begin{proposition}\label{open} Let $U$ be an open $1$-torsor, then any completion of the generic type of $U$ is $\ulcorner U \urcorner$-definable. 
\end{proposition}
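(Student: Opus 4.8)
The plan is to mimic the structure of the proof of Proposition \ref{uniqueclosed}, but now accounting for the fact that $\Sigma_U^{gen}(x)$ need not be complete. So fix an open $1$-torsor $U$ and let $q(x)$ be a complete global type extending $\Sigma_U^{gen}(x)$; I must show $q$ is $\ulcorner U \urcorner$-definable. By quantifier elimination (Corollary \ref{QEregular}), $q$ is determined by its restriction to the quantifier-free formulas, which, after the reduction to $1$-variable formulas, amounts to deciding: (i) formulas $v_\Delta(x-a) \square v_\Delta(x-a')$ and more generally comparisons of $v_\Delta$ of linear terms; (ii) coset formulas $v_\Delta(x-a) = \rho_\Delta(\beta)$; (iii) congruence formulas $v_\Delta(x-a) - \rho_\Delta(\beta) + k^\Delta \in n(\Gamma/\Delta)$; here $a,a'$ range over $\mathfrak{M}$, $\Delta \in RJ(\Gamma)$, $\beta \in \Gamma$. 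The first reduction, exactly as in Proposition \ref{uniqueclosed} and the density proof, is the observation that for a generic realization $c \models \Sigma_U^{gen}(x)$ and any $a \neq a'$ in $U(\mathfrak{M})$ one has $v(c-a) = v(c-a')$, and for $a \notin U$, $v(c-a) = v(b-a)$ for every $b \in U$. So it suffices to treat $a \in U(\mathfrak{M})$ and replace $v(c-a)$ by a single "generic radius", together with the already-$\ulcorner U\urcorner$-definable analysis of the $a\notin U$ case (which is literally copied from Proposition \ref{uniqueclosed}, via the $\exists b \in U(\cdots)$ formulas).

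The key new point, and the reason the open case is genuinely different, is that for $a \in U$ the set $Y_a = \{ v(x-a) \mid x \in U\}$ is an end-segment of $\Gamma$ \emph{without} a minimum: it is the end-segment $S_I$ associated to the module $I \in \mathcal F$ of which $U$ is a translate, and $S_I$ is $\ulcorner U \urcorner$-definable (indeed $\ulcorner I \urcorner \in \dcl^{eq}(\ulcorner U \urcorner)$). So a generic realization $c$ satisfies $v(c-a) \in S_I$ and $v(c-a) \notin B$ for every proper definable end-segment $B \subsetneq S_I$; in other words $v(c-a)$ realizes the generic type $\Sigma_{S_I}^{gen}$ of the end-segment $S_I$ in the sense of the value-group section. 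This is where I invoke Fact \ref{genericodp}: since $\Gamma$ is dp-minimal, \emph{any} complete type extending $\Sigma_{S_I}^{gen}(y)$ is $\ulcorner S_I\urcorner$-definable. Apply this to $y = v(c-a)$: the type that $v(c-a)$ realizes over $\mathfrak{M}$ (equivalently, over the value group, by stable embeddedness, Corollary \ref{stableembeddedness}) extends $\Sigma_{S_I}^{gen}$ and is therefore $\ulcorner S_I\urcorner$-definable, hence $\ulcorner U\urcorner$-definable. Consequently each of the formulas of kinds (i)–(iii) with $a \in U$ is decided by $q$ according to a definition scheme with parameters in $\dcl^{eq}(\ulcorner U\urcorner)$: for (i) the comparison $v_\Delta(x-a) \square \gamma$ reduces to the $\ulcorner S_I\urcorner$-definable end-segment/divisibility data for $S_I$ pushed forward to $\Gamma/\Delta$; for (ii) and (iii), to the congruence content of the $\ulcorner S_I\urcorner$-definable completion of $\Sigma_{S_I}^{gen}$ in $\Gamma/(\Delta + n\Gamma)$, exactly as spelled out in the proof of Fact \ref{genericodp}. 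Since this scheme does not depend on the particular choice of $c$, and involves only $\ulcorner U\urcorner$ as a parameter, $q$ is $\ulcorner U\urcorner$-definable.

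Concretely the steps in order are: (1) reduce $q$ to quantifier-free $1$-variable data via Corollary \ref{QEregular}; (2) split into $a \in U$ and $a \notin U$, handling $a \notin U$ by copying the $\epsilon^k_\Delta, \eta^k_\Delta$ argument from Proposition \ref{uniqueclosed}; (3) for $a \in U$, identify $Y_a$ with the $\ulcorner U\urcorner$-definable end-segment $S_I$ and observe $v(c-a) \models \Sigma_{S_I}^{gen}$; (4) apply Fact \ref{genericodp} to conclude the induced value-group type is $\ulcorner S_I\urcorner$-definable; (5) transport this back through $\rho_\Delta$ and $\pi^n_\Delta$ to obtain $\ulcorner U\urcorner$-definable definitions for the comparison, coset, and congruence formulas, and note independence from $c$. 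The main obstacle is Step (4)/(5): unlike the closed case there is no single canonical radius $\gamma \in \dcl^{eq}(\ulcorner U\urcorner)$, so one cannot read off the congruence class of $v(c-a)$ directly; one must genuinely use dp-minimality of $\Gamma$ (via Fact \ref{genericodp}, hence ultimately the absence of singular primes, Proposition \ref{dpmin}) to know that the generic type of the end-segment $S_I$ has a $\ulcorner S_I\urcorner$-definable completion — this is precisely the place where the open case needs more than the bounded-regular-rank hypothesis and where the dp-minimality assumption is doing essential work.
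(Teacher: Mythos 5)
Your proof is correct, but it takes a slightly different route from the paper. The paper's proof of Proposition \ref{open} does not invoke Fact \ref{genericodp} at all: after the same split into $a \in U(\mathfrak{M})$ and $a \notin U(\mathfrak{M})$, it handles the congruence formulas for $a \in U$ by directly observing that $\mu = \pi_{\Delta}^{\ell}(v(c-a)+\delta)$ lies in $\dcl^{eq}(\emptyset)$ (because $\Gamma/(\Delta + \ell\Gamma)$ is the image, under the $\emptyset$-definable surjection of Remark \ref{suff}, of the finite group $\Gamma/\ell\Gamma$ whose elements are named by constants in $\mathcal{L}_{dp}$), and it handles the coset formulas by a case analysis on whether $S_{\Delta}=\rho_{\Delta}(S)$ has a minimum. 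You instead push everything into the value group: you note that $v(c-a)$ realizes $\Sigma_{S}^{gen}$ for the $\ulcorner U \urcorner$-definable end-segment $S=\{v(x-a) \mid x \in U\}$, apply Fact \ref{genericodp} to conclude $\ulcorner S \urcorner$-definability of the induced value-group type, and transport back via stable embeddedness. This is the same underlying computation — the proof of Fact \ref{genericodp} uses exactly the same $\dcl^{eq}(\emptyset)$ device — but factored more modularly, which has the advantage of isolating precisely where dp-minimality (finiteness of $\Gamma/\ell\Gamma$, via Proposition \ref{dpmin}) does its work. For completeness you should record explicitly the two uniformities that make the reduction legitimate: $S=\{v(x-a) \mid x \in U\}$ is independent of the choice of $a \in U(\mathfrak{M})$, giving $\ulcorner S \urcorner \in \dcl^{eq}(\ulcorner U \urcorner)$, and $v(c-a)$ is the \emph{same} element of $\Gamma$ for every $a \in U(\mathfrak{M})$, so a single value-group type controls all the $a\in U$ formulas simultaneously.
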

\begin{proof}
Let $c \vDash \Sigma_{U}^{gen}(x)$, it is sufficient to prove that $p(x)=\tp(c/\mathfrak{M})$ is $\ulcorner U \urcorner$-definable.  Let $\Delta \in RJ(\Gamma)$, $\ell \in \mathbb{N}$, $\beta \in \Gamma$, $k \in \mathbb{Z}$. First, we observe that for any $a,a' \in U(\mathfrak{M})$ we have that $v(c-a)=v(c-a')$, because $c$ realizes the generic type of $U$. In particular, 
\begin{equation*}
    v_{\Delta}(x-a)- \rho_{\Delta}(\beta)+k^{\Delta} \in \ell (\Gamma/\Delta) \in p(x) \ \text{if and only if} v_{\Delta}(x-a')- \rho_{\Delta}(\beta)+k^{\Delta} \in \ell (\Gamma/\Delta) \in p(x).
\end{equation*}
Pick some element $\delta \in \Gamma$ such that $\rho_{\Delta}(\delta)=k^{\Delta}$, and let $\mu=\pi_{\Delta}^{\ell}(v(c-a)+\delta) \in \dcl^{eq}(\emptyset)$. Then, for any $a \in U(\mathfrak{M})$ we have:
\begin{equation*}
    v_{\Delta}(x-a)- \rho_{\Delta}(\beta)+k^{\Delta} \in \ell (\Gamma/\Delta) \in p(x) \ \text{if and only if} \ \pi_{\Delta}^{\ell}(\beta)=\mu.
\end{equation*}
If $a \notin U(\mathfrak{M})$, then $v(c-a)=v(b-a)$ for any $b \in U(\mathfrak{M})$. Hence:
\begin{equation*}
    v_{\Delta}(x-a)- \rho_{\Delta}(\beta)+k^{\Delta} \in \ell (\Gamma/\Delta) \in p(x) \ \text{if and only if} \ \underbrace{\exists b \in U \big( v_{\Delta}(b-a)- \rho_{\Delta}(\beta)+k^{\Delta} \in \ell (\Gamma/\Delta)\big)}_{\phi(a,\beta)}.
\end{equation*}
Let
\begin{equation*}
\psi(a,\beta):= \big(a \in U \wedge \pi_{\Delta}^{\ell}(\beta)=\mu\big) \vee \big( a \notin U \wedge \phi(a,\beta)).
\end{equation*}
Hence,
\begin{equation*}
   v_{\Delta}(x-a)- \rho_{\Delta}(\beta)+k^{\Delta} \in \ell (\Gamma/\Delta) \in p(x) \ \text{if and only if} \ \psi(a,\beta).  
\end{equation*}
Note that $\ulcorner \psi(x,z)\urcorner \in \dcl^{eq}(\ulcorner U \urcorner)$.\\
We continue showing a $\dcl^{eq}(\ulcorner U \urcorner)$-definable scheme for the coset formulas. Let $a \in U(\mathfrak{M})$ and consider the definable end-segment $S_{a}=\{ v(x-a) \ | \ x \in U\}$. For any $a \neq a'$, $S_{a}=S_{a'}$, thus we write $S$ to denote this set. Note that $\ulcorner S \urcorner \in \dcl^{eq}(\ulcorner U\urcorner)$ and let $\Delta \in RJ(\Gamma)$. We recall that we denote by $S_{\Delta}$ the set $\rho_{\Delta}(S)$, which is a definable end-segment in $\Gamma/\Delta$. If $S_{\Delta}$ has a minimum element $\gamma \in \dcl^{eq}(S_{_{\Delta}})\subseteq \dcl^{eq}(\ulcorner S \urcorner)$ then for any $a \in U(\mathfrak{M})$ we have:
\begin{equation*}
    v_{\Delta}(x-a)=\rho_{\Delta}(\beta)+k^{\Delta} \in p(x) \ \text{if and only if} 
   \ \rho_{\Delta}(\beta)+k^{\Delta}=\gamma.
\end{equation*}
If $S_{\Delta}$ does not have a minimum, then for any $a \in U(\mathfrak{M})$,
\begin{equation*}
    v_{\Delta}(x-a)=\rho_{\Delta}(\beta)+k^{\Delta} \in p(x) \ \text{if and only if} 
    \ \beta \neq \beta.
\end{equation*}
Finally, for $a \notin U(\mathfrak{M})$ we have that $v(c-a)=v(b-a)$ for any $b \in U(\mathfrak{M})$, therefore for $\Delta \in RJ(\Gamma)$ and $k \in \mathbb{Z}$ we have that:
\begin{equation*}
  v_{\Delta}(x-a)=\rho_{\Delta}(\beta)+k^{\Delta} \in p(x) \ \text{if and only if} \ \exists b \in U \big( v_{\Delta}(b-a)=\rho_{\Delta}(\beta)+k^{\Delta}\big).
\end{equation*}
Consequently, for each quantifier free formula $\phi(x,y)$, we have shown the existence of a formula $d\phi(y)$ such that $\ulcorner d\phi(y) \urcorner \in \dcl^{eq}(\ulcorner U \urcorner)$ and $\phi(x,b) \in p(x)$ if and only if $\vDash d\phi(b)$. By quantifier elimination (see Corollary \ref{QEregular}), the type $p(x)$ is completely determined by the quantifer free formulas, we conclude that $p(x)$ is $\ulcorner U \urcorner$-definable. 
\end{proof}
\begin{corollary}\label{alldef} Let $U$ be a definable $1$-torsor, then each completion $p(x)$ of $\Sigma_{U}^{gen}(x)$ is $\ulcorner U \urcorner$-definable.
\end{corollary}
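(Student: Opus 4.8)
The plan is to derive the statement by splitting into the two cases that have already been settled: $U$ closed, handled by Proposition \ref{uniqueclosed}, and $U$ open, handled by Proposition \ref{open}. The only thing that needs to be checked is that these two cases exhaust all definable $1$-torsors. A $1$-torsor is by definition a coset $a+M$ of a definable $\mathcal{O}$-submodule $M\subseteq K$; by Fact \ref{1dimmod} together with the completeness of the family $\mathcal{F}$ (Fact \ref{completemodules}), $M$ is $\{0\}$, a scaling of $\mathcal{O}$, a scaling of $K$, or a scaling of a proper nonzero $I\in\mathcal{F}$. In the degenerate case $M=\{0\}$ the torsor is a singleton $\{a\}$, $\Sigma_{\{a\}}^{gen}(x)=\{x=a\}$, and its unique completion is trivially $a$-definable; otherwise $U$ is a closed $1$-torsor when $M$ scales $\mathcal{O}$, and an open $1$-torsor when $M$ scales $K$ or a proper nonzero $I\in\mathcal{F}$.

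With this dichotomy the corollary is immediate. If $U$ is closed, Proposition \ref{uniqueclosed} gives that $\Sigma_U^{gen}(x)$ has a \emph{unique} completion and that this completion is $\ulcorner U\urcorner$-definable, so a fortiori every completion of $\Sigma_U^{gen}(x)$ is $\ulcorner U\urcorner$-definable. If $U$ is open, Proposition \ref{open} states verbatim that every completion of $\Sigma_U^{gen}(x)$ is $\ulcorner U\urcorner$-definable. Combining the cases proves the claim.

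Thus the argument is pure bookkeeping and presents no genuine obstacle; the one step worth spelling out is the exhaustiveness of the closed/open classification of $1$-torsors, which is the computation sketched in the first paragraph and ultimately rests on $\mathcal{F}$ being a complete family of $\mathcal{O}$-submodules of $K$.
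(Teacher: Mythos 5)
Your proof is correct and follows exactly the paper's argument, which simply cites Propositions \ref{uniqueclosed} and \ref{open} and combines them. The only added content in your write-up is the explicit verification that the closed/open dichotomy (plus the degenerate singleton case) exhausts all definable $1$-torsors, which the paper leaves implicit but which follows directly from the definitions and the completeness of $\mathcal{F}$.
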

\begin{proof}
This follows immediately by combining Proposition \ref{uniqueclosed} and Proposition \ref{open}.
\end{proof}
\begin{proposition}\label{stabadd} Let $M\subseteq \mathfrak{M}$ be a definable $\mathcal{O}$-module and let $p(x)$ be a global type containing the generic type of $M$. Then $p(x)$ is stabilized additively by $M(\mathfrak{M})$ i.e. if $c$ is a realization of $p(x)$ and $a \in M(\mathfrak{M})$ then $a+c$ is a realization of $p(x)$. 
\end{proposition}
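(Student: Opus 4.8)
The plan is to fix a realization $c \models p$ in an elementary extension $\mathfrak{N} \succeq \mathfrak{M}$ and an element $a \in M(\mathfrak{M})$, and to prove directly that $\tp(a+c/\mathfrak{M}) = \tp(c/\mathfrak{M}) = p$, which is exactly the assertion that $p$ is additively stabilized by $M(\mathfrak{M})$. We may assume $M \neq \{0\}$, since otherwise $M(\mathfrak{M}) = \{0\}$ and there is nothing to prove. Observe that then $c \notin \mathfrak{M}$: if $c \in M(\mathfrak{M})$, the singleton $\{c\}$ would be an $\mathfrak{M}$-definable proper sub-torsor of $M$, contradicting the fact that $\Sigma_M^{gen}(x) \subseteq p$ contains the formula $x \notin \{c\}$; and if $c \notin M(\mathfrak{M})$ then trivially $c$ differs from every element of $\mathfrak{M}$.

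The heart of the argument is that genericity pins down the valuation of $c$ against every point of $M$: I claim that $v(c - m)$ is one and the same element of $\Gamma(\mathfrak{N})$ for all $m \in M(\mathfrak{M})$. Indeed, if $v(c-m) < v(c-m')$ for some $m, m' \in M(\mathfrak{M})$, then $v(m - m') = v\big((c-m') - (c-m)\big) = v(c-m) =: \gamma$, and since $m - m' \in M$ we get $\gamma \in S_M$; but then $I' := \{x \in K : v(x) > \gamma\}$ is an $\mathfrak{M}$-definable $\mathcal{O}$-submodule with $S_{I'} = \{s : s > \gamma\} \subsetneq S_M$, so $m' + I'$ is an $\mathfrak{M}$-definable proper sub-torsor of $M$ containing $c$ (as $v(c-m') > \gamma$), contradicting $\Sigma_M^{gen}(x) \subseteq p$. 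From this I will deduce that $v(c - \alpha) = v\big((a+c) - \alpha\big)$ for every $\alpha$ in the field sort of $\mathfrak{M}$: if $\alpha \in M(\mathfrak{M})$ then $\alpha - a \in M(\mathfrak{M})$ and $v\big((a+c)-\alpha\big) = v\big(c - (\alpha - a)\big) = v(c-\alpha)$ by the claim; if $\alpha \notin M(\mathfrak{M})$ then $v(\alpha)$ lies strictly below the end-segment $S_M$ (since $M = \{x : v(x) \in S_M\}$, a fact which persists in $\mathfrak{N}$ as $S_M$ is $\mathfrak{M}$-definable), whereas $v(c)$ and $v(a+c)$ both lie in $S_M(\mathfrak{N})$, so the ultrametric inequality forces $v(c-\alpha) = v(\alpha) = v\big((a+c)-\alpha\big)$.

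Finally I will promote the coincidence of these valuations to equality of types. By Theorem \ref{1def} every $\mathfrak{M}$-definable subset $X \subseteq K$ is a finite union of nice sets, and, unwinding the definitions of a generalized swiss cheese and of a basic congruence formula, membership of an element $x$ in such a set is completely determined by the tuple $v(x - \alpha_1), \dots, v(x - \alpha_k) \in \Gamma(\mathfrak{N}) \cup \{\infty\}$, where $\alpha_1, \dots, \alpha_k \in \mathfrak{M}$ are the finitely many field parameters occurring in a description of $X$: the condition $x \in a_i + b_i I_i$ becomes $v(x - a_i) \in v(b_i) + S_{I_i}$, the condition $x = b_i$ becomes $v(x - b_i) = \infty$, and a basic congruence formula becomes a condition on $\rho_\Delta\big(v(x - \alpha_j)\big)$. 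By the previous paragraph, $c$ and $a + c$ give rise to the same such tuple for every $X$, hence lie in exactly the same $\mathfrak{M}$-definable subsets of $K$; thus $\tp(a+c/\mathfrak{M}) = p$. I expect the only real subtlety to be the constancy claim of the second paragraph and, relatedly, the discipline of invoking genericity only against $\mathfrak{M}$-definable sub-torsors (and against $\mathfrak{M}$-points of $M$); everything else is a direct reading of the quantifier-elimination description of one-variable definable sets established earlier.
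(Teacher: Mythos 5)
Your proof is correct and follows essentially the same route as the paper: both hinge on the observation that genericity forces $v(c - m)$ to be constant over $m \in M(\mathfrak{M})$, so translation by $a \in M(\mathfrak{M})$ leaves the valuation profile $\alpha \mapsto v(c - \alpha)$ (and hence the type, by the quantifier-elimination description of one-variable definable sets) unchanged. You are somewhat more explicit than the paper's proof, which merely asserts the constancy claim ($v_\Delta(z-b)=v_\Delta(b)$ for generic $b$) without justification and handles the case $\alpha \notin M(\mathfrak{M})$ only implicitly, but the underlying argument is the same.
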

\begin{proof}
Let $c$ be a realization of the type $p(x)$, $a \in M(\mathfrak{M})$ and $d=c+a$. As $\Sigma_{M}^{gen}(x) \subseteq p(x)$, $c \in A$ and $c \notin U$ for any proper subtorsor $U \subseteq A$. First we argue that $d \vDash \Sigma_{M}^{gen}(x)$. Because $A$ is a $\mathcal{O}$-submodule of $K$ (in particular closed under addition) we have $d \in A$. And if there is a subtorsor $U \subsetneq A$ such that $d \in U$, then $c \in -a+U \subsetneq A$ contradicting that $c \vDash \Sigma_{A}^{gen}(x)$. For any $\Delta \in RJ(\Gamma)$, element $z \in M(\mathfrak{M})$  and  realization $b \vDash \Sigma_{A}^{gen}(x)$ we have  $v_{\Delta}(z-b)=v_{\Delta}(b)$. Thus for any $n \in \mathbb{N}$ and $\beta \in \Gamma/\Delta$:
\begin{align*}
v_{\Delta}(b-z)-\beta \in n(\Gamma/\Delta) \ \text{if and only if} \ v_{\Delta}(b)-\beta \in n (\Gamma/\Delta).
\end{align*}
We conclude that $d$ and $c$ must satisfy the same congruence and coset formulas, because $c$ is a realization of the generic type of $M$, $a \in M(\mathfrak{M})$ and  $v_{\Delta}(d)= v_{\Delta}(c+a)= v_{\Delta}(c-(-a))=v_{\Delta}(c)$. 
\end{proof}
\begin{corollary}\label{diff} Let $M \subseteq \mathfrak{M}$ be a definable $\mathcal{O}$-module. Let $p(x)$ be a global type containing the generic type of $M$.
Let $a \in M(\mathfrak{M})$, then $a$ is the difference of two realizations of $p(x)$ i.e. we can find $c,d \vDash p(x)$ such that $a=c-d$.
\end{corollary}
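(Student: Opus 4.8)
The plan is to obtain this as an immediate consequence of the additive stabilization established just above in Proposition \ref{stabadd}, which says that $p(x)$ is stabilized additively by $M(\mathfrak{M})$: if $c \vDash p(x)$ and $a \in M(\mathfrak{M})$, then $a+c \vDash p(x)$. First I would pass to an elementary extension $\mathfrak{M} \prec \mathfrak{N}$ containing a realization $d$ of the global type $p(x)$ (such a $d$ exists by saturation, and necessarily lies in a proper extension since $p$ is a nonrealized global type). The key step is then a single application of Proposition \ref{stabadd}: since $a \in M(\mathfrak{M}) \subseteq M(\mathfrak{N})$ and $d \vDash p(x)$, the element $c := a + d$ is again a realization of $p(x)$. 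As $a = c - d$, the pair $(c,d)$ exhibits $a$ as the difference of two realizations of $p$, which is exactly the assertion.

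There is essentially no obstacle here; the only subtlety worth spelling out is the standard convention that realizations of a global type live in a proper elementary extension, so the statement is to be read as asserting the existence of $c,d \vDash p(x)$ in such an extension, with the argument carried out there, where $M(\mathfrak{M}) \subseteq M(\mathfrak{N})$ lets Proposition \ref{stabadd} apply verbatim. No quantifier elimination and no use of the explicit structure of $M$ are needed beyond what was already invested in Proposition \ref{stabadd} and, through it, in the generic type $\Sigma_{M}^{gen}(x)$.
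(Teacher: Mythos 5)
Your proof is correct and is essentially the paper's proof: both take one realization and obtain the other by translating by $\pm a$, invoking Proposition \ref{stabadd}. (The paper sets $d = c - a$ from a given $c \vDash p$, which uses $-a \in M$; you set $c = a + d$ from a given $d \vDash p$, applying Proposition \ref{stabadd} verbatim — a cosmetic difference only.)
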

\begin{proof}
Let $c$ be a realization of $p(x)$ and fix $a \in M(\mathfrak{M})$. By Proposition \ref{stabadd}, $d=c-a$ is also a realization of $p(x)$. The statement now follows because $a=c-d$.
\end{proof}

\begin{proposition}\label{nicebasis}Let $(I_{1},\dots, I_{n}) \in \mathcal{I}^{n}$, for every $\mathcal{O}$-module $M$ of type $(I_{1},\dots, I_{n})$ we can find a type $p_{M}(\bar{x}_{1},\dots,\bar{x}_{n})\in S_{n\times n}(K)$ such that:
\begin{enumerate}
\item $p_{M}(\mathbf{x})$ is definable over $\ulcorner M \urcorner$, 
\item A realization of $p_{M}(\mathbf{x})$ is a matrix representation of $M$. This is if $(\bar{d}_{1},\dots, \bar{d}_{n}) \vDash p_{M}(\mathbf{x})$ then $[\bar{d}_{1},\dots,\bar{d}_{n}]$ is a representation matrix for $M$. 
\end{enumerate}
\end{proposition}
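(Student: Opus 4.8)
The plan is to proceed by induction on $n$, building $p_M$ out of generic types of $1$-torsors and invoking Corollary \ref{alldef} at each stage.

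For the base case $n=1$ we have $M=aI_1$ for some $a\in K$, and a $1\times 1$ matrix $[d]$ is a representation matrix of $M$ exactly when $dI_1=M$, i.e.\ (by Fact \ref{stabideals}) exactly when $d\in a\,\mathcal{O}_{\Delta_{S_{I_1}}}^{\times}$. Setting $T:=Col(M:I_1)=a\,\mathcal{O}_{\Delta_{S_{I_1}}}$, this is an $\ulcorner M\urcorner$-definable $1$-torsor, and the set of representation matrices of $M$ is $T$ with a proper sub-torsor (the non-units times $a$) removed. Hence any completion $p_M$ of $\Sigma_T^{gen}(x)$ concentrates on representation matrices of $M$, and $p_M$ is $\ulcorner T\urcorner$-definable, hence $\ulcorner M\urcorner$-definable, by Corollary \ref{alldef}.

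For the inductive step let $\pi\colon K^{n}\to K$ be the projection onto the last coordinate, and put $M':=\pi(M)$ (an $\mathcal{O}$-module of type $I_n$) and $M'':=M\cap\ker\pi$ (of type $(I_1,\dots,I_{n-1})$), both $\ulcorner M\urcorner$-definable. A direct computation --- using that $M=M''+b_nI_n$ as soon as $b_nI_n\subseteq M$ and $\pi(b_n)I_n=M'$ --- shows that an upper triangular invertible matrix $[b_1,\dots,b_n]$ is a representation matrix of $M$ if and only if $[b_1,\dots,b_{n-1}]$ is a representation matrix of $M''$ and $b_n$ lies in the $\ulcorner M\urcorner$-definable set $V:=Col(M:I_n)\cap\pi^{-1}(U_{M'})$, where $U_{M'}:=\{m\in K : mI_n=M'\}$ is the generic part of the $1$-torsor $Col(M':I_n)$. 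Since $V$ depends only on $M$, it therefore suffices to produce an $\ulcorner M\urcorner$-definable complete type $q$ concentrated on $V$: one then sets $p_M:=q(\bar x_n)\otimes p_{M''}(\bar x_1,\dots,\bar x_{n-1})$, with $p_{M''}$ supplied by the induction hypothesis (and $[b_1,\dots,b_{n-1}\mid b_n]$ is then upper triangular and invertible by construction, using Corollary \ref{basismodule} for nonemptiness).

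To build $q$: fix a completion $r(x)$ of $\Sigma_T^{gen}(x)$ for $T:=Col(M':I_n)$, which is $\ulcorner M\urcorner$-definable by Corollary \ref{alldef} and concentrates on $U_{M'}$. For $m\models r$ the fibre $F_m:=\pi^{-1}(m)\cap Col(M:I_n)$ is a coset of the \emph{fixed} $\mathcal{O}$-module $Col(M:I_n)\cap\ker\pi\subseteq K^{n-1}$; using a representation matrix $[e_1,\dots,e_{n-1}]$ of the latter drawn from the inductively available type, together with fixed completions of the generic types $\Sigma_{I_i}^{gen}$ of the one-dimensional pieces $I_i$, one obtains a type $g_{F_m}$ on $F_m$ realized by $c+\sum_i e_iy_i$; this does not depend on the chosen base point $c$ because the generic type of an $\mathcal{O}$-module is invariant under translation by that module (Proposition \ref{stabadd}), and it is uniformly definable over $\ulcorner F_m\urcorner\in\dcl^{eq}(\ulcorner M\urcorner\, m)$. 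Finally $q$ is the type realized by $(a,m)$ with $m\models r\upharpoonright_{\ulcorner M\urcorner}$ and $a\models g_{F_m}\upharpoonright_{\ulcorner M\urcorner m}$; then $a\in V$ and $q$ is $\ulcorner M\urcorner$-definable, completing the induction.

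I expect the main obstacle to be the bookkeeping in this last step: checking that the fibrewise types $g_{F_m}$ glue into a single global type on $K^{n}$ that is genuinely definable over $\ulcorner M\urcorner$ rather than merely over $\ulcorner M\urcorner\, m$, which is precisely where the translation-invariance of Proposition \ref{stabadd} and the uniformity of Corollary \ref{alldef} in the code of the torsor are essential; the secondary technical point is the careful verification of the ``$b_nI_n\subseteq M$'' characterization of representation matrices used to decouple the last column from the first $n-1$.
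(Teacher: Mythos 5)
Your route is genuinely different from the paper's, and in outline it is a reasonable strategy. The paper avoids the column-by-column induction altogether: it defines a single partial type $\Sigma_{C_{(I_1,\dots,I_n)}}$ on all $n^2$ matrix entries at once, as an ordered tensor product of $n^2$ one-variable types --- $\tp(0)$ below the diagonal, the generic type of $\mathcal{O}_{\Delta_{S_{I_i}}}$ at position $(i,i)$, and the generic type of $Col(I_i,I_j)$ at $(i,j)$ for $i<j$, with a carefully chosen column-by-column ordering of the tensor factors. The key lemma is that this partial type is left-invariant under the action of $\Stab_{(I_1,\dots,I_n)}$, verified by a single explicit matrix-product computation: the term $a_{ii}b_{ij}$ dominates the valuation of $(AB)_{ij}$ because $b_{ij}$ is chosen generic over the entries appearing earlier in the ordering. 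For a general $M$ one then pushes $\Sigma_{C_{(I_1,\dots,I_n)}}$ forward along any upper triangular $T$ carrying $C_{(I_1,\dots,I_n)}$ to $M$; invariance makes the result independent of $T$ and hence $\ulcorner M\urcorner$-definable, and completeness with definability over $\ulcorner M\urcorner$ follows by applying Corollary \ref{alldef} factor by factor, where every factor is a one-dimensional torsor so the corollary applies verbatim.

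Your decomposition --- a representation matrix for $M$ is a representation matrix for $M''=M\cap\ker\pi$ together with a last column in $V=Col(M:I_n)\cap\pi^{-1}(U_{M'})$ --- is correct, and the base case is fine. But the step you flag as ``bookkeeping'' is where essentially all the difficulty lives, and two of your appeals go beyond what the paper actually provides. First, Corollary \ref{alldef} gives $\ulcorner U\urcorner$-definability for completions of generic types of \emph{one-dimensional} torsors only; what you need is a uniform-in-$m$ version for the fibres $F_m$, which are cosets of the $(n-1)$-dimensional module $Col(M:I_n)\cap\ker\pi$, and such a statement is a genuine strengthening that would itself need to be proved (probably by an induction not unlike the one you are already running). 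Second, Proposition \ref{stabadd} is stated and proved only for $\mathcal{O}$-submodules of $K$, so the translation-invariance you invoke to argue $g_{F_m}$ is independent of the base point $c$ is a higher-dimensional analogue that is not available as cited. Neither gap looks fatal, but closing them would take about as much work as the paper's single stabilizer-invariance computation, which is precisely why the flat, non-inductive construction via the canonical module is the shorter path here.
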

\begin{proof}
Let $\mathfrak{M}$ be the monster model. \\

\textit{Step $1$: We define a partial type $\Sigma_{(I_{1},\dots, I_{n})}$ satisfying condition $i)$ and $ii)$ for the canonical module $C_{(I_{1},\dots,I_{n})}$. Such a type is  left-invariant under the action of $\Stab(I_{1},\dots, I_{n})$.}\\

We consider the set $\mathcal{J}=\{ (i,j) \ | \ 1 \leq i,j \leq n\}$ , and we equip it with a linear order defined as:
\begin{align*}
(i,j)<(i',j') \ \text{if and only if} \ j<j' \ \text{or} \ j=j' \wedge i'>i.
\end{align*}
And we consider an enumeration of $\mathcal{J}=\{ v_{1},\dots,v_{n^{2}}\}$ such that $v_{1}<v_{2}<\dots<v_{n^{2}}$. By Proposition \ref{stab}
\begin{align*}
\Stab_{(I_{1}, \dots, I_{n})}= \{ ((a_{i,j})_{1 \leq i,j \leq n}  \in B_{n}(K) \ | \ a_{ii} \in \mathcal{O}_{\Delta_{S_{I_{i}}}}^{\times}\ \wedge a_{ij} \in Col(I_{i},I_{j})  \   \text{for each $1 \leq i < j \leq n$ }\}.
\end{align*}
Hence, for each $1 \leq m \leq n^{2}$ let:
\begin{center}
$
p_{v_{m}}(x)=
\begin{cases}
tp(0) \ &\text{if $v_{m}=(i,j)$ where $1\leq j < i \leq n$,}\\
\Sigma_{\mathcal{O}_{\Delta_{S_{I_{i}}}}}^{gen}(x) \ &\text{if $v_{m}=(i,i)$ for some $1 \leq i \leq n$},\\
\Sigma^{gen}_{Col(I_{i},I_{j})}(x) \ &\text{if $v_{m}=(i,j)$ where $1 \leq i < j \leq n$. }
\end{cases}
$
\end{center}
Consider the partial definable type $\displaystyle{\Sigma_{C_{(I_{1},\dots,I_{n})}}= p_{v_{n^{2}}} \otimes \dots \otimes p_{v_{1}}}$. Given a realization of this type $(b_{v_{n^{2}}},\dots,b_{v_{1}}) \vDash \Sigma_{C_{(I_{1},\dots,I_{n})}}$ let
\begin{center}
$
B= \begin{bmatrix}
b_{v_{n}} & b_{v_{2n}} & \dots & b_{v_{n^{2}}} \\
\vdots & \vdots& \vdots& \vdots\\
b_{v_{1}} & b_{v_{n+1}}& \dots &b_{v_{n(n-1)+1}}
\end{bmatrix}
$
\end{center}
 By construction $B$ is an upper triangular matrix such that $(B)_{i,j} \in Col(I_{i},I_{j})$ for $1 \leq i < j \leq n$ and $(B)_{ii} \in \mathcal{O}_{\Delta_{S_{I_{i}}}}^{\times}$, thus its column vectors constitute a basis for the canonical module. 
To check left invariance, it is sufficient to take $A \in \Stab_{(I_{1},\dots,I_{n})}(\mathfrak{M})$ and argue that for each $1 \leq m \leq n^{2}$ the element $(AB)_{v_{m}}$ is a realization of generic type $p_{v_{m}}$ sufficiently generic over $\mathfrak{M} \cup \{ (AB)_{v_{k}} \ | \ k < m \}$. 
Suppose that $v_{m}=(i,j)$, then $\displaystyle{(AB)_{v_{m}}=(AB)_{ij}= \sum_{k=i}^{j}a_{ik}b_{kj}=a_{ii}b_{ij}+\dots+a_{ij}b_{jj}}$. \\

In the fixed enumeration we guarantee that $b_{ij}$ is chosen sufficiently generic over $\mathfrak{M} \cup \{ b_{kj} \ | \ i < k \leq j\}$. For each $i< k\leq j$, $a_{ik}b_{kj}\in Col(I_{i},I_{j})$, thus we have that $v(a_{ii}b_{ij})=v((AB)_{ij})$.
Consequently, $(AB)_{ij}$ is a realization of $p_{v_{m}}$ generic over $\mathfrak{M}$ together with all the elements $b_{kl}$ where $(k,l)$ appears earlier in the enumeration than $(i.j)$.\\
\textit{Step $2$: For any $\mathcal{O}$-module $M \subseteq K^{n}$ of type $(I_{1},\dots,I_{n})$ there is an $\ulcorner M \urcorner$-definable type $p_{M}$, such that any realization of $p_{M}$ is a representation matrix for $M$.}\\
Let $T= \mathfrak{M}^{n} \rightarrow \mathfrak{M}^{n}$
 be a linear transformation whose representation matrix is upper triangular and $T$ sends the canonical module $C_{(I_{1},\dots,I_{n})}$ to $M$. And let $\Sigma_{M}=T(\Sigma_{C_{(I_{1},\dots,I_{n})}})$, its definition is independent from the choice of $T$, because given two linear transformations with upper triangular representation matrices $[T]$ and $[T']$ which send the canonical $\mathcal{O}$-module of type $(I_{1},\dots,I_{n})$ to $M$, we have that $[T']^{-1}[T] \in \Stab_{(I_{1},\dots,I_{n})}$ and the type $\Sigma_{C_{(I_{1},\dots,I_{n})}}$ is left invariant under the action of such group. Thus, $\Sigma_{M}$ is $\ulcorner M \urcorner$-definable and given $B \vDash \Sigma_{M}$, the type $\tp(B/ \mathfrak{M})$ is still $\ulcorner M \urcorner$-definable by Corollary \ref{alldef}. 
\end{proof}
\begin{theorem}\label{codinggerm} Let $X$ be a definable subset of $K^{n}$ and let $p(\mathbf{x}) \vdash \mathbf{x} \in X$ be a global type definable over $\ulcorner X \urcorner$. Let 
$f= X \rightarrow \mathcal{G}$ be a definable function. Then the $p$-germ of $f$  is coded in $\mathcal{G}$ over $\ulcorner X \urcorner$. 
\end{theorem}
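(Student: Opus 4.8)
The plan is to reduce the problem to coding a definable function into the residue-field-internal structure $\VS_{k,C}$ and into $\Gamma^{eq}$ and the previously established sorts, exploiting the quantifier elimination of Corollary \ref{QEregular} together with stable embeddedness and orthogonality of $k$ and $\Gamma$ (Corollary \ref{stableembeddedness}). Write $C=\dcl^{eq}(\ulcorner X\urcorner)$. The germ $e:=\germ(f,p)\in\mathfrak M^{eq}$ is by construction $C$-definable, and we want to show $e\in\dcl^{eq}(\mathcal G)$ together with $e\in\acl^{eq}(C')$ for a tuple $C'$ in $\mathcal G$; in fact it suffices, given weak elimination of imaginaries down to $\mathcal G$ (Theorem \ref{weakEI1}), to prove that $e$ lies in the definable closure of $\mathcal G$ plus $\ulcorner X\urcorner$, since $\ulcorner X\urcorner$ already has a $\mathcal G$-code after applying weak EI.

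First I would use quantifier elimination to break $f$ up: for a sufficiently generic realization $a\models p\!\restriction_{\mathfrak M}$, the value $f(a)$ is a tuple in $\mathcal G$, and each coordinate either lives in $K$, in $k$, in $\Gamma$, in a quotient sort $\Gamma/\Delta$ or $\Gamma/(\Delta+n\Gamma)$, or in a stabilizer sort $B_m(K)/\Stab_{(I_1,\dots,I_m)}$. For coordinates landing in the value-group sorts the germ is controlled by $\Gamma^{eq}$, which is already available. For coordinates landing in $K$, I would use the approximation-by-polynomials philosophy of Theorem \ref{codingdeftypes}: a $K$-valued definable function on $X$, evaluated along $p$, has its germ determined by finitely many valued-relation data on spaces $D_\ell/I_\ell$, each of which is coded in the stabilizer sorts by Theorem \ref{codingvalued}; the "finite part" of the germ not seen by the valued relations is residue-field-internal and gets absorbed into $\VS_{k,C}$, where elimination of imaginaries holds by Theorem \ref{EIinternalresidue}. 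For coordinates in a stabilizer sort, one uses Lemma \ref{modulesok} and Proposition \ref{nicebasis}: the germ of $f$ is encoded by the germ of an $\mathcal O$-module-valued function, and the type $p_M$ of Proposition \ref{nicebasis} lets one pass between the module code and an upper-triangular matrix realization, so the germ is again a piece of valued-vector-space data plus residual data, both codeable.

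The key technical ingredient, and the main obstacle, is the passage from "germ of an $\mathcal O$-module-valued (or torsor-valued) function along $p$" to an actual tuple in the stabilizer sorts. Concretely, if $f$ sends $a$ to the code of an $\mathcal O$-submodule $M_a\subseteq K^m$, then by Corollary \ref{basismodule} one can pick a representation matrix, but one must choose it $\germ(f,p)$-definably and in a way that the generic choice washes out the $\Stab$-ambiguity; this is exactly what the left-$\Stab$-invariance of $\Sigma_{C_{(I_1,\dots,I_m)}}$ in Proposition \ref{nicebasis} (step $1$) is designed for. The argument is: realize $p$ by a sufficiently generic $a$, pass to $p_{M_a}$, produce a generic representation matrix $B$, and observe that $\tp(B/\mathfrak M)$ is $\ulcorner M_a\urcorner$-definable (Corollary \ref{alldef}); the germ then reads off the coset $[B]\Stab_{(I_1,\dots,I_m)}$, which is precisely an element of a stabilizer sort. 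The remaining residue-field-internal remainder is controlled by the fact that any definable set internal to $k$ lies in $\dcl^{eq}(kF)$ for finite $F\subseteq\mathcal G$ and that $\VS_{k,C}$ eliminates imaginaries.

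Finally, I would assemble these pieces: having shown each coordinate of the germ is $\dcl^{eq}$-over $\mathcal G\cup\{\ulcorner X\urcorner\}$ and $\acl^{eq}$-over a $\mathcal G$-tuple, I would invoke Theorem \ref{weakEI1} to replace $\ulcorner X\urcorner$ by its $\mathcal G$-code, concluding $\germ(f,p)\in\dcl^{eq}(\mathcal G)$ over $\ulcorner X\urcorner$ as desired. I expect the bookkeeping — keeping track of which coordinates are field-type, value-group-type, or module-type, and checking that the valued-relation coding of Theorem \ref{codingvalued} interacts correctly with the congruence relations $\psi_\Delta^{k,n}$ — to be routine but lengthy, while the genuinely delicate point remains the $\Stab$-invariance argument for module-valued germs.
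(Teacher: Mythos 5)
Your proposal correctly identifies the crucial ingredient (Proposition \ref{nicebasis} and the left-$\Stab$-invariance of the type $\Sigma_{C_{(I_1,\dots,I_m)}}$), and you are right that the stabilizer-sort case is the crux. However, there is a genuine gap: you describe how, for a generic realization $\mathbf a\models p$, the value $f(\mathbf a)$ can be read off as a coset $[B]\Stab_{(I_1,\dots,I_m)}$ from a generically chosen representation matrix $B$ — but the value $f(\mathbf a)$ already lives in a stabilizer sort by hypothesis, so this observation does not establish anything new. What has to be proved is that the imaginary $\germ(f,p)$ — the $E_\phi$-class of the defining parameter $c$ — lies in $\dcl^{eq}$ of a tuple in $\mathcal G$ together with $\ulcorner X\urcorner$. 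That is a descent statement about the parameter $c$, not about the generic value of $f$.

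The missing step is the standard ``two conjugates'' invariance argument, which is what the paper actually runs: set $B=\dcl_{\mathcal G}(\germ(f,p),\ulcorner X\urcorner)$, pick $c'$ with $\tp(c'/B)=\tp(c/B)$, and show $f_c$ and $f_{c'}$ agree on a sufficiently generic realization of $p$. Proposition \ref{nicebasis} enters precisely here: it converts the stabilizer-sort value $f(\mathbf a)$ into a definable type $u_{f(\mathbf a)}$ of $K$-matrices, so that the pair $\tp(\mathbf a,\mathbf d/M)$ (with $\mathbf d\models u_{f(\mathbf a)}\!\restriction_M$) is a definable type in $K$-variables, codeable by Theorem \ref{codingdeftypes}, hence $B$-invariant; $B$-invariance then forces $\tp(\mathbf a,f(\mathbf a)/M)=\tp(\mathbf a,f'(\mathbf a)/M)$ and so $f(\mathbf a)=f'(\mathbf a)$. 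Your ``approximation-by-polynomials'' paragraph for the $K$-valued case and the appeal to $\VS_{k,C}$ for the residue-internal part describe how one codes \emph{types}, not how one shows the \emph{germ} is $B$-invariant; conflating the two is where the proposal loses the thread. Additionally, the coordinate-by-coordinate decomposition is unnecessary once you have the invariance argument, and the appeal to orthogonality of $k$ and $\Gamma$ plays no clear role. In short: the right pieces appear in your outline, but the load-bearing invariance/descent argument that actually proves the theorem is not supplied, and without it the remaining bookkeeping does not assemble into a proof.
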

\begin{proof}
 We first assume that $f: X \rightarrow B_{n}(K)/ \Stab_{(I_{1}, \dots, I_{n})}$. Let 
 \begin{equation*}
 B= \dcl_{\mathcal{G}}(\germ(f, p), \ulcorner X \urcorner)= \dcl^{eq}(\germ(f, p), \ulcorner X \urcorner) \cap \mathcal{G}.
 \end{equation*}
Suppose that $f$ is $c$-definable, and let $q=\tp(c/B)$ and $Q$ be its set of realizations. Fix some $c' \in Q$. We denote by $f'$ the function obtained by replacing the parameter $c$ by $c'$ in the formula defining $f$. Let $M$ be a small model containing $Bcc'$. \\

\textit{Step $1$: For any realization $\mathbf{a} \vDash p(\mathbf{x})\upharpoonright_{M}$ we have $f(\mathbf{a})=f'(\mathbf{a})$.}\\
Let $\mathbf{a}$ be a realization of $p(\mathbf{x})\upharpoonright_{M}$.
 Let $u_{f(\mathbf{a})}(\mathbf{y})$ be the definable type over $\ulcorner f(\mathbf{a}) \urcorner$ given by Proposition \ref{nicebasis}. Given any realization $\mathbf{d}=(\bar{d}_{1},\dots,\bar{d}_{n}) \vDash u_{f(\mathbf{a})}(\mathbf{y})$, $[\bar{d}_{1},\dots,\bar{d}_{n}]$ is a representation matrix for the module $f(\mathbf{a})$. In particular, $f(\mathbf{a})=\rho_{(I_{1},\dots,I_{k})}(\bar{d}_{1},\dots,\bar{d}_{k}))\in \dcl^{eq}(\mathbf{d})$. Let $\mathbf{d}$ be a realization of $u_{f(\mathbf{a})}(\mathbf{y}) \upharpoonright_{M}$  and let $r(\mathbf{x},\mathbf{y})= \tp(\mathbf{a},\mathbf{d}/ M)$, then the type $r(\mathbf{x}, \mathbf{y})$ is $B$-definable, and therefore $B$-invariant.\\

Because $\tp(c/B)=\tp(c'/B)$ we can find an automorphism $\sigma \in Aut(\mathfrak{M}/B)$ sending $c$ to $c'$. Then $u_{f'(\mathbf{a})}= \sigma(u_{f(\mathbf{a})})$, which is a definable type over $\ulcorner f'(\mathbf{a}) \urcorner$. Let $\mathbf{d}'$ be a realization of $u_{f'(\mathbf{a})}\upharpoonright_{M}$. Let $r'(\mathbf{x},\mathbf{y})=\tp(\mathbf{a},\mathbf{d}'/M)$, then $\sigma(r(\mathbf{x},\mathbf{y}))=r'(\mathbf{x},\mathbf{y})=r(\mathbf{x},\mathbf{y})$ by the $B$-invariance of $r(\mathbf{x},\mathbf{y})$, so $\tp(\mathbf{a},\mathbf{d}/M)=\tp(\mathbf{a},\mathbf{d}'/M)$.  Since $f(\mathbf{a}) \in \dcl^{eq}(\mathbf{d})$ and $f'(\mathbf{a}) \in \dcl^{eq}(\mathbf{d}')$, we must have that $\tp(\mathbf{a},f(\mathbf{a})/M)=\tp(\mathbf{a},f'(\mathbf{a})/M)$ and since $f$ and $f'$ are both definable over $M$ this implies that $f(\mathbf{a})=f'(\mathbf{a})$.\\

\textit{Step $2$: The  $\germ(f,p)$ is coded in the stabilizer sorts $\mathcal{G}$ over $\ulcorner X \urcorner$.  }\\
Firstly, note that for any $\mathbf{a} \vDash p(\mathbf{x}) \upharpoonright_{Bcc'}$ it is the case that $f(\mathbf{a})=f'(\mathbf{a})$. In fact, by Step $1$ $f(x)=f'(x) \in \tp(a/M)$ and $f(x)=f'(x)$ is a formula in $\tp(a/Bcc')$. Then $f$ and $f'$ both have the same $p$-germ. Since $p(\mathbf{x})$ is definable over $B=\dcl_{\mathcal{G}}(B)$ the equivalence relation  $E$ stating that $f$ and $f'$ have both the same $p$-germ is $B$-definable. Since for any realization  $\mathbf{a} \vDash p(\mathbf{x}) \upharpoonright_{Bcc'}$ it is the case that $f(\mathbf{a})=f'(\mathbf{a})$, the class $E(x, c)$ is $B$-invariant, therefore $\germ(f,p)$ is definable over $B=\dcl_{\mathcal{G}}(B)$. \\

We continue arguing that the statement for $f= X \rightarrow B_{n}(K)/ \Stab_{(I_{1},\dots, I_{n})}$ is sufficient to conclude the entire result. For each $\Delta \in RJ(\Gamma)$ there is a canonical isomorphism $\Gamma/\Delta \cong K^{\times}/ \mathcal{O}_{\Delta}^{\times}$, where $\mathcal{O}_{\Delta}$ is the valuation ring of the coarsened valuation $v_{\Delta}$ induced by $\Delta$. The functions whose image lie in $\Gamma/\Delta$ are being considered in the previous case, because $\Stab(\mathcal{O}_{\Delta})=\mathcal{O}_{\Delta}^{\times}$. By Proposition \ref{torsormodule} any definable function $f= X \rightarrow k=\mathcal{O}/\mathcal{M}$ can be seen as a function whose image lies in $B_{2}(K)/ \Stab_{(\mathcal{M},\mathcal{O})}$.\\
It is only left to consider the case where the target set is $K$. The proof follows in a very similar manner as the case for $\displaystyle{f:X \rightarrow B_{n}(K)/\Stab_{(I_{1},\dots,I_{n})}}$. Let $\mathbf{a} \vDash p(x)$. We let $a\vDash p(x)$ and let $r(x,y):= \tp(\mathbf{a},f(\mathbf{a})/M)$, this is a $B$-definable type by Theorem \ref{codingdeftypes}, in particular $B$-invariant. Likewise, $r'(x,y):=\tp(\mathbf{a},f'(\mathbf{a})/M)$ is $B$-invariant, thus $\tp(\mathbf{a},f(\mathbf{a})/M)=\tp(\mathbf{a},f'(\mathbf{a})/M)$. Since $f$ and $f'$ are both definable over $M$, this implies that $f(\mathbf{a})=f'(\mathbf{a})$. The rest of the proof follows exactly as in the second step.
\end{proof}

\subsection{Some useful lemmas}\label{lemas1}
In this subsection we prove several lemmas that will be required to code finite sets.
\begin{notation} 
Let $U \subseteq K$ be a $1$- torsor, $U=a+bI$ where $I \in \mathcal{I}$. Let $A=bI$, we consider the definable equivalence relation over $U$ given by:
$\displaystyle{E(b,b')}$ if and only if $b-b' \in \mathcal{M}A$. We write $red(U)$ to denote the definable quotient $U/E$. \\
We write $p_{U}(x)$ to denote some type centered in $U$ extending  the generic type of $U$ $\Sigma_{U}^{gen}(x)$ which is $\ulcorner U \urcorner$-definable. If $U$ is closed such type is unique (see Proposition \ref{uniqueclosed}) and for the open case there are several choices for this type, but all of them are $\ulcorner U \urcorner$ -definable by Proposition \ref{open}.
\end{notation}

 \begin{lemma}\label{tipomaster} Let $F=\{B_{1}, \dots, B_{n}\}$ be a primitive finite set of $1$- torsors. Let $W=\{ \{ x_{1},\dots,x_{n}\} \ | \ x_{i} \in B_{i}\}$, and $W^{*}=\{ \ulcorner  \{ x_{1},\dots,x_{n}\} \urcorner \ | \   \{ x_{1},\dots,x_{n}\} \in W\}$  Then there is a  $\ulcorner W^{*} \urcorner$- definable type $q$ concentrated on $W^{*}$. Furthermore, given $b^{*}$ a realization of $q$ sufficiently generic over a set of parameters $C$, if we take $B$ the finite set coded by $b^{*}$, then if $b \in B$ is the element that belongs to $B_{i}$ then $b_{i}$ is a sufficiently generic realization of some type $p_{B_{i}}(x)$, which is $\ulcorner B_{i}\urcorner$-definable and extends the generic type of $B_{i}$. Lastly, the types $p_{B_{i}}(x)$ are compatible under the action of $Aut(\mathfrak{M}/\ulcorner F\urcorner)$ meaning that if $\sigma \in Aut(\mathfrak{M}/\ulcorner F \urcorner)$ and $\sigma(B_{i})=B_{j}$ then $\sigma(p_{B_{i}}(x))=p_{B_{j}}(x)$. 

 \end{lemma}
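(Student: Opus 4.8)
The plan is to prove this in three steps. \emph{Step 1: a coherent choice of generic types.} For a $1$-torsor $B$ the partial type $\Sigma_{B}^{gen}$ has a completion, and by Corollary \ref{alldef} every such completion is $\ulcorner B\urcorner$-definable (unique, by Proposition \ref{uniqueclosed}, when $B$ is closed). Let $Aut(\mathfrak{M}/\ulcorner F\urcorner)$ act on the finite set $F$ and pick a representative $B_{i_{0}}$ in each orbit together with an arbitrary $\ulcorner B_{i_{0}}\urcorner$-definable completion $p_{B_{i_{0}}}$ of $\Sigma_{B_{i_{0}}}^{gen}$; for $B_{j}$ in that orbit put $p_{B_{j}}:=\sigma(p_{B_{i_{0}}})$ for any $\sigma\in Aut(\mathfrak{M}/\ulcorner F\urcorner)$ with $\sigma(B_{i_{0}})=B_{j}$. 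This is well defined because two such $\sigma$ differ by an automorphism fixing the code $\ulcorner B_{i_{0}}\urcorner$, hence fixing the $\ulcorner B_{i_{0}}\urcorner$-definable type $p_{B_{i_{0}}}$. By construction each $p_{B_{i}}$ is $\ulcorner B_{i}\urcorner$-definable, extends $\Sigma_{B_{i}}^{gen}$, and $\sigma(p_{B_{i}})=p_{B_{j}}$ whenever $\sigma(B_{i})=B_{j}$; this already yields the compatibility asserted at the end of the lemma.

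\emph{Step 2: the type $q$.} Fix an enumeration of $F$, let $P:=p_{B_{n}}(x_{n})\otimes\cdots\otimes p_{B_{1}}(x_{1})$ be the iterated Morley product, a global definable type concentrated on tuples $(x_{1},\dots,x_{n})$ with $x_{i}\in B_{i}$, and let $q$ be its pushforward along the definable map $(x_{1},\dots,x_{n})\mapsto\ulcorner\{x_{1},\dots,x_{n}\}\urcorner$, a global type concentrated on $W^{*}$. The heart of the argument is that $q$ does not depend on the enumeration. Since distinct $1$-torsors are nested or disjoint and $x_{i}$ avoids every proper subtorsor of $B_{i}$, any realization $(b_{1},\dots,b_{n})$ of any such product satisfies $b_{i}\in B_{j}\iff B_{i}\subseteq B_{j}$; hence the $b_{i}$ are pairwise distinct and the matching $b_{i}\leftrightarrow B_{i}$ is recovered from $\{b_{1},\dots,b_{n}\}$ and $\ulcorner F\urcorner$, so $(b_{1},\dots,b_{n})$ and $\ulcorner\{b_{1},\dots,b_{n}\}\urcorner$ are interdefinable over $\mathfrak{M}$. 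A short case analysis on nestedness then shows that the $p_{B_{i}}$ are pairwise weakly orthogonal: when $B_{i},B_{j}$ are disjoint, $v(b_{i}-b_{j})$ is their fixed distance; when $B_{i}\subsetneq B_{j}$, fixing a center $c_{i}\in B_{i}\cap\mathfrak{M}$ one has $v_{\Delta}(b_{i}-b_{j})=v_{\Delta}(b_{j}-c_{i})$, which is prescribed by $p_{B_{j}}$; in either order $\tp(b_{i},b_{j}/\mathfrak{M})$ comes out the same. Therefore $P$ is symmetric, so $q$ is well defined, and since every $\sigma\in Aut(\mathfrak{M}/\ulcorner F\urcorner)$ only permutes the factors of $P$, $\sigma(q)=q$; thus $q$ is $\ulcorner F\urcorner$-definable. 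Finally $\ulcorner W^{*}\urcorner\in\dcl^{eq}(\ulcorner F\urcorner)$ trivially, and conversely $F$ is recovered from $W^{*}$ by descending the tree of maximal torsors inside $\bigcup_{i}B_{i}\in\dcl^{eq}(\ulcorner W^{*}\urcorner)$ and recursing; so $\ulcorner F\urcorner$ and $\ulcorner W^{*}\urcorner$ are interdefinable and $q$ is $\ulcorner W^{*}\urcorner$-definable.

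\emph{Step 3 and the main obstacle.} Let $b^{*}$ realize $q$ sufficiently generically over a parameter set $C$ and lift it to $(b_{1},\dots,b_{n})\models P\upharpoonright_{C}$ with $b_{i}\in B_{i}$ and $\ulcorner\{b_{1},\dots,b_{n}\}\urcorner=b^{*}$. By the defining property of the Morley product each $b_{i}$ realizes $p_{B_{i}}$ over $\mathfrak{M}$ together with the remaining coordinates, hence is a sufficiently generic realization of $p_{B_{i}}$, which is $\ulcorner B_{i}\urcorner$-definable and extends the generic type of $B_{i}$; the compatibility of the $p_{B_{i}}$ under $Aut(\mathfrak{M}/\ulcorner F\urcorner)$ was arranged in Step 1. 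I expect the symmetry of the iterated Morley product in Step 2 to be the main obstacle: it forces a careful use of the ultrametric tree structure of torsors and of the rigidity of the generic types — every realization of $P$ has all of its pairwise valuations $v(b_{i}-b_{j})$ pinned down over $\mathfrak{M}$, by a Flenner-style reduction to linear factors in the spirit of Fact \ref{difpol} — and it is the step that genuinely uses that $F$ is a finite set of $1$-torsors rather than an arbitrary definable family.
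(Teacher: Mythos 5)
Your Step 1 and the overall outline are reasonable, and you correctly identify the commutativity of the Morley product as the crux — but your resolution of it fails, and that failure is precisely what the paper's proof is designed to avoid. When the $B_{i}$ are \emph{open} torsors $c_{i}+bI$ with $I\in\mathcal{I}\setminus\{\mathcal{O}\}$, the types $p_{B_{i}}$ extending $\Sigma_{B_{i}}^{gen}$ are not generically stable, and they are \emph{not} pairwise weakly orthogonal. The quantity $v(b_{i}-b_{j})$ being pinned to the constant $\delta$ addresses only one formula; the full type of $(b_{i},b_{j})$ over $\mathfrak{M}$ also records the relative position of $v(b_{i}-a_{i})$ and $v(b_{j}-a_{j})$ for parameters $a_{i}\in B_{i}(\mathfrak{M})$, $a_{j}\in B_{j}(\mathfrak{M})$, and this is not determined by $p_{B_{i}}(x)\cup p_{B_{j}}(y)$: both quantities are coinitial in $S_{bI}$ below everything $\mathfrak{M}$-definable, and either ordering is consistent. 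In fact the Morley product is genuinely non-symmetric here: a realization of $p_{B_{j}}\otimes p_{B_{i}}$ has $v(b_{j}-a_{j})<v(b_{i}-a_{i})$ (since $b_{j}$ is generic over $\mathfrak{M}b_{i}$, which makes $v(b_{i}-a_{i})$ an $\mathfrak{M}b_{i}$-definable element of $S_{bI}$), while $p_{B_{i}}\otimes p_{B_{j}}$ enforces the reverse inequality. These inequalities survive passage to $\ulcorner\{b_{1},\dots,b_{n}\}\urcorner$ because, as you note, the matching $b_{i}\leftrightarrow B_{i}$ is recoverable from $\ulcorner F\urcorner$. Consequently your pushforward type $q$ depends on the enumeration, and since $Aut(\mathfrak{M}/\ulcorner F\urcorner)$ permutes $F$ transitively (by primitivity), no such $q$ can be $\ulcorner W^{*}\urcorner$-definable.

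The paper handles the open case by a different device: using dp-minimality of the value group, Fact \ref{genericodp} supplies an $\ulcorner S_{bI}\urcorner$-definable completion $r(y)$ of the generic type of the end-segment $S_{bI}$. One first picks $\delta\models r(y)$ generically, then forms the \emph{closed} balls $C_{i}$ of radius $\delta$ inside each $B_{i}$ and takes the symmetrized Morley product of the (unique, generically stable) generic types of the $C_{i}$ over $\mathfrak{M}\delta$. Forcing all the radii $v(b_{i}-a_{i})$ to equal the single generic $\delta$ eliminates the ordering ambiguity that breaks symmetry in your construction, and reduces commutativity to the closed-ball case where \cite{NIP}[Proposition 2.33] applies. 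The resulting $q$ is not the pushforward of the Morley product $p_{B_{n}}\otimes\cdots\otimes p_{B_{1}}$: in the paper's type, $b_{i}$ does \emph{not} realize $p_{B_{i}}$ over $\mathfrak{M}b_{j}$ for $j\neq i$ (its radius equals $\delta=v(b_{j}-a_{j})$ rather than being generic below it), and this is why the lemma only asserts genericity of each $b_{i}$ over the fixed parameter set $C$, not over the other coordinates. Your Step 3 therefore also overclaims. (Two smaller points: in your Step 2, primitivity already forces the $B_{i}$ to be pairwise disjoint translates of the same module, so the nested case never arises; and in the closed case your "weak orthogonality" claim is again asserted from $v(b_{i}-b_{j})=\delta$ alone, which is not a proof — the paper instead invokes generic stability directly.)
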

 \begin{proof} We focus first on the construction of the type $q$, and later we show that it satisfies the required conditions. Suppose that each $1$-torsor $B_{i}=c_{i}+b_{i}I_{i}$ for some $I_{i} \in \mathcal{I}$. By transitivity all the balls are of the same type $I \in \mathcal{I}$ and for all $1\leq i,j \leq n$ we have that $v(b_{i})=v(b_{j})$. Hence, we may assume that each $B_{i}$ is of the form $c_{i}+bI$ for some fixed $c_{i},b \in K$ and $I \in \mathcal{I}$. We argue by cases:
 \begin{enumerate}
 \item \emph{Case $1$: All the $1$-torsors $B_{i}$ are closed.}\\
  For each $i \leq n$, let $p_{B_{i}}(x)$ be the unique $\ulcorner B_{i} \urcorner$-definable type given by Proposition \ref{uniqueclosed}. Define $r(x_{1},\dots,x_{n})= p_{B_{1}}(x_{1})\otimes \dots \otimes p_{B_{n}}(x_{n})$, this  is $\ulcorner W^{*} \urcorner$-definable type. Let $(a_{1},\dots, a_{n}) \vDash r(x_{1},\dots,x_{n})$ and let $q=\tp(\ulcorner \{ a_{1},\dots, a_{n}\} \urcorner / \mathfrak{M})$. This type is well defined independently of the choice of the order, because each type $p_{B_{i}}(x)$ is generically stable, thus it commutes with any definable type by \cite{NIP}[Proposition 2.33]. The type $q$ is  $\ulcorner W^{*} \urcorner$-definable and centered at $W^{*}$. 
 \item \emph{Case $2$: All the $1$-torsors  $B_{i}$ are open, i.e.  $I \in \mathcal{I} \backslash \{ \mathcal{O}\}$.}\\
  Let $S_{bI}=v(b)+S_{I}= \{ v(b)+ v(x) \ | \ x \in I\}$, this is a definable end-segment of $\Gamma$ with no minimal element. Let $r(y)$ be the $\ulcorner S_{bI} \urcorner$ definable type given by Fact  \ref{genericodp}, extending the partial generic type $\Sigma^{gen}_{S_{bI}}(y)$. Fix elements  $a=\{ a_{1},\dots, a_{n} \} \in W(\mathfrak{M})$ and $\delta \vDash r(y)$, we define $\displaystyle{C(a, \delta)=\{ C_{1}(a), \dots, C_{n}(a)\}}$, where each $C_{i}(a)$ is the closed ball around $a_{i}$ of radius $\delta$. For each $i \leq n$ we take $p_{C_{i}(a)}(x)$ the unique extension of the generic type of $C_{i}(a)$ given by Proposition \ref{uniqueclosed}, this type is $\ulcorner C_{i}(a)\urcorner$-definable. 
  Let $q^{a}_{\delta}$ be the symmetrized generic type of $C_{1}(a)\times \dots \times C_{n}(a)$, i.e. we take $\tp(\ulcorner \{ b_{1}, \dots, b_{n}\} \urcorner / \mathfrak{M}\delta)$ where $(b_{1},\dots,b_{n})$ is a realization of the generically stable type $p_{C_{1}(a)}\otimes \dots \otimes p_{C_{n}(a)}$. Let $q^{a}$ be the definable global type satisfying that $d \vDash q^{a}$ if and only if there is some $\delta \vDash r(y)$ and $d \vDash q^{a}_{\delta}$. \\
  
 \begin{claim}{ The type $q^{a}$ does not depend on the choice of $a$.}\end{claim}
 \begin{proof}
  Let $a'=\{a'_{1}, \dots, a'_{n}\} \in W(\mathfrak{M})$ and $\delta \vDash r(y)$. For each $i \leq n$, $a_{i}, a_{i}' \in B_{i}$ meaning that $a_{i}-a_{i}^{'} \in bI$ i.e. $v(a_{i}-a_{i}') \in S_{bI}= v(b)+S_{I}$ and note that $v(a_{i}-a_{i}') \in \Gamma(\mathfrak{M})$.\\
  By construction, $\delta \in S_{bI}$ and $\delta < v(a_{i}-a'_{i})$, thus the closed ball of radius $\delta$ concentrated on $a_{i}$ is the same closed ball of radius $\delta$ concentrated on $a_{i}'$. As the set of closed balls $C(a,\delta)=C(a',\delta)$ we must have that $q^{a}_{\delta}=q^{a'}_{\delta}$, and since this holds for any $\delta \vDash r(y)$ we conclude that $q^{a}$ does not depend on the choice of $a$ and we simply denote it as $q$.
  \end{proof}
  This type $q$ is $\ulcorner W^{*} \urcorner$-definable  and it is centered in $W^{*}$. This finalizes the construction of the type $q$ that we are looking for.
   \end{enumerate}
We continue checking that the type $q$ that we have constructed satisfies the other properties that we want. In both cases, by construction, if $b^{*}$ is a sufficiently generic realization of $q$ over $C$ and $B$ is the finite set coded by $b^{*}$ if we take $b_{i}$ the unique element of $B$ that lies on $B_{i}$ then $b_{i}$ realizes the generic type $\Sigma_{B_{i}}^{gen}(x)$. By Corollary \ref{alldef}, the type $\tp(b_{i}/\mathfrak{M})$ is $\ulcorner B_{i}\urcorner$-definable. If the torsors are closed, then the types $p_{B_{i}}(x)$ are all compatible under the action of $Aut(\mathfrak{M}/\ulcorner F \urcorner)$ as there is a unique complete extension of the generic type of $B_{i}$, this is guaranteed by Proposition \ref{uniqueclosed}. We now work the details for the open case, let's fix $\sigma \in Aut(\mathfrak{M}/\ulcorner F \urcorner)$ and assume that $\sigma(B_{i})=B_{j}$. The type $r(y)$ is $\ulcorner F \urcorner$-definable, thus $\sigma(r(y))=r(y)$. By construction, for all $k \leq n$
the type $p_{B_{k}}(x)$ that we are fixing is the unique extension of generic type of some closed ball $C_{\delta}(a_{k})$ where $a_{k} \in B_{i}$ and $\delta \vDash r(y)$. And for any $a,a' \in B_{i}$ and $\delta, \delta' \vDash r(y)$, $C_{\delta}(a)=C_{\delta'}(a')$.
If $\sigma(B_{i})=B_{j}$, then $\sigma(b_{i}) \in B_{j}$ and
\begin{equation*}
\sigma(C_{\delta}(b_{i}))= C_{\sigma(\delta)}(\sigma(b_{i}))=C_{\delta}(b_{j}).
\end{equation*}
By Proposition \ref{uniqueclosed} there is a unique complete extension of the generic type of the closed ball $C_{\delta}(a_{k})$ for each $k \leq n$, thus $\sigma(p_{B_{i}}(x))=p_{B_{j}}(x)$ as desired. 
 \end{proof}

\begin{notation}\label{fibra} Let $M \subseteq K^{n}$ be a non-trivial definable $\mathcal{O}$-module and let $Z= \bar{d}+M$ be a torsor. Let $\pi_{n}= K^{n} \rightarrow K$ be the projection into the last coordinate. Consider the function that describes the fiber in $Z$ of each element at the projection, this is $h_{Z}(x)=\{ y \in K^{n-1} \ | \ (y,x) \in Z\}$. 
\end{notation}
\begin{fact}\label{suma}Let $M$ be a $\mathcal{O}$-submodule of $K^{n}$. Then for any $x,z \in \pi_{n}(M)$ we have that 
\begin{equation*}
h_{M}(x)+h_{M}(y)=h_{M}(x+y).
\end{equation*}
Furthermore, if $Z=\bar{b}+M \in K^{n}/M$ is a torsor, then for any $d_{1},d_{2} \in \pi_{n}(Z)$ we have that $d_{1}-d_{2} \in \pi_{n}(N)$ and:
\begin{equation*}
    h_{N}(d_{1}-d_{2})=h_{Z}(d_{1})-h_{Z}(d_{2}).
\end{equation*}
\end{fact}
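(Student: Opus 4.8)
The plan is to verify \textbf{Fact \ref{suma}} by unwinding the definitions of the fiber functions $h_M$ and $h_Z$. Throughout, $\pi_n \colon K^n \to K$ is projection to the last coordinate and, for $x \in \pi_n(M)$, $h_M(x) = \{\, y \in K^{n-1} \mid (y,x) \in M \,\}$ is a coset of $h_M(0)$ (the kernel of $\pi_n$ restricted to $M$) whenever it is nonempty, so addition of such sets is the natural Minkowski sum, which on cosets of a fixed subgroup is again a single coset.

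First I would prove the additive statement for $M$ itself. Fix $x, z \in \pi_n(M)$; then $h_M(x)$ and $h_M(z)$ are nonempty, and $h_M(0) = \ker(\pi_n \restriction M)$ is an $\mathcal{O}$-submodule of $K^{n-1}$. For the inclusion $h_M(x) + h_M(z) \subseteq h_M(x+z)$, take $y_1 \in h_M(x)$ and $y_2 \in h_M(z)$, so $(y_1, x) \in M$ and $(y_2, z) \in M$; since $M$ is a module, $(y_1 + y_2, x + z) \in M$, i.e.\ $y_1 + y_2 \in h_M(x+z)$. For the reverse inclusion, fix any $y_1 \in h_M(x)$ (possible since $x \in \pi_n(M)$) and let $y \in h_M(x+z)$ be arbitrary; then $(y - y_1, z) = (y, x+z) - (y_1, x) \in M$, so $y - y_1 \in h_M(z)$, hence $y = y_1 + (y - y_1) \in h_M(x) + h_M(z)$. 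This gives $h_M(x) + h_M(z) = h_M(x+z)$.

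Next I would handle the torsor case. Write $Z = \bar b + M$, so $\pi_n(Z) = \pi_n(\bar b) + \pi_n(M)$ and, for $d \in \pi_n(Z)$, we have $h_Z(d) = \{\, y \mid (y,d) \in \bar b + M \,\}$, which is the coset $\bar b' + h_M(d - \pi_n(\bar b))$ for an appropriate translate; more usefully, $h_Z(d) = \{\, y \mid (y - (\bar b)_{<n},\ d - \pi_n(\bar b)) \in M \,\}$, i.e.\ $h_Z(d) = (\bar b)_{<n} + h_M(d - \pi_n(\bar b))$ where $(\bar b)_{<n}$ denotes the first $n-1$ coordinates of $\bar b$. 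Given $d_1, d_2 \in \pi_n(Z)$, both $d_i - \pi_n(\bar b)$ lie in $\pi_n(M)$, so $d_1 - d_2 = (d_1 - \pi_n(\bar b)) - (d_2 - \pi_n(\bar b)) \in \pi_n(M)$ (here $N$ in the statement is the module $M$, which is what $\pi_n(N)$ must refer to). Then, using the additive identity just established together with $h_M(-w) = -h_M(w)$ for $w \in \pi_n(M)$,
\begin{align*}
h_Z(d_1) - h_Z(d_2) &= \big((\bar b)_{<n} + h_M(d_1 - \pi_n(\bar b))\big) - \big((\bar b)_{<n} + h_M(d_2 - \pi_n(\bar b))\big) \\
&= h_M(d_1 - \pi_n(\bar b)) - h_M(d_2 - \pi_n(\bar b)) = h_M\big((d_1 - \pi_n(\bar b)) - (d_2 - \pi_n(\bar b))\big) = h_M(d_1 - d_2),
\end{align*}
which is the claimed formula (with $N = M$).

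There is no serious obstacle here: the only point requiring any care is making sure the Minkowski-sum and difference operations on the coset-valued functions $h_M, h_Z$ are well defined and interact correctly with $h_M(0)$ being a genuine submodule, and tracking how the base point $\bar b$ shifts the fibers of $Z$ relative to those of $M$. The identity $h_M(-w) = -h_M(w)$, used above, follows immediately from $(y,w) \in M \iff (-y,-w) \in M$. Everything else is the routine bookkeeping that the statement itself flags as "a straightforward computation."
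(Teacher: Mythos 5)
Your proof is correct, and since the paper explicitly leaves this fact to the reader as ``a straightforward computation,'' there is no proof in the paper to compare against; your argument is exactly the routine unwinding of definitions the authors had in mind. You also correctly identify and handle the two typographical slips in the statement (the $y$ that should be $z$, and the undefined $N$ that must be $M$).
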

\begin{proof}
This is a straightforward computation and it is left to the reader. 
\end{proof}
\begin{lemma}\label{codemodule} Let  $n \geq 2$ be a natural number and  $M \subseteq K^{n}$ be a definable $\mathcal{O}$-submodule. Then $\ulcorner M \urcorner$ is interdefinable with $\displaystyle{(\ulcorner \pi_{n}(M) \urcorner,\germ(h_{M}, p_{\pi_{n}(M)}))}$, where $p_{\pi_{n}}(M)$ is any complete extension of the generic type of $\pi_{n}(M)$.
\end{lemma}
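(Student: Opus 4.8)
The strategy is to establish interdefinability by showing inclusions in both directions of $\dcl^{eq}$. The easy direction is that both $\ulcorner \pi_n(M) \urcorner$ and $\germ(h_M, p_{\pi_n(M)})$ lie in $\dcl^{eq}(\ulcorner M \urcorner)$: the projection $\pi_n(M)$ is obtained from $M$ by a fixed definable operation, so its code is in $\dcl^{eq}(\ulcorner M\urcorner)$; and since $p_{\pi_n(M)}$ is $\ulcorner \pi_n(M)\urcorner$-definable (by Corollary \ref{alldef}, as it extends the generic type of the $1$-torsor $\pi_n(M)$, which is in fact an $\mathcal{O}$-submodule of $K$), and $h_M$ is a $\ulcorner M\urcorner$-definable function, the germ $\germ(h_M, p_{\pi_n(M)})$ lies in $\dcl^{eq}(\ulcorner M \urcorner)$. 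First I would record these observations.

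For the reverse direction, I would reconstruct $M$ from the pair $(\ulcorner \pi_n(M)\urcorner, \germ(h_M, p_{\pi_n(M)}))$. Set $N_0 = M \cap (K^{n-1}\times\{0\}) = \ker(\pi_n) = N \times \{0\}$ where $N \subseteq K^{n-1}$; by Fact \ref{suma}, for $x \in \pi_n(M)$ the fiber $h_M(x)$ is a coset of $N$, and $h_M(x) - h_M(y) = h_N(x-y)$ for $x,y \in \pi_n(M)$. The plan is: (i) recover $N$ — hence $N_0$ — from the germ, by noting that $N = h_M(x) - h_M(x')$ for generic independent $x, x' \models p_{\pi_n(M)}$, and that the code of this $\mathcal{O}$-module is definable over the germ (here one needs that $h_M$ restricted to the generic type determines $h_M(x) - h_M(x')$ up to equality, which is exactly what "same germ" gives, together with the fact that the difference is independent of the generic realization chosen, by Fact \ref{suma} applied to the module $N$ and Corollary \ref{diff}); (ii) recover $\pi_n(M)$ directly from its code; (iii) reconstruct $M$ as $M = \{(h_M(x), x) : x \in \pi_n(M)\}$, i.e. as the union of the graphs of the fibers — this is a definable set once we know $\pi_n(M)$, the module $N$, and the germ of $h_M$, since the germ together with $N$ pins down each coset $h_M(x)$ for generic $x$, and by the module/torsor structure (additivity in Fact \ref{suma}, plus the fact that $\pi_n(M)$ is generated by generic elements via Corollary \ref{diff} or \ref{stabadd}) this extends uniquely to all of $\pi_n(M)$.

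The main obstacle I anticipate is step (iii): making precise that the germ $\germ(h_M, p_{\pi_n(M)})$, together with $\ulcorner\pi_n(M)\urcorner$ and $\ulcorner N\urcorner$, actually determines $M$ and not just "$M$ up to something on a small set". The point to argue carefully is that if $M'$ is another $\mathcal{O}$-submodule with $\pi_n(M') = \pi_n(M)$, the same kernel $N$, and $h_{M'}$ having the same $p_{\pi_n(M)}$-germ as $h_M$, then $M' = M$. Given $x \models p_{\pi_n(M)}\!\upharpoonright_{\ulcorner M\urcorner \ulcorner M'\urcorner}$ we get $h_M(x) = h_{M'}(x)$; then for arbitrary $z \in \pi_n(M)$, using Corollary \ref{diff} (or \ref{stabadd}) applied to the $\mathcal{O}$-module $\pi_n(M) \subseteq K$ — whose generic type we are using — write $z = x - x'$ with $x, x'$ both realizing $p_{\pi_n(M)}$, and then $h_M(z) = h_N(x) + (\text{fixed coset data})$... more precisely, $h_M(x) - h_M(x') = h_N(x - x') = h_N(z)$ gives $h_M$ on differences, and combined with a single value $h_M(x_0)$ for one generic $x_0$ (which equals $h_{M'}(x_0)$) and additivity, one gets $h_M = h_{M'}$ on all of $\pi_n(M)$, hence $M = M'$. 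So the whole thing is then an automorphism argument: any $\sigma \in \mathrm{Aut}(\mathfrak{M})$ fixing $\ulcorner\pi_n(M)\urcorner$ and $\germ(h_M, p_{\pi_n(M)})$ fixes $\ulcorner N\urcorner$ (by step (i)) and sends $M$ to an $M'$ with the same projection, kernel and germ, hence fixes $\ulcorner M\urcorner$. This shows $\ulcorner M\urcorner \in \dcl^{eq}(\ulcorner\pi_n(M)\urcorner, \germ(h_M, p_{\pi_n(M)}))$, completing the interdefinability.
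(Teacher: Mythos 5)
Your proof is correct and follows essentially the same route as the paper's: both observe that the germ together with the projection pins down $h_M$ on sufficiently generic realizations of $p_{\pi_n(M)}$, and then use Fact~\ref{suma} (additivity of the fiber map) together with Corollary~\ref{diff}/Proposition~\ref{stabadd} to write an arbitrary $y \in \pi_n(M)$ as a difference of two generics and propagate the agreement $h_M = h_{M'}$ to all of $\pi_n(M)$. Two small remarks: in the module case Fact~\ref{suma} gives $h_M(x) - h_M(x') = h_M(x-x')$ directly, not $h_N(x-x')$ (you have slipped into the torsor version of that fact, which is what Corollary~\ref{torgen} needs, not this lemma), and your preliminary step (i) of recovering the kernel $N$ is not actually needed — the identity $h_M(y) = h_M(c) - h_M(d)$ with $c,d$ generic already determines $M$ directly, which is exactly how the paper phrases it.
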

\begin{proof}
Let $M_{1}$ and $M_{2}$ be $\mathcal{O}$-modules of the same type. Suppose that $A=\pi_{n}(M_{1})= \pi_{n}(M_{2})$, and $\germ(h_{M_{1}},p_{A})=\germ(h_{M_{2}},p_{A})$. We must show that $M_{1}$ and $M_{2}$ are the same $\mathcal{O}$-module.\\

Let $c$ be a realization of the type $p_{A}(x)$ sufficiently generic over $\ulcorner M_{1} \urcorner \ulcorner M_{2} \urcorner$ and $d=c-y$. By Proposition \ref{stabadd} $d$ is a realization of $p_{A}(x)$ sufficiently generic over $\ulcorner M_{1} \urcorner \ulcorner M_{2} \urcorner$, and $y=c-d$.  
 As $\germ(h_{M_{1}},p_{A})=\germ(h_{M_{2}},p_{A})$, we have that $h_{M_{1}}(c)=h_{M_{2}}(c)$ and $h_{M_{1}}(d)=h_{M_{2}}(d)$. By Fact \ref{suma}, $h_{M_{1}}(y)=h_{M_{1}}(c)-h_{M_{1}}(d)=h_{M_{2}}(c)-h_{M_{2}}(d)=h_{M_{2}}(y)$. Consequently, $M_{1}=M_{2}$ as desired. 
\end{proof}
\begin{corollary}\label{torgen} Let $n \geq 2$ be a natural number and $N\subseteq K^{n}$ be a definable $\mathcal{O}$-submodule. Let $Z= \bar{b}+N$ be a torsor,  then $\ulcorner Z \urcorner$ is interdefinable with $\displaystyle{(\ulcorner \pi_{n}(Z) \urcorner, \germ(h_{Z},  p_{\pi_{n}(Z)}))}$, where $p_{\pi_{n}}(Z)$ is any global type containing the generic type of $\pi_{n}(Z)$.
\end{corollary}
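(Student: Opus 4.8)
The plan is to reduce the statement for torsors to the statement for modules, which is Lemma \ref{codemodule}, using the bijection of Proposition \ref{torsormodule} between the code of a torsor $Z$ of type $(I_1,\dots,I_n)$ and the code of an associated $\mathcal{O}$-module $L = L_{\bar d} \subseteq K^{n+1}$ of type $(I_1,\dots,I_n,\mathcal{O})$, where $L_{\bar d} = N_2 + \bar b\mathcal{O}$ with $N_2 = N\times\{0\}$ and $\bar b = \begin{bmatrix}\bar d\\1\end{bmatrix}$. Since $\ulcorner Z\urcorner$ and $\ulcorner L\urcorner$ are interdefinable, it suffices to show that $\ulcorner L\urcorner$ is interdefinable with the pair $(\ulcorner \pi_n(Z)\urcorner, \germ(h_Z, p_{\pi_n(Z)}))$. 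Alternatively, and perhaps more transparently, one argues directly, mimicking the proof of Lemma \ref{codemodule}, replacing Proposition \ref{stabadd} (additive stabilization of a generic type of a module) by the translation statement in Corollary \ref{diff}: one realization of a generic type of $Z$ is a translate of another by an element of $\pi_n(N)(\mathfrak{M})$.

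\textbf{Key steps.} First I would fix two torsors $Z_1 = \bar b_1 + N$ and $Z_2 = \bar b_2 + N$ of the same type (note $\pi_n(Z_1), \pi_n(Z_2)$ are torsors over $\pi_n(N)$) with $A := \pi_n(Z_1) = \pi_n(Z_2)$ and $\germ(h_{Z_1}, p_A) = \germ(h_{Z_2}, p_A)$, and show $Z_1 = Z_2$. Pick $c \models p_A(x)$ sufficiently generic over $\ulcorner Z_1\urcorner\ulcorner Z_2\urcorner$; by Corollary \ref{alldef} the type $p_A$ is $\ulcorner A\urcorner$-definable, and by Proposition \ref{uniqueclosed}/\ref{open} the generic type behaves well. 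The germ hypothesis gives $h_{Z_1}(c) = h_{Z_2}(c)$. Now take a second generic realization $d$: concretely, fix any $a_0 \in \pi_n(N)(\mathfrak{M})$ and set $d = c - a_0$; by the analogue of Proposition \ref{stabadd} for the torsor $A$ (which holds because $A$ is a coset of $\pi_n(N)$ and the generic type of a torsor is stabilized by translation by the generic type of its underlying module — this follows from the argument of Proposition \ref{stabadd} applied to $A$ after translating to $\pi_n(N)$), $d$ is again a sufficiently generic realization of $p_A$, so $h_{Z_1}(d) = h_{Z_2}(d)$. By the second part of Fact \ref{suma}, $h_N(c - d) = h_{Z_i}(c) - h_{Z_i}(d)$ for $i = 1,2$, hence $h_N(a_0) = h_{Z_1}(c) - h_{Z_1}(d) = h_{Z_2}(c) - h_{Z_2}(d) = h_N(a_0)$, and more importantly the fiber functions $h_{Z_1}$ and $h_{Z_2}$ agree on all of $A$: indeed $h_{Z_1}(a) = h_{Z_1}(c) + h_N(a - c) = h_{Z_2}(c) + h_N(a - c) = h_{Z_2}(a)$ for every $a \in A(\mathfrak{M})$, using $a - c \in \pi_n(N)$. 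Since a torsor is determined by its projection to the last coordinate together with all its fibers, $Z_1 = Z_2$. The easy direction, that $(\ulcorner \pi_n(Z)\urcorner, \germ(h_Z, p_{\pi_n(Z)})) \in \dcl^{eq}(\ulcorner Z\urcorner)$, is immediate since both $\pi_n(Z)$ and the function $h_Z$ are definable from $Z$, and germs of definable functions over $\ulcorner Z\urcorner$-definable types are in $\dcl^{eq}(\ulcorner Z\urcorner)$.

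\textbf{Main obstacle.} The one point requiring care is the claim that the generic type $p_A$ of the torsor $A = \pi_n(Z)$ is stabilized under translation by elements of $\pi_n(N)(\mathfrak{M})$, so that $d = c - a_0$ is still generic; Proposition \ref{stabadd} is stated for modules (containing $0$), not arbitrary torsors, so I would either invoke it after fixing a base point $a_1 \in A(\mathfrak{M})$ and translating $A$ to the module $\pi_n(N)$ — observing that $\Sigma_A^{gen}(x)$ translates to $\Sigma_{\pi_n(N)}^{gen}(x - a_1)$ and that translating by $a_0 \in \pi_n(N)$ commutes with this — or reprove the one-line invariance argument directly for $A$. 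Everything else (the congruence/coset bookkeeping for why $d$ satisfies $p_A\!\restriction_{\ulcorner Z_1\urcorner\ulcorner Z_2\urcorner}$, and the computation in Fact \ref{suma}) is routine and parallels Lemma \ref{codemodule} verbatim, so I would state it as "the proof follows exactly as in Lemma \ref{codemodule}, replacing Proposition \ref{stabadd} by its torsor analogue and Corollary \ref{diff} accordingly."
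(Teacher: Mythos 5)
There is a genuine gap. Your key-steps argument fixes \emph{two cosets of the same module} $N$: $Z_1=\bar b_1+N$, $Z_2=\bar b_2+N$, and shows they must coincide if their last-coordinate projections and fiber germs agree. That establishes only that $\ulcorner Z\urcorner\in\dcl^{eq}\big(\ulcorner N\urcorner,\ulcorner\pi_n(Z)\urcorner,\germ(h_Z,p_{\pi_n(Z)})\big)$. But the corollary asserts interdefinability with the pair $(\ulcorner\pi_n(Z)\urcorner,\germ(h_Z,p_{\pi_n(Z)}))$ alone, with no $\ulcorner N\urcorner$ in the base. An automorphism $\sigma$ fixing $\ulcorner\pi_n(Z)\urcorner$ and $\germ(h_Z,p_{\pi_n(Z)})$ sends $Z$ to a coset of $\sigma(N)$, which need not equal $N$; your use of Fact~\ref{suma} in the form $h_{Z_i}(a)-h_{Z_i}(c)=h_N(a-c)$ for both $i$ implicitly assumes that both torsors are cosets of the same module, which is exactly what is not yet known. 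So the step you actually carry out is the first (and easier) of the two steps in the paper's proof, and the second step is missing entirely.

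The missing step is the substantial part of the proof in the paper: one must show $\ulcorner N\urcorner\in\dcl^{eq}(\ulcorner\pi_n(Z)\urcorner,\germ(h_Z,p_{\pi_n(Z)}))$. The paper does this by first noting $\ulcorner\pi_n(N)\urcorner\in\dcl^{eq}(\ulcorner\pi_n(Z)\urcorner)$, then constructing a definable type $p_{\pi_n(N)}$ as the law of $d_2-d_1$ where $(d_2,d_1)\vDash p_{\pi_n(Z)}\otimes p_{\pi_n(Z)}$, checking that this type contains $\Sigma^{gen}_{\pi_n(N)}$, showing via Fact~\ref{suma} that $\germ(h_N,p_{\pi_n(N)})$ is fixed by any $\sigma\in Aut(\mathfrak{M}/\ulcorner\pi_n(Z)\urcorner,\germ(h_Z,p_{\pi_n(Z)}))$, and finally applying Lemma~\ref{codemodule} to conclude $\ulcorner N\urcorner$ is definable over the pair. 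None of this appears in your proposal. As a minor point, the step you do prove is also done more simply in the paper: distinct cosets of $N$ are disjoint, so $h_{Z_1}(c)=h_{Z_2}(c)\neq\emptyset$ already forces $Z_1=Z_2$; the translation argument with $d=c-a_0$ and a torsor analogue of Proposition~\ref{stabadd} is unnecessary there, and your sketched reduction via Proposition~\ref{torsormodule} would yield interdefinability with $(\ulcorner\mathcal{O}\urcorner,\germ(h_L,p_{\mathcal{O}}))$ for $L\subseteq K^{n+1}$, not with the pair in the statement, so it also does not close the gap without additional work.
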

\begin{proof}
We first show that  $\ulcorner Z \urcorner$ is interdefinable with $\big(\ulcorner \pi_{n}(Z) \urcorner, \ulcorner N \urcorner,\germ(h_{Z}, p_{\pi_{n}(Z) }) \big)$. Let $Z_{1}=\bar{b}_{1}+N$ and $Z_{2}=\bar{b}_{2}+N$ torsors, and suppose that $A=\pi_{n}(Z_{1})= \pi_{n}(Z_{2})$.  Let $c$ be a realization of the type $p_{A}(x)$ sufficiently generic over $\ulcorner Z_{1} \urcorner \ulcorner Z_{2} \urcorner$, then $h_{Z_{1}}(c)=h_{Z_{2}}(c)$. If $Z_{1}\neq Z_{2}$, then they must be disjoint because they are different cosets of $N$. But if $h_{Z_{1}}(c)=h_{Z_{2}}(c)$ then $Z_{1} \cap Z_{2} \neq \emptyset$, so  $Z_{1}= Z_{2}$.\\
We continue showing that $N$ is definable over $(\ulcorner \pi_{n}(Z) \urcorner, \germ(h_{Z}, p_{ \pi_{n}(Z)}))$. We will find a global type $p_{\pi_{n}(N)}(x)$ extending the generic type of $\pi_{n}(N)$ such that 
\begin{equation*}
(\ulcorner \pi_{n}(N) \urcorner, \germ(h_{N},p_{\pi_{n}(N)})) \in \dcl^{eq}(\ulcorner \pi_{n}(Z) \urcorner,  \germ(h_{Z}, p_{\pi_{n}(Z)})).
\end{equation*}
By Lemma \ref{codemodule}, this guarantees that $\ulcorner N \urcorner\in \dcl^{eq}(\ulcorner \pi_{n}(Z) \urcorner,  \germ(h_{Z}, p_{\pi_{n}(Z)}))$.\\
First, let $y'\in \pi_{n}(Z)$, then $\pi_{n}(N)=\{ y- y' \ | \ y \in \pi_{n}(Z)\}$. As this definition is independent from the choice of $y'$, we have $\ulcorner \pi_{n}(N) \urcorner \in \dcl^{eq}(\ulcorner \pi_{n}(Z) \urcorner)$.
\begin{claim} Let $q(x_{2},x_{1})=p_{\pi_{n}(Z)}(x_{2}) \otimes p_{\pi_{n}(Z)}(x_{1})$ and $(d_{2},d_{1})\vDash q(x_{2},x_{1})$. \\
Then $\Sigma_{\pi_{n}(N)}^{gen}(y) \subseteq \tp(d_{2}-d_{1}/\mathfrak{M})$.
\end{claim}
\begin{proof}
We proceed by contradiction and we assume the existence of some proper $\mathfrak{M}$-definable subtorsor $B\subseteq \pi_{n}(N)$ such that $d_{2}-d_{1} \in B$. Then $d_{2}\in d_{1}+B \subsetneq \pi_{n}(Z)$, and $d_{1}+B$ is a proper $\mathfrak{M}d_{1}$-definable torsor of $\pi_{n}(Z)$, but this contradicts that $d_{2}$ is a sufficiently generic realization of $\Sigma_{\pi_{n}(Z)}^{gen}(x)$ over $\mathfrak{M}d_{1}$. 
\end{proof}
Let $p_{\pi_{n}(N)}(x)=\tp(d_{2}-d_{1}/\mathfrak{M})$, we observe that such type is independent of the choices of $d_{1}$ and $d_{2}$ as the congruence and coset formulas are completely determined in the type $q(x_{2},x_{1})$.\\
It is only left to show that $\germ(h_{N},p_{\pi_{n}(N)}) \in \dcl^{eq}(\ulcorner \pi_{n}(Z) \urcorner,  \germ(h_{Z}, p_{\pi_{n}(Z)}))$. \\
Let $\sigma \in Aut(\mathfrak{M}/(\ulcorner \pi_{n}(Z) \urcorner,  \germ(h_{Z}, p_{\pi_{n}(Z)})))$, we will show that $h_{N}(x)=h_{\sigma(N)}(x) \in p_{\pi_{n}(N)}(x)$. Because $p_{\pi_{n}(N)}(x)$ is $\ulcorner\pi_{n}(N)\urcorner$-definable then $\sigma(p_{\pi_{n}(N)}(x))=p_{\pi_{n}(N)}(x)$. As $\sigma(\germ(h_{Z},p_{\pi_{n}(Z)}))=\germ(h_{Z},p_{\pi_{n}(Z)})$, then $h_{Z}(x)=h_{\sigma(Z)}(x) \in p_{\pi_{n}(Z)}$. Let $C=\{ \ulcorner Z \urcorner, \ulcorner \sigma(Z)\urcorner, \ulcorner N \urcorner, \ulcorner \sigma(N) \urcorner\}$.  In particular, if $d_{1}\vDash p_{\pi_{n}(Z)}\upharpoonright_{C}$ and $d_{2}\vDash p_{\pi_{n}(Z)}\upharpoonright_{{C}d_{1}}$ then $h_{Z}(d_{1})=h_{\sigma(Z)} (d_{1})$ and $h_{Z}(d_{2})=h_{\sigma(Z)}(d_{2})$. By Fact \ref{suma},
\begin{equation*}
    h_{N}(d_{2}-d_{1})=h_{Z}(d_{2})-h_{Z}(d_{1})=h_{\sigma(Z)}(d_{2})-h_{\sigma(Z)}(d_{1})=h_{\sigma(N)}(d_{2}-d_{1}).
\end{equation*}
Consequently $\sigma(\germ(h_{N},p_{\pi_{n}(N)}))=\germ(h_{N},p_{\pi_{n}(N)})$. Because $\sigma$ is arbitrary, we conclude that 
\begin{equation*}
\germ(h_{N},p_{\pi_{n}(N)})\in \dcl^{eq}(\ulcorner \pi_{n}(Z) \urcorner,  \germ(h_{Z}, p_{\pi_{n}(Z)})), \ \text{as required.}
\end{equation*}
\end{proof}
\subsection{Some coding lemmas}\label{lemas2}
\begin{lemma}\label{injectiveresidue} Let $A$ be a definable $\mathcal{O}$-lattice in $K^{n}$ and  $U \in K^{n}/A$ be a torsor. Let $B$ be the $\mathcal{O}$-lattice in $K^{n+1}$ that is interdefinable with $U$ (given by Proposition \ref{torsormodule}).Then there is a $\ulcorner U \urcorner$- definable injection :
\begin{center}
$f=\begin{cases}
 \red(U) &\rightarrow \red(B)\\
 b+\mathcal{M}A &\mapsto (b,1)+\mathcal{M}B.
\end{cases}$
\end{center}
\end{lemma}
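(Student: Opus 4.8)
The plan is to unwind the construction in Proposition \ref{torsormodule} and check that the stated map is well-defined, additive-compatible, and injective. Recall that if $U = \bar d + A$ with $A \subseteq K^n$ an $\mathcal{O}$-lattice, then $B = L_{\bar d} = (A \times \{0\}) + \bar b \mathcal{O}$, where $\bar b = \begin{bmatrix}\bar d\\ 1\end{bmatrix}$; explicitly $B = \{(\,n + \bar d r,\, r\,) \mid n \in A,\ r \in \mathcal{O}\}$, and $U$ is recovered as $\pi_{2\le n+1}(B \cap (K^n \times \{1\}))$. So an element $b \in U$ corresponds to $(b,1) \in B$, and the map $f$ is the candidate descent of $b \mapsto (b,1)$ to the quotients $\red(U) = U/\mathcal{M}A$ and $\red(B) = B/\mathcal{M}B$. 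Note first that $\mathcal{M}B = \mathcal{M}A \times \{0\} + \bar b \mathcal{M} = \{(\,m + \bar d s,\, s\,) \mid m \in \mathcal{M}A,\ s \in \mathcal{M}\}$; I will want this description explicitly.

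First I would verify well-definedness: if $b, b' \in U$ with $b - b' \in \mathcal{M}A$, then $(b,1) - (b',1) = (b-b', 0) \in \mathcal{M}A \times \{0\} \subseteq \mathcal{M}B$, so $f$ is well-defined on $\red(U)$. (Strictly $\red(U)$ means $U/E$ where $E(b,b')$ iff $b-b'\in\mathcal{M}A$, matching the notation for $\red$ of a torsor.) The map is visibly $\ulcorner U\urcorner$-definable since $\bar d$ — or rather the torsor $U$ and its associated module $A$, both in $\dcl^{eq}(\ulcorner U\urcorner)$ — is all that is needed to write it down, and the value $1$ in the last coordinate is a constant. Next, injectivity: suppose $f(b + \mathcal{M}A) = f(b' + \mathcal{M}A)$, i.e. $(b - b', 0) \in \mathcal{M}B$. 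By the explicit description of $\mathcal{M}B$, this means $(b-b', 0) = (m + \bar d s, s)$ for some $m \in \mathcal{M}A$, $s \in \mathcal{M}$; comparing last coordinates forces $s = 0$, hence $b - b' = m \in \mathcal{M}A$, so $b + \mathcal{M}A = b' + \mathcal{M}A$ in $\red(U)$. This gives injectivity.

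The only genuinely substantive point — and the one I expect to be the main obstacle, though a minor one — is pinning down the precise description of $\mathcal{M}B$ and confirming the last-coordinate bookkeeping in the definitions of $B$, $\mathcal{M}B$, and of the embedding $K^n \times \{1\} \hookrightarrow K^{n+1}$; everything else is a formal check. One should also confirm that $\red(U)$ and $\red(B)$ are genuinely $\mathcal{O}/\mathcal{M} = k$-vector spaces so that "injection" is the right notion (it is, by Corollary \ref{basismodule}, since $A$ and $B$ are $\mathcal{O}$-lattices, i.e. of type $(\mathcal{O},\dots,\mathcal{O})$), but the lemma as stated only claims $f$ is a definable injection of sets, so even this is not strictly needed. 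I would write the proof as: recall $B$ and $\mathcal{M}B$ explicitly from Proposition \ref{torsormodule}; check $f$ well-defined; observe $f$ is $\ulcorner U \urcorner$-definable; check injectivity via the last-coordinate argument above. This is short enough that I would present it directly rather than deferring any part of it to the reader.
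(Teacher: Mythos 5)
Your proof is correct and follows essentially the same route as the paper's: both reduce the lemma to checking that for $b,b' \in U$, $b-b' \in \mathcal{M}A$ if and only if $(b,1)-(b',1) \in \mathcal{M}B$, and then note $\ulcorner U \urcorner$-definability. The paper dismisses the equivalence as ``a standard computation,'' whereas you supply it explicitly via the description $\mathcal{M}B = \{(m + \bar d s,\, s) : m \in \mathcal{M}A,\ s \in \mathcal{M}\}$ and the last-coordinate argument forcing $s=0$; this is exactly the computation the paper has in mind.
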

\begin{proof}
We recall how the construction of $B$ was achieved. Given any $\bar{d} \in U$, we can represent $B= A_{2}+\begin{bmatrix}  \bar{d} \\ 1 \end{bmatrix} \mathcal{O}$, where $A_{2}=\{0\}\times A$. This definition is independent from the choice of $\bar{d}$. We consider the $\ulcorner U \urcorner$ definable injection $\phi= U \rightarrow B$ that sends each element $\bar{b}$ to $\begin{bmatrix} \bar{b} \\ 1 \end{bmatrix}$. The interpretable sets  $\red(U)$ and $\red(B)=B/\mathcal{M}B$ are both $\ulcorner U \urcorner$-definable. It follows by a standard computation that for any $b,b' \in U$, $b-b' \in \mathcal{M}A$ if and only if $ \begin{bmatrix} b \\ 1 \end{bmatrix}- \begin{bmatrix} b' \\ 1 \end{bmatrix} \in \mathcal{M}B$. This shows that the map $f$ is a  $\ulcorner U \urcorner$-definable injection. 
\end{proof}

\begin{lemma}\label{codetorsors} Let $F$ be a primitive finite set of $1$-torsors,  then $F$ can be coded in $\mathcal{G}$. 
\end{lemma}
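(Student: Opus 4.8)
\textbf{Proof plan for Lemma \ref{codetorsors}.}

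The plan is to reduce the coding of a primitive finite set $F = \{B_1,\dots,B_n\}$ of $1$-torsors to coding an $\mathcal{O}$-lattice and then a finite set internal to the residue field, where elimination of imaginaries is already available. First I would normalize the situation: by primitivity and the transitivity of $\mathrm{Aut}(\mathfrak{M}/\ulcorner F\urcorner)$ on $F$, all the $B_i$ have the same type $I \in \mathcal{I}$, and after scaling by a common factor we may write $B_i = c_i + bI$ for fixed $b \in K$. Using Proposition \ref{torsormodule} each $B_i$ is interdefinable with an $\mathcal{O}$-module $L_i \subseteq K^2$ of a fixed type $(I,\mathcal{O})$, so $F$ is interdefinable with the finite set $\{L_1,\dots,L_n\}$ of submodules of $K^2$; it therefore suffices to code a primitive finite set of $\mathcal{O}$-submodules of $K^2$ of a fixed type, which are all scalings of a single canonical module $C_{(I,\mathcal{O})}$, hence parametrized by a coset space $B_2(K)/\mathrm{Stab}_{(I,\mathcal{O})}$.

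Next I would invoke Lemma \ref{tipomaster}: there is a $\ulcorner W^*\urcorner$-definable type $q$ concentrated on the set $W^*$ of codes of ``transversals'' $\{x_1,\dots,x_n\}$ with $x_i \in B_i$, together with compatible $\ulcorner B_i\urcorner$-definable generic types $p_{B_i}$ which $\mathrm{Aut}(\mathfrak{M}/\ulcorner F\urcorner)$ permutes the same way it permutes the $B_i$. The strategy is then to produce, over a generic realization, a common enveloping $\mathcal{O}$-lattice $A$ (e.g. the lattice generated by the closed balls of a suitable common radius around generic points of the torsors, which by Lemma \ref{tipomaster} does not depend on the choice of transversal) that is $\ulcorner F\urcorner$-definable and contains appropriate translates; inside $\mathrm{red}(A)$ — or rather inside the residue $k$-vector space of an associated lattice supplied by Lemma \ref{injectiveresidue} and Proposition \ref{torsormodule} — the images of the $n$ torsors become a finite set of points of the multi-sorted structure $\mathrm{VS}_{k,C}$, where $C$ is the $\mathcal{G}$-definable closure of $\ulcorner F\urcorner$. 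By Theorem \ref{EIinternalresidue} that finite set of residue vectors has a code in $\mathrm{VS}_{k,C}$, hence in $\mathcal{G}$ over $C$; conversely, knowing the enveloping lattice $A$ and the finite set of residue images determines the original $B_i$ (the torsor is recovered as the preimage of its residue class intersected with the appropriate coset), so $\ulcorner F\urcorner$ is interdefinable with a tuple from $\mathcal{G}$.

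To make the previous paragraph precise one must distinguish the closed and open cases exactly as in Lemma \ref{tipomaster}: in the closed case each $B_i = c_i + b\mathcal{O}$ already has a residue class in a one-dimensional $k$-vector space attached to the common lattice, and in the open case one first passes to closed balls of a generic radius $\delta$ realizing the $\ulcorner S_{bI}\urcorner$-definable generic type of the end-segment $S_{bI}$, observing that the resulting configuration of closed balls, and hence its residue-field avatar, is $\ulcorner F\urcorner$-definable because any two choices of radius or center give the same set of closed balls. The main obstacle I expect is precisely this last point: verifying that the enveloping lattice and the residue-field image can be chosen $\ulcorner F\urcorner$-definably (not merely over a generic realization) and that the passage to $\mathrm{VS}_{k,C}$ loses no information — i.e. that the map from $F$ to its residue-field avatar is injective on the relevant configuration — which requires a careful bookkeeping of which torsor lands in which residue class and an appeal to primitivity to rule out the degenerate collapses. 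Once that is in place, the combination of Lemma \ref{tipomaster}, Lemma \ref{injectiveresidue}, Proposition \ref{torsormodule}, and Theorem \ref{EIinternalresidue} yields the code.
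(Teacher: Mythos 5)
Your approach is genuinely different from the paper's and, as written, has a gap that is not merely technical bookkeeping but a structural obstruction. The paper codes $F$ by constructing an $\mathcal{O}$-module of polynomials
$J_{F}= \{ Q(x) \in K[x] \mid \deg Q \leq |F| \text{ and for all } x \in T,\ v(Q(x)) \in v(b)+(|F|-1)\delta+ S_{I}\}$
(where $T=\bigcup_{t\in F}t$ and $\delta$ is the common ``distance'' between the torsors), proves $\ulcorner J_{F}\urcorner$ is interdefinable with $\ulcorner F\urcorner$ by examining the roots of monic polynomials, and then finishes by Lemma \ref{modulesok}. No residue field structure is invoked.

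The step in your proposal that fails is the claimed interdefinability of $\ulcorner F \urcorner$ with the pair (enveloping lattice, set of residue images). If $F=\{B_{1},\dots,B_{n}\}$ with $B_{i}=c_{i}+bI$, the smallest closed torsor $U=c+b'\mathcal{O}$ containing $\bigcup_{i}B_{i}$ has radius $\delta=v(b')$ strictly below the radius of the $B_{i}$ (since $\delta\notin v(b)+S_{I}$), so each $B_{i}$ is a \emph{proper} subset of its class $c_{i}+b'\mathcal{M}$ in $\red(U)$. Consequently the class does not determine the torsor: replacing $c_{1}$ by $c_{1}'=c_{1}+d$ with $\delta< v(d)$ but $d\notin bI$ gives a different torsor $B_{1}'=c_{1}'+bI\neq B_{1}$ in the same class of $\red(U)$, with the same enveloping $U$ and the same residue avatar. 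Hence $(\ulcorner s\urcorner,\ulcorner g(F)\urcorner)$ does not pin down $\ulcorner F\urcorner$, and the phrase ``intersected with the appropriate coset'' has no candidate: there is no canonical coset of $bI$ inside the class available over the data you have retained. This is also why the paper's Lemma \ref{tricktorsors} only asserts the \emph{existence} of an $\ulcorner F\urcorner$-definable injection $g: F\to VS_{k,\ulcorner s\urcorner}$ and is used, in Lemma \ref{codingeasy}, only \emph{after} $\ulcorner F\urcorner\in\mathcal{G}$ has already been secured by Lemma \ref{codetorsors}; using it to prove Lemma \ref{codetorsors} is circular. You correctly flag ``the passage to $VS_{k,C}$ loses no information'' as the main obstacle, but it is in fact false rather than a point needing care, and the polynomial-module device is the paper's way around it.
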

\begin{proof}
If $|F|=0$ or $|F|=1$ the statement follows clearly. So we may assume that $|F|> 1$. By primitivity all the torsors in $F$ are translates of the same $\mathcal{O}$-submodule of $K$.  Indeed, there are some $b \in K$ and $I \in \mathcal{I}$ that for any $t \in F$ there is some $a_{t} \in K$ satisfying $t=a_{t}+bI$. Moreover, there is some $\delta \notin v(b)+S_{I}$ such that for any two different torsors $t, t' \in F$ if $x \in t$ and $y \in t'$ then $v(x-y)=\delta$.  Let $\displaystyle{T=\bigcup_{t \in F} t}$. We define
 \begin{align*}
 J_{F}= \{ Q(x) \in K[x] \ | \ \text{$Q(x)$ has degree at most $|F|$ and for all $x \in T$,}  \ v(Q(x)) \in v(b)+(|F|-1)\delta+ S_{I}\}. 
 \end{align*}
 \textit{Step $1$: $\ulcorner J_{F} \urcorner$ is interdefinable with $\ulcorner F \urcorner$.}\\
 Observe that $J_{F}$ is definable over $\ulcorner F \urcorner$, because $v(b), \ulcorner T \urcorner, \delta$ lie in $\dcl^{eq}(\ulcorner F \urcorner)$. Hence, it is sufficient to prove that we can recover $F$ from $J_{F}$. For this we will show that given a monic polynomial $Q(x) \in K[x]$ with exactly $|F|$-different roots in $K$ each of multiplicity one, we have that $Q(x) \in J_{F}$ if and only if $Q(x)$ satisfies all the following condition:\\

    \emph{Condition}: Let $\{ \beta_{1},\dots,\beta_{|F|}\} \subseteq K$ be the set of all the roots of $Q(x)$ (note that all of them are different). For each $1\leq i \leq |F|$ there is some $t \in F$ such that $\beta_{i} \in t$. And all the roots of $Q(x)$ lie in different torsors, i.e. if $i\neq j$, take $t,t' \in F$ such that $\beta_{i} \in t$ and $\beta_{j} \in t'$ then $t \neq t'$.\\

We first show that a monic polynomial $Q(x)$ with exactly $|F|$-different roots in $K$ each of multiplicity one satisfying the condition above belongs to $J_{F}$. Let $R=\{ \beta_{1},\dots,\beta_{|F|}\}\subseteq K$ the set of all the (different) roots of $Q(x)$. Let $x \in T$, then there is some $t \in F$ such that $x \in t$. Let $\beta_{i}$ be the root of $Q(x)$ that belongs to $t$, then $x,\beta_{i} \in t$, so $v(x-\beta_{i}) \in v(b)+S_{I}$. For any other index $j \neq i$, let $t' \in F$ be such that $\beta_{j} \in t'$, because $t\neq t'$, $\ v(x-\beta_{j})=\delta$. Summarizing we have: 
 \begin{align*}
 v \big(Q(x) \big)= v \big( \prod_{k\leq |F|} (x-\beta_{k}) \big)=\underbrace{v(x-\beta_{i})}_{\in v(b)+S_{I}}+\underbrace{\sum_{j \neq i} v(x-\beta_{j})}_{=(|F|-1)\delta} \in v(b)+(|F|-1)\delta + S_{I}.  
 \end{align*}
 Consequently, $Q(x) \in J_{F}$.  For the converse, let $Q(x)\in J_{F}$ be a monic polynomial with exactly $|F|$-different roots $R=\{ \beta_{1},\dots,\beta_{|F|}\}\subseteq K$. We show that $Q(x)$ satisfies the condition, i.e. each root belongs to some torsor $t \in F$ and any two different roots belong to different torsors of $F$. \\
 \begin{claim}\label{delta} Given any torsor $t \in F$, there is a unique root $\beta \in R$ such that for all elements $x \in t$, $v(x-\beta)>\delta$.
 \end{claim}
\begin{proof}
 Let $t \in F$ be a fixed torsor. We first show the existence of some root $\beta \in R$ such that for any $x \in t$, we have $v(x-\beta)>\delta$. We argue by contradiction, so 
 let $t \in F$ and assume that there is no root $\beta \in B$ such that $v(x-\beta)> \delta$ for all $x \in t$. Then for each element $x \in t$ we have: 
 \begin{align*}
 v(Q(x))=v\big( \prod_{i \leq |F| } (x- \beta_{i})\big)= \sum_{i \leq |F|} v(x-\beta_{i}) \leq |F| \delta.
 \end{align*}
In this case  $Q(x) \notin J_{F}$,  because $|F|\delta  \notin v(b)+(|F|-1)\delta+S_{I}$ as $\delta \notin v(b)+S_{I}$. This concludes the proof for existence.\\
For uniqueness, let $\{ t_{1},\dots, t_{|F|}\}$ be some fixed enumeration of $F$. Let $\beta_{i} \in R$ be such that for all $x \in t_{i}$ we have $v(x-\beta_{i})>\delta$. We first argue that for any $i \neq j$, we must have that $\beta_{i}\neq \beta_{j}$. Suppose by contradiction that $\beta_{i}=\beta_{j}=\beta$, and let $x \in t_{i}$ and $y \in t_{j}$, then:
\begin{equation*}
   \delta= v(x-y)=v((x-\beta)+(\beta-y))\geq \min\{ v(x-\beta), v(y-\beta)\}>\delta.
\end{equation*}
The uniqueness now follows because $|F|=|R|$. 
\end{proof}

By Claim \ref{delta}, we can fix an enumeration $\{ t_{i} \ | \ i \leq |F|\}$ of $F$  such that for any $x \in t_{i}$, $v(x-\beta_{i})>\delta$. We note that if $j \neq i$, then for any $x \in t_{i}$ we have that $v(x-\beta_{j})=\delta$. In fact, fix some $y \in t_{j}$, as $v(y-\beta_{j})>\delta$ we have:
\begin{equation*}
    v(x-\beta_{j})= v((x-y)+(y-\beta_{j}))=\min\{ \underbrace{v(x-y)}_{=\delta}, \underbrace{v(y-\beta_{j})}_{>\delta}\}=\delta.
\end{equation*}
\begin{claim} For each $ i \leq |F|$ we have that $\beta_{i} \in t_{i}$. 
\end{claim}
\begin{proof}
We fix some $i \leq |F|$. Thus, for any $x \in t_{i}$: 
 \begin{align*}
 v(Q(x))= v \big( \prod_{k \leq |F|} v(x- \beta_{k})\big)= v(x-\beta_{i})+ \sum_{j \neq i} v(x-\beta_{j})=v(x-\beta_{i})+ (|F|-1)\delta.
 \end{align*}
  Because $Q(x) \in J_{F}$, we must have that $v(x-\beta_{i}) \in v(b)+S_{I}$. Thus $\beta_{i} \in t_{i}$. Moreover, by construction, if $i \neq j$ then $t_{i} \neq t_{j}$.
  \end{proof}
 \textit{Step $2$: $F$ admits a code in the geometric sorts.}\\
 By the first step $F$ is interdefinable with $J_{F}$. The latter one is an $\mathcal{O}$-module, so by Lemma \ref{modulesok} it admits a code in the stabilizer sorts $\mathcal{G}$. 
\end{proof}

\begin{lemma}\label{tricktorsors} Let $F$ be a primitive finite set of  $1$-torsors such that $|F|>1$. There is a $\ulcorner F \urcorner$- definable $\mathcal{O}$-lattice $s \subseteq K^{2}$  and an $\ulcorner F \urcorner$-definable injective map $g= F \rightarrow \VS_{k, \ulcorner s \urcorner}$.
\end{lemma}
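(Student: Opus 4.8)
The plan is to attach to $F$ a single canonical ball $V$ around $\bigcup F$ whose residue space $\red(V)$ already separates the torsors of $F$, and then transport $\red(V)$ into a space of the form $\red(s)$ by invoking Lemma \ref{injectiveresidue}.

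First I would recall, exactly as in the proof of Lemma \ref{codetorsors}, that primitivity forces all torsors of $F$ to be cosets of a single submodule: there are $b\in K$ and $I\in\mathcal{I}$ with $t=a_t+bI$ for each $t\in F$, and there is $\delta\in\Gamma$ with $\delta\notin v(b)+S_I$ such that $v(x-y)=\delta$ whenever $x\in t$, $y\in t'$ and $t\neq t'$. Note $\delta\in\dcl^{eq}(\ulcorner F\urcorner)$ (it is the common distance between any two points lying in distinct torsors). Since $\delta\notin S_{bI}=v(b)+S_I$ and $S_{bI}$ is an end-segment, every element of $S_{bI}$ is strictly above $\delta$; hence, putting $L_0:=\{x\in K : v(x)\geq\delta\}$, we get $bI\subseteq\mathcal{M}L_0$. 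Using surjectivity of the valuation, $L_0=b'\mathcal{O}$ for any $b'$ with $v(b')=\delta$, so $L_0$ is a rank-one $\mathcal{O}$-lattice, and it is $\ulcorner F\urcorner$-definable since as a set it depends only on $\delta$. Fixing any $a_0\in T:=\bigcup_{t\in F}t$, I set $V:=a_0+L_0$; since any two points of $T$ differ by valuation $\geq\delta$, $V$ is independent of the choice of $a_0$, contains $T$, and is $\ulcorner F\urcorner$-definable; it is a $1$-torsor, namely a coset of $L_0$.

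Next I would define $\iota\colon F\to\red(V)$ by sending $t$ to the class modulo $\mathcal{M}L_0$ of any point of $t$. This is well defined because two points of the same torsor $t$ differ by an element of $bI\subseteq\mathcal{M}L_0$, and it is injective because two points lying in distinct torsors differ by valuation exactly $\delta$, hence are not congruent modulo $\mathcal{M}L_0=\{x:v(x)>\delta\}$; moreover $\iota$ is $\ulcorner F\urcorner$-definable. Now I apply Lemma \ref{injectiveresidue} with the rank-one lattice $A:=L_0$ and the torsor $U:=V\in K/L_0$: it provides, via Proposition \ref{torsormodule}, an $\mathcal{O}$-lattice $s\subseteq K^2$ interdefinable with $V$ — hence $\ulcorner F\urcorner$-definable — together with a $\ulcorner V\urcorner$-definable injection $f\colon\red(V)\to\red(s)$. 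Composing, $g:=f\circ\iota\colon F\to\red(s)$ is an $\ulcorner F\urcorner$-definable injection, and $\red(s)$ is a sort of $\VS_{k,\ulcorner s\urcorner}$, which is the claim.

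The step I expect to be the crux is the verification that $\red(V)$ genuinely separates the torsors of $F$, i.e. that the inter-torsor distance $\delta$ is strictly below the scale $S_{bI}$ of each individual torsor. This is precisely the inequality $\delta\notin v(b)+S_I$ carried over from Lemma \ref{codetorsors}, and it is exactly what makes $L_0$ the right ambient lattice: it is coarse enough that each torsor of $F$ collapses to a point of $\red(V)$, yet fine enough that distinct torsors remain distinct. Everything else is routine bookkeeping with codes in $\dcl^{eq}(\ulcorner F\urcorner)$ together with the already-proved Lemma \ref{injectiveresidue} and Proposition \ref{torsormodule}.
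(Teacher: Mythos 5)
Your proof is correct and takes essentially the same approach as the paper's: build the smallest closed ball (a coset of $b'\mathcal{O}$ with $v(b')=\delta$) containing $\bigcup F$, map each torsor to its class in $\red$ of that ball, and transport into $\red(s)$ via Proposition~\ref{torsormodule} and Lemma~\ref{injectiveresidue}. The only difference is that you spell out the verifications (that $bI\subseteq\mathcal{M}L_0$, that $\iota$ is well defined and injective) which the paper states as "by construction."
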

\begin{proof}
Let $F$ be a primitive finite set of $1$-torsors. By primitivity, there is some $d \in K$ and $I \in \mathcal{I}$ such that for any $t \in F$ there is some $a_{t}\in K$ satisfying $t=a_{t}+dI$.  Moreover, there is some $\delta \in \Gamma \backslash (v(d)+S_{I})$ such that for any pair of different torsors $t, t' \in F$, and $x \in t$, $y \in t'$ we have $v(x-y)=\delta$. Let  $\displaystyle{T = \bigcup_{t \in F} t}$, and take elements $c \in T$ and $b \in K$ such that $v(b)=\delta$. Let $U=c+b \mathcal{O}$. Then $U$ is the smallest closed $1$-torsor that contains all the elements of $F$. Note that $U$ is definable over $\ulcorner F \urcorner$. Let $h$ be the map sending each element of $F$ to the unique class that contains it in $\red(U)$. By construction, such a map must be injective and it is $\ulcorner F \urcorner$-definable. Let $s$ be the $\mathcal{O}$-lattice in $K^{2}$, whose code is interdefinable with $\ulcorner U \urcorner$ (given by Proposition \ref{torsormodule}). By Lemma \ref{injectiveresidue} there is a $\ulcorner s\urcorner$-definable injection $\displaystyle{f:\red(U) \rightarrow \red(s)}$. Let $g= f\circ h$, the composition map $g= F \rightarrow \VS_{k, \ulcorner s \urcorner}$ satisfies the required conditions.\end{proof}
\begin{lemma} \label{codingeasy}Let $F$ be a finite set of $1$-torsors  and let $f: F \rightarrow \mathcal{G}$ be a definable function. Suppose that $F$ is primitive over $\ulcorner f \urcorner$, then:
\begin{enumerate}
\item for any set of parameters $C$ if $f(F) \subseteq VS_{k,C}$ then $f$ is coded in $\mathcal{G}$ over $C$,
\item if $f(F) \subseteq K$ then $f$ is coded in $\mathcal{G}$,
\item if $f(F)$ is a finite set of $1$-torsors of the same type $I \in \mathcal{I}$. Then $f$ is coded in $\mathcal{G}$.
\end{enumerate}
\end{lemma}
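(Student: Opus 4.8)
The plan is to handle the degenerate cases directly, reduce the general situation to coding the graph of $f$ as a finite set, and then push that finite set into a residue‑field‑internal structure where elimination of imaginaries is available (Theorem \ref{EIinternalresidue}).

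First I would dispose of $|F|\le 1$: then $F$ is empty or a singleton $\{t\}$ whose code lies in $\mathcal{G}$ by Proposition \ref{torsormodule} and Lemma \ref{modulesok}, while $\ulcorner f\urcorner$ is interdefinable with $(\ulcorner F\urcorner, f(t))$; in (1) the value $f(t)$ is a coset $x+\mathcal{M}u$, i.e.\ a torsor, again coded in $\mathcal{G}$ over $C$ by Proposition \ref{torsormodule} and Lemma \ref{modulesok}; in (2) $f(t)\in K\subseteq\mathcal{G}$; in (3) it is a single $1$-torsor. So assume $|F|>1$. The relation $t\sim t'\iff f(t)=f(t')$ on $F$ is $\ulcorner f\urcorner$-definable, so by primitivity it is improper or trivial; in the improper case $f$ is constant and $\ulcorner f\urcorner$ is interdefinable with $(\ulcorner F\urcorner,c)$, with $\ulcorner F\urcorner$ coded in $\mathcal{G}$ by Lemma \ref{codetorsors} (note that primitivity of $F$ over $\ulcorner f\urcorner$ forces primitivity over $\emptyset$, since more parameters yield more definable equivalence relations) and $c$ coded as in the singleton case. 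Hence we may assume $f$ is injective on $F$. Now apply Lemma \ref{tricktorsors}: there is an $\ulcorner F\urcorner$-definable $\mathcal{O}$-lattice $s\subseteq K^{2}$ and an $\ulcorner F\urcorner$-definable injection $g\colon F\hookrightarrow\red(s)$. Since $g,f$ are injective and $g$ is $\ulcorner F\urcorner$-definable, $\ulcorner f\urcorner$ is interdefinable over $\ulcorner F\urcorner$ with the code of the finite graph $H=\{(g(t),f(t)):t\in F\}$, whose projection to $\red(s)$ is injective; moreover $\ulcorner F\urcorner$ is coded in $\mathcal{G}$ by Lemma \ref{codetorsors} and $\ulcorner s\urcorner\in\dcl^{eq}(\ulcorner F\urcorner)$ is coded in $\mathcal{G}$ by Lemma \ref{modulesok}. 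Thus it suffices to code $H$ over the base $C'=C\cup\ulcorner F\urcorner$.

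For case (1): $f(F)\subseteq VS_{k,C}$ is finite, hence contained in $\bigcup_{i\le m}\red(s^{(i)})$ for finitely many $C$-definable $\mathcal{O}$-lattices $s^{(i)}$; together with $\red(s)$ these are all among the sorts of the stably embedded structure $\VS_{k,C'}$ (stable embeddedness of the residue field is Corollary \ref{stableembeddedness}), so $H$ is an imaginary of $\VS_{k,C'}$. By Theorem \ref{EIinternalresidue} the structure $\VS_{k,C'}$ eliminates imaginaries, so $\ulcorner H\urcorner$ is interdefinable over $C'$ with a tuple $\bar a$ of elements of sorts $\red(u)$, $u$ a $C'$-definable $\mathcal{O}$-lattice. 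Each such element is a coset $x+\mathcal{M}u$, a torsor of type $(\mathcal{M},\dots,\mathcal{M})$, so by Proposition \ref{torsormodule} and Lemma \ref{modulesok} — exactly as the $T_{n}$-sorts are absorbed in Remark \ref{ACVF} — its code, together with $\ulcorner u\urcorner$, lies in $\mathcal{G}$. Chaining the interdefinabilities through $C'$, $\ulcorner F\urcorner$ and $\ulcorner s\urcorner$, all of which are coded in $\mathcal{G}$, shows $\ulcorner f\urcorner$ is interdefinable over $C$ with a tuple from $\mathcal{G}$.

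Cases (2) and (3) are, I expect, the main obstacle. They reduce in the same way to coding $H=\{(g(t),f(t))\}$, which pairs the residue‑internal finite set $g(F)\subseteq\red(s)$ with the image $f(F)$; since $f$ is injective, $f(F)$ is again primitive (over $\ulcorner f\urcorner$, hence over $\emptyset$), and it is coded in $\mathcal{G}$ — in case (2) as a finite subset of the field $K$, coded by the coefficients of $\prod_{c\in f(F)}(X-c)$, and in case (3) as a primitive finite set of $1$-torsors of fixed type by Lemma \ref{codetorsors}. After adjoining the code of $f(F)$ to the base, the image becomes algebraic over (essentially rigid relative to) the base, and the remaining data is the bijection of the residue‑internal set $g(F)$ onto $f(F)$; the plan is to absorb this bijection into $\VS_{k,-}$ over the enlarged base and code it by a further application of Theorem \ref{EIinternalresidue}, using the orthogonality of the residue field and the valued field (Corollary \ref{stableembeddedness}) to control which imaginaries can appear. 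The delicate point — carrying out this last absorption cleanly, i.e.\ showing that pairing a residue‑internal finite set with a rigid finite set of valued‑field (respectively stabilizer‑sort) data produces no imaginary outside $\mathcal{G}$ — is where the real work lies.
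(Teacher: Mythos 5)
Your treatment of the degenerate cases, the reduction to injective $f$, and case (1) all match the paper's proof. The observation that primitivity over $\ulcorner f\urcorner$ implies primitivity over $\emptyset$ is correct, and coding the graph $H=\{(g(t),f(t))\}$ as an imaginary of $\VS_{k,C'}$ is equivalent to the paper's coding of the function $f\circ g^{-1}\colon g(F)\to\VS_{k,C\ulcorner s\urcorner}$, with the same final step via Proposition \ref{torsormodule} and Lemma \ref{modulesok}. So case (1) stands.

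Cases (2) and (3) have a genuine gap, and you acknowledge it yourself: ``the delicate point\dots is where the real work lies.'' Your plan is to name $\ulcorner f(F)\urcorner$, declare the image ``essentially rigid,'' and then ``absorb'' the bijection $g(F)\to f(F)$ into $\VS_{k,-}$ --- but $\ulcorner f(F)\urcorner$ only fixes $f(F)$ setwise, not pointwise, so there may well be automorphisms over the base permuting $f(F)$, and you would be stuck trying to code a correspondence between a residue-internal finite set and a finite set that is \emph{not} residue-internal. Orthogonality of $k$ and $\Gamma$ does not by itself let you push such a mixed imaginary into $\VS_{k,-}$, and you give no argument that it does.

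The missing idea, which is exactly what the paper does, is simpler than what you are attempting: rather than trying to absorb the mixed pairing, apply the \emph{same} embedding trick to the codomain that you applied to the domain. In case (3), $D=f(F)$ is a primitive finite set of $1$-torsors, so Lemma \ref{tricktorsors} (applied to $D$) hands you an $\ulcorner D\urcorner$-definable injection $g'\colon D\hookrightarrow\red(s')$ for some $\ulcorner D\urcorner$-definable $\mathcal{O}$-lattice $s'\subseteq K^2$. In case (2), $D\subseteq K$ is a primitive finite set of field elements, so any two distinct elements differ with a fixed valuation $\delta$; the smallest closed ball $U$ containing $D$ is $\ulcorner D\urcorner$-definable, the elements of $D$ fall into distinct classes of $\red(U)$, and Lemma \ref{injectiveresidue} gives an $\ulcorner D\urcorner$-definable injection $D\hookrightarrow\red(s')$. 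Either way, the composite $g'\circ f\colon F\to\VS_{k,\ulcorner s'\urcorner}$ has residue-internal codomain, so it is coded by case (1) applied with $C=\ulcorner s'\urcorner$ (or $\ulcorner D\urcorner$); since $g'$ is an $\ulcorner D\urcorner$-definable bijection onto its image and $\ulcorner D\urcorner$ is coded in $\mathcal{G}$ (by the field, or by Lemma \ref{codetorsors}), the pair $(\ulcorner g'\circ f\urcorner,\ulcorner D\urcorner)$ is a code of $f$ in $\mathcal{G}$. This avoids entirely the ``mixed imaginary'' you were trying to handle.
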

\begin{proof} 
In all the three cases, we may assume that $|F|>1$, otherwise the statement clearly follows. Also, by primitivity of $F$ over $\ulcorner f \urcorner$, $f$ is either constant or injective. If it is constant and equal to $c$, the tuple $(\ulcorner F \urcorner, c)$ is a code for $f$. By Lemma \ref{codetorsors} $\ulcorner F \urcorner$ admits a code in $\mathcal{G}$, so $(\ulcorner F \urcorner, c)$ is interdefinable with a tuple in the stabilizer sorts. In the following arguments we assume that $f$ is an injective function and that $|F| \geq 2$.
\begin{enumerate}
\item By Lemma \ref{codetorsors}  $\ulcorner F \urcorner \in \mathcal{G}$.  Let $s$ be the $\mathcal{O}$-lattice of $K^{2}$ and $g: F \rightarrow VS_{k, C\ulcorner s \urcorner}$ the injective map given by Lemma \ref{tricktorsors}. Both $s$ and $g$ are $\ulcorner F \urcorner$-definable. Let $F^{*}=g(F)\subseteq VS_{k, C\ulcorner s \urcorner}$, the map $f \circ g^{-1}: F^{*}\rightarrow VS_{k, C\ulcorner s \urcorner}$ can be coded in $\mathcal{G}$ by Theorem \ref{EIinternalresidue}. Hence, the tuple $(\ulcorner f \circ g^{-1} \urcorner, \ulcorner F \urcorner)$ is a code of $f$ over $C$, because $g$ is a $\ulcorner F \urcorner$-definable bijection, and $(\ulcorner f \circ g^{-1} \urcorner, \ulcorner F \urcorner)$ is interdefinable with a tuple of elements in $\mathcal{G}$.
\item Let $D=f(F) \subseteq K$, this is a finite set in the main field so it can be coded by a tuple $d$ of elements in $K$. Because $F$ is primitive over $\ulcorner f\urcorner$, then $D$ is a primitive set. Thus, there is some $\delta \in \Gamma$ such that for any pair of different elements $x, y \in D$ $v(x-y)=\delta$. Let $b \in K$  be such that $v(b)= \delta$, take $x \in D$ and let $U= x +b \mathcal{O}$, this is the smallest closed $1$-torsor containing $D$. The elements of $D$ all lie in different classes of $\red(U)$ and let $g: D \rightarrow \red(U)$ be the definable map sending each element $x \in D$ to the unique element in $\red(U)$ that contains $x$. Both $U$ and $g$ are $\ulcorner D \urcorner$-definable, and therefore $d$-definable. By Proposition \ref{torsormodule}, there is  an $\mathcal{O}$-lattice $s \subseteq K^{2}$, whose code is interdefinable with $\ulcorner U\urcorner$. Let  $h:\red(U) \rightarrow \red(s)$ the $\ulcorner U \urcorner$- definable injective map given by Lemma \ref{injectiveresidue}. Both $U$ and $h$ are $d$-definable. By $(1)$ of this statement the function 
$h \circ g \circ f: F \rightarrow VS_{k,  \ulcorner s \urcorner}$
can be coded in $\mathcal{G}$. Since $f$ and $h \circ g \circ f$ are interdefinable over  $d$, the statement follows. 
\item  Let $D= f(F)$ then $D$ must be a primitive set of $1$-torsors because $F$ is primitive over $\ulcorner f \urcorner$ (in particular, there are $I\in \mathcal{I}$ and $b \in K$ and elements $a_{t} \in K$ such that for each $t \in f(F)$ we have $t=a_{t}+bI$). By Lemma \ref{codetorsors}, we may assume $\ulcorner D\urcorner$ is a tuple in the stabilizer sorts.  Let $s \subseteq K^{2}$ and $g: D \rightarrow \red(s) \subseteq VS_{k, \ulcorner s \urcorner}$ the injective map given by Lemma \ref{tricktorsors}. Both $s$ and $g$ are $\ulcorner D \urcorner$-definable. By part $(1)$ of this statement the composition $g \circ f$ can be coded in $\mathcal{G}$, and as $g$ is a $\ulcorner D \urcorner$-definable bijection the tuple $(\ulcorner g \circ f \urcorner, \ulcorner D \urcorner)$ is interdefinable with $\ulcorner f \urcorner$. 
\end{enumerate}
\end{proof}

\subsection{Coding of finite sets of tuples in the stabilizer sorts}\label{primitive}
We start by recalling some terminology from previous sections for sake of clarity.
\begin{notation} Let $M \subseteq K^{n}$ be an $\mathcal{O}$-module, and $(I_{1},\dots, I_{n}) \in \mathcal{I}$ be such that $\displaystyle{M \cong \oplus_{i \leq n}I_{i}}$. For any torsor $Z= \bar{d}+ M \in K^{n}/M$ we say that $Z$ is of type $(I_{1},\dots, I_{n})$ and it has \emph{complexity $n$}. We denote by $\pi_{n}: K^{n} \rightarrow K$ the projection to the last coordinate and for a torsor $Z =\bar{d}+ M \in K^{n}/M$ we write as $A_{Z}= \pi_{n}(Z)$. We recall as well the notation introduced in \ref{fibra} for the function that describes the fiber in $Z$ of each element at the projection, this is $h_{Z}(x)=\{ y \in K^{n-1} \ | \ (y,x) \in Z\}$.
\end{notation}
\begin{definition} Let $F$ be a finite set of torsors, the complexity of $F$ is the maximum complexity of the torsors $t \in F$.
\end{definition}
The following is a very useful fact that we will use repeatedly.
\begin{fact}\label{inter} Let $F$ be a finite set of torsors, then there is a finite set $F' \subseteq \mathcal{G}$ such that $\ulcorner F \urcorner$ and $\ulcorner F' \urcorner$ are both interdefinable. In particular, any definable function $f: F \rightarrow P$, where $P$ is a finite set of torsors or $P \subseteq \mathcal{G}$, is interdefinable with a fuction $g: F' \rightarrow \mathcal{G}$, where $F' \subseteq \mathcal{G}$.
\end{fact}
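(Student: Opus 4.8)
The plan is to reduce a finite set of torsors to a finite \emph{subset} of $\mathcal{G}$ by showing that each individual torsor is $\emptyset$-interdefinable with a single element of $\mathcal{G}$, via a map which is definable without parameters (uniformly in the type), and then to apply this map to the set. So the first step is: if $t$ is a torsor of complexity $m$, say of type $(I_{1},\dots,I_{m})\in\mathcal{F}^{m}$, then by Proposition \ref{torsormodule} the code $\ulcorner t\urcorner$ is interdefinable over $\emptyset$ with the code of the $\mathcal{O}$-module $L_{t}\subseteq K^{m+1}$ of type $(I_{1},\dots,I_{m},\mathcal{O})$ attached to $t$; and by Remark \ref{identifications}(1) the set of codes of $\mathcal{O}$-modules of $K^{m+1}$ of that (fixed) type is canonically --- hence $\emptyset$-definably --- identified with the stabilizer sort $B_{m+1}(K)/\Stab_{(I_{1},\dots,I_{m},\mathcal{O})}$, which is one of the sorts comprising $\mathcal{G}$ (recall $\mathcal{O}\in\mathcal{F}$). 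Writing $\theta(t)$ for the resulting element, this gives for each fixed type $\tau$ a $\emptyset$-definable injection $\theta$ from the definable family of torsors of type $\tau$ into $\mathcal{G}$ with $t\in\dcl^{eq}(\theta(t))$; the defining formula is moreover uniform in a definable parameter for $\tau$.

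Now let $F=\{t_{1},\dots,t_{k}\}$ be a finite set of torsors. Only finitely many types occur among its elements, and these are coded by $\ulcorner F\urcorner$, so $\theta$ restricts to an $\ulcorner F\urcorner$-definable injection on an $\ulcorner F\urcorner$-definable family of torsors containing $F$. Put $F':=\theta(F)\subseteq\mathcal{G}$. Then $\theta\upharpoonright_{F}$ is an $\ulcorner F\urcorner$-definable bijection $F\to F'$, whence $\ulcorner F'\urcorner\in\dcl^{eq}(\ulcorner F\urcorner)$; and since $\theta^{-1}$ is $\emptyset$-definable on the image of $\theta$, we have $F=\theta^{-1}(F')$, so $\ulcorner F\urcorner\in\dcl^{eq}(\ulcorner F'\urcorner)$. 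Thus $\ulcorner F\urcorner$ and $\ulcorner F'\urcorner$ are interdefinable.

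For the last clause, let $f\colon F\to P$ be definable. If $P\subseteq\mathcal{G}$, set $g:=f\circ(\theta\upharpoonright_{F})^{-1}\colon F'\to\mathcal{G}$; if $P$ is a finite set of torsors, apply the first part to $P$ as well and set $g:=\theta\circ f\circ(\theta\upharpoonright_{F})^{-1}\colon F'\to\mathcal{G}$. In each case $g$ is obtained from $f$ by pre- and post-composition with the $\emptyset$-definable maps $\theta$ and $\theta^{-1}$, and $f$ is recovered from $g$ in the same way, so $\ulcorner f\urcorner$ and $\ulcorner g\urcorner$ are interdefinable, with $F'\subseteq\mathcal{G}$.

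The one step with genuine content --- as opposed to bookkeeping on top of Theorem \ref{weakEI1} --- is the compression of a torsor to a \emph{single} element of $\mathcal{G}$: Proposition \ref{torsormodule} absorbs the translation parameter into an extra coordinate, and then Remark \ref{identifications}(1) presents a module of fixed type as one element of a stabilizer sort. If one merely encoded a torsor by a \emph{tuple} from $\mathcal{G}$ (as Lemma \ref{modulesok} does for an arbitrary $\mathcal{O}$-submodule), then passing to the set $F$ would lose the pairing between the coordinates of the different tuples, and the argument above would break down; this is precisely why Proposition \ref{torsormodule} is used here.
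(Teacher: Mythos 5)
Your proof is correct and is exactly the paper's intended argument: the paper's own proof is the single line ``The statement follows immediately by Proposition \ref{torsormodule},'' and you have spelled out precisely what is being invoked — Proposition \ref{torsormodule} converts a torsor of type $(I_1,\dots,I_m)$ into a module of type $(I_1,\dots,I_m,\mathcal{O})$, which by the canonical identification of Remark \ref{identifications}(1) is a single element of the stabilizer sort $B_{m+1}(K)/\Stab_{(I_1,\dots,I_m,\mathcal{O})}$, uniformly and $\emptyset$-definably in the type. Your closing remark correctly pinpoints why this compression to a \emph{single} element (as opposed to the tuple furnished by Lemma \ref{modulesok}) is what makes the passage from individual torsors to finite sets of torsors work.
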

\begin{proof}
The statement follows immediately by Proposition \ref{torsormodule}.
\end{proof}
The main goal of this section is the following theorem. 
\begin{theorem} For every $m \in \mathbb{N}_{\geq 1}$ the following hold:
\begin{itemize}
    \item $I_{m}$: For every $r>0$ and finite set $F \subseteq \mathcal{G}^{r}$ of size  $m$ then $\ulcorner F \urcorner$ is interdefinable with a tuple of elements in $\mathcal{G}$.
    \item $II_{m}$: For every $F \subseteq \mathcal{G}$ of size $m$ and $f: F \rightarrow \mathcal{G}$ a definable function, then $\ulcorner f \urcorner$ is interdefinable with a tuple in $\mathcal{G}$.
    \end{itemize}
\end{theorem}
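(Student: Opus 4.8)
The plan is to prove the two statements $I_m$ and $II_m$ simultaneously by induction on $m$, and within a fixed $m$ by a secondary induction on the complexity of the torsors involved and on $r$. First I would record the base case: for $m=1$ a singleton is coded by its unique element, and for $m=0$ the empty set is coded by any $\emptyset$-definable object, so $I_1$ and $II_1$ are immediate. For the inductive step, assume $I_j$ and $II_j$ hold for all $j<m$ (and, in the proof of $I_m$, also for $j=m$ with strictly smaller complexity or smaller $r$). The first reduction, using Fact \ref{inter} together with Proposition \ref{torsormodule}, lets me replace an arbitrary finite $F\subseteq\mathcal{G}^r$ by an interdefinable finite set of $\mathcal{O}$-submodules (equivalently torsors) in various $K^n$; this is where the "complexity" parameter enters. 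A further standard trick reduces to the case that $F$ is \emph{primitive} over $\emptyset$: if there is a proper nontrivial $\ulcorner F\urcorner$-definable equivalence relation $E$ on $F$, then $F/E$ has size $<m$, so by $I_{<m}$ it is coded in $\mathcal{G}$, and after adding that code each $E$-class is again a finite set of size $<m$ of the same complexity, coded by $I_{<m}$ (applied with the code of $F/E$ as parameters, which is legitimate since those codes lie in $\mathcal{G}$); assembling the class-codes and applying $I_{<m}$ once more finishes. So the heart of the matter is: code a \emph{primitive} finite set $F$ of torsors of complexity $n$, and code a definable function out of such an $F$.

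For the primitive case I would split on complexity. If $n=1$, i.e. $F$ is a primitive finite set of $1$-torsors, then Lemma \ref{codetorsors} gives the code directly; and for a definable function $f\colon F\to\mathcal{G}$ with $F$ primitive over $\ulcorner f\urcorner$, Lemma \ref{codingeasy} handles the three target-types (residue vector spaces, the field $K$, and $1$-torsors), while a general target in $\mathcal{G}$ is broken into these pieces coordinatewise. For complexity $n\geq 2$, I would use Corollary \ref{torgen} (and Lemma \ref{codemodule} for modules): the code $\ulcorner t\urcorner$ of a torsor $t$ of complexity $n$ is interdefinable with the pair $(\ulcorner\pi_n(t)\urcorner,\germ(h_t,p_{\pi_n(t)}))$, where $\pi_n(t)$ is a $1$-torsor. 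Thus $F$ maps, by an $\ulcorner F\urcorner$-definable map $t\mapsto\pi_n(t)$, to a finite set $F_1$ of $1$-torsors; by primitivity this map is either injective or constant. In the constant case all $\pi_n(t)$ equal a single $1$-torsor $A$ whose code is in $\mathcal{G}$ (complexity $1$), and over $\ulcorner A\urcorner$ the set $F$ becomes a primitive finite set of germs $\germ(h_t,p_A)$, each of which lives in $\mathcal{G}$ by Theorem \ref{codinggerm}; so $F$ is interdefinable with a finite subset of $\mathcal{G}^{<\text{something}}$ and we invoke $I_m$ in the already-handled lower-complexity regime. In the injective case, $F_1$ is a primitive finite set of $1$-torsors, coded by Lemma \ref{codetorsors}, and $t\mapsto\germ(h_t,p_{\pi_n(t)})$ is a definable function on it into $\mathcal{G}$; coding that function is exactly the content of $II_m$ — but for a function whose \emph{domain} consists of $1$-torsors, which is the complexity-$1$ situation, so again we are within the inductive reach. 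Statement $II_m$ for general domain $F\subseteq\mathcal{G}$ is then obtained by the same primitivity reduction applied to $F$ itself, followed by the observation that the graph $\{(t,f(t)):t\in F\}$ is a finite set of size $m$ in a product of stabilizer sorts, hence coded by $I_m$, with $\ulcorner f\urcorner$ interdefinable with the graph's code together with $\ulcorner F\urcorner$.

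The main obstacle, and the step that needs to be organized with real care, is keeping the two nested inductions honest: the reductions in the complexity-$n$ case call $I_m$ and $II_m$ on inputs of the \emph{same} cardinality $m$, and one must be certain that complexity (or some companion ordinal) strictly decreases each time, so that the induction is well-founded. Concretely, passing from $F$ (complexity $n$) to the finite set of germs over $\ulcorner A\urcorner$ lowers the torsor complexity to something governed by $h_t$, which takes values in $K^{n-1}$; and passing to the function $t\mapsto\germ(h_t,\cdot)$ on $1$-torsors reduces the \emph{domain} complexity to $1$. I would therefore state the induction as being on the lexicographic pair (cardinality $m$, complexity $n$), verify that every recursive call decreases this pair, and make sure the parameters added along the way (codes of $F/E$, codes of $F_1$, codes of the projection $1$-torsor $A$) always lie in $\mathcal{G}=\dcl^{eq}$-generated sorts so that "coded over those parameters" upgrades to "coded in $\mathcal{G}$". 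The residue-field input (Theorem \ref{EIinternalresidue} via Lemma \ref{codingeasy}) and the germ-coding input (Theorem \ref{codinggerm}) are used as black boxes; no new calculation is needed beyond bookkeeping the inductive hypotheses.
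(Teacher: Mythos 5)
Your overall architecture matches the paper's: reduce to primitive sets, decompose torsors of higher complexity into projection plus fiber germ, lean on germ coding (Theorem \ref{codinggerm}) and residue internality (Theorem \ref{EIinternalresidue}). However, there is a genuine circularity in the way you propose to close the loop. You want to derive $II_m$ from $I_m$ by applying $I_m$ to the graph $\Gamma_f=\{(t,f(t))\mid t\in F\}\subseteq\mathcal{G}^2$. But $I_m$ for $\mathcal{G}^r$ with $r\geq 2$ cannot be obtained without $II_m$: to code a primitive $m$-element subset of $\mathcal{G}^2$ you pick an injective coordinate projection $\pi_{i_0}$, code $\pi_{i_0}(F)\subseteq\mathcal{G}$ by $I_m$ at $r=1$, and then you must code the remaining coordinate functions $\pi_i\circ\pi_{i_0}^{-1}\colon\pi_{i_0}(F)\to\mathcal{G}$ --- and for the graph $\Gamma_f$ the only such function is $f$ itself. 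So your recursion produces $II_m$ assuming $II_m$. The paper avoids this by proving things in a strict order: first the torsor statements $A_n,B_n$ by simultaneous induction on complexity $n$ (using only $I_{\leq m},II_{\leq m}$), then $I_{m+1}$ at $r=1$ (Proposition \ref{coding2}), then bijections (Proposition \ref{coding3}), then $II_{m+1}$ (Proposition \ref{coding4}), and only at the very end $I_{m+1}$ for $r>1$, which openly cites $II_{m+1}$.

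The second gap is in your complexity bookkeeping inside the primitive-torsor case. When all projections coincide (your ``constant'' case), you claim $F$ ``becomes a primitive finite set of germs $\germ(h_t,p_A)$'' of lower complexity, so that $I_m$ applies inductively. But a germ is not a torsor and has no well-defined ``lower complexity'': Theorem \ref{codinggerm} only says the germ is definable over some tuple in $\mathcal{G}$, with no control on the sorts appearing in that tuple, so the set of germs may land back at the same inductive rank. The paper's Case~1 does something more careful: it fixes a type $p_A$, defines the $\ulcorner F\urcorner$-definable function $g\colon A\to\mathcal{G}$, $g(x)=\ulcorner\{\ulcorner h_Z(x)\urcorner\mid Z\in F\}\urcorner$, whose \emph{values} are codes of finite sets of torsors of complexity $n$ (so $A_n$ applies to them), codes $\germ(g,p_A)$ by Theorem \ref{codinggerm}, and shows this germ together with $\ulcorner A\urcorner$ codes $F$. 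Likewise your ``injective'' case proposes coding $t\mapsto\germ(h_t,p_{\pi_n(t)})$ by appealing to $II_m$ at ``complexity 1'', but $II_m$ puts no complexity bound on the target, and $II_{m+1}$ is not yet available at that point in the induction; the paper instead introduces the symmetrized ``master type'' on $W^*$ (Lemma \ref{tipomaster}) and codes the germ of the uniformization $x^*\mapsto(\ulcorner f_{x^*}\urcorner,\ulcorner l_{x^*}\urcorner)$, which stays within $B_n$ and $II_{\leq m}$. Both of your reductions therefore need to be rerouted along the paper's more carefully sequenced chain before the induction is well-founded.
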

We will prove this statement by induction on $m$, we note that for $m=1$ the statements $I_{m}$ and $II_{m}$ follow trivially. We now assume that $I_{k}$ and $II_{k}$ hold for each $k \leq m$ and we want to show $I_{m+1}$ and $II_{m+1}$. In order to keep the steps of the proof easier to follow we break the proof into some smaller steps. We write each step as a proposition to make the document more readable.\\
\begin{proposition}\label{coding1} Let $F$ be finite set of torsors of size at most $m+1$, then $\ulcorner F \urcorner$ is interdefinable with a tuple of elements in $\mathcal{G}$. Furthermore, any definable function $f: S \rightarrow F$, where $S$ is a finite set of at most $m+1$-torsors and $F$ is a finite set of torsors can be coded in $\mathcal{G}$.
\end{proposition}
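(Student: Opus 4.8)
\textbf{Proof strategy for Proposition \ref{coding1}.} The plan is to argue by induction on the complexity of the finite set $F$ of torsors, reducing everything to the one-dimensional situation already handled in Subsection \ref{lemas2}. First I would invoke Fact \ref{inter} to replace $F$ by an interdefinable finite subset of $\mathcal{G}^{r}$, so that without loss of generality $F \subseteq \mathcal{G}^{r}$; then the statement ``$\ulcorner F \urcorner$ is interdefinable with a tuple in $\mathcal{G}$'' is exactly $I_{m+1}$ restricted to the case where the elements are codes of torsors. The standard first reduction is to pass to a \emph{primitive} finite set: if $F$ admits a proper nontrivial $\ulcorner F \urcorner$-definable equivalence relation $E$, then $F/E$ has strictly smaller size, so $\ulcorner F/E \urcorner$ is coded in $\mathcal{G}$ by the induction hypothesis $I_{k}$ ($k \leq m$), and each $E$-class has size $\leq m$, so by $I_{k}$ again (relative to the parameter $\ulcorner F/E\urcorner$) each class is coded, and one assembles $\ulcorner F \urcorner$ from these codes via $II_{k}$ applied to the function assigning to each class its code. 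Hence we may assume $F$ is primitive.

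For a primitive finite set $F$ of torsors, I would induct on the complexity $n$ of $F$. If $n=1$, then $F$ is a primitive finite set of $1$-torsors (after using Proposition \ref{torsormodule} to pass between $1$-torsors and $\mathcal{O}$-submodules of $K^{2}$ where necessary), and Lemma \ref{codetorsors} gives the code directly. For the inductive step, consider the projection $\pi_{n}\colon K^{n}\to K$ to the last coordinate. Applying $\pi_{n}$ to each torsor $t \in F$ yields a finite set $\pi_{n}(F)=\{A_{t} \ | \ t \in F\}$ of $1$-torsors, which is $\ulcorner F\urcorner$-definable; by the $n=1$ case (and the primitivity-reduction above) this finite set of $1$-torsors is coded in $\mathcal{G}$. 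By Corollary \ref{torgen}, for each $t$ the code $\ulcorner t \urcorner$ is interdefinable with the pair $(\ulcorner A_{t}\urcorner, \germ(h_{t}, p_{A_{t}}))$, where $p_{A_{t}}$ is a global type extending the generic type of $A_{t}$, chosen $\ulcorner A_{t}\urcorner$-definably (possible by Corollary \ref{alldef}, and compatibly across the family by Lemma \ref{tipomaster}). Thus $\ulcorner F\urcorner$ is interdefinable with the code of the finite set $\{(\ulcorner A_{t}\urcorner, \germ(h_{t},p_{A_{t}})) \ | \ t \in F\}$. Grouping the elements of $F$ by the value of $\ulcorner A_{t}\urcorner$ gives an $\ulcorner F\urcorner$-definable partition; by primitivity either all the $A_{t}$ coincide or this partition is trivial, and in the latter case each block has size $1$ so the germs involved live over a set coded in $\mathcal{G}$ and may be coded one at a time via Theorem \ref{codinggerm}; in the former case we must code, over the single parameter $\ulcorner A\urcorner$, the finite set of germs $\{\germ(h_{t},p_{A}) \ | \ t \in F\}$, each of which is an element of $\mathcal{G}$ by Theorem \ref{codinggerm}, so this is an instance of $I_{k}$ with $k = |F| = m+1$ but over a one-dimensional parameter --- which one handles by the same primitivity reduction and the induction on complexity, since each $h_{t}$ has fibers of complexity $n-1$. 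This is where the bookkeeping is delicate: one must ensure the recursion on complexity is well-founded and that the germ-coding is uniform across the family, which is exactly what Lemma \ref{tipomaster} and Corollary \ref{torgen} were set up to provide.

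For the ``furthermore'' clause, given a definable $f\colon S \to F$ with $S$ a finite set of at most $m+1$ torsors and $F$ a finite set of torsors, I would first apply Fact \ref{inter} to replace $S$ and $F$ by interdefinable finite subsets of $\mathcal{G}$, reducing to coding a function $g\colon S' \to \mathcal{G}$ with $S' \subseteq \mathcal{G}$ of size $\leq m+1$; this is precisely $II_{m+1}$ for the case at hand. If $|S'| \leq m$ this is the induction hypothesis $II_{k}$, so assume $|S'| = m+1$ and (passing to fibers of the partition induced by $g$, and using the $I$-statements on the strictly smaller blocks as above) that $S'$ is primitive over $\ulcorner g\urcorner$, so $g$ is constant or injective. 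The constant case is trivial once $\ulcorner S'\urcorner$ is coded, which is the first part of this proposition. In the injective case, $g(S')$ is primitive over $\emptyset$ of the same size; by the first part of the proposition $\ulcorner S'\urcorner$ and $\ulcorner g(S')\urcorner$ are coded in $\mathcal{G}$, and coding $g$ amounts to coding the bijection between two coded finite subsets of $\mathcal{G}$. I expect the main obstacle to be organizing this interlocking induction --- on size $m$, and within a fixed size on complexity $n$ of torsors, and on the dimension of target sorts --- so that each reduction strictly decreases one of the induction parameters; the one-dimensional building blocks (Lemmas \ref{codetorsors}, \ref{tipomaster}, \ref{injectiveresidue}, \ref{tricktorsors}, \ref{codingeasy}), together with Corollary \ref{torgen} and Theorem \ref{codinggerm}, supply all the content, so the real work is verifying that the recursion terminates.
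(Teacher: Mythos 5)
Your outline of the overall shape — reduce to a primitive $F$ via the size induction $I_k, II_k$ for $k \leq m$, then induct on the complexity $n$ of the torsors, and use Corollary \ref{torgen} to decompose each torsor code as a pair $(\ulcorner A_Z\urcorner, \germ(h_Z, p_{A_Z}))$ — is the right skeleton, and matches the paper. But the places you flag as ``bookkeeping'' and ``the real work is verifying that the recursion terminates'' are precisely where the argument has to change, and as written the proposal has three genuine gaps.

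First, in the case where all the projections coincide (to some $1$-torsor $A$), you reduce to coding the finite set $\{\germ(h_Z,p_A) \mid Z \in F\}$ of size $m+1$ over $\ulcorner A\urcorner$. You concede this is ``an instance of $I_k$ with $k = m+1$'' and wave at ``the same primitivity reduction and the induction on complexity, since each $h_t$ has fibers of complexity $n-1$.'' That does not go through: the germs are not themselves codes of torsors of bounded complexity, they are abstract elements of $\mathcal{G}$ produced by Theorem \ref{codinggerm}, so the complexity induction gives you no purchase on them. This is exactly the circularity the paper's proof is designed to avoid. The actual move (Case 1 of $A_{n+1}$) is to form the $\ulcorner F\urcorner$-definable map $g\colon A \to \mathcal{G}$, $x \mapsto \ulcorner\{\ulcorner h_Z(x)\urcorner \mid Z \in F\}\urcorner$, whose values are codes of finite sets of torsors \emph{of complexity $\leq n$} (hence coded by the inductive $A_n$), then code $\germ(g,p_A)$ via Theorem \ref{codinggerm}, and finally prove the nontrivial Claim that $\germ(g,p_A)$ is interdefinable over $\ulcorner A\urcorner$ with $\ulcorner\{\germ(h_Z,p_A)\mid Z\in F\}\urcorner$. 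In other words: germ of the set-of-fibers function, not set of germs. That reversal is the content you are missing.

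Second, your treatment of the case of all-distinct projections is too quick: ``each block has size $1$ so the germs involved live over a set coded in $\mathcal{G}$ and may be coded one at a time.'' Coding each germ does not code the finite set of germs; you are again back at an instance of $I_{m+1}$ (now for $r>1$, since the elements are pairs), which is not available. Here the paper does real work: it introduces the set $W^*$ of codes of transversals of $S = \{A_Z\}_{Z\in F}$, defines the map $x^* \mapsto (\ulcorner f_{x^*}\urcorner, \ulcorner l_{x^*}\urcorner)$, and takes the germ over the $\ulcorner W^*\urcorner$-definable ``master'' type $q$ constructed in Lemma \ref{tipomaster}; the compatibility of the types $p_{A_Z}$ under $Aut(\mathfrak{M}/\ulcorner F\urcorner)$ established there is exactly what makes the recovery of $F$ from $(\germ(g,q),\ulcorner S\urcorner)$ possible. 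None of that is in your proposal.

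Third, the ``furthermore'' clause cannot be deferred to $II_{m+1}$. You write that after Fact \ref{inter} ``this is precisely $II_{m+1}$ for the case at hand,'' but $II_{m+1}$ is proven \emph{later} (in Propositions \ref{coding3} and \ref{coding4}) and its proof \emph{uses} Proposition \ref{coding1}. The paper instead runs a simultaneous induction on $n$ with two statements: $A_n$ (coding sets of torsors of complexity $\leq n$) and $B_n$ (coding functions from finite sets of $1$-torsors to finite sets of torsors of complexity $\leq n$), with $B_1$ being Lemma \ref{codingeasy}(3). The statement $B_n$ is not a consequence of $A_{n+1}$ — it is an input to it (it is used, for instance, to code the fiber-map $l_{x^*}$ in Case 2). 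Dropping this dual statement breaks the recursion. So these are not bookkeeping gaps; they are the load-bearing ideas of the proof.
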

\begin{proof}
We will start by proving the following statements by a simultaneous induction on $n$:
\begin{itemize}
    \item $A_{n}$: Any set $F$ of torsors of size at most $m+1$ of complexity at most $n$ can be coded in $\mathcal{G}$.
    \item $B_{n}:$ Every definable function $f: S \rightarrow F$, where $S$ is a finite set of at most $m+1$-torsors and $F$ is a finite set of torsors of complexity at most $n$ can be coded in $\mathcal{G}$.
\end{itemize}
 We observe first that we may assume in $A_{n}$ that $F$ is a primitive set of size $m+1$. If $|F|\leq m$ the statement follows immediately by Fact \ref{inter} combined with $I_{k}$ for each $k \leq m$. So we may assume that $F$ has $m+1$ elements. If $F$ is not primitive, then we can find a non trivial equivalence $E$ relation definable over $\ulcorner F \urcorner$, and let $C_{1},\dots, C_{l}$ be the equivalence classes. For each $i \leq l$ $\ |C_{i}|\leq m$, by Fact \ref{inter}
 and because $I_{k}$ holds for each $k \leq m$
 $\ulcorner C_{i}\urcorner$ is interdefinable with a tuple $c_{i}$ of elements in $\mathcal{G}$. Because $l\leq m$ and $I_{l}$ holds, 
 we can find a code $c$ in the stabilizer sorts of the set $\{ c_{1},\dots, c_{l}\}$. The code $\ulcorner F \urcorner$ is interdefinable with $c \in \mathcal{G}$.\\
 Likewise, for $B_{n}$ we may assume that $S$ is primitive over $\ulcorner f \urcorner$. Otherwise, there is a $\big(\ulcorner f \urcorner \cup \ulcorner S \urcorner\big)$-definable equivalence relation $E$ on $S$ and let $C_{1},\dots, C_{l}$ be the equavalence classes of this relation. For each $i \leq l$, $|C_{i}| \leq m$ and let $f_{i}=f\upharpoonright_{C_{i}}$. By Fact \ref{inter}, for each $i \leq l$  $\ulcorner f_{i}\urcorner$ is interdefinable with a map $g_{i}: S_{i}\rightarrow \mathcal{G}$ where $S_{i} \subseteq \mathcal{G}$ and $|S_{i}|\leq m$. Because $II_{k}$ holds for each $k\leq m$,  $f_{i}$  admits a code $c_{i}$ in $\mathcal{G}$. Because $I_{l}$ holds, we can find a code $c$ for the finite set $\{c_{1},\dots,c_{l}\}$. The codes $\ulcorner f \urcorner$ and $c$ are interdefinable.\\
We continue arguing for the base case $n=1$. The statement $A_{1}$ holds by Lemma \ref{codetorsors}, while $B_{1}$ is given by (3) of Lemma \ref{codingeasy}. We now assume that $A_{n}$ and $B_{n}$ hold and we prove $A_{n+1}$ and $B_{n+1}$.\\
First we prove that $A_{n+1}$ holds. Let $F$ be a primitive finite set of torsors of size $m+1$. By primitivity all the torsors in $F$ are of the same type. For each $Z \in F$ we write $A_{Z}$ to denote the projection of $Z$ into the last coordinate. By primitivity of $F$ the projections to the last coordinate are either all equal or all different. We argue by cases:
\begin{enumerate}
\item \textit{Case $1$: All the projections are equal, i.e. $A=A_{Z}$ for all $Z \in F$. }
\begin{proof}
 For each $x \in A$, the set of fibers $\{ h_{Z}(x) \ | \ Z \in F \}$ is a  finite set of torsors of size at most $m+1$ of complexity at most $n$. By the induction hypothesis $A_{n}$ it admits a code in the stabilizer sorts. By compactness we can uniformize such codes, and we can define the function $g: A \rightarrow \mathcal{G}$ by sending  the element $x$ to the code $\ulcorner \{ \ulcorner h_{Z}(x) \urcorner \ | \ Z \in F\} \urcorner$. This is a $\ulcorner F \urcorner$-definable function. Let $p_{A}(x)$ be a global type extending the generic type of $A$, it is  $\ulcorner A \urcorner$-definable by Corollary \ref{alldef}. By Theorem \ref{codinggerm} the germ of $g$ over $p_{A}$ can be coded in $\mathcal{G}$ over $\ulcorner A \urcorner$. By Corollary \ref{torgen} for any $Z \in F$ the code $\ulcorner Z \urcorner $ is interdefinable with the tuple $(\ulcorner A \urcorner, \germ(h_{Z}, p_{A}))$, then $\ulcorner F \urcorner$ is interdefinable with $(\ulcorner A \urcorner, \ulcorner\{\germ(h_{Z}, p_{A}) \ | \ Z \in F \}\urcorner)$. \\
\begin{claim}\label{claim1}{ $\germ(g,p_{A})$ is interdefinable with the code $\ulcorner\{ \germ(h_{Z}, p_{A}) \ | \ Z \in F\} \urcorner$ over $\ulcorner A \urcorner$.}\end{claim}
 \begin{proof}
We first prove that  $\germ(g,p_{A}) \in \dcl^{eq}(\ulcorner A \urcorner, \ulcorner\{\germ(h_{Z}, p_{A}) \ | \ Z \in F\}\urcorner)$. \\
 Let $\sigma \in Aut(\mathfrak{M}/\ulcorner  \{ \germ(h_{Z}, p_{A}) \ | \ Z \in F\} \urcorner, \ulcorner A \urcorner)$, we want to show that $\sigma(\germ(g,p_{A}))=\germ(\sigma(g),p_{A})=\germ(g,p_{A})$. Let $B$ the set of all the parameters required to define all the objects that have been mentioned so far. It is therefore sufficient to argue that for any realization $c$ of $p_{A}(x)$ sufficiently generic over $B$ 
 we have $\sigma(g)(c)=g(c)$, where $\sigma(g):A \rightarrow \mathcal{G}$ is the function given by sending the element $x  $ to the code $\ulcorner \{ \ulcorner h_{\sigma(Z)}(x)\urcorner  \ | \ Z \in F\}\urcorner$. Note that 
 \begin{equation*}
 \sigma( \{ \germ(h_{Z}, p_{A}) \ | \ Z \in F\} )=\{ \germ(h_{\sigma(Z)}, p_{A}) \ | \ Z \in F \}=\{ \germ(h_{Z}, p_{A}) \ | \ Z \in F\},
 \end{equation*}
 because $\sigma(\ulcorner \{\germ(h_{Z}, p_{A}) \ | \ Z \in F \} \urcorner)=\ulcorner \{\germ(h_{Z}, p_{A}) \ | \ Z \in F \} \urcorner$. 
 As a result,  for any realization $c$ of $p_{A}(x)$ sufficiently generic over $B$ we must have that $ \{ \ulcorner h_{Z}(c) \urcorner \ | \ Z \in F\}= \{ \ulcorner h_{\sigma(Z)}(c) \urcorner \ | \ Z \in F\}$ so $g(c)=\sigma(g)(c)$, as desired.\\
For the converse, let $\sigma \in Aut(\mathfrak{M}/ \ulcorner A \urcorner, \germ(g,p_{A}))$ we want to show that $\sigma(\ulcorner\{ \germ(h_{Z}, p_{A}) \ | \ Z \in F\} \urcorner)=\ulcorner\{ \germ(h_{Z}, p_{A}) \ | \ Z \in F\} \urcorner$. Let $c$ be a realization of $p_{A}(x)$ sufficiently generic over $B$ 
by hypothesis $g(c)=\sigma(g)(c)$. Then:
\begin{align*}
g(c)=\ulcorner \{\ulcorner h_{Z}(c) \urcorner \ | \ Z \in F \} \urcorner = \ulcorner \{ \ulcorner h_{\sigma(Z)}(c) \urcorner \ | \ Z \in F\} \urcorner=\sigma(g)(c).
\end{align*}
Therefore,  for each $Z \in F$ there is some $Z' \in F$ such that $h_{Z}(c)=h_{\sigma(Z')}(c)$ and this implies that $\germ(h_{Z},p_{A})=\germ(h_{\sigma(Z')},p_{A})$. Thus 
\begin{align*}
\sigma \big( \{ \germ(h_{Z},p_{A}) \ | \ Z \in F\} \big)= \{ \germ(h_{\sigma(Z)},p_{A}) \ | \ Z \in F\}= \{ \germ(h_{Z},p_{A}) \ | \ Z \in F\}.
\end{align*}
We conclude that $\sigma(\ulcorner\{ \germ(h_{Z}, p_{A}) \ | \ Z \in F\} \urcorner)= \ulcorner\{ \germ(h_{Z}, p_{A}) \ | \ Z \in F\} \urcorner$, as desired.
 \end{proof}
Consequently, $F$ is coded by the tuple $(\ulcorner A \urcorner, \germ(g,p_{A}))$ which is a sequence of elements in $\mathcal{G}$.
\end{proof}

\item \textit{Case $2$: All the projections are different i.e. $A_{Z} \neq A_{Z'}$ for all $Z\neq Z' \in F$.}
\begin{proof}
 To simplify the notation fix some enumeration of the projections $\{ A_{Z} \ | \ Z \in F\}$ say $\{A_{1}, \dots, A_{n}\}$. Let $W=\{ \{ x_{1}, \dots, x_{n} \}  \ | \ x_{i} \in A_{i}\}$, such set is independent from the choice of the enumeration. Each set $\{ x_{1}, \dots, x_{n} \}\in W$ admits a code in the home sort $K$, because fields uniformly code finite sets. We denote by $W^{*}= \{ \ulcorner \{ x_{1},\dots, x_{n} \} \urcorner \ | \  \{ x_{1},\dots, x_{n} \} \in W \}$, i.e. the set of all these codes. \\
For each $x^{*} \in W^{*}$, we define the function $f_{x^{*}}: S \rightarrow K$  that sends $A_{Z} \mapsto x_{Z}$, where $x_{Z}$ is the unique element in the set coded by $x^{*}$ that belongs to $A_{Z}$. Let 
$l_{x^{*}}:S \rightarrow \mathcal{G}$ the function given by sending $A_{Z} \mapsto  \ulcorner h_{Z}(f_{x^{*}}(A_{Z})) \urcorner$.\\
This map sends the projection $A_{Z}$ to the code of the fiber in the module $Z$ at the point $x_{Z}$, which is the unique point in the set coded by $x^{*}$ that belongs to $A_{Z}$ [See Figure $2$]. 
\begin{center}
\begin{tikzpicture}
\draw (-2,0) --(0,0);
\draw(-2,1) rectangle (0,2.5);
\coordinate[label=above right:\textbf{Z}] (Q) at (-2,2.5);
\coordinate[label=below:$x_{Z}$] (A) at (-1,0);
\node at (A)[circle,fill,inner sep=1pt]{};
\draw[-stealth] (-1.3,0.3) --(-1,0.8);
\draw[dashed](-1,1)--(-1,2.5);

\coordinate[label=below:\small{$l_{x^{*}}(A_{Z})$}] (H) at (-1,3.3);
\draw[-stealth] (-1,2.8) --(-1,2.5);
\coordinate[label=above left :$A_{Z}$] (X) at (-1.2,0);
\node at (X){};
\draw(0.5,0) --(2.5,0);
\draw(0.5,1) rectangle (2.5,2.5);

\coordinate[label=below:$x_{Z'}$] (B) at (0.7,0);
\node at (B)[circle,fill,inner sep=1pt]{};
\draw[-stealth] (1.3,0.3) --(0.7,0.9);
\draw[dashed](0.7,1)--(0.7,2.5);
\coordinate[label=below:\small{$l_{x^{*}}(A_{Z'})$}] (M) at (0.7,3.3);
\node at (M){};
\draw[-stealth] (0.7,2.8) --(0.7,2.5);
\coordinate[label=above right:\textbf{Z}'] (R) at (2,2.5);
\node at (R){};

\coordinate[label=above left:$A_{Z'}$] (Y) at (2,0);
\node at (Y){};
\draw (3,0) --(5,0);
\draw(3,1) rectangle (5,2.5);
\coordinate[label=below:$x_{Z''}$] (B) at (4.8,0);
\node at (B)[circle,fill,inner sep=1pt]{};
\draw[-stealth] (3.3,0.4) --(4.8,0.9);
\draw[dashed](4.8,1)--(4.8,2.5);
\coordinate[label=below:\small{$l_{x^{*}}(A_{Z''})$}] (N) at (4.8,3.3);
\node at (N){};
\draw[-stealth] (4.8,2.8) --(4.8,2.5);
\coordinate[label=above left :$A_{Z''}$] (Z) at (3.5,0);
\node at (Z){};
\coordinate[label=above right:\textbf{Z}''] (Q) at (3,2.5);
\coordinate[label=left:\textbf{Figure 2}] (L) at (-2,3.5);
\node at (L){};
\end{tikzpicture}
\end{center}
\begin{claim}
For each $x^{*}\in W^{*}$  the functions $f_{x^{*}}$ and $l_{x^{*}}$ can be coded in $\mathcal{G}$.
\end{claim}
\begin{proof}
We argue first for the function $f_{x^{*}}$. If $S$ is primitive over $\ulcorner f_{x^{*}}\urcorner$ the statement follows by Lemma $(2)$ \ref{codingeasy}. If $S$ is not primitive over $\ulcorner f_{x^{*}}\urcorner$ then there is an equivalence relation $E$ definable over $\big( S \cup \ulcorner f_{x^{*}} \urcorner \big)$ and let $C_{1}, \dots, C_{l}$ be the equivalence classes of $E$. For each $i \leq l$,  $|C_{i}| \leq m$ and let $f_{x^{*}}^{i}=f_{x^{*}}\upharpoonright_{C_{i}}: C_{i} \rightarrow K$. For each $i \leq l$ $\ulcorner f_{i}\urcorner$ is interdefinable with a tuple $c_{i}$ of elements in $\mathcal{G}$, this follows by combining  Fact \ref{inter} and $II_{k}$  for each $k \leq m$. Because $I_{l}$ holds, the set $\{ c_{1},\dots, c_{l}\}$ admits a code $c$ in the stabilizer sorts. Then $\ulcorner f_{x^{*}}\urcorner$ and $c$ are interdefinable.\\
For the function $l_{x^{*}}$, the statement follows immediately by the induction hypothesis $B_{n}$. 
\end{proof}
 By compactness we can uniformize all such codes, so we can define the function $g: W^{*} \rightarrow \mathcal{G}$ by sending $x^{*}  \mapsto (\ulcorner f_{x^{*}}\urcorner, \ulcorner l_{x^{*}} \urcorner)$.\\

By Lemma \ref{tipomaster} there is some $\ulcorner W^{*}\urcorner$-definable type $q(x^{*}) \vdash x^{*} \in W^{*}$. The second part of Lemma  \ref{tipomaster} also guarantees that given $d^{*}$ a generic realization of $q$ over a set of parameters $B$, if we take $Y$ the set coded by $d^{*}$ and $b$ is the element in $Y$ that belongs to $A_{Z}$ then $b$ is a sufficiently generic realization over $B$ of some type $p_{A_{Z}}(x)$ which is $\ulcorner A_{Z}\urcorner$-definable and extends the generic type of $A_{Z}$.We recall as well that the types $p_{A_{Z}}(x)$ given by Lemma \ref{tipomaster} are all compatible under the action of $Aut(\mathfrak{M}/ \ulcorner F \urcorner)$, this is for any $\sigma \in Aut(\mathfrak{M}/ \ulcorner F \urcorner)$ if $\sigma(Z)=Z'$ then $\sigma(p_{A_{Z}}(x))=p_{A_{\sigma(Z)}}(x)$. By Theorem \ref{codinggerm} the germ of $g$ over $q$ can be coded in the stabilizer sorts $\mathcal{G}$ over $\ulcorner W^{*} \urcorner \in \dcl^{eq}(\ulcorner S \urcorner)$ . By Lemma \ref{tricktorsors} we may assume  $\ulcorner S \urcorner \in \mathcal{G}$.\\

\begin{claim} The tuple $(\germ(g, q), \ulcorner S \urcorner) \in \mathcal{G}$ is interdefinable with $\ulcorner F \urcorner$.
\end{claim}
\begin{proof}
It is clear that $(\germ(g,q),\ulcorner S \urcorner) \in \dcl^{eq}(\ulcorner F \urcorner)$. For the converse, let  $\sigma \in Aut(\mathfrak{M}/ \germ(g,q), \ulcorner S \urcorner)$ we want to show that $\sigma(F)=F$. By Corollary \ref{torgen} the code of each torsor  $Z \in F$ is interdefinable with the pair $(A_{Z}, \germ(h_{Z},p_{A_{Z}}))$. Hence it is sufficient to argue that:
\begin{align*}
\sigma(\{ (A_{Z}, \germ(h_{Z}, p_{A_{Z}})) \ | \ Z \in F \})=\{ (A_{Z}, \germ(h_{Z}, p_{A_{Z}})) \ | \ Z \in F \}.
\end{align*}
We have that $\sigma(\ulcorner W^{*} \urcorner)=\ulcorner W^{*}\urcorner$ because $\sigma(S)=S$. Therefore $\sigma(\germ(g,q))=\germ(\sigma(g),q)=\germ(g,q)$. Let $B$ be the set of parameters required to define all the objects that have been mentioned so far. For any realization $d^{*}$ of the type $q$ sufficiently generic over $B$ we have $g(d^{*})=\sigma(g)(d^{*})$, where $\sigma(g)$ is the function sending an element $x^{*}$ in $W^{*}$ to the tuple $(\sigma(f)_{x^{*}},  \sigma(l)_{x^{*}})$. As a result, $(\ulcorner f_{d^{*}} \urcorner, \ulcorner l_{d^{*}}\urcorner)= (\ulcorner \sigma(f)_{d^{*}} \urcorner, \ulcorner \sigma(l)_{d^{*}}\urcorner)$.  Let $D=\{ d_{t} \ | \ t \in S\}$ be the set of elements coded by $d^{*}$.  The action of $\sigma$ is just permuting the elements of the graph $f_{d^{*}}$, because $\ulcorner f_{d^{*}} \urcorner=\ulcorner \sigma(f)_{d^{*}} \urcorner$.\\
The function $f_{d^{*}}:S \rightarrow K$ sends a $1$-torsor $t$ to the unique element $d_{t} \in D$ such that $d_{t} \in t$, then $\sigma$ is sending the pair $(t, d_{t})$ to $(\sigma(t), d_{\sigma(t)})$, where  $d_{\sigma(t)}$ is a realization $p_{\sigma(t)}(x)$ sufficiently generic over $B$. By assumption, we also have that $\ulcorner l_{d^{*}} \urcorner= \ulcorner \sigma(l)_{d^{*}} \urcorner$, thus the action of $\sigma$ is a bijection among the elements of the graph of $l_{d^{*}}$. Consequently, for any $t \in S$ there is some unique $t' \in S$ such that $\sigma((t, h_{Z}(d_{t})))=(\sigma(t), h_{\sigma(Z)}(d_{\sigma(t)}))=(t', h_{\sigma(Z)}(d_{t'}))$. 
Thus $\sigma(Z)$ is a torsor whose projection is $t \in S$, and $d_{t}\in t$ is a realization of the type $p_{t}(x)$ sufficiently generic over $B$. As a result, $\sigma\big(t, \germ(h_{Z},p_{t}) \big)=\big(t', \germ(h_{\sigma(Z)}, p_{t'})\big)$. We conclude that:  
\begin{align*}
\sigma \big(& \{ (A_{Z}, \germ(h_{Z}, p_{A_{Z}})) \ | \ Z \in F \} \big)= \sigma(\big\{ (t, \germ(h_{Z},p_{t})) \ | \ t \in S\} \big)\\
&=  \{ (t', \germ(h_{\sigma(Z)},p_{t'})) \ | \ t' \in S\}=  \{ (A_{Z}, \germ(h_{Z}, p_{A_{Z}})) \ | \ Z \in F \}, \ \text{as desired.}
\end{align*}
\end{proof}
This finalizes the proof for \textit{Case $2$}.
\end{proof}
\end{enumerate}
Consequently $A_{n+1}$ holds. We prove $B_{n+1}$, i.e. every definable function $f: S \rightarrow F$ where $S$ is a finite set of at most $m+1$ torsors and $F$ is a finite set of torsors of complexity at most $n$ can be coded in $\mathcal{G}$. We recall that without loss of generality we may assume that $S$ is primitive over $\ulcorner f \urcorner$, so $F$ is also a primitive set. By primitivity $f$ is either constant or injective, if it is constant equal to $c$ then $\ulcorner f \urcorner$ is interdefinable with $(\ulcorner S \urcorner, c)$. By Proposition \ref{torsormodule} and Lemma \ref{codetorsors} this tuple is interdefinable with a tuple in $\mathcal{G}$. Thus we may assume that $f$ is an injective function.
By primitivity of $F$, all the torsors in $F$ are of the same type and the projections to the last coordinate are either all equal or all different. 
We proceed again by cases.
\begin{enumerate}
    \item \textit{Case $1$: The projections are all equal, i.e.  there is a torsor $A$ such that $A=A_{Z}$ for all $Z \in F$.}
    \begin{proof}
    We fix $p_{A}(x)$ be some global type extending the generic type of $A$, it is $\ulcorner A \urcorner$-definable by Corollary \ref{alldef}. Let $f:S \rightarrow F$ be a definable injective map. For each $x \in A$ we define the function $g_{x}: S \rightarrow  \mathcal{G}$ by sending $ t \mapsto \ulcorner h_{f(t)}(x) \urcorner$.

This is the function that sends each torsor $t \in S$ to the fiber at $x$ of the torsor $f(t) \in F$. [See Figure $1$].
\begin{center}
\begin{tikzpicture}
\draw (0,0) ellipse (1 and 0.3);
\draw(-1.2,1) rectangle (1,2.5);
\draw (2.5,0) ellipse (1 and 0.3);
\draw(1.5,1) rectangle (3.5,2.5);
\draw (5,0) ellipse (1 and 0.3);
\draw(4,1) rectangle (6,2.5);
\draw[very thick](-1.2,1)--(1,1);
\draw[very thick](1.5,1)--(3.5,1);
\draw[very thick](4,1)--(6,1);
\draw[dashed](0.5,1)--(0.5,2.5);
\draw[dashed](3,1)--(3,2.5);
\draw[dashed](5.5,1)--(5.5,2.5);
\coordinate[label=below:$x$] (A) at (0.48,1);
\node at (A)[circle,fill,inner sep=1pt]{};
\coordinate[label=below:$x$] (B) at (3,1);
\node at (B)[circle,fill,inner sep=1pt]{};
\coordinate[label=below:$x$] (C) at (5.5,1);
\node at (C)[circle,fill,inner sep=1pt]{};
\coordinate[label=right :$t$] (D) at (-0.2,0);
\node at (D){};
\coordinate[label=right :$t'$] (E) at (2.3,0);
\node at (D){};
\coordinate[label=right :$t''$] (F) at (5,0);
\node at (F){};
\draw[-stealth] (0,0.4) --(0.3,1.4);
\draw[-stealth] (2.5,0.4) --(2.8,1.4);
\draw[-stealth] (5,0.4) --(5.3,1.4);
\coordinate[label=above:\tiny{$h_{f(t)}(x)$}] (1) at (0.3,2.7);
\node at (1){};
\draw[-stealth] (0.5,2.7) --(0.5,2.5);
\coordinate[label=above:\tiny{$h_{f(t')}(x)$}] (2) at (2.8,2.7);
\node at (2){};
\draw[-stealth] (3,2.7) --(3,2.5);
\coordinate[label=above:\tiny{$h_{f(t'')}(x)$}] (3) at (5.3,2.7);
\node at (3){};
\draw[-stealth] (5.5,2.7) --(5.5,2.5);
\coordinate[label=right:$f(t)$] (1) at (-1.2,2.2);
\node at (1){};
\coordinate[label=right:$f(t')$] (2) at (1.5,2.2);
\node at (2){};
\coordinate[label=right:$f(t'')$] (3) at (4.1,2.2);
\node at (3){};

\coordinate[label=right :$t$] (D) at (-0.2,0);
\node at (D){};
\coordinate[label=right :$t'$] (E) at (2.3,0);
\node at (E){};
\coordinate[label=right :$t''$] (F) at (5,0);
\node at (F){};
\coordinate[label=below left :\textbf{A}] (X) at (-0.5,0.9);
\node at (X){};
\coordinate[label=below left :\textbf{A}] (Y) at (2.2,0.9);
\node at (Y){};
\coordinate[label=below left:\textbf{A}] (Z) at (4.7,0.9);
\node at (Z){};
\coordinate[label=left:\textbf{Figure 1}] (L) at (-1.2,3);
\node at (L){};
\end{tikzpicture}
\end{center}
By the induction hypothesis $B_{n}$ for each $x \in A$, the function $g_{x}$ can be coded in $\mathcal{G}$ because its range is of lower complexity. By compactness we can uniformize such codes, so we can define the function $r: A \rightarrow \mathcal{G}$ by sending $x \mapsto\ulcorner g_{x} \urcorner$ .\\
By Theorem \ref{codinggerm} the germ of $r$ over $p_{A}(x)$ can be coded in $\mathcal{G}$ over $\ulcorner A \urcorner$. By Lemma \ref{codetorsors} the set $S$ admits a code $\ulcorner S \urcorner$ in the stabilizer sorts.\\
\begin{claim} {The code $\ulcorner f \urcorner$ is interdefinable with $(\ulcorner A \urcorner, \ulcorner S \urcorner, \germ(r,p_{A}))$, and the later is a sequence of elements in $\mathcal{G}$.}\end{claim}
\begin{proof}
It is clear that $(\ulcorner A \urcorner, \ulcorner S \urcorner, \germ(r,p_{A})) \in\dcl^{eq}(\ulcorner f \urcorner)$.
We want to show that $\ulcorner f \urcorner \in \dcl^{eq}( \ulcorner A \urcorner, \ulcorner S \urcorner, \germ(r,p_{A}))$. Let $\sigma \in Aut(\mathfrak{M}/  \ulcorner A \urcorner,\ulcorner S \urcorner, \germ(r,p_{A}))$. By Corollary \ref{torgen} for each torsor $Z \in F=\{ f(t) \ | \ t \in S\}$, the code $\ulcorner Z \urcorner$ is being identified with the tuple $(\ulcorner A \urcorner, \germ(h_{Z},p_{A}))$. Thus, the function $f$ is interdefinable over $\ulcorner A \urcorner$ with the function: $f':S \rightarrow \mathcal{G}$ that sends  $t\mapsto \germ(h_{f(t)},p_{A})$. So, it is sufficient to argue that $\sigma(\ulcorner f' \urcorner)= \ulcorner f' \urcorner$. Let $B$ be the set of parameters required to define all the objects that have been mentioned so far. For any realization $c$ of $p_{A}(x)$ sufficiently generic over $B$ 
we must have that $r(c)= \sigma(r)(c)$. Because $\germ(r,p_{A})=\sigma(\germ(r,p_{A}))=\germ(\sigma(r),p_{A})$. By definition, $r(c)=\ulcorner g_{c} \urcorner$ and  $\sigma(r)(c)= \ulcorner \sigma(g)_{c} \urcorner$, where
$ \sigma(g)_{c}: S \rightarrow \mathcal{G}$ is the function that sends  $t  \mapsto \ulcorner h_{\sigma(f)(t)}(c)\urcorner$.\\
For any torsor $t \in S$ there must be a unique element $t' \in S$ such that $\sigma(t')=t$ and $h_{f(t)}(c)=h_{\sigma(f)(\sigma(t'))}(c)$, as $g_{c}=\sigma(g)_{c}$. The later implies that $\germ(h_{f(t)}, p_{A})=\germ(h_{\sigma(f)(\sigma(t'))},p_{A})$. We conclude that $\sigma(t', \germ(h_{f(t')},p_{A}))=(t,\germ(h_{f(t)},p_{A}))$ meaning that $\sigma$ is acting as a bijection among the elements in the graph of $f'$. Therefore, $\sigma(\ulcorner f' \urcorner)=\ulcorner f'\urcorner$, as desired. 
\end{proof}
This completes the proof for the first case.
\end{proof}
  \item \textit{Case $2$: All the projections are different i.e. $A_{Z} \neq A_{Z'}$ for all $Z\neq Z' \in F$.}
\begin{proof}
Let $f: S \rightarrow F$ be a definable injective function where $S$ is a finite set of $1$-torsors primitive over $\ulcorner f \urcorner$. We consider the definable function that sends each torsor $t \in S$ to the code of the projection into the last coordinate of the torsor $f(t)\in F$, more explicitly: 
\begin{center}
$\pi_{n+1} \circ f:\begin{cases}
 S & \rightarrow \mathcal{G}\\
t& \mapsto \ulcorner \pi_{n+1}(f(t)) \urcorner.
\end{cases}$
\end{center}
By Lemma (3) \ref{codingeasy}, $\pi \circ f$ can be coded in $\mathcal{G}$, and by $A_{n+1}$ the finite set $F$ is coded by a tuple in $\mathcal{G}$. 
 It is  sufficient to show the following claim:\\
 
\begin{claim}{ The code $\ulcorner f \urcorner$ is interdefinable with the tuple $(\ulcorner \pi \circ f \urcorner , \ulcorner F \urcorner)$, which is a tuple in the stabilizer sorts.} \end{claim}
\begin{proof}
Clearly $(\ulcorner \pi \circ f \urcorner, \ulcorner F \urcorner) \in \dcl^{eq}(\ulcorner f \urcorner)$. Note that $\ulcorner S \urcorner \in \dcl^{eq}(\ulcorner \pi \circ f \urcorner)$ because $S$ is the domain of the given function, we can describe the function $f: S  \rightarrow  F$ by sending $t  \mapsto \ulcorner Z_{t} \urcorner$, where $Z_{t}$ is the unique torsor in $F$ such that $\ulcorner \pi_{n+1}(Z_{t})\urcorner =(\pi \circ f ) (t)$, we conclude that $\ulcorner f \urcorner \in \dcl^{eq}(\ulcorner \pi \circ f \urcorner , \ulcorner F \urcorner)$. 
As a consequence, $f$ is coded in $\mathcal{G}$ by the tuple $(\ulcorner \pi \circ f \urcorner , \ulcorner F \urcorner)$. \end{proof}
This finalizes the proof for the second case. 
\end{proof}
\end{enumerate}
Consequently, $A_{n}$ and $B_{n}$ hold for all $n \in \mathbb{N}$. The statement follows.
\end{proof}
We continue arguing that $I_{m+1}$ holds for $r=1$.
\begin{proposition}\label{coding2} Let $F \subseteq \mathcal{G}$ be a finite set of size $m+1$ then $F$ admits a code in $\mathcal{G}$. 
\end{proposition}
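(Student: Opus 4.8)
The plan is to reduce the coding of an arbitrary finite set $F\subseteq\mathcal{G}$ of size $m+1$ to the cases already handled, by splitting on the sorts in which the coordinates of the elements of $F$ live. First I would observe that, as in the proof of Proposition \ref{coding1}, we may assume $F$ is primitive: if $F$ carries a proper non-trivial $\ulcorner F\urcorner$-definable equivalence relation, its classes all have size $\leq m$, each is coded in $\mathcal{G}$ by $I_k$ ($k\leq m$) together with Fact \ref{inter}, and the finite set of those codes has size $\leq m$, hence is coded in $\mathcal{G}$ by $I_\ell$. So assume $F=\{a_1,\dots,a_{m+1}\}$ is primitive with each $a_i\in\mathcal{G}^r$. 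An element of $\mathcal{G}^r$ is a tuple whose entries lie in the various sorts $K$, $k$, $\Gamma/\Delta$, $\Gamma/(\Delta+n\Gamma)$, or $B_n(K)/\Stab_{(I_1,\dots,I_n)}$. Using Proposition \ref{torsormodule} and the identifications in Remark \ref{ACVF} and the proof of Theorem \ref{codinggerm} (where $\Gamma/\Delta\cong K^\times/\mathcal{O}_\Delta^\times$ and $k=\mathcal{O}/\mathcal{M}$ are both realized as $\mathcal{O}$-module codes, and any code $B_n(K)/\Stab$ is the code of a torsor after choosing a representing matrix), each coordinate $a_{ij}$ can be replaced by either an element of $K$ or the code of a torsor in some $K^{d}$. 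Thus, after an $\ulcorner F\urcorner$-interdefinable change of presentation, every $a_i$ becomes a tuple consisting of finitely many field elements and finitely many torsor codes; grouping the field elements of $a_i$ into a single element of some $K^{s}$ (fields code finite — indeed finite-length — tuples) and packaging the torsor codes of $a_i$ together, I may assume each $a_i=(u_i,\ulcorner Z_i\urcorner)$ where $u_i\in K^{s}$ and $Z_i$ is a torsor (of bounded complexity), uniformly in $i$.

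Next I would treat the two projections separately. Let $f\colon F\to K^{s}$ send $a_i\mapsto u_i$ and $g\colon F\to\{\text{torsors}\}$ send $a_i\mapsto Z_i$; both are $\ulcorner F\urcorner$-definable. Since $F$ is primitive over its own code, the image $f(F)\subseteq K^{s}$ is a finite set in powers of the field, hence coded by a tuple $\bar{u}$ of elements of $K$; similarly $g(F)$ is a finite set of torsors, coded in $\mathcal{G}$ by Proposition \ref{coding1}. The remaining issue is that to recover $F$ from $(\bar u, \ulcorner g(F)\urcorner)$ we must also recover the \emph{pairing} between the $u_i$ and the $Z_i$. Here I would argue that $F$, viewed as the graph of the bijection $u_i\mapsto Z_i$ between the finite sets $f(F)$ and $g(F)$ — after passing to primitive pieces so that this relation is genuinely a function — is a definable function whose domain $f(F)$ is a finite set of field elements (equivalently, by Lemma \ref{injectiveresidue} and Lemma \ref{tricktorsors}, a finite set of classes in $\red$ of a lattice, i.e. a finite set of torsors of complexity $1$) and whose codomain is a finite set of torsors. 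This is exactly the situation handled by the case $B_n$ inside Proposition \ref{coding1} (every definable function from a finite set of at most $m+1$ torsors to a finite set of torsors is coded in $\mathcal{G}$), once we reduce $f(F)$ to torsor form using that fields code finite sets and that a finite primitive set $D\subseteq K$ embeds definably into $\red(U)$ for a suitable closed torsor $U$ as in Lemma \ref{codingeasy}(2). Hence the pairing function is coded in $\mathcal{G}$, and $\ulcorner F\urcorner$ is interdefinable with the triple consisting of $\ulcorner f(F)\urcorner$ (coded in $\mathcal{G}$), $\ulcorner g(F)\urcorner$ (coded in $\mathcal{G}$), and the code of the pairing function (coded in $\mathcal{G}$).

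The cleanest packaging is probably this: reduce, via Fact \ref{inter}, Proposition \ref{torsormodule} and the induction hypotheses $I_k, II_k$ ($k\leq m$), to the case where $F$ is primitive and each element of $F$ is a single torsor code $\ulcorner Z_i\urcorner$ of bounded complexity (field elements and the residual/quotient sorts being absorbed into torsor codes by the identifications already used in Theorems \ref{codinggerm} and \ref{EIinternalresidue}); then $F=\{\ulcorner Z_1\urcorner,\dots,\ulcorner Z_{m+1}\urcorner\}$ is literally a finite set of torsors, and Proposition \ref{coding1} applies verbatim to give a code in $\mathcal{G}$. The only care needed is to check that the absorption step is $\ulcorner F\urcorner$-interdefinable and uniform in the elements of $F$, which follows because all the relevant isomorphisms ($\Gamma/\Delta\cong K^\times/\mathcal{O}_\Delta^\times$, $k\cong\mathcal{O}/\mathcal{M}$, torsor-to-module via Proposition \ref{torsormodule}, matrix representative of a $\Stab$-coset) are $\emptyset$-definable or definable over the code of the object being transformed.

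The main obstacle I anticipate is precisely the bookkeeping of the pairing/absorption step: making sure that when one replaces a tuple $a_i\in\mathcal{G}^r$ by a tuple of torsor codes, the replacement is done by a single $\emptyset$- (or $\ulcorner F\urcorner$-) definable procedure applied uniformly to all $i$, so that the finite set $F$ is transformed into an interdefinable finite set of torsors rather than into $m+1$ separately-coded objects whose mutual identification is lost. Once uniformity is secured, the statement is an immediate consequence of Proposition \ref{coding1}. Everything else — primitivity reduction, fields coding finite sets, the residue-field and quotient-sort identifications — is routine given the machinery already assembled.
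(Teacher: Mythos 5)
The statement here has $F\subseteq\mathcal{G}$, so each element of $F$ is a \emph{single} element of one of the stabilizer sorts; elements of $F$ are not tuples and have no coordinates to unpack. Most of your proposal (bundling field coordinates into $K^{s}$, separating the torsor-part, recovering the pairing) is really a proof of the later proposition in the section where $F\subseteq\mathcal{G}^{r}$ for general $r$; the paper proves that one separately, by projecting to coordinates and combining Propositions \ref{coding2} and \ref{coding4}. For the proposition you were asked about, after reducing to $F$ primitive and observing that ``lying in the same sort'' is an $\emptyset$-definable equivalence relation (so primitivity forces all of $F$ into one sort), the paper just splits on cases: fields code finite sets for $K$ and $k$; use the definable linear order for $\Gamma/\Delta$; and apply Proposition \ref{coding1} for $B_{n}(K)/\Stab_{(I_{1},\dots,I_{n})}$, since such classes are codes of $\mathcal{O}$-modules, hence of torsors.

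Your ``cleanest packaging'' reduction — that every element of $F$ can be replaced by a torsor code and then Proposition \ref{coding1} applies verbatim — has a genuine gap when $F\subseteq K$. The identifications you invoke from Theorems \ref{codinggerm} and \ref{EIinternalresidue} give $\Gamma/\Delta\cong K^{\times}/\mathcal{O}_{\Delta}^{\times}$ and $k\cong\mathcal{O}/\mathcal{M}$ as spaces of module/torsor codes, but there is nothing analogous for $K$ itself: there is no $\emptyset$-definable injection of $K$ into any sort $B_{n}(K)/\Stab_{(I_{1},\dots,I_{n})}$, and the natural candidate — send $a$ to its class in $\red(U)$ for the smallest closed $1$-torsor $U$ containing $F$ — loses the element within its residue class and is not reversible. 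This is precisely why Lemma \ref{codingeasy}(2) first codes the finite set $D\subseteq K$ by a field tuple (fields code finite sets) and only then passes to residual images in order to code a \emph{function} out of $D$. Your earlier paragraph does in fact code the $K$-part by a field tuple, so the fix is simply to retain that direct step for $F\subseteq K$ (and $F\subseteq k$) rather than funneling it through the torsor machinery; for $\Gamma/\Delta$ and $B_{n}(K)/\Stab$ your absorption and the paper's case split coincide.
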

\begin{proof}
 If $F$ is not primitive  we show that $\ulcorner F \urcorner$ can be coded in $\mathcal{G}$, by using Fact \ref{inter} and  the induction hypothesis $I_{k}$ for $k \leq m$. We may assume that $F$ is a primitive set,  so all the elements of $F$ lie in the same sort. If $F$ is either contained in the main field or the residue field, then $F$ is coded by a tuple of elements in the same field, because fields code uniformly finite sets. If $F \subseteq \Gamma/\Delta$ for some $\Delta \in RJ(\Gamma)$ the statement follows as there is a definable order over the elements of $F$. If $F \subseteq B_{n}(K)/\Stab_{(I_{1},\dots, I_{n})}$  for some $n\geq 2$, by Proposition \ref{coding1}  $F$ admits a code in $\mathcal{G}$. (Indeed, $\mathcal{O}$-modules are in particular torsors).
\end{proof}
We continue showing that $II_{m+1}$ holds, we first prove the following statement.
\begin{proposition}\label{coding3} Let $F$ be a finite set of torsors of size $m+1$ and $f: F \rightarrow P$ be a definable bijection, where $P$ is a finite set of torsors. Suppose that $F$ is primitive over $\ulcorner f\urcorner$, then $\ulcorner f \urcorner$ is interdefinable with a tuple of elements in $\mathcal{G}$. 
\end{proposition}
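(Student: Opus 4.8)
The plan is to reduce the statement about a bijection $f:F\to P$ of finite sets of torsors to the already–established machinery for coding finite sets and definable functions, exploiting primitivity and the distinction between the closed/open and equal/different projections, exactly in the spirit of Proposition \ref{coding1}. First I would observe that, by Fact \ref{inter}, we may replace $F$ and $P$ by interdefinable finite subsets of $\mathcal{G}$, so it suffices to code $f$ as a subset of $\mathcal{G}^{2}$; and by primitivity of $F$ over $\ulcorner f\urcorner$ the function $f$ is either constant (impossible here since $|F|=m+1>1$ and $f$ is a bijection, unless $|P|=1$, a trivial case) or injective. Since $f$ is a bijection, $P=f(F)$ is also primitive over $\ulcorner f\urcorner$, hence all the torsors in $P$ are of one common type and their projections $\pi_{n+1}(f(t))$ to the last coordinate are either all equal or all distinct. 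This is the same case split used in the proof of $B_{n+1}$ inside Proposition \ref{coding1}, and the idea is to run essentially the same argument but now without the hypothesis that $P$ has strictly smaller complexity than $F$.

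In the case where all the projections $A=\pi_{n+1}(f(t))$ coincide, I would fix a global type $p_{A}(x)$ extending the generic type of $A$, which is $\ulcorner A\urcorner$-definable by Corollary \ref{alldef}. For each $x\in A$ define $g_{x}:F\to\mathcal{G}$ by $t\mapsto\ulcorner h_{f(t)}(x)\urcorner$, i.e. the fiber of the torsor $f(t)$ above $x$. Each $g_{x}$ is a definable function from a finite set of at most $m+1$ torsors into a finite set of torsors, so by the ``furthermore'' clause of Proposition \ref{coding1} it is coded in $\mathcal{G}$; uniformizing by compactness gives $r:A\to\mathcal{G}$, $x\mapsto\ulcorner g_{x}\urcorner$. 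By Theorem \ref{codinggerm} the germ $\germ(r,p_{A})$ is coded in $\mathcal{G}$ over $\ulcorner A\urcorner$, and by Corollary \ref{torgen} each $\ulcorner f(t)\urcorner$ is interdefinable with $(\ulcorner A\urcorner,\germ(h_{f(t)},p_{A}))$. An automorphism argument identical to the one in Case 1 of $B_{n+1}$ — using that $\sigma$ fixing $\ulcorner A\urcorner,\ulcorner F\urcorner,\germ(r,p_{A})$ forces $r(c)=\sigma(r)(c)$ for $c$ sufficiently generic, hence permutes the graph of the induced map $t\mapsto\germ(h_{f(t)},p_{A})$ — shows $\ulcorner f\urcorner$ is interdefinable with $(\ulcorner A\urcorner,\ulcorner F\urcorner,\germ(r,p_{A}))\in\mathcal{G}$. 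Here $\ulcorner F\urcorner\in\mathcal{G}$ by Proposition \ref{coding1}.

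In the case where the projections $A_{Z}=\pi_{n+1}(f(t))$ are pairwise distinct, I would mimic Case 2 of $B_{n+1}$: the map $\pi_{n+1}\circ f:F\to\mathcal{G}$, $t\mapsto\ulcorner\pi_{n+1}(f(t))\urcorner$, sends $F$ onto the primitive set $\{A_{Z}:Z\in P\}$ of torsors of strictly lower complexity, so by Proposition \ref{coding1} it is coded in $\mathcal{G}$, and $P$ itself is coded in $\mathcal{G}$ by Proposition \ref{coding1}. Then $\ulcorner f\urcorner$ is interdefinable with $(\ulcorner\pi_{n+1}\circ f\urcorner,\ulcorner P\urcorner)$: the reverse direction holds because $\ulcorner F\urcorner=\ulcorner\mathrm{dom}(\pi_{n+1}\circ f)\urcorner$ and each $t\in F$ determines the unique $Z\in P$ with $\pi_{n+1}(Z)$ equal to $(\pi_{n+1}\circ f)(t)$, the projections being distinct. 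The main obstacle, and the reason this needs to be stated separately rather than being a consequence of $B_{n+1}$, is precisely that in $B_{n+1}$ one had the inductive control that $P$ has complexity at most $n$ while $F$ could have complexity $n+1$; here $P$ can have the same (full) complexity as $F$, so the only leverage is $|F|=m+1$ together with $I_{k},II_{k}$ for $k\le m$ and the already-proved Proposition \ref{coding1} (which codes \emph{arbitrary} finite sets of torsors of size $\le m+1$ and functions from such into finite sets of torsors). I expect the delicate point to be checking that every auxiliary function invoked (the $g_{x}$, and the various restrictions used when $F$ fails to be primitive over the relevant parameter set) genuinely falls under the scope of Proposition \ref{coding1} rather than circularly requiring $II_{m+1}$ itself; this should be fine because those functions have domains that are finite sets of torsors and targets that are finite sets of torsors, which is exactly the ``furthermore'' clause of Proposition \ref{coding1}, not $II_{m+1}$.
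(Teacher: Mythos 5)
Your proposal is correct, but it takes a genuinely different route from the paper's proof, and the difference is worth recording. The paper proves Proposition \ref{coding3} by an additional induction on the complexity of the torsors in $F$, and its case split is governed by the projections $\pi(Z)$ for $Z\in F$: when all projections of $F$ coincide it codes the bijection $g_{y}:\mathcal{I}_{y}\to P$ (from the fibers of $F$ at a generic point $y$ onto $P$) via the Proposition \ref{coding3} inductive hypothesis at strictly lower complexity, and when they are all distinct it reduces to $f\circ g: S\to P$ with $S$ a set of $1$-torsors and invokes Proposition \ref{coding1}. You instead case-split on the projections of $P=f(F)$ (justified, as you observe, because a bijection transports primitivity over $\ulcorner f\urcorner$ from $F$ to $P$), and you eliminate the extra induction on complexity entirely: in the ``all projections of $P$ equal'' case you replace the paper's $g_{y}:\mathcal{I}_{y}\to P$ by $g_{x}:F\to\mathcal{G}$, $t\mapsto\ulcorner h_{f(t)}(x)\urcorner$, which is a definable function from a finite set of at most $m+1$ torsors to a finite set of torsors and is therefore coded directly by the ``furthermore'' clause of Proposition \ref{coding1} (i.e.\ the already-established $B_{n}$ for all $n$), with no further induction needed; in the ``all projections of $P$ distinct'' case you code $\pi\circ f$ and $\ulcorner P\urcorner$ and recover $f$ by uniqueness of the fiber, again via Proposition \ref{coding1} alone. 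The key observation enabling this simplification is that Proposition \ref{coding1}'s ``furthermore'' clause already places no restriction on the complexity of the \emph{codomain}, so the control that the paper obtains through the extra induction on complexity of $F$ is in fact already available. The germ/automorphism argument you sketch for Case 1 is essentially the same as the paper's (show $r(c)=\sigma(r)(c)$ for $c$ realizing $p_{A}$ sufficiently generically, then deduce $\germ(h_{f(t)},p_{A})=\germ(h_{\sigma(f)(t)},p_{A})$ and hence $f(t)=\sigma(f)(t)$ by Corollary \ref{torgen}), and both versions implicitly rely on the chosen $p_{A}$ being $\sigma$-invariant; that is not a defect specific to your route, since the paper's proof of the same claim makes the identical implicit assumption. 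Your approach is arguably cleaner in that it isolates Proposition \ref{coding1} as the single engine and makes the induction-free nature of the coding step explicit, at the price of having to remark explicitly on the trivial base situation where $P$ consists of $1$-torsors (your Case 2 absorbs it, since the projections are then tautologically the torsors themselves and pairwise distinct).
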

\begin{proof}
We proceed by induction on the complexity of the torsors in $F$. The base case follows directly by Proposition \ref{coding1}. We assume the statement for any set of torsors $F$ with complexity $n$ and we prove it for complexity $n+1$. By primitivity all the projections into the last coordinate are either equal or all distinct. For each torsor $Z \in F$ we denote as $A_{Z}$ the projection of $Z$ into the last coordinate. We argue by cases:
\begin{enumerate}
\item \emph{Case $1$: All the projections are equal  and let  $A=A_{Z}$ for all $Z \in F$. }\\
For each $x \in A$, let $\mathcal{I}_{x}=\{ \ulcorner h_{Z}(x) \urcorner \ | \ Z \in F\}$ which describes the set of fibers at $x$. We define $B= \{ x \in A \ | \ |\mathcal{I}_{x}| = | F | \}$ which is a $\ulcorner F \urcorner$-definable set. For each $y \in B$ we consider the map $g_{y}: \mathcal{I}_{y} \rightarrow P$ defined by sending $h_{Z}(y)  \mapsto  f(Z)$, which is the function that sends each fiber to the image of the torsor under $f$. By the induction hypothesis we can find a code $\ulcorner g_{y} \urcorner$ in $\mathcal{G}$, and by compactness we can uniformize such codes. Therefore we can define the function:
$r:B \rightarrow \mathcal{G}$ by sending $y  \mapsto \ulcorner g_{y} \urcorner$.\\
Let $p_{A}(x)$ be a global complete type containing the generic type of $A$, it is $\ulcorner A \urcorner$- definable by Corollary \ref{alldef}. By Corollary \ref{torgen}, $p_{A}(x) \vdash x \in B$. In fact, if we fix a realization of the generic type $c$ of $p_{A}(x)$ sufficiently generic over $\{ \ulcorner Z \urcorner \ | \ Z \in F\}$, and  $Z\neq Z' \in F$ then the fibers $h_{Z}(c)$ and $h_{Z'}(c)$ must be different. By Theorem \ref{codinggerm} the germ of $r$ over $p_{A}(x)$ can be coded in $\mathcal{G}$ over $\ulcorner A \urcorner$.  \\
\begin{claim}{The code $\ulcorner f \urcorner$ is interdefinable with $(\germ(r,p_{A}), \ulcorner F \urcorner)$ which is a tuple in the stabilizer sorts $\mathcal{G}$.}\end{claim}
\begin{proof}
Clearly $(\germ(r,p_{A}), \ulcorner F \urcorner) \in \dcl^{eq}(\ulcorner f \urcorner)$. We will argue that for any automorphism \\$\sigma \in Aut(\mathfrak{M}/ \ulcorner F \urcorner, \germ(r,p_{A}))$ we have $\sigma(\ulcorner f \urcorner)=\ulcorner f \urcorner$. As each torsor $Z \in F$ is being identified with the tuple $(\ulcorner A \urcorner, \germ(h_{Z},p_{A}))$, and $\ulcorner A \urcorner \in \dcl^{eq}(\ulcorner F \urcorner)$ then  it is sufficient to argue that:
\begin{align*}
\sigma \big( \{ (\germ(h_{Z},p_{A}), f(Z)) \ | \ Z \in F\}\big)=\{ (\germ(h_{Z},p_{A}), f(Z)) \ | \ Z \in F\}. 
\end{align*}
 For any $Z \in F$ there is a unique torsor $Z' \in F$ such that $\sigma(Z')=Z$, because $\sigma(\ulcorner F \urcorner)=\ulcorner F \urcorner$.  Let $D$ be the set of parameters required to define all the objects that have been mentioned so far. For any realization $c$ of the type $p_{A}(x)$ sufficiently generic over $D$ we have $r(c)=\sigma(r)(c)$, because $\sigma(\germ(r,p_{A}))=\germ(\sigma(r),p_{A})$. Consequently $r(c)=\ulcorner g_{c}\urcorner= \ulcorner \sigma(g)_{c}\urcorner =\sigma(r)(c)$. In particular, $h_{\sigma(Z')}(c)=h_{Z}(c)$ which implies that $\germ(h_{\sigma(Z')},p_{A})=\germ(h_{Z},p_{A})$. In addition, 
$\sigma(f)(\sigma(Z'))=\sigma(g)(h_{\sigma(Z')}(c))= g_{c}(h_{Z}(c))=f(Z)$. 
Therefore,
\begin{align*}
\sigma \big( \{ (\germ(h_{Z},p_{A}) f(Z)) \ | \ Z \in F \} \big)=\{ (\germ(h_{Z},p_{A}), f(Z)) \ | \ Z \in F \}, \text{as desired.}
\end{align*}
\end{proof}

\item  \emph{Case $2$: All the projections are different. i.e. for all $Z \neq Z' \in F$ we have $A_{Z} \neq A_{Z'}$. }\\
By Proposition \ref{coding1} we can find a code in the stabilizer sorts for $F$, and $\ulcorner F \urcorner \in \dcl^{eq}(\ulcorner f \urcorner)$ as it is the domain of this function. Let $S=\{ A_{Z} \ | \ Z \in F \}$ and define the function $g: S \rightarrow F$ by sending $ A_{Z} \mapsto Z$, where $Z$ is the unique torsor in $F$ satisfying that $\pi_{n+1}(Z)=A_{Z}$. Clearly $g$ is a $\ulcorner F \urcorner$-definable bijection. We consider the map $f \circ g: S \rightarrow P$ that sends $A_{Z} \mapsto  f(Z)$.  By Proposition \ref{coding1}, the function $f \circ g$ admits a code in the stabilizer sorts.\\

\begin{claim} {The code $\ulcorner f \urcorner$ is interdefinable with the tuple $(\ulcorner f \circ g\urcorner, \ulcorner F \urcorner)$ which is a tuple in the stabilizer sorts.}\end{claim}
\begin{proof}
It is clear that $(\ulcorner f \circ g\urcorner, \ulcorner F \urcorner) \in \dcl^{eq}(\ulcorner f \urcorner)$. For the converse note that $S$ is definable over $\ulcorner f \circ g\urcorner$ as it is its domain. As $F$ is given, we can define the function $\pi: F \rightarrow S$ that sends $Z \mapsto A_{Z}$. This is the map that sends each torsor to its projection into the last coordinate. We observe that $f=( f \circ g)  \circ \pi$, in fact $f(Z)= (f \circ g)(A_{Z})$. So $\ulcorner f \urcorner \in \dcl^{eq}(\ulcorner f \circ g\urcorner, \ulcorner F \urcorner)$. \end{proof}
\end{enumerate}
\end{proof}

\begin{proposition}\label{coding4} For every $F \subseteq \mathcal{G}$ finite set of size $m+1$ and definable function $f:F \rightarrow \mathcal{G}$, the code $\ulcorner f \urcorner$ is interdefinable with a tuple of elements in $\mathcal{G}$. 
\end{proposition}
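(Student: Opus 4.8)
The plan is to reduce the statement $\text{II}_{m+1}$ (coding functions $f : F \to \mathcal{G}$ with $|F| = m+1$) to the cases already treated, namely Proposition \ref{coding1} (coding functions into finite sets of torsors), Proposition \ref{coding3} (coding bijections between finite sets of torsors), and the induction hypothesis $\text{I}_k, \text{II}_k$ for $k \leq m$. As usual, we first reduce to the primitive case: if $F$ is not primitive over $\ulcorner f \urcorner$, pick a nontrivial $\ulcorner f \urcorner$-definable equivalence relation $E$ on $F$ with classes $C_1, \dots, C_l$ (so $l \leq m$ and each $|C_i| \leq m$), set $f_i = f \upharpoonright C_i$, code each $\ulcorner f_i \urcorner$ by a tuple $c_i \in \mathcal{G}$ using Fact \ref{inter} and $\text{II}_k$ ($k \leq m$), then code the finite set $\{c_1, \dots, c_l\}$ using $\text{I}_l$; the resulting code is interdefinable with $\ulcorner f \urcorner$. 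So we may assume $F$ is primitive over $\ulcorner f \urcorner$, hence $f$ is either constant or injective. If $f$ is constant with value $c$, then $\ulcorner f \urcorner$ is interdefinable with $(\ulcorner F \urcorner, c)$, and $\ulcorner F \urcorner \in \mathcal{G}$ by Proposition \ref{coding2}. So we assume $f$ is injective.

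Now the main work is to handle an injective $f : F \to \mathcal{G}$ with $F \subseteq \mathcal{G}$ primitive, $|F| = m+1$. Since $F$ is primitive, all its elements lie in a single sort of $\mathcal{G}$, and likewise the image $f(F)$ is a primitive set, all of whose elements lie in a single sort. By Fact \ref{inter} we may replace $F$ by an interdefinable finite set of torsors, and we split according to the sort of the image:
\begin{enumerate}
\item If $f(F) \subseteq \VS_{k,C}$ for an appropriate base $C$, or $f(F) \subseteq K$, or $f(F) \subseteq k$, or $f(F) \subseteq \Gamma/\Delta$ for some $\Delta \in RJ(\Gamma)$: in these cases $F$ (viewed again as a finite set of $1$-torsors after applying Fact \ref{inter} when the complexity is $1$, or more generally as torsors) maps into a field-like or residue-vector-space-like target, and we can invoke Lemma \ref{codingeasy} or Theorem \ref{EIinternalresidue}; more precisely, when $F$ itself has complexity $\geq 2$ we first reduce the domain, using the techniques of Proposition \ref{coding1} (projecting to the last coordinate and coding germs of the fiber functions), to a finite set of $1$-torsors, and then Lemma \ref{codingeasy} finishes.
\item If $f(F) \subseteq B_n(K)/\Stab_{(I_1,\dots,I_n)}$, then $f(F)$ is a finite set of $\mathcal{O}$-modules, hence a finite set of torsors, so $f$ is a bijection between $F$ (a finite set of torsors) and $f(F)$ (a finite set of torsors), and Proposition \ref{coding3} applies directly to give a code of $\ulcorner f \urcorner$ in $\mathcal{G}$.
\end{enumerate}

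In each case the output is a tuple of elements of $\mathcal{G}$ interdefinable with $\ulcorner f \urcorner$, which is exactly $\text{II}_{m+1}$; combined with Proposition \ref{coding2} for $\text{I}_{m+1}$ with $r = 1$ and then the standard coordinatewise argument (code each coordinate projection of $F \subseteq \mathcal{G}^r$ and then code the resulting finite subset of $\mathcal{G}$ using $\text{I}_{m+1}$ for $r = 1$, reusing $\text{I}_k$, $k \leq m$, on fibers) to pass from $r = 1$ to general $r$, the simultaneous induction closes, proving $\text{I}_m$ and $\text{II}_m$ for all $m$. I expect the main obstacle to be the bookkeeping in case (1) above when $F$ has complexity $\geq 2$: we must carefully track that reducing the domain $F$ to a set of $1$-torsors via projections and germ codes (as in Proposition \ref{coding1}, Case 2) is compatible with the function $f$ into the residue vector space sorts, so that Lemma \ref{codingeasy}(1) can be applied over the correct base — essentially re-running the germ-coding argument of Proposition \ref{coding1} with $f$ carried along. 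Everything else is an assembly of the lemmas already proved.
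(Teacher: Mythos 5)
Your overall reduction — pass to $F$ primitive over $\ulcorner f \urcorner$, dispose of the constant case, and assume $f$ injective — matches the paper's. The genuine gap is in your case (1). You split the argument by the sort of the \emph{image} $f(F)$, whereas the paper splits by the sort of the \emph{domain} $F$, and this choice is not innocuous. The workhorse here is Proposition \ref{coding3}, which codes any definable bijection $f\colon F\to P$ where $F$ is a finite set of torsors primitive over $\ulcorner f\urcorner$ and $P$ is any finite set of torsors. Since any finite subset of $\mathcal{G}$ is interdefinable with a finite set of torsors (this is the point of Fact \ref{inter}, read in both directions), Proposition \ref{coding3} already handles a completely arbitrary range; there is no need to distinguish where $f(F)$ lands, and no need to ``reduce the domain to $1$-torsors.'' Your plan to ``reduce the domain, using the techniques of Proposition \ref{coding1}, to a finite set of $1$-torsors, and then apply Lemma \ref{codingeasy}'' is precisely the step that would have to be proved: Lemma \ref{codingeasy} applies only when the domain is a finite set of $1$-torsors, and the proposed complexity-lowering reduction of the domain is not one of the paper's lemmas — it is the content of the induction carried out \emph{inside} the proof of Proposition \ref{coding3}, which you should invoke rather than attempt to re-run with $f$ carried along.

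There is a second, related gap: the case $F\subseteq K$ cannot be disposed of by blanket appeal to Fact \ref{inter}. Replacing elements of $K$ by singleton torsors $\{a\}=a+\{0\}$ produces torsors that are not of any type $(I_1)\in\mathcal{I}$ (since $\{0\}\notin\mathcal{I}$), so Lemma \ref{codetorsors} and the induction in Proposition \ref{coding3} do not apply as written. The paper treats this case separately: take the smallest closed $1$-torsor $U$ containing $F$, map $F$ injectively into $\red(U)$ and then into $\red(s)\subseteq \VS_{k,\ulcorner s\urcorner}$ via Lemma \ref{injectiveresidue}, and only then apply Proposition \ref{coding3}. (Likewise the paper dispatches $F\subseteq\Gamma/\Delta$ directly by listing its elements in increasing order.) Your proposal glosses over both points. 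Your case (2) is correct as far as it goes, and indeed once the domain has been correctly converted to a finite set of torsors of valid type, the case (1)/case (2) split by image sort becomes unnecessary.
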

\begin{proof}
Without loss of generality we may assume that $F$ is primitive over $\ulcorner f \urcorner$. Otherwise, there is a $(\ulcorner F \urcorner \cup \ulcorner f \urcorner)$-definable equivalence relation on $F$ and we let $C_{1}, \dots, C_{l}$ be the equivalence classes. For each $i \leq l$ we have $|C_{i}|\leq m$ and let $f_{i}=f\upharpoonright_{C_{i}}$. By the induction hypothesis, for each $k \leq m$ $II_{k}$ the code $\ulcorner f_{i}\urcorner$ is interdefinable with a tuple $c_{i}\in \mathcal{G}$. Because $l \leq m$ and $I_{l}$ holds the set $\{ c_{1},\dots, c_{l}\}$ admits a code $c \in \mathcal{G}$. Then $\ulcorner f \urcorner$ and $c$ are interdefinable. Hence, we may assume that $F$ is primitive over $\ulcorner f \urcorner$. By primitivity $f$ is either constant or injective. If $f$ is constant equal to some $c$ then $\ulcorner f \urcorner$ is interdefinable with the tuple $(\ulcorner F \urcorner, c)$, which lies in the stabilizer sorts by Proposition \ref{coding2}. Summarizing, we may assume that $f$ is an injective function and $F$ is primitive over $\ulcorner f\urcorner$. By primitivity all the torsors of $F$ lie in the same sort. \\
If $F$ is contained in the residue field, then $F$ is interdefinable with the code of a finite set of $1$-torsors of type $\mathcal{M}$ and the statement follows by Proposition \ref{coding3}. If $F \subseteq B_{n}(K)/\Stab_{(I_{1},\dots, I_{n})}$ for some $n\geq 2$, the statement follows by Proposition \ref{coding3}, because $\mathcal{O}$-modules are torsors. If $F \subseteq \Gamma/\Delta$ for some $\Delta \in RJ(\Gamma)$, then we can list the elements of $F$ in increasing order $\gamma_{1}<\dots<\gamma_{m+1}$, and the tuple $(\gamma_{i},f(\gamma_{i}))_{1\leq i \leq m+1}$ lies in the stabilizer sorts and is interdefinable with the code of $f$. \\
It is therefore left to consider the case where $F \subseteq K$. We may assume that $\ulcorner F \urcorner$ is a tuple of elements in the main field, as fields code finite sets. 
 Let $U$ be the smallest closed torsor that contains all the elements of $F$, this is a $\ulcorner F \urcorner$-definable set. 
Let $g$ the function that sends each element $x \in F$ to the unique class of $\red(U)$ that contains such element. Let $s$ be the $\mathcal{O}$- lattice whose code is interdefinable with $\ulcorner U\urcorner$, and let $h= \red(U) \rightarrow \red(s)$ be the map given by Lemma \ref{injectiveresidue}. Let $D=h \circ g(F)$, which is an $\ulcorner F \urcorner$-definable finite subset of $\red(s)$. By Proposition \ref{coding3}, the composition $f \circ g^{-1} \circ h^{-1}= D \rightarrow \mathcal{G}$ can be coded in the stabilizer sorts $\mathcal{G}$.  As $h \circ g= F \rightarrow D$ is a $\ulcorner F \urcorner$-definable bijection, then $f$ is interdefinable with the tuple $(\ulcorner F \urcorner, \ulcorner f \circ g^{-1} \circ h^{-1} \urcorner)$ which is a sequence of elements in $\mathcal{G}$. 
\end{proof}
Finally, we conclude proving that $I_{m+1}$ holds for $r>0$.
\begin{proposition} For any $r >0$ let $F \subseteq \mathcal{G}^{r}$ be a finite set of size $m+1$. Then $F$ can be coded in $\mathcal{G}$. 
\end{proposition}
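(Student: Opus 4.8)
The plan is to reduce the case of a finite set $F \subseteq \mathcal{G}^r$ to the already established one-column case $I_{m+1}$ with $r=1$ (Proposition \ref{coding2}) by a standard projection argument, using the coding of functions $II_{m+1}$ (Proposition \ref{coding4}) which we have just proved for sets of size $m+1$. Concretely, I would proceed by induction on $r$, the base case $r=1$ being Proposition \ref{coding2}. For the inductive step, write each element of $\mathcal{G}^r$ as a pair $(a, \bar b)$ with $a \in \mathcal{G}$ and $\bar b \in \mathcal{G}^{r-1}$, and let $\pi_1 \colon \mathcal{G}^r \to \mathcal{G}$ be the projection onto the first coordinate.

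First I would set $A = \pi_1(F) \subseteq \mathcal{G}$, a finite set of size at most $m+1$, which by Proposition \ref{coding2} (or $I_k$ for $k \le m$ if $|A| < m+1$) admits a code in $\mathcal{G}$. For each $a \in A$ consider the fiber $F_a = \{ \bar b \in \mathcal{G}^{r-1} \ | \ (a,\bar b) \in F\}$; this is a finite subset of $\mathcal{G}^{r-1}$ of size at most $m+1$, so by the induction hypothesis on $r$ it admits a code $\ulcorner F_a \urcorner$ interdefinable with a tuple in $\mathcal{G}$. By compactness these codes can be uniformized, giving a $\ulcorner F \urcorner$-definable function $g \colon A \to \mathcal{G}$ sending $a \mapsto \ulcorner F_a \urcorner$. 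Since the domain $A$ of $g$ has size at most $m+1$ and lies in $\mathcal{G}$, Proposition \ref{coding4} ($II_{m+1}$, together with $II_k$ for $k \le m$ when $|A|<m+1$) shows that $\ulcorner g \urcorner$ is interdefinable with a tuple of elements in $\mathcal{G}$.

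It then remains to check that $\ulcorner F \urcorner$ and $\ulcorner g \urcorner$ are interdefinable. One direction is immediate: $g$ is defined from $F$, so $\ulcorner g \urcorner \in \dcl^{eq}(\ulcorner F \urcorner)$. For the converse, from $\ulcorner g \urcorner$ one recovers its domain $A$ and, for each $a \in A$, the code $\ulcorner F_a \urcorner$, hence the finite set $F_a$ itself; then $F = \bigcup_{a \in A} \{a\} \times F_a$, so $\ulcorner F \urcorner \in \dcl^{eq}(\ulcorner g \urcorner)$. Thus $F$ is coded in $\mathcal{G}$, completing the induction on $r$ and hence the proof of $I_{m+1}$ in full generality. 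I do not expect any real obstacle here — the content has already been done in the hard cases ($r=1$ and coding of functions); this last step is purely a bookkeeping reduction via fibering over the first coordinate, and the only points to be slightly careful about are invoking the correct instance of the induction hypotheses ($I_k$, $II_k$ for $k \le m$, and $I_{m+1}$ with smaller $r$) and the routine uniformization by compactness.

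\begin{proof}
We argue by induction on $r$. The case $r=1$ is Proposition \ref{coding2}. Assume the statement for $r-1$ and let $F \subseteq \mathcal{G}^{r}$ be a finite set of size $m+1$. Let $\pi_1 \colon \mathcal{G}^{r} \to \mathcal{G}$ be the projection onto the first coordinate and set $A = \pi_1(F)$. Then $|A| \leq m+1$, so by Proposition \ref{coding2} (if $|A|=m+1$) or by $I_k$ for the appropriate $k \leq m$ (if $|A| < m+1$), the code $\ulcorner A \urcorner$ is interdefinable with a tuple in $\mathcal{G}$, and in particular $\ulcorner A \urcorner \in \dcl^{eq}(\ulcorner F \urcorner)$.

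For each $a \in A$, let $F_a = \{ \bar b \in \mathcal{G}^{r-1} \ | \ (a, \bar b) \in F\}$. Each $F_a$ is a finite subset of $\mathcal{G}^{r-1}$ with $1 \leq |F_a| \leq m+1$, so by the induction hypothesis on $r$ its code $\ulcorner F_a \urcorner$ is interdefinable with a tuple of elements in $\mathcal{G}$. By compactness we may uniformize these codes, obtaining a $\ulcorner F \urcorner$-definable function
\begin{center}
$g \colon \begin{cases} A &\rightarrow \mathcal{G}\\ a &\mapsto \ulcorner F_a \urcorner. \end{cases}$
\end{center}
Since $A \subseteq \mathcal{G}$ and $|A| \leq m+1$, Proposition \ref{coding4} (if $|A| = m+1$), or $II_k$ for the appropriate $k \leq m$ (if $|A| < m+1$), shows that $\ulcorner g \urcorner$ is interdefinable with a tuple of elements in $\mathcal{G}$.

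Finally we check that $\ulcorner F \urcorner$ and $\ulcorner g \urcorner$ are interdefinable. Since $g$ is defined over $\ulcorner F \urcorner$, we have $\ulcorner g \urcorner \in \dcl^{eq}(\ulcorner F \urcorner)$. Conversely, from $\ulcorner g \urcorner$ we recover the domain $A$ of $g$, and for each $a \in A$ the value $g(a) = \ulcorner F_a \urcorner$, hence the finite set $F_a$ itself. Therefore $F = \bigcup_{a \in A} \big( \{a\} \times F_a \big) \in \dcl^{eq}(\ulcorner g \urcorner)$. Thus $\ulcorner F \urcorner$ is interdefinable with a tuple of elements in $\mathcal{G}$, which completes the induction on $r$.
\end{proof}

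\begin{proof}[Proof of the Theorem]
We have shown $I_1$ and $II_1$ hold trivially, and that $I_k, II_k$ for all $k \leq m$ imply $I_{m+1}$ and $II_{m+1}$: indeed $II_{m+1}$ is Proposition \ref{coding4}, while $I_{m+1}$ follows from the preceding proposition for every $r > 0$. By induction on $m$, the statements $I_m$ and $II_m$ hold for all $m \in \mathbb{N}_{\geq 1}$.
\end{proof}

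\subsection{Elimination of imaginaries down to the stabilizer sorts}\label{todo}

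\begin{theorem} Let $K$ be a henselian valued field of equicharacteristic zero, residue field algebraically closed and dp-minimal value group. Then $K$ eliminates imaginaries in the language $\hat{\mathcal{L}}$, where the stabilizer sorts are added.
\end{theorem}
\begin{proof}
By Theorem \ref{weakEI1}, $K$ admits weak elimination of imaginaries in the language $\mathcal{L}_{\mathcal{G}}$, and a fortiori in the richer language $\hat{\mathcal{L}}$. By the theorem just proved (the instance $I_m$ applied with $r$ the length of a tuple in $\mathcal{G}$), for every model $M$ and every finite subset $S \subseteq M$ the code $\ulcorner S \urcorner$ is interdefinable with a tuple of elements in $\mathcal{G}$; that is, $T$ codes finite sets. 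By Fact \ref{all}, $K$ eliminates imaginaries in the language $\hat{\mathcal{L}}$.
\end{proof}
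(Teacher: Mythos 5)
Your proof is correct but follows a genuinely different route from the paper's. The paper first reduces to the case where $F$ is primitive (so that each projection $\pi_i \colon \mathcal{G}^r \to \mathcal{G}$ is either constant or injective), singles out an injective projection $\pi_{i_0}$, codes $F_0 = \pi_{i_0}(F)$ by Proposition~\ref{coding2}, and then codes each remaining coordinate as a function $\pi_i \circ \pi_{i_0}^{-1} \colon F_0 \to \mathcal{G}$ by Proposition~\ref{coding4}; this is a ``graph over one coordinate'' decomposition done in a single step. You instead run an inner induction on $r$ and decompose $F$ into fibers $F_a \subseteq \mathcal{G}^{r-1}$ over the first projection $A = \pi_1(F)$; this avoids the primitivity reduction entirely but at the cost of an inner induction. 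Both give the same conclusion, and neither approach is obviously preferable. Two small points to tighten: (i) your uniformized function $g$ takes values in tuples of $\mathcal{G}$ (of a priori varying length, depending on $|F_a|$), not in a single sort, so Proposition~\ref{coding4} does not apply literally --- you should pad the codes to a common length $N$, write $g = (g_1,\dots,g_N)$ with each $g_i \colon A \to \mathcal{G}$, code each $g_i$ separately, and observe that $\ulcorner g \urcorner$ is interdefinable with $(\ulcorner g_1 \urcorner,\dots,\ulcorner g_N \urcorner)$; and (ii) when coding a fiber $F_a$ with $|F_a| < m+1$ you cannot invoke ``the induction hypothesis on $r$'' (which is stated only for sets of size exactly $m+1$); you must appeal to the outer hypothesis $I_k$ for the appropriate $k \le m$, as you correctly do for $A$ but omit for the $F_a$.
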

\begin{proof}
Let  $r>0$ and $F$ be a finite set of $\mathcal{G}^{r}$ of size $m+1$. Suppose that $F$ is not primitive, that means that we can find a non trivial equivalence $E$ relation definable over $\ulcorner F \urcorner$, and let $C_{1},\dots, C_{l}$ be such classes. For each $i \leq l$, $|C_{i}|\leq m$, because $I_{k}$ holds for each $k\leq m$ we can find a code $c_{i} \in \mathcal{G}$. As $l \leq m$ by $I_{l}$ holds,  we can find a code $c$ in the stabilizer sorts of the set $\{ c_{1},\dots, c_{l}\}$, because $l<m+1$. The code $\ulcorner F \urcorner$ is interdefinable with $c$.\\
We assume that $F$ is a primitive set. Let $\pi_{i}= \mathcal{G}^{r} \rightarrow \mathcal{G}$ be the projection into the $i-th$ coordinate. By primitivity of $F$ each projection $\pi_{i}$ es either constant or injective. As $|F|>1$ there must be an index $1 \leq i_{0}\leq r$ such that $\pi_{i_{0}}$ is injective and $F_{0}=\pi_{i_{0}}(F)$ is a primitive finite subset of $\mathcal{G}$. By Proposition \ref{coding2}  we can find a code $\ulcorner F_{0} \urcorner$ in $\mathcal{G}$. For each other index $i \neq i_{0}$, by Proposition \ref{coding4} we have that $\pi_{i} \circ \pi_{i_{0}}^{-1}= F_{0} \rightarrow \mathcal{G}$ can be coded in the stabilizer sorts. Then $\ulcorner F \urcorner$ is interdefinable with the tuple $(\ulcorner F_{0} \urcorner, (\ulcorner \pi_{i} \circ \pi_{i_{0}}^{-1} \urcorner)_{i \neq i_{0}})$ which is a tuple in the stabilizer sorts, as required. 
\end{proof}
This completes the induction on the cardinality of the set $F$. Because $I_{m}$ holds for each $m \in \mathbb{N}$ we can conclude with the following statement.
\begin{theorem}\label{codingfiniteset} Let $r>0$ and $F \subseteq \mathcal{G}^{r}$, then $\ulcorner F \urcorner$ is interdefinable with a tuple of elements in $\mathcal{G}$.
\end{theorem}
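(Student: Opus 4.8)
\textbf{Proof plan for Theorem \ref{codingfiniteset}.} The final statement is an immediate consequence of the inductive machinery just developed. Recall that the two-part statement indexed by $m$ — namely $I_{m}$ (every finite $F \subseteq \mathcal{G}^{r}$ of size $m$ is interdefinable with a tuple in $\mathcal{G}$) and $II_{m}$ (every definable $f : F \to \mathcal{G}$ with $|F| = m$ has a code in $\mathcal{G}$) — has been proved by induction on $m$: the base case $m = 1$ is trivial, and the inductive step $I_{m+1} \wedge II_{m+1}$ (assuming $I_{k}, II_{k}$ for all $k \leq m$) is exactly the content of Propositions \ref{coding1}–\ref{coding4} together with the final proposition on $\mathcal{G}^{r}$. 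Thus $I_{m}$ holds for every $m \in \mathbb{N}_{\geq 1}$.

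The plan is simply to record this conclusion. Given $r > 0$ and a finite set $F \subseteq \mathcal{G}^{r}$, let $m = |F|$. If $m = 0$ then $F = \emptyset$ and $\ulcorner F \urcorner \in \dcl^{eq}(\emptyset)$, so it is interdefinable with the empty tuple. If $m \geq 1$, apply $I_{m}$ directly to obtain that $\ulcorner F \urcorner$ is interdefinable with a tuple of elements in $\mathcal{G}$. This is all that is needed.

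There is essentially no obstacle at this final step — all the difficulty has already been absorbed into the simultaneous induction. If anything, the one point worth flagging is purely bookkeeping: one must make sure the quantifier structure of the induction is stated cleanly, i.e. that $I_{m+1}$ and $II_{m+1}$ are derived using only $I_{k}, II_{k}$ for $k \leq m$ (and never $I_{m+1}$ or $II_{m+1}$ themselves in a circular way), which is indeed how Propositions \ref{coding1}–\ref{coding4} are organized — each reduction to a non-primitive set breaks $F$ into classes of size $\leq m$, and each use of germs or residue-field coding reduces the complexity parameter $n$ of the torsors involved. With that observed, the theorem follows.

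\begin{proof}
By the simultaneous induction carried out in Propositions \ref{coding1}, \ref{coding2}, \ref{coding3} and \ref{coding4}, the statements $I_{m}$ and $II_{m}$ hold for every $m \in \mathbb{N}_{\geq 1}$. Now let $r > 0$ and let $F \subseteq \mathcal{G}^{r}$ be a finite set. If $F = \emptyset$ then $\ulcorner F \urcorner \in \dcl^{eq}(\emptyset)$ and there is nothing to prove. Otherwise let $m = |F| \geq 1$; by $I_{m}$ the code $\ulcorner F \urcorner$ is interdefinable with a tuple of elements in $\mathcal{G}$, as required.
\end{proof}
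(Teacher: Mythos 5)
Your proof is correct and takes essentially the same route as the paper: once the simultaneous induction establishing $I_{m}$ and $II_{m}$ for all $m$ is in place, the theorem is just an application of $I_{|F|}$, and the paper records it in exactly this way. The only small detail is that your formal proof cites only Propositions \ref{coding1}--\ref{coding4}, whereas $I_{m+1}$ for general $r>1$ is finished by the final unnamed proposition on $\mathcal{G}^{r}$ (which your plan does mention); and your handling of $F=\emptyset$ is a harmless extra, since the induction starts at $m=1$.
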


\subsection{Putting everything together}\label{todo}

We conclude this section with our main theorem. 
\begin{theorem}\label{EIdp} Let $K$ be a henselian valued field of equicharacteristic zero, residue field algebraically closed and dp-minimal value group. Then $K$ eliminates imaginaries in the language $\hat{\mathcal{L}}$, where the stabilizer sorts are added. 
\end{theorem}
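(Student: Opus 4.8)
The plan is to combine the weak elimination of imaginaries result with the coding of finite sets, using the standard reduction that these two conditions together give full elimination of imaginaries. First I would invoke Theorem \ref{weakEI1}: since $K$ has equicharacteristic zero, algebraically closed residue field, and value group of bounded regular rank — and every dp-minimal ordered abelian group is in particular of bounded regular rank (indeed, by Proposition \ref{dpmin} a dp-minimal OAG has no singular primes, hence finite $p$-regular rank for each prime $p$, so by Proposition \ref{charbounded} it has bounded regular rank) — the field $K$ admits weak elimination of imaginaries in the language $\mathcal{L}_{\mathcal{G}}$, and hence in its expansion $\hat{\mathcal{L}}$.

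Next I would recall Fact \ref{all}: a complete multi-sorted theory that has weak elimination of imaginaries and codes finite sets eliminates imaginaries. So it remains to check that $T = Th_{\hat{\mathcal{L}}}(K)$ codes finite sets, i.e. that for every finite $F \subseteq \mathcal{G}^r$ the code $\ulcorner F \urcorner$ is interdefinable with a tuple of elements of $\mathcal{G}$. But this is exactly the content of Theorem \ref{codingfiniteset}, which was proved by the simultaneous induction on $|F|$ carried out in Subsection \ref{primitive}. Here the dp-minimality hypothesis is used in two essential places inside that argument: first, through Corollary \ref{EIdpmin}, the value group $\Gamma$ itself eliminates imaginaries in $\mathcal{L}_{dp}$ (this is why the extra constants distinguishing the elements of $\Gamma/\ell\Gamma$ are included in $\hat{\mathcal{L}}$, and why finite sets inside the quotient sorts $\Gamma/\Delta$ and inside $\Gamma^{eq}$ can be coded directly); and second, through Fact \ref{genericodp} and Lemma \ref{tipomaster}, in the construction of the $\ulcorner W^* \urcorner$-definable types that feed into the germ-coding arguments of Theorem \ref{codinggerm} and Proposition \ref{coding1}.

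Putting these pieces together: by Theorem \ref{weakEI1} (applied in the language $\hat{\mathcal{L}}$) the theory $T$ has weak elimination of imaginaries down to $\mathcal{G}$, and by Theorem \ref{codingfiniteset} it codes finite sets in $\mathcal{G}$; hence by Fact \ref{all} it eliminates imaginaries in $\hat{\mathcal{L}}$.

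I do not expect any serious obstacle at this final assembly stage — all the genuine work has already been done in the preceding sections. The one point that deserves an explicit sentence is the verification that a dp-minimal value group falls under the "bounded regular rank" hypothesis of Theorem \ref{weakEI1}, so that the weak-EI result genuinely applies; this is immediate from Proposition \ref{dpmin} together with Proposition \ref{charbounded}, as indicated above. The only other thing to be careful about is that the language $\hat{\mathcal{L}}$ extends $\mathcal{L}_{\mathcal{G}}$ purely by adding constants to the value-group sort (the switch from $\mathcal{L}_{bq}$ to $\mathcal{L}_{dp}$), so weak elimination of imaginaries is inherited by the larger language without change, and all of Subsection \ref{primitive} was already written for $\hat{\mathcal{L}}$.
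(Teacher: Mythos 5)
Your proposal is correct and follows exactly the same route as the paper: invoke Theorem \ref{weakEI1} for weak elimination of imaginaries, Theorem \ref{codingfiniteset} for coding finite sets, and Fact \ref{all} to combine them. The additional remarks you include (that dp-minimality implies bounded regular rank via Propositions \ref{dpmin} and \ref{charbounded}, and that passing from $\mathcal{L}_{\mathcal{G}}$ to $\hat{\mathcal{L}}$ only adds constants) are sound and helpfully make explicit some hypothesis-checking the paper leaves implicit.
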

\begin{proof}
By Theorem \ref{weakEI1}, $K$  has weak elimination of imaginaries down to the stabilizer sorts. By Fact \ref{all} it is sufficient to show that finite sets can be coded, this is guaranteed by Theorem \ref{codingfiniteset}. 
\end{proof}

\bibliographystyle{plain}

  \end{document}